\theoremstyle{plain}
\newtheorem{theorem}{Theorem}[section]
\newtheorem{corollary}[theorem]{Corollary}
\newtheorem{lemma}[theorem]{Lemma}
\newtheorem{proposition}[theorem]{Proposition}
\newtheorem{definition}[theorem]{Definition}
\newtheorem*{definition*}{Definition}
\theoremstyle{remark}
\newtheorem{remark}[theorem]{Remark}
\newtheorem{example}[theorem]{Example}
\newtheorem*{remark*}{Remark}
\newtheorem*{example*}{Example}
\newtheorem*{notation*}{Notation}
\newcommand{\ddd}{\mathrm{d}}
\numberwithin{equation}{section}
\newcommand{\one}{{{\bf 1}}}
\newcommand{\ip}[1]{\langle {#1}\rangle}
\newcommand{\emm}{m}
\newcommand{\bAC}{\mathbb{AC}}
\newcommand{\cCE}{\mathcal{CE}}
\newcommand{\cAC}{\mathcal{AC}}
\newcommand{\bal}{\mbox{isotropy }}
\newcommand{\bOmega}{{\overline{\Omega}}}
\DeclareMathOperator{\diam}{diam}
\DeclareMathOperator{\Lip}{Lip}
\DeclareMathOperator{\dive}{div}
\DeclareMathOperator{\conv}{conv}
\DeclareMathOperator{\supp}{supp}
\def\BS{\boldsymbol} 
          \def\bftheta{{\BS\theta}}
\def\bflambda{{\BS\lambda}}
\newcommand{\sd}{\mathsf{d}}
\newcommand{\bA}{\mathbb{A}}
\newcommand{\bW}{\mathbb{W}_2}
\newcommand{\bbW}{\mathbb{W}}
\newcommand{\cC}{\mathcal{C}}
\newcommand{\cK}{\mathcal{K}}
\newcommand{\cX}{\mathcal{X}}
\newcommand{\cM}{\mathcal{M}}
\newcommand{\cA}{\mathcal{A}}
\newcommand{\cT}{\mathcal{T}}
\newcommand{\cP}{\mathcal{P}}
\newcommand{\cQ}{Q}
\newcommand{\sQ}{\mathscr{Q}}
\newcommand{\cD}{\mathcal{D}}
\def\cI{{\mathcal I}}
\newcommand{\TV}{\rm TV}
\newcommand{\dphi}{\widetilde\phi}
\newcommand{\cW}{\mathcal{W}}
\newcommand{\bfn}{n}
\newcommand{\cU}{\mathcal{U}}
\newcommand{\hrho}{\widehat{\rho}}
\newcommand{\bphi}{\overline{\phi}}
\def\N{{\mathbb N}}
\def\Z{{\mathbb Z}}
\def\R{{\mathbb R}}
\def\bbT{{\mathbb T}}
\def\P{{\cP}}
\newcommand{\dd}{\, \mathrm{d}}
\newcommand{\eps}{\varepsilon}
\newcommand{\weakly}{\rightharpoonup}
\def\Xint#1{\mathchoice
{\XXint\displaystyle\textstyle{#1}}%
{\XXint\textstyle\scriptstyle{#1}}%
{\XXint\scriptstyle\scriptscriptstyle{#1}}%
{\XXint\scriptscriptstyle\scriptscriptstyle{#1}}%
\!\int}
\def\XXint#1#2#3{{\setbox0=\hbox{$#1{#2#3}{\int}$ }
\vcenter{\hbox{$#2#3$ }}\kern-.6\wd0}}
\def\dashint{\Xint-}
\definecolor{jan}{rgb}{0.0,0.3,0.8}
\title{Scaling limits of discrete optimal transport}
\author{Peter Gladbach\thanks{Mathematisches Institut, Universit\"at Leipzig, Augustusplatz 10, 04109 Leipzig, Germany (\texttt{gladbach@math.uni-leipzig.de})} 
\and Eva Kopfer\thanks{Institut f\"ur Angewandte Mathematik, Universit\"at Bonn, Endenicher Allee 60, 53115 Bonn, Germany (\texttt{eva.kopfer@iam.uni-bonn.de})}
\and Jan Maas\thanks{Institute of Science and Technology Austria (IST Austria),
Am Campus 1, 3400 Klosterneuburg, Austria (\texttt{jan.maas@ist.ac.at})}}
\date{}
\begin{document}
\maketitle
\begin{abstract}
We consider dynamical transport metrics for probability measures on discretisations of a bounded convex domain in $\R^d$.
These metrics are natural discrete counterparts to the Kantorovich metric $\bW$, defined using a Benamou--Brenier type formula.
Under mild assumptions we prove an asymptotic upper bound for the discrete transport metric $\cW_\cT$ in terms of $\bW$, as the size of the mesh $\cT$ tends to $0$.
However, we show that the corresponding lower bound may fail in general, even on certain one-dimensional and symmetric two-dimensional meshes.
In addition, we show that the asymptotic lower bound holds under an isotropy assumption on the mesh, which turns out to be essentially necessary. 
This assumption is satisfied, e.g., for tilings by convex regular polygons, and it implies Gromov--Hausdorff convergence of the transport metric.
\end{abstract}

\tableofcontents

\section{Introduction}

Over the last decades, optimal transport has become a vibrant research area at the interface of analysis, probability, and geometry.
A central object is the $2$-Kantorovich distance $\bW$ (often called $2$-Wasserstein metric), which plays a major role in non-smooth geometry and analysis, and in the theory of dissipative PDE. We refer to the monographs \cite{AmGiSa05,Vill09,Sant15} for an overview of the theory and its applications.

More recently, discrete dynamical transport metrics have been introduced in the context of Markov chains \cite{Maas11}, reaction-diffusion systems \cite{Miel11} and discrete Fokker--Planck equations \cite{CHLZ11}.
These metrics are natural discrete counterparts to $\bW$ in several ways: they have been used to obtain a gradient flow formulation for discrete evolution equations \cite{ErMa14,MaMa16}, and to develop a discrete theory of Ricci curvature that leads to various functional inequalities for discrete systems \cite{ErMa12,Miel13,FaMa16}.
The geometry of geodesics for these metrics is currently actively studied, both from an analytic point of view \cite{GaLiMo17,ErMaWi18}, and through numerical methods \cite{ERSS17,SoRuGuBu16}; see also \cite{chow2019discrete,li2018computations} for further recent developments involving discrete optimal transport.

It is natural to ask whether the discrete transport metrics converge to $\bW$ under suitable assumptions.
The first result of this type has been obtained in \cite{GiMa13}.
The authors approximated the continuous torus $\bbT^d$ by the discrete torus $\bbT_N^d = (\Z/N\Z)^d$, and endowed the space of probability measures $\cP(\bbT_N^d)$ with the discrete transport metric $\cW_N$. The main result in \cite{GiMa13} asserts that, under a natural rescaling, the metric spaces $(\cP(\bbT_N^d),\cW_N)$ converge to the $L^2$-Kantorovich space $(\cP(\bbT^d), \bW)$ in the sense of Gromov--Hausdorff as $N\to\infty$.

A different convergence result was subsequently obtained by Garcia Trillos \cite{Tril17}. Given a set $\cX_N$ consisting of $N$ distinct points in $\bbT^d$, Garcia Trillos considers the graph obtained by connecting all pairs of points that lie at distance less than $\eps$, for a suitable $\eps$ depending on $N$. Under appropriate conditions on the uniformity of the point set, it is shown in \cite{Tril17} that the discrete transport metric converges to $\bW$, provided that $\eps = \eps(N)$ decays sufficiently slow.
While the result of \cite{Tril17} covers a wide range of settings, the latter assumption typically implies that the number of neighbours of a point in $\cX_N$ tends to $\infty$ as $N \to \infty$; in particular, the result of \cite{GiMa13} is not contained in \cite{Tril17}.

The aim of this paper is to investigate Gromov--Hausdorff convergence for transport metrics on general finite volume discretisations of a bounded convex domain $\Omega \subseteq \R^d$. While our setting is different from \cite{Tril17}, it corresponds in terms of scaling to the limiting regime in which the results of \cite{Tril17} fail to apply.

\subsection*{Setting of the paper}

We informally present the main results of this paper. For precise definitions we refer to Section \ref{sec:preliminaries}.

Let $\Omega \subseteq \R^d$ be a bounded convex open set. We endow  the set of Borel probability measures $\cP(\Omega)$ with the $2$-Kantorovich metric, which can be expressed in terms of the \emph{Benamou--Brenier formula}
\begin{align*}
	\bW(\mu_0,\mu_1) =
		\inf\left\{\sqrt{\int_0^1\bA^*(\mu_t,\dot\mu_t)\dd t}\right\} \ ,
\end{align*}
for $\mu_0, \mu_1 \in \cP(\bOmega)$.
Here, the infimum is taken among all absolutely continuous curves $(\mu_t)_{t\in[0,1]}$ in $\cP(\bOmega)$ connecting $\mu_0$ and $\mu_1,$ and
\begin{align}\label{eq:action-cont}
	\bA^*(\mu,w)=\sup_{\phi \in \cC^1(\bOmega)}
		\bigg\{ 2 \ip{\phi,w} - \bA(\mu, \phi) \bigg\} \ ,
\qquad
	\bA(\mu, \phi) = \int_\bOmega|\nabla\phi|^2\dd\mu \ .
\end{align}

We discretise the domain $\Omega$ using a finite volume discretisation, closely following  the setup from \cite{EyGaHe00}. An \emph{admissible mesh} consists of a partition $\cT$ of $\bOmega$ into sets $K$ with non-empty and convex interior, together with a family of distinct points $\{x_K\}_{K \in \cT}$ such that $x_K \in \overline K$ for all $K \in \cT$.
We write $(K|L) = \overline K\cap \overline L$ to denote the flat convex surface with $(d-1)$-dimensional Hausdorff measure $|(K|L)|$. We make the geometric assumption that the vector $x_K - x_L$ is orthogonal to $(K|L)$ if $K$ and $L$ are neighbouring cells and we write $d_{KL}:= |x_K-x_L|$. In addition, we impose some mild regularity conditions on the mesh; see Definition \ref{def:admissible-mesh} for the notion of $\zeta$-regularity that is imposed in the sequel.
We write $[\cT] := \max_{K \in \cT} \diam(K)$ to denote the mesh size of $\cT$.

The discrete transport metric on $\cP(\cT)$ is defined in terms of a discrete Be\-na\-mou--Brenier formula: for $\emm_0, \emm_1 \in \cP(\cT)$, we set
 \begin{align*}
 	\cW_\cT(\emm_0, \emm_1)
 = \inf\left\{ \sqrt{ \int_0^1 \cA^*_\cT(\emm_t,\dot\emm_t) \dd t }
 	 \right\} \ ,
 \end{align*}
where the action functionals are defined using natural discrete counterparts to \eqref{eq:action-cont}:
\begin{align*}
	\cA_\cT^*(\emm,\sigma) &  = \sup_{\psi : \cT \to \R}
		\bigg\{ 2 \ip{\psi,\sigma} - \cA_\cT(\emm, \psi) \bigg\} \ ,
\\
	\cA_\cT(\emm, \psi) & = \frac12 \sum_{K,L \in \cT}
 	S_{KL}
	 \theta_{KL} \bigg(\frac{\emm(K)}{|K|}, \frac{\emm(L)}{|L|}\bigg)  \big( \psi(K) - \psi(L) \big)^2 \ .
\end{align*}
Here, the transmission coefficients $S_{KL}$ are defined by
$
	S_{KL} = \frac{|(K|L)|}{|x_K - x_L|}
$.
This choice ensures the formal consistency of the discrete and the continuous definitions; cf. Remark \ref{rem:consistency} below for a verification at the level of the associated Dirichlet forms. We refer to \cite[Theorem 4.2]{EyGaHe00} for a convergence result for the discrete heat equation to the continuous heat equation.

The functions $\theta_{KL} : \R_+ \times \R_+ \to \R_+$ are \emph{admissible means}, i.e., $\theta_{KL}$ is a continuous function that is $\cC^\infty$ on $(0,\infty) \times (0,\infty)$, positively $1$-homogeneous, jointly concave, and normalised (i.e., $\theta(1,1) = 1$); see Definition \ref{def:mean-function} below for further details. Furthermore, we impose the symmetry condition $\theta_{KL}(s,t) = \theta_{LK}(t,s)$.
A common choice is the logarithmic mean $\theta_{KL}(s,t) := \int_0^1 s^{1-p} t^p \dd p$ for all $K \sim L$, which naturally arises in the gradient flow structure of the discrete heat equation \cite{Maas11,Miel11,CHLZ11}.
We write $\bftheta = (\theta_{KL})$ to denote the collection of mean functions in the definition of $\cW_\cT$, and suppress the dependence of $\cW_\cT$ on $\bftheta$ in the notation.
The freedom to choose these mean functions is due to the discreteness of the problem. We will see that a careful choice is crucial in the sequel.

\subsection*{Statement of the main results}

The goal of this paper is to analyse the limiting behaviour of the discrete transport metrics $\cW_\cT$ as $[\cT] \to 0$. To formulate the main results we introduce the canonical projection operator $P_\cT : \cP(\bOmega) \to \cP(\cT)$ given by
\begin{align}\label{eq:proj-embed}
	\big(P_\cT \mu \big)(K) & = \mu(K) \qquad \text{for } \mu \in \cP(\bOmega) \text{ and } K \in \cT \ .
\end{align}

Our first main result establishes a one-sided asymptotic estimate for the discrete transport metric in great generality.

\begin{theorem}[Asymptotic upper bound for $\cW_\cT$]\label{thm:upper-bound-intro}
Fix $\zeta \in (0,1]$, and let $\mu_0, \mu_1 \in \cP(\bOmega)$. 
For any family of $\zeta$-regular meshes $\{\cT\}$, and for any choice of mean functions $\bftheta_\cT$, we have
\begin{align*}
	\limsup_{[\cT] \to 0} \cW_\cT(P_\cT \mu_0 , P_\cT \mu_1) \leq \bW(\mu_0,\mu_1) \ .
\end{align*}

\end{theorem}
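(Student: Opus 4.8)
The plan is to bound $\cW_\cT(P_\cT\mu_0,P_\cT\mu_1)$ from above by exhibiting an explicit competitor curve in $\cP(\cT)$ and passing to the limit $[\cT]\to 0$. I would first reduce to the case where $\mu_0=\rho_0\,\Leb^d$ and $\mu_1=\rho_1\,\Leb^d$ with $\rho_0,\rho_1\in\cC^\infty(\bOmega)$ bounded below by a positive constant: such measures are $\bW$-dense in $\cP(\bOmega)$, $\cW_\cT$ is a genuine metric, and $\limsup_{[\cT]\to0}$ behaves subadditively along the triangle inequality, so it suffices to also control $\cW_\cT(P_\cT\mu,P_\cT\widetilde\mu)$ for $\bW$-close $\mu,\widetilde\mu$ uniformly in $\cT$, which follows from a crude transport estimate similar to the a priori bound used below.

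\textbf{The competitor.} For smooth, strictly positive $\rho_0,\rho_1$ I would fix a smooth curve $(\mu_t)_{t\in[0,1]}=(\rho_t\,\Leb^d)_{t\in[0,1]}$ with $\rho_t\in\cC^\infty(\bOmega)$ bounded below uniformly in $t$, and a smooth vector field $v_t$ with $v_t\cdot n=0$ on $\partial\Omega$, solving the continuity equation $\partial_t\rho_t+\dive(\rho_t v_t)=0$ and satisfying $\int_0^1\!\!\int_\Omega|v_t|^2\,\ddd\mu_t\,\ddd t\le\bW(\mu_0,\mu_1)^2+\delta$; such a curve exists by mollifying the Benamou--Brenier geodesic (the minimal action is jointly convex and translation-invariant in the measure--momentum pair, so mollification does not increase it) and correcting the endpoints by a cheap interpolation, which also sidesteps boundary regularity of the optimal map on the convex domain. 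As the discrete competitor I would simply take $m_t:=P_\cT\mu_t$, so that $m_0=P_\cT\mu_0$, $m_1=P_\cT\mu_1$, and
\[
	\cW_\cT(P_\cT\mu_0,P_\cT\mu_1)^2\ \le\ \int_0^1 \cA_\cT^*(m_t,\dot m_t)\,\ddd t .
\]

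\textbf{Analysis of the discrete action.} The heart of the matter is to show that, for each fixed $t$, $\cA_\cT^*(m_t,\dot m_t)\to\bA^*(\mu_t,\dot\mu_t)$ as $[\cT]\to 0$, together with a $\cT$-uniform, $t$-integrable majorant. For the pointwise limit I would use Legendre duality: since $\dot m_t$ has zero total mass and the edge weights $w_{KL}^{(t)}:=S_{KL}\,\theta_{KL}(m_t(K)/|K|,m_t(L)/|L|)$ are strictly positive (because $\rho_t$ is bounded below and $\theta_{KL}$ is positive and continuous), the supremum in $\cA_\cT^*(m_t,\dot m_t)=\sup_\psi\{2\ip{\psi,\dot m_t}-\cA_\cT(m_t,\psi)\}$ is attained at the unique mean-zero solution $\Psi_t$ of the discrete elliptic problem $\sum_L w_{KL}^{(t)}(\Psi_t(K)-\Psi_t(L))=\dot m_t(K)$, and one has the identity $\cA_\cT^*(m_t,\dot m_t)=\ip{\Psi_t,\dot m_t}=\sum_K \Psi_t(K)\int_K\partial_t\rho_t$. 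But this is precisely the finite-volume scheme of \cite{EyGaHe00} for $-\dive(\rho_t\nabla\phi)=\partial_t\rho_t$ with homogeneous Neumann data, since the interface coefficients $\theta_{KL}(m_t(K)/|K|,m_t(L)/|L|)$ converge uniformly to $\rho_t$ on the interfaces (continuity of $\theta_{KL}$ and $\theta_{KL}(a,a)=a$); invoking the $L^2$-convergence of this scheme on $\zeta$-regular meshes, the piecewise-constant reconstruction of $\Psi_t$ converges in $L^2(\Omega)$ to the solution $\phi_t$, whence $\ip{\Psi_t,\dot m_t}\to\int_\Omega\phi_t\,\partial_t\rho_t$, which equals $\int_\Omega\rho_t|\nabla\phi_t|^2\le\int_\Omega|v_t|^2\,\ddd\mu_t$ after an integration by parts and Cauchy--Schwarz (using $v_t\cdot n=0$ on $\partial\Omega$ and $\dive(\rho_t(\nabla\phi_t-v_t))=0$). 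For the majorant I would test $\cA_\cT^*$ with the ``geometric flux'' $\mathcal J_{KL}(t):=\int_{(K|L)}\rho_t v_t\cdot n_{KL}\,\ddd\Hm^{d-1}$, which by Gauss's theorem satisfies the discrete continuity equation $\dot m_t(K)=-\sum_L\mathcal J_{KL}(t)$ exactly, so $\cA_\cT^*(m_t,\dot m_t)\le\tfrac12\sum_{K,L}\mathcal J_{KL}(t)^2/w_{KL}^{(t)}$; a Cauchy--Schwarz estimate on each interface, together with $\theta_{KL}\ge\inf_\Omega\rho_t$, $d_{KL}\le 2[\cT]$ and the counting bound $\sum_{K\sim L}|(K|L)|\le C_\zeta[\cT]^{-1}$, crudely dominates this by $C(t)\,\|\rho_t v_t\|_{L^\infty}^2$, uniformly in $\cT$ and integrable in $t$.

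\textbf{Conclusion and main difficulty.} Dominated convergence then yields $\int_0^1\cA_\cT^*(m_t,\dot m_t)\,\ddd t\to\int_0^1\int_\Omega\rho_t|\nabla\phi_t|^2\,\ddd t\le\bW(\mu_0,\mu_1)^2+\delta$, hence $\limsup_{[\cT]\to0}\cW_\cT(P_\cT\mu_0,P_\cT\mu_1)^2\le\bW(\mu_0,\mu_1)^2+\delta$; letting $\delta\downarrow0$ and undoing the approximation finishes the proof. I expect the one genuinely substantial step to be the pointwise convergence $\cA_\cT^*(m_t,\dot m_t)\to\bA^*(\mu_t,\dot\mu_t)$: it rests on recognising the optimal discrete potential as the solution of a linear finite-volume scheme and on the convergence of that scheme. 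Crucially, one must \emph{not} estimate $\cA_\cT^*$ by feeding the nodal interpolation $\phi_t(x_K)$ into $\cA_\cT(m_t,\cdot)$ --- on anisotropic meshes this overshoots $\bA^*$ --- but should exploit the ``homogenisation'' already built into the discrete elliptic solution; this is exactly why the upper bound needs no isotropy hypothesis. The remaining points (reduction to smooth positive data, the regularity/endpoint adjustments of the continuous competitor, and the crude a priori bound) are routine.
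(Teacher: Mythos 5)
Your overall strategy is the paper's: project a regularised, near-optimal continuous curve onto the mesh via $P_\cT$, and control the discrete dual action at each time by its continuous counterpart, with the sharp constant coming from the fact that the optimal discrete potential solves the two-point-flux finite-volume scheme for $-\dive(\rho_t\nabla\phi_t)=\partial_t\rho_t$. The paper packages exactly this comparison as Proposition \ref{prop:cont-and-disc-action}, proved via a weighted $H^1$ finite-volume error estimate (Proposition \ref{prop:error-est}) and a discrete weighted Poincar\'e inequality (Proposition \ref{prop:Poincare-discrete}); because that comparison is quantitative, $|\cA^*_\cT-\bA^*|\le C[\cT]\|w\|_{L^2}^2$ on $\cP_\delta(\bOmega)$, the paper needs neither your flux majorant nor dominated convergence. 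Where the paper also differs organisationally: it regularises with the Neumann heat semigroup (Lemma \ref{lem:P-delta}) and repairs the endpoints inside the discrete metric via the a priori bounds $\cW_\cT(\delta_K,\delta_L)\le Cd_{KL}$ and $\cW_\cT(\emm_0,\emm_1)\le C(\bW(\cQ_\cT\emm_0,\cQ_\cT\emm_1)+[\cT])$ (Lemmas \ref{lem:neigbour-bound}--\ref{lem:distconttodisc}), which is also the estimate your reduction to smooth positive data actually requires (your flux majorant needs densities bounded below, so it cannot serve that purpose).

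Two steps you treat as routine are where the real work sits. First, ``mollification is translation-invariant so it does not increase the action'' is not available as stated on the bounded domain $\Omega$: convolution neither preserves $\bOmega$ nor the no-flux condition, and uniform lower bounds on the mollified densities do not come for free. The standard repairs are a dilation-towards-an-interior-point before mollifying (using convexity), or, as the paper does, the Neumann heat flow together with the Bakry--\'Emery contraction $\bA^*(H_a\mu,H_aw)\le\bA^*(\mu,w)$ and the heat-kernel lower bound (Lemmas \ref{lem:Wass-facts} and \ref{lem:P-delta}) -- this is precisely where convexity of $\Omega$ enters. Second, the $L^2$-convergence of the finite-volume scheme you invoke is not literally off-the-shelf: the interface coefficient here is an arbitrary admissible mean $\theta_{KL}$ of adjacent cell averages, the relevant norm is the $\hrho$-weighted discrete $H^1$ norm, and for $d\ge4$ one must sample $\phi$ by ball averages rather than point values; proving this adapted convergence (using exactly the mechanism you name, that Lipschitz continuity and the lower bound force $\theta_{KL}(\rho(K),\rho(L))$ to be within $O([\cT])$ of $\rho_t$ on each interface) is the content of the paper's Section \ref{sec:finite-volume}. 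So the proposal is correct in outline and identifies the key point (do not plug nodal interpolants of $\phi_t$ into $\cA_\cT$; use the discrete elliptic solution), but these two steps need to be carried out, not cited.
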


In view of the Gromov--Hausdorff convergence results from \cite{GiMa13} and \cite{Tril17}, one might expect that a corresponding asymptotic lower bound for $\cW_\cT$ in terms of $\bW$ holds as well.
However, convergence can fail, even in one dimension, as the following example shows.

\begin{example*}[A; a one-dimensional period mesh]
For $N \in \N$ and $r \in (0,\frac12)$, we consider a one-dimensional discretisation $\cT_{r,N}$ of the unit interval $[0,1]$, obtained by alternatingly concatenating intervals of length $\frac{r}{N}$ and $\frac{1-r}{N}$.
\begin{figure}[h]
    \begin{center}
        \includegraphics[scale=1.0]{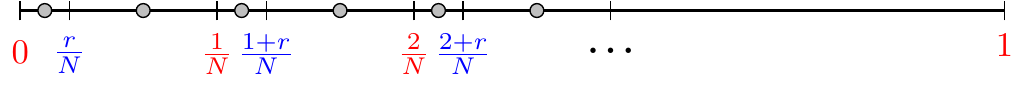}
    \end{center}
    \caption{The mesh $\cT_{r,N}$ on the interval $[0,1]$.}
\label{fig:1D}
\end{figure}
\end{example*}

The next result shows that $\cW_\cT$ can \emph{not} be bounded from below by $\bW$ as $[\cT] \to 0$.

\begin{proposition}[Counterexample to the lower bound for $\cW_\cT$]\label{prop:counterexamples}
Let $\Omega = (0,1)$ and let $\cT_{r,N}$ be as in Example (A)
above. Fix an admissible symmetric mean $\theta$, and consider the transport metric $\cW_\cT$ defined by setting $\theta_{KL} = \theta$ for all $K, L \in \cT$.
Then there exist probability measures $\mu_0, \mu_1 \in \cP(\bOmega)$ such that, for each fixed $r \in (0,\frac12)$,
\begin{align*}
    \limsup_{N \to \infty} \cW_{\cT_{r,N}}(P_{\cT_{r,N}} \mu_0, P_{\cT_{r,N}}\mu_1) < \bW(\mu_0,\mu_1) \ .
\end{align*}
\end{proposition}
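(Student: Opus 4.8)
The strategy is to exploit the inhomogeneity of the mesh $\cT_{r,N}$: because the cells alternate between two different lengths, the transmission coefficients $S_{KL}$ and the cell volumes are not uniform, so the effective "mobility" seen by the discrete Benamou--Brenier functional is governed not by the arithmetic structure of $\Omega$ but by some harmonic-type average of the two cell geometries, which is strictly smaller. Concretely, I would first compute, for the periodic mesh with period consisting of one short cell (length $\tfrac rN$) and one long cell (length $\tfrac{1-r}{N}$), the homogenised limit of the discrete action functional $\cA_{\cT}$ as $N\to\infty$. The homogenisation is classical for one-dimensional discrete energies: testing $\psi(K)$ against a smooth potential $\phi$, the relevant discrete Dirichlet form $\tfrac12\sum S_{KL}\theta_{KL}(\dots)(\psi(K)-\psi(L))^2$ converges to $\int_0^1 c_{\mathrm{eff}}(r)\,|\phi'|^2\,\rho\,dx$ for some effective constant $c_{\mathrm{eff}}(r)$ determined by a cell problem that, in one dimension, reduces to placing "conductances" $S_{KL}$ in series over one period. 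The key point is that $c_{\mathrm{eff}}(r) < 1$ for every $r\in(0,\tfrac12)$, with equality only in the degenerate uniform case $r=\tfrac12$: the harmonic mean of the two alternating transmission strengths is strictly below the value $1$ that the uniform mesh would give.

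**Key steps, in order.** (1) Fix a convenient pair of target measures, e.g.\ $\mu_0$ and $\mu_1$ absolutely continuous with smooth positive densities (say $\mu_0 = \Leb|_{(0,1)}$ and $\mu_1$ a smooth perturbation), so that the optimal curve for $\bW$ is smooth and bounded away from $0$, and so that $P_{\cT}\mu_i$ is, up to $O([\cT])$, the uniform projection of these densities. (2) Using Theorem~\ref{thm:upper-bound-intro}, we already have $\limsup_N \cW_{\cT_{r,N}} \le \bW(\mu_0,\mu_1)$; the task is to show the inequality is strict, so it suffices to produce \emph{any} competitor curve in the discrete problem whose cost is asymptotically strictly below $\bW(\mu_0,\mu_1)$ --- or, more cleanly, to pass to the limit in the discrete metric and identify the $\Gamma$-limit of $\cA^*_{\cT_{r,N}}$. (3) Carry out the one-dimensional two-scale/homogenisation computation for $\cA_{\cT_{r,N}}(\emm,\psi)$: for $\psi$ that is the restriction of a smooth $\phi$, expand $\psi(K)-\psi(L) = \phi'(x)\,d_{KL} + o(d_{KL})$ and sum, grouping terms over one period; the result is $\int_0^1 \lambda(r)\,|\phi'(x)|^2\,d\mu(x) + o(1)$ where $\lambda(r)$ is the period-averaged coefficient. (4) Dualise: this identifies $\lim_N \cA^*_{\cT_{r,N}}(\emm,\cdot)$ with $\tfrac1{\lambda(r)}\bA^*(\mu,\cdot)$, hence $\lim_N \cW_{\cT_{r,N}}(P_\cT\mu_0,P_\cT\mu_1) = \tfrac1{\sqrt{\lambda(r)}}\,\bW(\mu_0,\mu_1)$ along this mesh for these measures. (5) Check $\lambda(r) > 1$ (equivalently, the effective conductance is $<1$) for all $r\neq \tfrac12$ --- this is an elementary convexity/Jensen computation once the formula for $\lambda(r)$ is in hand --- which gives the strict inequality $\tfrac1{\sqrt{\lambda(r)}} < 1$ and completes the proof.

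**Main obstacle.** The delicate part is the $\liminf$ direction of the homogenisation, i.e.\ showing that no discrete competitor curve can beat the homogenised cost --- the upper bound (constructing a good competitor from the continuous geodesic) is comparatively routine, but the matching lower bound requires controlling arbitrary discrete momenta, not just those coming from smooth potentials. The clean way to handle this is to argue at the level of the action functionals: establish $\Gamma$-convergence (or at least the relevant $\liminf$ inequality) of $\int_0^1 \cA^*_{\cT_{r,N}}(\emm^N_t,\dot\emm^N_t)\,dt$ to $\tfrac1{\lambda(r)}\int_0^1 \bA^*(\mu_t,\dot\mu_t)\,dt$ along converging curves, using lower semicontinuity of the dual action and the fact that in the Benamou--Brenier formulation the momentum variable $w=\rho v$ is the natural object to pass to the limit in (it is a measure with controlled mass, hence weak-$*$ precompact). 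One subtlety to watch is degeneracy of $\theta_{KL}$ near zero density; choosing $\mu_0,\mu_1$ with densities bounded below, together with the asymptotic upper bound guaranteeing the competing curves stay in a region of bounded cost, keeps all densities uniformly positive along the relevant curves and sidesteps this. A reader should also note that strictness for \emph{all} $r\in(0,\tfrac12)$ hinges on the periodic mesh never being uniform in that range, which is exactly why the construction with two distinct cell lengths is used.
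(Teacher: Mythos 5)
There is a genuine gap, and it sits at the heart of your plan: for the mesh $\cT_{r,N}$ the ``conductance'' homogenisation you propose gives effective coefficient $1$, not something $<1$. In one dimension $|(K|L)|=1$, and since the points $x_K$ are the midpoints of alternating cells of lengths $\tfrac rN$ and $\tfrac{1-r}N$, every neighbouring pair satisfies $d_{KL}=\tfrac1{2N}$; hence $S_{KL}=2N$ is the \emph{same} on every edge, and the series/cell-problem computation with the density frozen at its local average yields exactly the continuous Dirichlet form, i.e. $\lambda(r)=1$ for all $r$. (The quantities that alternate are the rates $R(K,L)=|(K|L)|/(|K|d_{KL})$, but this heterogeneity is carried by the cell volumes, which enter only through the densities $\rho(K)=m(K)/|K|$, and for smooth data $\theta(\rho(K),\rho(L))\approx u(x)$ on every edge.) This is consistent with Proposition \ref{prop:cont-and-disc-action}: for $\mu_t\in\cP_\delta(\bOmega)$ the projected curve satisfies $|\cA^*_\cT(P_\cT\mu_t,P_\cT\dot\mu_t)-\bA^*(\mu_t,\dot\mu_t)|\le C/N$, so the discrete cost of the projected geodesic converges \emph{exactly} to $\bW(\mu_0,\mu_1)^2$, and your steps (3)--(5) cannot produce a strict inequality. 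Indeed, if your homogenised coefficient were $\ne 1$ it would contradict the convergence of the discrete heat flow on this mesh and the evolutionary $\Gamma$-convergence result of \cite{DiLi15}, both of which hold here.

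The strict gap comes from a mechanism your proposal never uses: competitor curves whose discrete densities \emph{oscillate at the mesh scale}, exploiting the mismatch between the symmetric mean and the geometry. The paper's proof perturbs $m_t=P_\cT\mu_t$ by adding $(1-r)\eta$ to the density in every small cell and subtracting $r\eta$ in every large cell. Since $\theta$ is symmetric and $1$-homogeneous, $\partial_1\theta(1,1)=\partial_2\theta(1,1)=\tfrac12$, so by concavity each interface mean increases by at least $\tfrac12\eta(\tfrac12-r)$; for the \emph{same} momentum field $V$ (admissible because the perturbation is time-independent, so $\dot m_t$ is unchanged) the kinetic cost $\cK(m^\eta,V)$ drops by a factor $1-c\eta(\tfrac12-r)$, and the perturbed endpoints are reconnected to $P_\cT\mu_i$ at cost $O(1/N)$ via Lemma \ref{lem:distconttodisc}. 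If you insist on a homogenisation viewpoint, the correct cell problem must optimise over the microscopic density profile within one period (a relaxation of the mobility), and it is precisely this optimisation --- which degenerates when the weight of the mean equals $r$, as noted at the end of Section \ref{sec:1D-counter} --- that yields an effective mobility strictly larger than $\theta$ evaluated at the averaged density; the series-conductance formula misses it entirely. Your step (1) (smooth endpoints with densities bounded below) and step (2) (only an upper bound is needed) are fine and match the paper, but the construction of the cheap competitor must incorporate the oscillation, not the projected curve alone.
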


We stress that the discrete heat flow converges to the continuous heat flow in the setting of this counterexample.

The idea behind the one-dimensional counterexample is that an ``unreasonably cheap'' discrete transport can be constructed by introducing microscopic oscillations in the discrete density in such a way that most of the mass is assigned to small cells. We refer to Section \ref{sec:counterexamples} for more details.

\medskip

In view of Proposition \ref{prop:counterexamples} it is natural to look for additional geometric assumptions on the mesh under which an asymptotic lower bound for $\cW_\cT$ in terms of $\bW$ can be obtained.
A \emph{weight function} on $\cT$ is a mapping $\bflambda : \cT \times \cT \to [0,1]$ satisfying $\lambda_{KL} + \lambda_{LK} = 1$ for all $K, L \in \cT$.
The following definition plays a central role in our investigations.

\begin{definition}[Asymptotic isotropy]
\label{def:asymp-intro}
A  family of admissible meshes $\{\cT\}$ is said to be \emph{asymptotically isotropic} with weight functions $\{\bflambda^\cT\}$ if, for all $K\in \cT$,
\begin{equation}\begin{aligned}\label{eq:asymptotic balance-intro}
 & \sum_{L} \lambda_{KL}^\cT 
 	\frac{| (K|L) |}{d_{KL}} (x_K - x_L) \otimes (x_K - x_L)
	 \leq |K| \big( 1  + \eta_{\cT}(K) \big)I_d \ ,
\end{aligned}\end{equation}
where $\sup_{K \in \cT} |\eta_{\cT}(K)| \to 0$ as $[\cT] \to 0$.
\end{definition}

The asymptotic isotropy condition puts a strong geometric constraint on a family of meshes, although it will be shown in Section \ref{sec:counterexamples} that isotropy always holds on average on a macroscopic scale.

\begin{remark*}[Centre-of-mass condition]
In examples it is often convenient to verify the asymptotic isotropy condition by checking the following stronger condition: we say that $\cT$ satifies the \emph{centre-of-mass condition} with weight function $\bflambda$ if 
\begin{align*}
	 \dashint_{(K|L)} x \dd S = \lambda_{LK} x_K + \lambda_{KL} x_L 
\end{align*}
for any pair of neighbouring cells $K, L \in \cT$.
As $\lambda_{KL} + \lambda_{LK} = 1$, this condition asserts that the centre of mass of the interface $(K|L)$ lies on the line segment connecting $x_K$ and $x_L$.
In the literature on finite volume methods this property is known as \emph{superadmissibility} of the mesh; see \cite[Lemma 2.1]{EyGaHe10}.

For all interior cells $K \in \cT$, we claim that the centre-of-mass condition yields the asymptotic isotropy condition \eqref{eq:asymptotic balance-intro} with equality and $\eta_{\cT}(K) = 0$. To see this, let $n = n(x)$ be the outward unit normal on $\partial K$, and note that
\begin{align}\label{eq:Gauss-identity}
    \int_{\partial K} (x - x_K) \otimes  n \dd S
    = |K| I_{d} \ ,
\end{align}
as can be shown by applying Gauss's Theorem to the vector fields $\Phi^{ij} : \R^d \to \R^d$ given by $\Phi^{ij}(x) := \ip{x - x_K, e_j} e_i$ for $1 \leq i, j \leq d$.
For all $L \sim K$, the centre-of-mass condition yields
\begin{align}
    \int_{(K|L)} (x - x_K) \otimes  n \dd S
    & = \int_{(K|L)} \lambda_{KL} (x_L - x_K) \otimes  n \dd S \nonumber
    \\& = \lambda_{KL} \frac{ | (K|L) |}{d_{KL}} (x_K - x_L) \otimes (x_K - x_L)\label{rhs com} \ .
\end{align}
The claim follows by summation over $L$.

Note that for boundary cells, the centre-of-mass condition does not imply asymptotic isotropy in general. If $\Omega$ is polygonal and $\cT$ can be extended to a global mesh satisfying the centre-of-mass condition, then our claim holds also for boundary cells by positive semi-definiteness of \eqref{rhs com}. This is the case in several examples below.
\end{remark*}

\begin{figure}[h]
	\begin{center}
        \includegraphics[scale=.195]{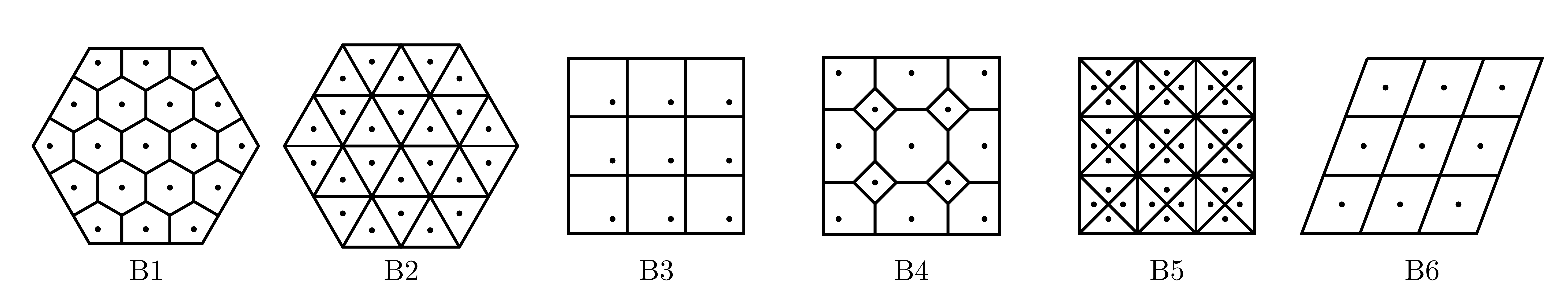}
    \end{center}
    \caption{
Five admissible meshes (B1)--(B5) and a non-admissible mesh (B6).
            }\label{fig:2D}
\end{figure}

\begin{example*}[A; revisited]
Clearly, the centre-of-mass condition holds in every one-di\-men\-sional mesh for an appropriate weight function $\bflambda$. For the one-dimensional periodic lattice $\cT_{r,N}$ in Example (A), it is immediately checked that $\lambda_{KL} = r$ if $K$ is small, and $\lambda_{KL} = 1-r$ if $K$ is large.
\end{example*}

\begin{example*}[B; Two dimensional meshes]\label{ex:com-domains}

The centre-of-mass condition holds for the regular hexagonal lattice (B1) and the  equilateral triangular lattice (B2) in dimension $2$, if the points $x_K$ are placed at the centre of mass of the cells. In these examples we have $\lambda_{KL} = \frac12$ for all $K$ and $L$. The hexagonal lattice (B1) is truncated in such a way that all interior interface have equal size. 
More generally, it is immediate to see that the centre-of-mass condition holds for any tiling of the plane by convex regular polygons; cf. \cite[Chapter 2]{GrSh87} for many examples.

The centre-of-mass condition is clearly satisfied for rectangular grids in any dimension, if the points $x_K$ are placed at the centre of the cells. The weights $\lambda_{KL}$ will depend on the size of the rectangles. It is possible to put the points $x_K$ at different positions, as is done in (B3). In that case, the centre-of-mass condition is violated, but the isotropy condition holds.

Another example for which the centre-of-mass condition holds is shown in (B4). The value of the weights $\lambda_{KL}$ is determined by the length ratio of the edges.

The centre-of-mass condition fails for the lattice in (B5). Indeed, to satisfy this condition, the points $x_K$ would have to be placed at the boundary of the cells, in a way that violates our assumption that the points $x_K$ are all distinct. This would lead to infinite transmission coefficients $S_{KL}$. The isotropy condition fails to hold as well, as will be discussed in Section 
\ref{sec:counterexamples}.

In each of the examples (B1)--(B4) it is readily checked that the isotropy condition also holds for boundary cells, by introducing suitable fake points outside of the domain.

The mesh in (B6) is not admissible, as the line segments connecting the points $x_K$ are not orthogonal to the cell interfaces. 
\end{example*}

Our next main result provides an asymptotic lower bound for $\cW_\cT$ in terms of $\bW$ under the assumption that the meshes $\{\cT\}$ satisfy the asymptotic isotropy condition, and the means $\theta_{KL}^\cT$ are carefully chosen to reflect this condition.

A mean function $(\theta_{KL})$ is said to be \emph{compatible} with a weight function $(\lambda_{KL})$ if, for any $K, L \in \cT$ and all $a , b \geq 0$, we have
\begin{align*}
	\theta_{KL}(a,b) \leq \lambda_{KL} a + \lambda_{LK} b \ , 
\end{align*}
or equivalently,  $\partial_1 \theta_{KL}(1,1) = \lambda_{KL}$ for any $K, L$; cf. Section \ref{sec:discrete-transport} for a more extensive discussion.

\begin{remark*}\label{rem:examples-means}
In the special case that $\lambda_{KL} = \frac12$, the compatibility condition holds for any admissible mean that is symmetric (i.e., $\theta_{KL}(s,t) = \theta_{KL}(t,s)$ for all $K, L$ and $s, t \geq 0$).
\end{remark*}

\begin{theorem}[Asymptotic lower bound for $\cW_\cT$]\label{thm:lower-bound-intro}
Fix $\zeta \in (0,1]$, and let $\mu_0, \mu_1 \in \cP(\bOmega)$. 
Let $\{\cT\}$ be a family of $\zeta$-regular meshes satisfying the asymptotic isotropy condition with weights $(\lambda_{KL}^\cT)_{K,L \in \cT}$, and let $(\theta_{KL}^\cT)_{K,L \in \cT}$ be admissible means that are compatible with $(\lambda_{KL}^\cT)_{K,L \in \cT}$. Then:
\begin{align*}
	\bW(\mu_0,\mu_1) \leq
		\liminf_{[\cT] \to 0} \cW_\cT(P_\cT \mu_0 , P_\cT \mu_1) \ .
\end{align*}
\end{theorem}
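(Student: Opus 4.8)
The plan is to establish the lower bound by a compactness-and-liminf argument: given an arbitrary sequence of meshes $\cT$ with $[\cT] \to 0$ and (up to subsequence) nearly optimal discrete curves $(\emm_t^\cT)_{t \in [0,1]}$ connecting $P_\cT\mu_0$ and $P_\cT\mu_1$ with $\int_0^1 \cA^*_\cT(\emm_t^\cT,\dot\emm_t^\cT)\dd t$ converging to $\liminf_{[\cT]\to 0}\cW_\cT(P_\cT\mu_0,P_\cT\mu_1)^2$, I want to extract a limiting curve $(\mu_t)_{t\in[0,1]}$ in $\cP(\bOmega)$ connecting $\mu_0$ and $\mu_1$, and show that the continuous action $\int_0^1 \bA^*(\mu_t,\dot\mu_t)\dd t$ is no larger than the limit of the discrete actions. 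This is the standard lower-semicontinuity route for metric-space/Benamou--Brenier convergence, but the discrete-to-continuous passage is where all the work lies.

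First I would reformulate the discrete action in the momentum/continuity-equation form: $\cA_\cT^*(\emm,\sigma)$ equals the infimum of $\frac12\sum_{K,L} \frac{J_{KL}^2}{S_{KL}\theta_{KL}(\emm(K)/|K|,\emm(L)/|L|)}$ over discrete fluxes $J$ satisfying the discrete continuity equation $\sigma(K) = \sum_L J_{KL}$. From a nearly optimal discrete solution $(\emm^\cT, J^\cT)$ I then build continuous objects: let $\mu_t^\cT := \sum_K \frac{\emm_t^\cT(K)}{|K|}\mathds{1}_K \Leb$ be the piecewise-constant density, and construct a vector measure $V_t^\cT$ on $\bOmega$ supported near the interfaces, roughly $V_t^\cT \approx \sum_{(K|L)} J_{KL}^\cT \frac{x_K-x_L}{d_{KL}}\otimes(\text{measure on }(K|L))$, engineered so that $(\mu_t^\cT, V_t^\cT)$ satisfies the continuous continuity equation in a weak sense (the reconstruction should be done so that $\partial_t \mu_t^\cT + \dive V_t^\cT = 0$ holds against test functions up to errors vanishing with $[\cT]$; a clean way is to spread each discrete flux $J_{KL}$ along the segment from $x_K$ to $x_L$ as in the classical construction for finite-volume convergence). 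A standard a priori bound on the discrete action gives uniform total mass of $|V_t^\cT|$ in $L^1_t$, so by compactness $\mu_\cdot^\cT \to \mu_\cdot$ narrowly for a.e. $t$ (and $\mu_0 = \mu_0$, $\mu_1 = \mu_1$ since $P_\cT\mu_i \to \mu_i$) and $V_\cdot^\cT \weakstar V_\cdot$ as measures on $[0,1]\times\bOmega$, with $(\mu,V)$ solving the continuity equation.

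The crucial step is the liminf inequality for the action, and this is where compatibility and asymptotic isotropy enter. Since $\theta_{KL}$ is $1$-homogeneous, concave, normalised and compatible with $\lambda_{KL}$, we have the pointwise bound $\theta_{KL}(s,t) \le \lambda_{KL}s + \lambda_{LK}t$; substituting this into the quadratic form and using convexity of $(J,\rho)\mapsto J^2/\rho$ together with Jensen's inequality gives a lower bound of the discrete action by a quadratic form against the \emph{affine} reconstruction of the density, and it is precisely at the interfaces that the tensor $\sum_L \lambda_{KL}\frac{|(K|L)|}{d_{KL}}(x_K-x_L)\otimes(x_K-x_L)$ appears when one bounds $\sum_{(K|L)} J_{KL}^2/(\cdots)$ from below by $\int |v|^2 \dd\mu$ for the reconstructed velocity $v$; asymptotic isotropy says this tensor is $\le |K|(1+\eta_\cT(K))I_d$, which is exactly what converts the anisotropic discrete energy into the isotropic continuous Benamou--Brenier energy $\int_\bOmega|v|^2\dd\mu_t$ in the limit, up to the vanishing error $\sup_K|\eta_\cT(K)|$. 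Combining this with the lower semicontinuity of $(\mu,V)\mapsto \int_0^1\bA^*(\mu_t,\dot\mu_t)\dd t$ under narrow/weak-$\ast$ convergence (which is the continuous statement $\bA^*(\mu,V) = \int |\frac{dV}{d\mu}|^2\dd\mu$ being jointly convex and l.s.c.) closes the argument.

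The main obstacle I anticipate is the density reconstruction and matching of the discrete and continuous continuity equations with errors that are uniform in the mesh and controlled by $[\cT]$: one must handle small cells (where $\emm(K)/|K|$ may be large, exactly the mechanism behind the counterexample in Proposition \ref{prop:counterexamples}) and ensure the $\zeta$-regularity is used to bound geometric distortion, while making sure the affine interpolation of the density used in the Jensen step is genuinely close to $\mu_t^\cT$ in a sense strong enough to survive the weak limit; the asymptotic isotropy inequality is the key structural input that makes the anisotropy-to-isotropy passage work, but extracting $\int|v|^2\dd\mu$ cleanly from the interface sum — rather than a weaker anisotropic quantity — requires care in choosing how fluxes are distributed spatially and how the test against $C^1$ functions is organised.
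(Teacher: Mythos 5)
Your strategy is viable, but it is genuinely different from the paper's proof, so let me compare. The paper never reconstructs a continuous momentum field: compactness of near-optimal discrete curves is obtained by regularising at the \emph{continuous} level with the heat semigroup (Lemma \ref{lem:a-priori}, with the crucial choice $a=[\cT]$, gives $\bA^*(H_{[\cT]}\cQ_\cT\emm,H_{[\cT]}\cQ_\cT\dot\emm)\leq C\,\cA_\cT^*(\emm,\dot\emm)$, whence $\bW$-equicontinuity and Arzel\`a--Ascoli), and the key liminf is proved by duality with discretised test functions: for $\psi_\cT(K)=\phi(x_K)$, $\phi\in\cC^1(\bOmega)$, compatibility ($\theta_{KL}(a,b)\leq\lambda_{KL}a+\lambda_{LK}b$) and asymptotic isotropy yield $\limsup_{[\cT]\to 0}\cA_\cT(\emm_\cT,\psi_\cT)\leq\bA(\mu,\phi)$, and then $\cA_\cT^*(\emm_\cT,\sigma_\cT)\geq 2\ip{\psi_\cT,\sigma_\cT}-\cA_\cT(\emm_\cT,\psi_\cT)$ gives $\bA^*(\mu,w)\leq\liminf\cA_\cT^*(\emm_\cT,\sigma_\cT)$ (Proposition \ref{prop:action-bounds}); a time-mollification (Lemma \ref{lem:smoothing}) is inserted so that $\cQ_\cT\dot\emm_t^{\cT,\delta}$ converges to a signed \emph{measure}, which the liminf statement requires (Remark \ref{rem:derivative-measure}). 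Your route --- flux reconstruction, weak-$\ast$ limits of space-time momentum measures, and lower semicontinuity of the Benamou--Brenier functional --- is closer to \cite{GiMa13}; it buys you that no time-regularisation is needed (the space-time functional is l.s.c.\ without requiring $\dot\mu_t$ to be a measure for each $t$), at the price of the reconstruction itself. Two points there need real care. First, compactness of the curves: your $L^1_t$ bound on the total momentum does follow from $\theta_{KL}\leq\lambda_{KL}\rho(K)+\lambda_{LK}\rho(L)$, $\zeta$-regularity and Cauchy--Schwarz, but this step replaces the paper's Lemma \ref{lem:a-priori} and must be carried out. Second, and more importantly, the ``convexity of $J^2/\rho$ plus Jensen'' step only produces the continuous energy if the spatial weights used to spread the flux $J_{KL}$ over the two adjacent cells are chosen proportional to $\lambda_{KL},\lambda_{LK}$; only then does testing against a smooth vector field $\Phi$ (or applying Jensen with exactly those weights) produce the per-cell matrix $\sum_L\lambda_{KL}\tfrac{|(K|L)|}{d_{KL}}(x_K-x_L)\otimes(x_K-x_L)$, which asymptotic isotropy bounds by $|K|(1+\eta_\cT(K))I_d$ and hence by $(1+o(1))\int|\Phi|^2\dd\mu$. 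With a different (say symmetric) distribution of the fluxes the wrong tensor appears and the argument breaks precisely on the anisotropic meshes of Section \ref{sec:counterexamples}, so this choice is not a cosmetic detail but the point where compatibility and isotropy are actually consumed --- exactly as in the paper's Proposition \ref{prop:action-bounds}, just organised on the primal rather than the dual side. Finally, note that the paper proves a version of the lower bound that is uniform in $\mu_0,\mu_1$ (Theorem \ref{thm:lower-bound}, via a contradiction/compactness argument), which is what feeds into Gromov--Hausdorff convergence; your argument as stated gives the fixed-endpoint statement, and the same compactness wrapper would be needed for uniformity.
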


\begin{remark*}
As discussed above, the assumptions of the theorem are satisfied in the Examples (A) and (B1)--(B4).
\end{remark*}

In Section \ref{sec:G-H} we will prove slightly stronger versions of Theorems \ref{thm:upper-bound-intro} and \ref{thm:lower-bound-intro}, that provide uniform error bounds in terms of $\mu_0$ and $\mu_1$. As a consequence, we obtain the following result.

\begin{corollary}[Gromov--Hausdorff convergence of $\cW_\cT$]\label{cor:GH-conv}
Under the conditions of Theorem \ref{thm:lower-bound-intro}, we have convergence of metric space in the sense of Gromov--Hausdorff:
\begin{align*}
	(\cP(\cT), \cW_\cT) \to (\cP(\bOmega), \bW) \quad \text{ as } [\cT] \to 0 \ .
\end{align*}
\end{corollary}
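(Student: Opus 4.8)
The plan is to derive Corollary \ref{cor:GH-conv} from the quantitative refinements of Theorems \ref{thm:upper-bound-intro} and \ref{thm:lower-bound-intro}. Recall that Gromov--Hausdorff convergence of compact metric spaces can be established by exhibiting, for each mesh $\cT$, a map $\Phi_\cT : \cP(\cT) \to \cP(\bOmega)$ together with the canonical projection $P_\cT : \cP(\bOmega) \to \cP(\cT)$, and showing that these are \emph{asymptotic $\eps_\cT$-isometries} with $\eps_\cT \to 0$ as $[\cT] \to 0$; that is, $\Phi_\cT$ is an $\eps_\cT$-net (every point of $\cP(\bOmega)$ lies within $\eps_\cT$ of the image), and the metric is distorted by at most $\eps_\cT$:
\begin{align*}
	\big| \cW_\cT(\emm_0, \emm_1) - \bW(\Phi_\cT \emm_0, \Phi_\cT \emm_1) \big| \leq \eps_\cT
	\qquad \text{for all } \emm_0, \emm_1 \in \cP(\cT) \ .
\end{align*}
The natural candidate for $\Phi_\cT$ is an embedding $Q_\cT : \cP(\cT) \to \cP(\bOmega)$ that distributes the mass $\emm(K)$ over the cell $K$ in a fixed way (e.g.\ proportional to Lebesgue measure on $K$), so that $P_\cT \circ Q_\cT = \mathrm{id}$ on $\cP(\cT)$.

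First I would record the two uniform estimates promised in Section \ref{sec:G-H}: an upper bound of the form $\cW_\cT(P_\cT \mu_0, P_\cT \mu_1) \leq \bW(\mu_0,\mu_1) + \delta_\cT$ and a lower bound $\bW(\mu_0,\mu_1) \leq \cW_\cT(P_\cT \mu_0, P_\cT \mu_1) + \delta_\cT'$, where the errors $\delta_\cT, \delta_\cT' \to 0$ \emph{uniformly} in $\mu_0, \mu_1 \in \cP(\bOmega)$ — this uniformity is exactly what upgrades the pointwise $\limsup$/$\liminf$ statements to a genuine approximate isometry, and it is available because $\cP(\bOmega)$ is compact in the weak topology and both metrics metrise it. Next I would note that for any $\emm \in \cP(\cT)$ one has $P_\cT Q_\cT \emm = \emm$, and that $\bW(Q_\cT \mu, \mu) \leq C[\cT]$ for every $\mu \in \cP(\bOmega)$ (transporting each cell's mass back to where it came from moves it by at most $\diam(K) \leq [\cT]$), so $Q_\cT$ is a $C[\cT]$-net in $(\cP(\bOmega), \bW)$. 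Combining: for $\emm_0, \emm_1 \in \cP(\cT)$, apply the uniform bounds with $\mu_i = Q_\cT \emm_i$ to get $|\cW_\cT(\emm_0,\emm_1) - \bW(Q_\cT \emm_0, Q_\cT \emm_1)| \leq \max(\delta_\cT, \delta_\cT')$, which together with the net property yields the required $\eps_\cT$-isometry with $\eps_\cT := \max(\delta_\cT, \delta_\cT', C[\cT]) \to 0$. The Gromov--Hausdorff convergence then follows from the standard criterion (e.g.\ \cite[Corollary 7.3.28]{BuBuIv01}).

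The main obstacle is really contained in the two uniform estimates rather than in the packaging: one must check that the error terms in the proofs of Theorems \ref{thm:upper-bound-intro} and \ref{thm:lower-bound-intro} depend on the data only through quantities that are controlled uniformly over $\cP(\bOmega)$ — for the upper bound, the recovery sequence construction (mollify, discretise, estimate the action defect) produces an error that can be bounded using only $[\cT]$, $\zeta$, $\diam(\Omega)$ and a modulus coming from the compactness of $\cP(\bOmega)$; for the lower bound, the $\Gamma$-liminf argument together with the asymptotic isotropy constant $\sup_K |\eta_\cT(K)|$ similarly gives an error uniform in the endpoints. Once this uniformity is in hand, everything else is soft. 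I would also remark that $\cP(\cT)$ is a compact finite-dimensional set and $(\cP(\bOmega), \bW)$ is compact since $\bOmega$ is bounded, so there are no issues of completeness or local compactness to worry about in applying the Gromov--Hausdorff criterion.
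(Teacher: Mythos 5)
Your proposal is correct and follows essentially the same route as the paper: both arguments reduce the corollary to the uniform error bounds of Theorems \ref{thm:upper-bound} and \ref{thm:lower-bound}, which immediately give the $\eps$-isometry property of a canonical map. The only (cosmetic) difference is that the paper takes $P_\cT:\cP(\bOmega)\to\cP(\cT)$ itself as the approximate isometry, for which surjectivity is trivial, whereas you work with $Q_\cT$ in the opposite direction and invoke $P_\cT Q_\cT=\mathrm{id}$ together with the consistency estimate $\bW(\mu,Q_\cT P_\cT\mu)\leq[\cT]$ of Lemma \ref{lem:almost-identity} (note your net estimate should read $\bW(Q_\cT P_\cT\mu,\mu)\leq[\cT]$, not $\bW(Q_\cT\mu,\mu)$) --- an equivalent packaging of the same criterion.
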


Another consequence is the following result on the behaviour of $\cW_\cT$-geodesics.  
Let $\cQ_\cT : \cP(\cT) \to \cP(\bOmega)$ be the natural embedding defined in \eqref{eq:embedding} below.

\begin{corollary}[Convergence of geodesics]
\label{cor:min}
Under the conditions of Theorem \ref{thm:lower-bound-intro},
let $\mu_i \in \cP(\bOmega)$ and $\emm_i^\cT \in \cP(\cT)$ be such that $\cQ_\cT\emm_i^\cT \weakly \mu_i$ as $[\cT] \to 0$ for $i = 0, 1$. 
Then: 
\begin{align*}
	\lim_{[\cT] \to 0} \cW_\cT(\emm_0^\cT , \emm_1^\cT) 
		= \bW(\mu_0,\mu_1) \ .
\end{align*}
Moreover, if $(m_t^\cT)_{t \in [0,1]}$ is a constant speed geodesic in $(\cP(\cT), \cW_\cT)$ and $Q_\cT m_t^\cT \weakly \mu_t \in \cP(\bOmega)$ as $[\cT] \to 0$ for every $t\in [0,1]$, then $(\mu_t)_{t \in [0,1]}$ is a constant speed geodesic in $(\cP(\bOmega),\bW)$. 
\end{corollary}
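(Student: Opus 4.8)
\medskip

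\noindent\textbf{Proof plan for Corollary \ref{cor:min}.}
The strategy is to first upgrade the convergence of distances from the canonical projections $P_\cT\mu_i$ to arbitrary discrete approximations $m_i^\cT$ with $Q_\cT m_i^\cT\weakly\mu_i$, and then to deduce the geodesic statement by passing to the limit in the constant-speed identity. Throughout I would use two elementary facts: since $\bOmega$ is compact, the metric $\bW$ metrises weak convergence on $\cP(\bOmega)$, so that $Q_\cT m^\cT\weakly\mu$ is equivalent to $\bW(Q_\cT m^\cT,\mu)\to 0$; and the projection and embedding satisfy $P_\cT Q_\cT=\mathrm{id}_{\cP(\cT)}$, which is immediate from the definition \eqref{eq:proj-embed} of $P_\cT$ and the definition \eqref{eq:embedding} of $Q_\cT$.

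\smallskip
\noindent\emph{Step 1 (convergence of distances).} The key claim I would establish is that $Q_\cT m^\cT\weakly\mu$ implies $\cW_\cT(m^\cT,P_\cT\mu)\to 0$. To see this, set $\nu^\cT:=Q_\cT m^\cT\in\cP(\bOmega)$ and invoke the quantitative, measure-uniform form of Theorem \ref{thm:upper-bound-intro} announced in Section \ref{sec:G-H}, i.e.\ an estimate of the shape $\cW_\cT(P_\cT\alpha,P_\cT\beta)\le\bW(\alpha,\beta)+\omega([\cT])$ valid for all $\alpha,\beta\in\cP(\bOmega)$ with an error $\omega([\cT])\to 0$ not depending on $\alpha,\beta$. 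Taking $\alpha=\nu^\cT$, $\beta=\mu$ and using $P_\cT\nu^\cT=P_\cT Q_\cT m^\cT=m^\cT$ gives
\begin{align*}
	\cW_\cT\big(m^\cT,P_\cT\mu\big)=\cW_\cT\big(P_\cT\nu^\cT,P_\cT\mu\big)\le\bW\big(\nu^\cT,\mu\big)+\omega([\cT])\longrightarrow 0 \ .
\end{align*}
Applying the claim with $i=0,1$, combining with the full convergence $\cW_\cT(P_\cT\mu_0,P_\cT\mu_1)\to\bW(\mu_0,\mu_1)$ furnished by Theorems \ref{thm:upper-bound-intro} and \ref{thm:lower-bound-intro}, and using the triangle inequality for $\cW_\cT$, I obtain $\cW_\cT(m_0^\cT,m_1^\cT)\to\bW(\mu_0,\mu_1)$, which is the first assertion.

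\smallskip
\noindent\emph{Step 2 (convergence of geodesics).} Let $(m_t^\cT)_{t\in[0,1]}$ be a constant-speed geodesic in $(\cP(\cT),\cW_\cT)$ with $Q_\cT m_t^\cT\weakly\mu_t$ for every $t$. Fixing $s,t\in[0,1]$ and applying Step 1 to the pairs with limits $(\mu_s,\mu_t)$ and $(\mu_0,\mu_1)$ — legitimate since $Q_\cT m_r^\cT\weakly\mu_r$ for $r\in\{0,s,t,1\}$ — yields $\cW_\cT(m_s^\cT,m_t^\cT)\to\bW(\mu_s,\mu_t)$ and $\cW_\cT(m_0^\cT,m_1^\cT)\to\bW(\mu_0,\mu_1)$. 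Passing to the limit in the identity $\cW_\cT(m_s^\cT,m_t^\cT)=|t-s|\,\cW_\cT(m_0^\cT,m_1^\cT)$ then gives $\bW(\mu_s,\mu_t)=|t-s|\,\bW(\mu_0,\mu_1)$ for all $s,t$. Since $\bW(\mu_0,\mu_1)\le\diam(\bOmega)<\infty$, this is precisely the statement that $t\mapsto\mu_t$ is a constant-speed geodesic in $(\cP(\bOmega),\bW)$ (in particular it is $\bW$-Lipschitz, hence a continuous curve), completing the proof.

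\smallskip
\noindent\emph{Main obstacle.} The argument above is soft once Step 1 is available, and its whole weight rests on the \emph{uniform} quantitative upper bound: because $\nu^\cT=Q_\cT m^\cT$ varies with the mesh, it is essential that the error $\omega([\cT])$ in Theorem \ref{thm:upper-bound-intro} be controlled uniformly over the endpoint measures — this is exactly the strengthening carried out in Section \ref{sec:G-H}, and it is the substantive ingredient. The two remaining points, namely $P_\cT Q_\cT=\mathrm{id}$ and the metrisation of weak convergence by $\bW$ on the compact set $\bOmega$, are routine.
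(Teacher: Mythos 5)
Your proposal is correct, and its overall skeleton is the same as the paper's: reduce the general endpoints $\emm_i^\cT$ to the canonical projections $P_\cT\mu_i$ via the triangle inequality, invoke the already established convergence $\cW_\cT(P_\cT\mu_0,P_\cT\mu_1)\to\bW(\mu_0,\mu_1)$ from Theorems \ref{thm:upper-bound-intro} and \ref{thm:lower-bound-intro}, and then pass to the limit in the constant-speed identity to get the geodesic statement. The one genuine difference is how you prove $\cW_\cT(\emm_i^\cT,P_\cT\mu_i)\to 0$: you use the identity $P_\cT\circ\cQ_\cT=\mathrm{id}$ to rewrite $\emm_i^\cT=P_\cT\cQ_\cT\emm_i^\cT$ and then apply the \emph{uniform} upper bound of Theorem \ref{thm:upper-bound} (the strengthened version of Theorem \ref{thm:upper-bound-intro}), whereas the paper gets the same conclusion more cheaply from the coarse a priori estimate of Lemma \ref{lem:distconttodisc} together with the consistency Lemma \ref{lem:almost-identity}, which gives $\cW_\cT(\emm_i^\cT,P_\cT\mu_i)\leq C\big(\bW(\cQ_\cT\emm_i^\cT,\mu_i)+[\cT]\big)$. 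Both routes are legitimate here, since Theorem \ref{thm:upper-bound} is proved before the corollary; your version leans on the heavier finite-volume/heat-flow machinery behind that theorem (and correctly identifies the uniformity in the endpoint measures as the crucial point), while the paper's version only needs the soft Section \ref{sec:a-priori} bounds, so it would survive even without the quantitative upper bound. Your Step 2 coincides with the paper's argument.
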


Finally, we will show that the asymptotic isotropy condition is essentially necessary in Theorem \ref{thm:lower-bound-intro} and Corollary \ref{cor:GH-conv}.  More precisely, the following result shows that the asymptotic lower bound for $\cW_\cT$ fails to hold if the asymptotic isotropy condition is locally violated at all scales.
In this sense the asymptotic isotropy condition for $\{\cT\}$ is essentially equivalent to Gromov--Hausdorff convergence of $\cW_\cT$ to $\bW$.
The result relies crucially on the smoothness of the mean functions $\theta_{KL}$. In particular, the result does not apply to $\theta_{KL}(a,b) = \min\{a,b\}$.

\begin{theorem}[Necessity of asymptotic isotropy]\label{thm:necessity-intro}
Fix $\zeta\in(0,1]$, and let $\{\cT\}$ be a family of $\zeta$-regular meshes on $\Omega$.
For each $\cT$, let $\bftheta^\cT$ be a mean function on $\cT$ satisfying the regularity condition 
\begin{align*}
	\sup_{\cT} \sup_{K,L\in \cT}
	 \| D^2\theta_{KL}^\cT\|_{L^\infty(B((1,1),s))} < \infty
\end{align*}
for some $s > 0$.
Consider the weight functions $\bflambda^\cT$ defined by $\lambda_{KL} = \partial_1 \theta_{KL}^\cT(1,1)$, and assume that there exists a non-empty open subset $U \subseteq \Omega$, a unit vector $v\in S^{d-1}$ and $c > 0$, such that
\begin{align}
	\label{eq:anisotropy everywhere-intro}
\liminf_{[\cT]\to 0} \sum_{K\in \cT, K \subseteq V} 
	\bigg(\bigg(\sum_L \lambda_{KL} |n_{KL} \cdot v|^2 |(K|L)| d_{KL}\bigg) 
		- |K| \bigg)_+ \geq c|V| \ ,
\end{align}
for any non-empty open subset $V \subseteq U$.
Then there exist $\mu_0, \mu_1 \in \cP(\bOmega)$ such that
\begin{align*}
	\limsup_{[\cT]\to 0} \cW_\cT(P_\cT\mu_0,P_\cT\mu_1) < \bW(\mu_0,\mu_1) \ .
\end{align*}
\end{theorem}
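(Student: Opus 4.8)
The idea is to realise quantitatively the heuristic behind the one--dimensional counterexample (Proposition~\ref{prop:counterexamples}): a microscopic oscillation of the discrete density makes transport in the direction $v$ artificially cheap, and under \eqref{eq:anisotropy everywhere-intro} this effect is present uniformly throughout $U$. Fix an open set $U_0$ with $\overline{U_0}\subseteq U$, a cutoff $\psi\in\cC^\infty_c(U)$ with $\psi\equiv1$ on $U_0$, and an auxiliary open set $U_0'$ with $\overline{U_0'}\subseteq\Omega\setminus\overline{\supp\psi}$. Put $\Phi(x):=\psi(x)\,(v\cdot x)$, so that $\nabla\Phi\equiv v$ on $U_0$ and $\nabla\Phi\equiv 0$ on $U_0'$. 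For small $\eps>0$ set $\mu_0:=\Leb|_\Omega/|\Omega|$ and $\mu_1^\eps:=(\mathrm{id}+\eps\nabla\Phi)_\#\mu_0\in\cP(\bOmega)$; then $\bW(\mu_0,\mu_1^\eps)^2=\eps^2 c_\Phi+O(\eps^3)$ with $c_\Phi:=\int_\Omega|\nabla\Phi|^2\dd\mu_0>0$. I would build a competitor curve in $\cP(\cT)$ from $P_\cT\mu_0$ to $P_\cT\mu_1^\eps$ in three stages: (i) starting from $P_\cT\mu_0$, develop a prescribed mesh--scale oscillation of the densities supported on $U_0\cup U_0'$; (ii) transport along the discretisation of $t\mapsto(\mathrm{id}+t\eps\nabla\Phi)_\#\mu_0$, carrying the oscillation along; (iii) dissolve the oscillation, ending at $P_\cT\mu_1^\eps$. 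Stages~(i) and~(iii) move only $O(1)$ mass between neighbouring cells over distances $O([\cT])$, so a direct estimate gives $\cW_\cT$--cost $O([\cT])$, negligible as $[\cT]\to0$; everything reduces to stage~(ii).

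For stage~(ii) I would prescribe the density on cell $K$ at time $t$ as $\bar\rho^t_K(1+\delta b_K)$, where $\bar\rho^t$ is the macroscopic density of $(\mathrm{id}+t\eps\nabla\Phi)_\#\mu_0$ (equal to $|\Omega|^{-1}(1+O(\eps))$), $\delta>0$ is a fixed small amplitude, and the pattern is $b_K:=(T_K/|K|-1)_+$ on $U_0$, $b_K:=0$ off $U_0\cup U_0'$, and $b_K$ a negative constant on $U_0'$ chosen so that $\sum_K|K|\,b_K=0$; here $T_K:=\sum_L\lambda_{KL}\,|n_{KL}\cdot v|^2\,|(K|L)|\,d_{KL}$ is exactly the quantity in \eqref{eq:anisotropy everywhere-intro}, and $\zeta$--regularity gives a uniform bound $|b_K|\le C_\zeta$. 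Realising $\eps\nabla\Phi$ by the momenta $J^t_{KL}=|(K|L)|\,(\bar\rho^t\,\eps\nabla\Phi\cdot n_{KL})$, corrected by a divergence--fixing flux of size $O(\eps\delta[\cT])$ and cost $O([\cT])$, and using $1$--homogeneity of $\theta_{KL}$ together with
\[
	\theta_{KL}(1+\delta b_K,1+\delta b_L)=1+\delta\big(\lambda_{KL}b_K+\lambda_{LK}b_L\big)+O(\delta^2),
\]
whose remainder is controlled uniformly in $\cT$ by the hypothesis $\sup_\cT\sup_{K,L}\|D^2\theta^\cT_{KL}\|_{L^\infty(B((1,1),s))}<\infty$ once $\delta C_\zeta<s$, one arrives after a short computation at
\[
	\int_0^1\cA_\cT^*(\emm_t,\dot\emm_t)\dd t
	\ \le\ \eps^2 c_\Phi-\frac{\eps^2\delta}{|\Omega|}\sum_{K\subseteq U_0}b_K T_K+C\delta^2\eps^2+\eps^2\,o_{[\cT]\to0}(1)+O(\eps^3).
\]
The leading term $\eps^2 c_\Phi$ is the Dirichlet--form consistency of Remark~\ref{rem:consistency} applied to $\Phi$; applied to $v\cdot x$ it also yields $\sum_{K\subseteq U_0}T_K\to|U_0|$. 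Since $b_K\ge0$ and $b_K>0$ only where $T_K>|K|$, we get $\sum_{K\subseteq U_0}b_K T_K\ge\sum_{K\subseteq U_0}(T_K-|K|)_+\ge c|U_0|-o(1)$ by \eqref{eq:anisotropy everywhere-intro} with $V=U_0$.

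Combining, $\limsup_{[\cT]\to0}\cW_\cT(P_\cT\mu_0,P_\cT\mu_1^\eps)^2\le\eps^2 c_\Phi-\tfrac{\delta c|U_0|}{|\Omega|}\eps^2+C\delta^2\eps^2+O(\eps^3)$. Fix first $\delta\in(0,s/C_\zeta)$ so small that $C\delta^2\le\tfrac{\delta c|U_0|}{2|\Omega|}$, and then $\eps>0$ so small that the $O(\eps^3)$ term is $\le\tfrac{\delta c|U_0|}{4|\Omega|}\eps^2$. For these now fixed $\mu_0$ and $\mu_1:=\mu_1^\eps$ one obtains $\limsup_{[\cT]\to0}\cW_\cT(P_\cT\mu_0,P_\cT\mu_1)<\bW(\mu_0,\mu_1)$.

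The main obstacle is the stage--(ii) estimate, through three interlocking issues. First, one must extract a \emph{uniform} (in $[\cT]$) gain from the oscillation: this is exactly why the hypothesis is a lower bound on $\sum(T_K-|K|)_+$ holding at every scale inside $U$, not on the signed sum $\sum(T_K-|K|)$ (which $\to0$ by consistency) nor on $U$ alone. Secondly, the oscillation must be decoupled both from mass conservation (handled by putting the compensating oscillation in the flux--free region $U_0'$, so it never enters the momentum balance in stage~(ii)) and from the discrete continuity equation (which forces the divergence--fixing correction, whose cost must be shown negligible). Thirdly, one needs that at $\delta=0$ the naive momentum field is already asymptotically optimal, so there is no $O(1)$ deficit for the $O(\delta)$ gain to overcome — this is the role of consistency. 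Finally, the $C^2$--control on $\theta_{KL}$ is essential: it makes $\delta(\lambda_{KL}b_K+\lambda_{LK}b_L)$ the genuine first--order behaviour of $\theta_{KL}$ near $(1,1)$; for $\theta_{KL}(a,b)=\min\{a,b\}$ the analogous quantity is $\delta\min(b_K,b_L)$, which vanishes on every interface joining a cell with $T_K>|K|$ to one with $T_K<|K|$ and hence gives no gain when $T_\bullet/|\bullet|$ oscillates, consistent with the failure of the theorem for that mean.
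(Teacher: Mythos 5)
There is a genuine gap, and it sits exactly where you park the compensating mass. Your oscillation pattern has $b_K=(T_K/|K|-1)_+\ge 0$ on $U_0$ and a \emph{negative constant} on a region $U_0'$ that is macroscopically separated from $\supp\psi$. By hypothesis \eqref{eq:anisotropy everywhere-intro} with $V=U_0$, the total extra mass placed in $U_0$ is at least $\tfrac{\delta c|U_0|}{|\Omega|}+o(1)$, a quantity of order $\delta$ that does \emph{not} vanish with $[\cT]$, and it must be brought in from $U_0'$, i.e.\ across a distance bounded below by $\dist(U_0,U_0')>0$. So stages (i) and (iii) do not ``move $O(1)$ mass between neighbouring cells over distances $O([\cT])$''; the only available bound (Lemma \ref{lem:distconttodisc} plus $\bW^2\lesssim \delta\,\dist(U_0,U_0')^2$) gives a cost of order $\sqrt{\delta}$ in $\cW_\cT$-\emph{distance}, uniformly in $[\cT]$. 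This is fatal to the final comparison: stage (ii) shortens the path by $\Theta(\delta\eps)$ in distance (a gain of $\Theta(\delta\eps^2)$ in squared distance against $\bW(\mu_0,\mu_1^\eps)=\eps\sqrt{c_\Phi}$), so you would need $\sqrt{\delta}\lesssim \delta\eps$, i.e.\ $1\lesssim\eps\sqrt{\delta}$, which fails for every admissible choice of small $\delta,\eps$ (and no coupling $\delta=\delta(\eps)$ rescues it). The claim ``cost $O([\cT])$'' is a carry-over from the one-dimensional example, where the compensating cell is the \emph{adjacent} cell; it does not apply to your pattern.

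The repair is precisely what the paper's proof of Theorem \ref{thm:necessity} does differently: the perturbation must be mean-zero \emph{locally}, on cubes $Q\in\sQ_\ell$ of side $\ell$, so that the discrepancy at the endpoints costs only $C(\ell+[\cT])$, which contributes $C\ell^2$ to the squared distance and is removed by letting $\ell\to 0$ after $\eps$ is fixed. The price is that the negative part of the oscillation then lives inside the flux region, and one must check it does not produce a first-order \emph{loss}; the paper arranges this by putting the negative sign exactly on the cells with $S_K<|K|$ (so both sign classes contribute gains of the form $\nu(K)(S_K-|K|)\ge 0$) and invoking Lemma \ref{lemma:isotropy average} to guarantee, via \eqref{eq:anisotropy lower}, that the negative class carries a deficit of order $|Q|$ as well — a lemma your argument never needs but the corrected one does (either in this form, or to control the constant you would subtract from $(T_K/|K|-1)_+$ to make it locally mean-zero). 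A secondary, smaller point: your leading-order identification of the discrete action with $\eps^2 c_\Phi$ should not rest on the purely formal Remark \ref{rem:consistency}; the paper makes this step rigorous through Propositions \ref{prop:error-est} and \ref{prop:cont-and-disc-action} (comparing the discrete potential $\psi_t^\cT$ with cell averages of the continuous potential), and your Taylor-expansion step for $\theta_{KL}$, which is fine, should be coupled to those estimates in the same way.
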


The condition \eqref{eq:anisotropy everywhere-intro} can easily be verified in the setting of Examples (A) and (B5); cf. Section \ref{sec:counterexamples} for details.

\begin{remark*}
On a technical level, our method of proof offers the advantage that the maps for which Gromov--Hausdorff convergence is proved are the canonical projections $\cP_\cT$, rather than regularised versions of these maps, as in \cite{GiMa13}.
Another advantage is that we do not require any regularisation argument at the discrete level, as was done both in \cite{GiMa13} and \cite{Tril17}. All regularisation arguments are done at the continuous level.
In particular, we do not require any lower Ricci curvature bounds for the Markov chain at the discrete level, which would be quite restrictive.
 \end{remark*}

Let us finally discuss how the results in this paper relate to the discretisation of gradient flows. 
Indeed, it follows from results in \cite{Maas11,Miel11,CHLZ11} that the discrete heat equation on a mesh $\cT$ is the entropy gradient flow with respect to $\cW_\cT$ \emph{if and only if} the mean function $\theta_{KL}$ is the logarithmic mean for every pair of adjacent cells $K, L$. 
Moreover, for any vanishing sequence of regular meshes, solutions of the discrete heat equation converge to solutions of the continuous heat equation \cite{EyGaHe00}. 

By contrast, our main results imply that if $\theta_{KL}$ is the logarithmic mean, the associated transport metrics do \emph{not} converge to $\bW$, unless the meshes satisfy (rather restrictive) isotropy conditions.
Thus, in the non-isotropic setting, preservation of the gradient flow structure is incompatible with convergence of the associated transport distances.

For $1$-dimensional isotropic meshes, evolutionary $\Gamma$-convergence of the entropic gradient flow structure for the discrete heat flow with respect to the transport distance $\cW_\cT$ has been proved in \cite{DiLi15}. 
Loose speaking, this means that
\[
 \int_0^1 \cA_\cT^*(m_t, \dot m_t) + |\mathrm{grad}_{\cW_\cT}\mathrm{Ent}_{\cT}(m_t)|^2 \dd t\to \int_0^1 \bA^*(\mu_t, \dot \mu_t) + |\mathrm{grad}_{\bW}\mathrm{Ent}(\mu_t)|^2\dd t
\]
in the sense of $\Gamma$-convergence.
For gradient flow approximations to nonlinear parabolic problems, convergence results have been obtained as well; see, e.g., \cite{CaGu17}.

\subsection*{Structure of the paper}

In Section \ref{sec:preliminaries} we recall some known facts about the Kantorovich distance $\bW$ and the heat flow on convex bounded domains. Furthermore we introduce $\zeta$-regular meshes and the associated discrete transport distance $\cW_\cT$. 
Section \ref{sec:a-priori} contains several useful coarse \emph{a priori} bounds for discrete optimal transport, and in Section \ref{sec:finite-volume} we obtain finite volume estimates for the action functionals $\cA_\cT$ and $\cA_\cT^*$.
The failure of Gromov--Hausdorff convergence to $\bW$ (Proposition \ref{prop:counterexamples} and Theorem \ref{thm:necessity-intro}) is established in Section \ref{sec:counterexamples}.
Finally, Section \ref{sec:G-H} contains the proofs of the lower bound (Theorem \ref{thm:upper-bound-intro}), the upper bound (Theorem \ref{thm:lower-bound-intro}), and the convergence results (Corollaries \ref{cor:GH-conv} and \ref{cor:min}).

\small
\subsection*{Acknowledgements}

J.M. gratefully acknowledges support by the European Research Council (ERC) under the European Union's Horizon 2020 research and innovation programme (grant agreement No 716117), and by the Austrian Science Fund (FWF), Project SFB F65. E.K. gratefully acknowledges  support by the German Research Foundation through the Hausdorff Center for Mathematics and the Collaborative Research Center 1060.
She also thanks the Fields Institute for hospitality during the semester programme ``Geometric Analysis" in 2017. P.G. is funded by the Deutsche Forschungsgemeinschaft (DFG, German Research Foundation) -- 350398276.
We thank Arseniy Akopyan and Matthias Liero for useful comments on a draft of this paper.
We also thank the anonymous referees for careful reading and helpful comments.
\normalsize

\section{Preliminaries}
\label{sec:preliminaries}

\subsection{The Kantorovich metric}

Let $(\cX, \sd)$ be a Polish space, and let $\cP(\cX)$ be the set of Borel probability measures on $\cX$.
The class of Borel measures on $\cX$ is denoted by $\cM_+(\cX)$, and the class of signed Borel measures with mass $0$ by $\cM_0(\cX)$.

For $1 \leq p < \infty$, the \emph{$p$-Kantorovich metric} (often called $p$-Wasserstein metric) is defined by
\begin{align}\label{eq:Wp}
	\bbW_p(\mu_0, \mu_1)
	= \inf_{\gamma \in \Gamma(\mu_0, \mu_1)}
	 \bigg(\int_{\cX \times \cX} \sd(x,y)^p \dd \gamma(x,y) \bigg)^{1/p}
\end{align}
for $\mu_0, \mu_1 \in \cP(\cX)$.
Here, $\Gamma(\mu_0, \mu_1)$ denotes the set of all couplings (also called transport plans) between $\mu_0$ and $\mu_1$:
\begin{align*}
	\Gamma(\mu_0, \mu_1) = \{ \gamma \in \cP(\cX \times \cX) : \pi^0_\# \gamma = \mu_0 \text{ and } \pi^1_\# \gamma = \mu_1 \}  \ ,
\end{align*}
where $\pi^0(x,y) = x$, $\pi^1(x,y) = y$, and $\pi_\#^i \gamma$ denotes the push-forward of $\gamma$ under $\pi^i$. If $\cW(\mu_0, \mu_1) < \infty$, then the infimum in \eqref{eq:Wp} is attained; see, e.g., \cite{Vill09} for basic properties of $\bbW_p$.

\medskip

Let $\Omega \subseteq \mathbb R^d$ be a convex bounded open set.
Let $\cD=\cC_c^\infty(\mathbb R^d)$ be the space of test functions and let $\cD'$ be the space of distributions.
In this paper we will make use of the dynamical formulation of the metric $\bW$, which is given in terms of the action functional $\bA : \cM_+(\bOmega) \times \cC^1(\bOmega) \to \R$ and its Legendre dual $\bA^* : \cM_+(\bOmega) \times \cD' \to \R \cup \{ + \infty \}$, where
\begin{align}\label{eq:cont-action}
	\bA(\mu, \phi) := \int_{\bOmega} |\nabla \phi|^2 \dd \mu \ , \qquad
	 \bA^*(\mu, w)
		& = \sup_{\phi \in \cD} \bigg(2\ip{\phi, w} -  \bA(\mu, \phi) \bigg) \ .
\end{align}
The \emph{Benamou--Brenier formula} \cite{BeBr00} asserts that
\begin{equation}\begin{aligned}\label{eq:Benamou--Brenier}
	\bW(\mu_0, \mu_1)
	& = \inf \Bigg\{ \sqrt{\int_0^1 \bA^*(\mu_t, \dot \mu_t) \dd t} \ : \
				(\mu_t)_{t \in [0,1]} \in \bAC(\mu_0, \mu_1) \Bigg\} \ .
\end{aligned}\end{equation}
Here, $\bAC(\mu_0,\mu_1)$ denotes the class of all $\bW$-absolutely continuous curves $(\mu_t)_{t \in [0,1]}$ in $\cP(\bOmega)$ connecting $\mu_0$ and $\mu_1$.

For fixed $\delta > 0$, it will be useful to introduce the set $\cP_\delta(\bOmega)\subseteq \cP(\bOmega)$ consisting of all $\mu=u\dd x$ such that $u\colon \bOmega\to\mathbb R$ is Lipschitz continuous with $\Lip(u)\leq \frac1\delta$ and $\min_{x \in \bOmega} u(x) \geq \delta$.
For $\mu \in \cP_\delta(\bOmega)$ and $w\in L^2(\Omega)$  with $\int_\Omega w(x) \dd x = 0$, the maximiser in the definition of $\bA^*(\mu,w)$ is given by the unique distributional solution $\phi\in H^1(\Omega)$ to the elliptic problem
\begin{equation*}
		\begin{cases}
		- \dive (u \nabla \phi)  = w &\text{ in }\Omega\\
		\partial_\bfn \phi  = 0	&\text{ on }\partial\Omega
		\end{cases}
	\end{equation*}
satisfying $\int_\Omega\phi \dd x=0$.
 Moreover, $\bA(\mu,\phi) = \bA^*(\mu,w)$.

\medskip

Let $(H_a)_{a \geq 0}$ be the heat semigroup associated to the Neumann Laplacian $\Delta$ on $\Omega$.
Since $\Omega$ is convex, $\bOmega$ is a CD$(0,d)$ space in the sense of Bakry--\'Emery and Lott--Villani--Sturm. In particular, the heat semigroup satisfies the Bakry--\'Emery gradient estimate
\begin{align}\label{eq:Bakry-Emery}
	|\nabla H_a \phi|^2\leq H_a|\nabla \phi|^2
\end{align}
for all sufficiently smooth functions $\phi : \bOmega \to \R$,
and the local logarithmic Sobolev inequality
\begin{align}\label{eq:LSI-local}
  |\nabla \log H_a \phi|^2
 	\leq \frac{1}{a}\frac{H_a(\phi \log \phi) - H_a \phi \log H_a \phi}{H_a \phi} \ ,
\end{align}
for all smooth and positive functions $\phi : \bOmega \to \R$; cf. \cite[Theorem 5.5.2]{BaGeLe14}.
Moreover, it is well known that the Neumann heat kernel $h_a$ satisfies the Gaussian bounds
\begin{align}\label{eq: heat kernel estimates}
(4\pi a)^{-d/2}e^{-\frac{|x-y|^2}{4a}}
	\leq h_a(x,y)
	\leq C \big(a^{-d/2} \vee 1 \big) e^{-c\frac{|x-y|^2}{a}}
\end{align}
for all $a > 0$ and $x,y \in \bOmega$, with constants $c, C >0$ depending on $\Omega$; see \cite[Theorems 3.2.9 and 5.6.1]{Davi89}. The following result asserts that the heat semigroups maps $\cP(\bOmega)$ into $\cP_\delta(\bOmega)$.

\begin{lemma}\label{lem:P-delta}
For all $a > 0$ there exists a constant $\delta > 0$, depending on $\Omega$ and $a$, such that for any $\mu \in \cP(\bOmega)$ the density $u_a$ of $H_a \mu$ satisfies
\begin{align*}
	u_a(x) \geq \delta \text{ for all } x \in \bOmega
	\quad \text{ and } \quad
	\Lip(u_a)\leq \frac{1}{\delta} \ .
\end{align*}
\end{lemma}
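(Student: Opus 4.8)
The plan is to use the Gaussian bounds on the Neumann heat kernel \eqref{eq: heat kernel estimates} to control both the lower bound and the Lipschitz constant of the density $u_a$ of $H_a\mu$, uniformly over all $\mu \in \cP(\bOmega)$. Recall that for $\mu \in \cP(\bOmega)$ the measure $H_a\mu$ has density
\begin{align*}
	u_a(x) = \int_{\bOmega} h_a(x,y) \dd\mu(y) \ .
\end{align*}
First I would establish the lower bound: since $\bOmega$ is compact with $\diam(\bOmega) = R < \infty$, the lower Gaussian estimate gives $h_a(x,y) \geq (4\pi a)^{-d/2} e^{-R^2/(4a)} =: \delta_0 > 0$ for all $x, y \in \bOmega$, and integrating against the probability measure $\mu$ yields $u_a(x) \geq \delta_0$ for all $x \in \bOmega$, with $\delta_0$ depending only on $\Omega$ and $a$.

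Next, for the Lipschitz bound I would use the spatial regularity of the heat kernel together with the semigroup property. The key point is to write $h_a = \int_{\bOmega} h_{a/2}(\cdot, z) h_{a/2}(z, y) \dd z$ (using symmetry and the Chapman--Kolmogorov identity), so that
\begin{align*}
	u_a(x) = \int_{\bOmega} h_{a/2}(x, z) \, v(z) \dd z \ , \qquad v(z) := \int_{\bOmega} h_{a/2}(z,y) \dd\mu(y) \ .
\end{align*}
Here $v$ is a probability density on $\bOmega$ bounded above by $C(a^{-d/2} \vee 1)$ from the upper Gaussian bound. It therefore suffices to bound $\|\nabla_x h_{a/2}(\cdot, z)\|_{L^1_z}$ uniformly in $x$; equivalently, I would invoke the standard gradient estimate for the Neumann heat kernel on a bounded convex (hence smooth-enough in the relevant sense) domain, $|\nabla_x h_b(x,y)| \leq C b^{-(d+1)/2} e^{-c|x-y|^2/b}$, which follows from \cite{Davi89}-type analytic semigroup estimates or directly from differentiating the known kernel bounds. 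Integrating this over $y \in \bOmega$ gives $\|\nabla_x h_{a/2}(x,\cdot)\|_{L^1} \leq C_a$ uniformly in $x$, and since $v$ is bounded, $|\nabla u_a(x)| \leq \|v\|_\infty \cdot C_a =: L$, a constant depending only on $\Omega$ and $a$.

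Finally, I would set $\delta := \min\{\delta_0, 1/L\}$ to conclude. The main obstacle is making the gradient estimate for the Neumann heat kernel precise and clean: on a general bounded convex domain the boundary need not be smooth, so one cannot blindly quote interior parabolic regularity up to the boundary. The cleanest route is probably to combine the Bakry--\'Emery estimate \eqref{eq:Bakry-Emery} with the upper heat kernel bound: applying \eqref{eq:Bakry-Emery} to $H_{a/2}$ acting on $h_{a/2}(\cdot,y)$ gives $|\nabla u_a(x)|^2 = |\nabla H_{a/2} v(x)|^2 \leq H_{a/2}(|\nabla h_{a/2}(\cdot, z)|^2)$-type control, but more directly one uses that $|\nabla H_{a/2} f|^2 \le H_{a/2} |\nabla f|^2$ together with an $L^\infty$--$W^{1,\infty}$ smoothing estimate for the heat semigroup at time $a/2$ applied to the bounded function $v$; since $v$ is already bounded by the upper kernel bound, $H_{a/2}$ maps it to a Lipschitz function with constant controlled by $\|v\|_\infty$ and $a$. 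I expect the referee-facing version to simply cite the Gaussian gradient bound from \cite{Davi89} or a standard reference, so the proof can be kept short.
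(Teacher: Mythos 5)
Your final sketch is essentially the paper's proof: the lower bound is read off directly from the Gaussian lower bound on the Neumann heat kernel, and the Lipschitz bound comes from writing $u_a = H_{a/2}u_{a/2}$, bounding $\|u_{a/2}\|_\infty$ by the Gaussian upper bound, and then applying an $L^\infty$-to-Lipschitz smoothing estimate for the Neumann heat semigroup, which is available because $\Omega$ is convex. The one point you should pin down is that smoothing estimate: the commutation bound $|\nabla H_t \phi|^2 \le H_t |\nabla \phi|^2$ that you mention does not by itself smooth, and the paper instead invokes the CD$(0,d)$ inequality $2a\,|\nabla H_a \phi|^2 \le H_a(\phi^2)$ from \cite[Theorem 4.7.2]{BaGeLe14}, which immediately gives $|\nabla u_a(x)|^2 = |\nabla H_{a/2} u_{a/2}(x)|^2 \le a^{-1}\|u_{a/2}\|_\infty^2 \le C$; this is the route to take rather than your first suggestion via pointwise gradient bounds on the Neumann heat kernel, which, as you note yourself, are not off-the-shelf on a bounded convex domain with possibly non-smooth boundary.
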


\begin{proof}
The lower bound follows immediately from the Gaussian lower bound in \eqref{eq: heat kernel estimates}.

To prove the Lipschitz bound, we use the pointwise gradient inequality
\begin{align*}
	2a |\nabla H_a \phi |^2 \leq H_a (\phi^2) \ ,
\end{align*}
which follows from the CD$(0,d)$ property of $\Omega$; cf. \cite[Theorem 4.7.2]{BaGeLe14}. Together with \eqref{eq: heat kernel estimates} we obtain
\begin{align*}
	|\nabla u_a(x)|^2
		=    | \nabla H_{a/2}  u_{a/2} (x)|^2
		\leq a^{-1}  \| H_{a/2} (u_{a/2}^2)\|_\infty
		\leq a^{-1} \| u_{a/2} \|_{\infty}^2
		\leq C \ ,
\end{align*}
for some $C < \infty$ depending on $a$. This implies the result.
\end{proof}

\medskip

We collect some known properties of the Kantorovich metric that will be useful in the sequel.
Let us remark that the convexity of the domain $\Omega$ is crucial for Part \ref{item:heat-contraction} in the following lemma, as its proof relies on gradient estimates in the sense of Bakry and \'Emery (see, e.g., \cite[Theorem 3.17]{AmGiSa15} and \cite{BaGeLe14}).

\begin{lemma}[Bounds on the Kantorovich metric] \label{lem:Wass-facts}
The following assertions hold:
\begin{enumerate}[label=(\roman*)]\setlength\itemsep{0ex}
\item \label{item:heat-monotonicity}
(Monotonicity)
For $i = 0, 1$, let $\mu_i \in \cM_+(\bOmega)$ with $\mu_0 \geq \mu_1$, and let $w \in \cD'$. Then:
\begin{align}\label{eq:action-monotone}
	\bA^*(\mu_0, w) \leq \bA^*(\mu_1, w) \ .
\end{align}
\item \label{item:heat-contraction}
(Contraction bounds)
For any $\mu \in \cP(\bOmega)$, $\phi \in \cD$, $w \in \cD'$, and $a \geq 0$, we have
 \begin{align}\label{eq:action-heat}
	\bA(\mu, H_a \phi) \leq \bA(H_a \mu, \phi) \quad \text{and} \quad
	\bA^*(H_a \mu, H_a w) \leq \bA^*(\mu, w) \ .
\end{align}
Consequently, the following contraction property holds for any $\mu_0, \mu_1 \in \cP(\bOmega)$:
\begin{align}\label{eq:contraction-heat}
\bW(H_a \mu_0, H_a \mu_1) \leq \bW(\mu_0,\mu_1) \ .
\end{align}
\item \label{item:heat-holder}
(H\"older continuity of the heat flow)
There exists a constant $C < \infty$ such that
\begin{align*}
	\bW(\mu, H_a\mu) \leq C \sqrt{a}
\end{align*}
for any $\mu \in \cP(\bOmega)$ and $a \geq 0$.
\end{enumerate}
\end{lemma}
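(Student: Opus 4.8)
The plan is to prove the three parts in order, each being a short consequence of the structure of $\bA$, $\bA^*$, and the Bakry--\'Emery calculus recalled above.

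\emph{Part \ref{item:heat-monotonicity} (Monotonicity).} This is immediate from the definitions. For a fixed test function $\phi \in \cD$, the map $\mu \mapsto \bA(\mu,\phi) = \int_\bOmega |\nabla\phi|^2 \dd\mu$ is monotone in $\mu$: if $\mu_0 \geq \mu_1$, then $\bA(\mu_0,\phi) \geq \bA(\mu_1,\phi)$ since the integrand $|\nabla\phi|^2$ is nonnegative. Hence, taking suprema over $\phi$ in the Legendre transform $\bA^*(\mu,w) = \sup_\phi\{2\ip{\phi,w} - \bA(\mu,\phi)\}$, the larger we make $\bA(\mu,\cdot)$ the smaller the supremum, so $\bA^*(\mu_0,w) \leq \bA^*(\mu_1,w)$. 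No regularisation is needed here.

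\emph{Part \ref{item:heat-contraction} (Contraction bounds).} The first inequality is exactly the Bakry--\'Emery gradient estimate \eqref{eq:Bakry-Emery}: since $H_a$ is self-adjoint with $H_a 1 = 1$,
\begin{align*}
	\bA(\mu, H_a\phi) = \int_\bOmega |\nabla H_a\phi|^2 \dd\mu
		\leq \int_\bOmega H_a|\nabla\phi|^2 \dd\mu
		= \int_\bOmega |\nabla\phi|^2 \dd (H_a\mu)
		= \bA(H_a\mu,\phi) \ ,
\end{align*}
where one should note a minor subtlety: \eqref{eq:Bakry-Emery} is stated for sufficiently smooth $\phi$, but $H_a\phi$ is smooth for $a>0$ and the case $a=0$ is trivial, so this is fine; for the boundary Neumann condition the estimate is the one recorded above for the CD$(0,d)$ space $\bOmega$. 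The second inequality in \eqref{eq:action-heat} is the dual statement. Indeed, using self-adjointness of $H_a$ and the first inequality,
\begin{align*}
	\bA^*(H_a\mu, H_a w)
		= \sup_{\phi\in\cD}\big(2\ip{\phi, H_a w} - \bA(H_a\mu,\phi)\big)
		\leq \sup_{\phi\in\cD}\big(2\ip{H_a\phi, w} - \bA(\mu, H_a\phi)\big)
		\leq \bA^*(\mu,w) \ ,
\end{align*}
the last step because $\{H_a\phi : \phi\in\cD\}$ is a subset of the admissible class in the definition of $\bA^*(\mu,w)$ (or is approximated by it), so the supremum over this smaller set is no larger. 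Finally, \eqref{eq:contraction-heat} follows from \eqref{eq:action-heat} via the Benamou--Brenier formula \eqref{eq:Benamou--Brenier}: given a curve $(\mu_t)$ connecting $\mu_0$ to $\mu_1$, the curve $(H_a\mu_t)$ connects $H_a\mu_0$ to $H_a\mu_1$, its metric derivative satisfies $\dot{(H_a\mu_t)} = H_a\dot\mu_t$, and $\int_0^1 \bA^*(H_a\mu_t, H_a\dot\mu_t)\dd t \leq \int_0^1 \bA^*(\mu_t,\dot\mu_t)\dd t$; taking the infimum over curves gives the claim.

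\emph{Part \ref{item:heat-holder} (H\"older continuity).} The plan is a standard estimate of the Benamou--Brenier action of the heat flow curve itself. Set $\mu_t := H_{ta}\mu$ for $t\in[0,1]$; this connects $\mu$ to $H_a\mu$. By the continuity equation $\partial_t \mu_t = a\,\Delta\mu_t = -\dive(a\,\nabla u_t)$ where $u_t$ is the density of $\mu_t$ (which is smooth and bounded below for $t>0$ by the Gaussian bounds \eqref{eq: heat kernel estimates}), one has $\dot\mu_t = -\dive(\mu_t\, v_t)$ with velocity $v_t = -a\nabla\log u_t$, so $\bA^*(\mu_t,\dot\mu_t) \leq \int_\bOmega |v_t|^2 \dd\mu_t = a^2\int_\bOmega |\nabla\log u_t|^2 u_t \dd x = a^2 \int_\bOmega \frac{|\nabla u_t|^2}{u_t}\dd x$. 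The quantity $\int_\bOmega \frac{|\nabla u_t|^2}{u_t}\dd x$ is (up to a constant) the time derivative of the entropy $\int u_t\log u_t$ along the heat flow (the Fisher information), and $\int_0^1 a \int_\bOmega \frac{|\nabla u_t|^2}{u_t}\dd x\, \dd t = \mathrm{Ent}(\mu) - \mathrm{Ent}(H_a\mu)$, which is bounded by $\mathrm{Ent}(\mu) - \inf_{\cP(\bOmega)}\mathrm{Ent} \leq \log\big(\|u_a\|_\infty |\bOmega|\big) + C$; more directly, the local logarithmic Sobolev inequality \eqref{eq:LSI-local} or the heat kernel bounds \eqref{eq: heat kernel estimates} give $\int_\bOmega \frac{|\nabla u_t|^2}{u_t}\dd x \leq C/(ta)$ for $ta \leq 1$ (say), which is not integrable, so instead I would split: on $[\tfrac12,1]$ the density $u_{ta}$ is comparable to a smooth bounded-below function uniformly, giving $\bA^*(\mu_t,\dot\mu_t) \leq C a^2$, while on $[0,\tfrac12]$ I use the entropy-dissipation identity to bound $\int_0^{1/2} \bA^*(\mu_t,\dot\mu_t)\dd t \leq a \int_0^{1/2} a\int_\bOmega\frac{|\nabla u_t|^2}{u_t}\dd x\,\dd t \leq a\big(\mathrm{Ent}(\mu) - \mathrm{Ent}(H_{a/2}\mu)\big)$. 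Since $\mathrm{Ent}(\mu)$ can be $+\infty$ for general $\mu \in \cP(\bOmega)$, the cleanest route is to reparametrise: use the curve $t\mapsto H_{\psi(t)}\mu$ with $\psi(0)=0$, $\psi(1)=a$ chosen so that $\psi'(t) = \min\{\psi(t), a\}$ near $t=0$, i.e. absorb the singularity of the Fisher information by a time change, exactly as in the standard proof that $\bW(\mu, H_a\mu) \leq C\sqrt a$. After the time change, $\int_0^1 \bA^*\dd t \leq C a$, whence $\bW(\mu, H_a\mu) \leq C\sqrt a$.

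\emph{Main obstacle.} Parts \ref{item:heat-monotonicity} and \ref{item:heat-contraction} are essentially formal once the Bakry--\'Emery estimate is granted. The only genuine work is in Part \ref{item:heat-holder}: controlling the (possibly infinite) Fisher information of $\mu_t = H_{ta}\mu$ near $t=0$ uniformly over all $\mu \in \cP(\bOmega)$. The key point is that although the instantaneous Fisher information blows up like $1/(ta)$ as $t\to 0$, its time integral against $\dd t$ is only logarithmically divergent and, crucially, $a\int_0^1 a\int_\bOmega \frac{|\nabla u_{ta}|^2}{u_{ta}}\dd x\,\dd t$ equals $a$ times the total entropy dissipation, which is bounded \emph{provided} one measures it from a slightly positive time; the time-change trick (or equivalently splitting $[0,1]$ into $[0,\eps]$ handled by entropy dissipation from $H_{\eps a/2}$ and $[\eps,1]$ handled by the uniform lower density bound) circumvents the need for any entropy bound on $\mu$ itself. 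This is where the convexity of $\Omega$ and the Gaussian heat kernel bounds \eqref{eq: heat kernel estimates} enter decisively.
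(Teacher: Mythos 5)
Parts \ref{item:heat-monotonicity} and \ref{item:heat-contraction} of your proposal are correct and coincide with the paper's (very terse) argument: monotonicity is read off from the sign of $\bA$ in the Legendre transform, and the contraction bounds follow from the Bakry--\'Emery estimate \eqref{eq:Bakry-Emery}, self-adjointness of $H_a$, and the Benamou--Brenier formula \eqref{eq:Benamou--Brenier}; the only small point you gloss over (which class of test functions $H_a\phi$ lives in) is glossed over in the paper as well and is harmless.

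For Part \ref{item:heat-holder} you take a genuinely different and, as written, incomplete route. The paper's proof is a one-line static argument: $\dd\gamma(x,y)=h_a(x,y)\dd\mu(x)\dd y$ is a coupling of $\mu$ and $H_a\mu$, so the Gaussian upper bound in \eqref{eq: heat kernel estimates} gives $\bW(\mu,H_a\mu)^2\leq C(1\vee a^{d/2})\,a$, and the case $a\geq\diam(\Omega)^2$ is trivial. Your dynamical argument has a genuine gap: everything hinges on a Fisher-information bound $\int_\bOmega |\nabla u_s|^2/u_s\,\dd x\leq C/s$ for $u_s$ the density of $H_s\mu$, \emph{uniform over all} $\mu\in\cP(\bOmega)$ and with no logarithmic loss, and none of the tools you cite delivers it. The entropy-dissipation identity requires $\mathrm{Ent}(\mu)<\infty$, as you note; the local log-Sobolev inequality \eqref{eq:LSI-local}, applied via the semigroup splitting $u_s=H_{s/2}u_{s/2}$ together with $\|u_{s/2}\|_\infty\leq Cs^{-d/2}$ from \eqref{eq: heat kernel estimates}, only yields $I(H_s\mu)\leq C\big(1+\log(1/s)\big)/s$, and feeding this into any reparametrisation produces $\bW(\mu,H_a\mu)\leq C\sqrt{a\log(1/a)}$, not $C\sqrt a$. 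Removing the logarithm requires the dimensional Li--Yau-type bound $I(H_s\mu)\leq d/(2s)$ on the convex Neumann domain, which you neither state nor prove (and which the paper never invokes). In addition, the time change you specify is degenerate: $\psi'=\min\{\psi,a\}$ with $\psi(0)=0$ forces $\psi\equiv 0$; a choice such as $\psi(t)=at^2$ is what actually turns the $C/s$ bound into $\int_0^1\bA^*\,\dd t\leq Ca$, and appealing to ``the standard proof that $\bW(\mu,H_a\mu)\leq C\sqrt a$'' is circular, since that is the statement being proved. Either supply the Li--Yau estimate explicitly, or replace this part by the paper's heat-kernel coupling, which needs nothing beyond \eqref{eq: heat kernel estimates}.
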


\begin{proof}
Assertion \ref{item:heat-monotonicity} follows immediately from the definitions.

The first part of Assertion \ref{item:heat-contraction} is a consequence of the Bakry--\'Emery gradient estimate \eqref{eq:Bakry-Emery} and the selfadjointness of $H_a$. The second part follows immediately, and the inequality \eqref{eq:contraction-heat} holds by the Benamou--Brenier formula \eqref{eq:Benamou--Brenier}.

To show Assertion \ref{item:heat-holder}, note first that $d\gamma(x,y) = h_a(x,y) \dd \mu(x) \dd y$ is a coupling between $\mu$ and $H_a \mu$.
Thus using \eqref{eq: heat kernel estimates} we obtain
\begin{equation}
\begin{aligned}\label{delta-heat}
	\bW(\mu, H_a\mu)^2
	& \leq \int_\bOmega \int_\bOmega |x-y|^2 h_a(x,y)\dd \mu(x) \dd y
	\\& \leq C \big(a^{-d/2} \vee 1 \big)
	\int_{\bOmega} \int_\bOmega |x-y|^2 e^{-c\frac{|x-y|^2}{a}} \dd\mu(x) \dd y
	\\& \leq C \big(a^{-d/2} \vee 1 \big) \int_{\bOmega} \int_{\R^d}
		|x-y|^2  e^{-c\frac{|x-y|^2}{a}}
			\dd y \dd\mu(x)
	\\& = C \big(1 \vee a^{d/2} \big) a \ .
\end{aligned}
\end{equation}
If $a \leq \diam(\Omega)^2$, we have $1 \vee a^{d/2} \leq C$ for some constant $C$ depending on $\Omega$, so we may absorb the factor $1 \vee a^{d/2}$ into the constant. If $a \geq \diam(\Omega)^2$, we trivially have $\bW(\mu, H_a \mu) \leq \diam(\Omega) \leq \sqrt{a}$.
\end{proof}

\subsection{Discrete transport metrics} \label{sec:discrete-transport}

We briefly recall the definition of the discrete transport metrics introduced in \cite{CHLZ11,Maas11,Miel11}.

Let $\cX$ be a finite set, and let $R : \cX \times \cX \to \R_+$ be a non-negative function satisfying $R(x,x) = 0$ for all $x \in \cX$. 
We interpret $R(x,y)$ as the transition rate from $x$ to $y$ for a continuous-time Markov chain on $\cX$, which we assume to be irreducible.
Under this assumption, there exists a unique invariant probability measure $\pi \in \cP(\cX)$. We assume that $\pi$ satisfies the \emph{detailed balance condition}:
\begin{align*}
	\pi(x) R(x,y) = \pi(y) R(y,x) \quad \text{ for all } x, y \in \cX \ .
\end{align*}

To define the discrete transport metric we need to choose a family of admissible means in the sense of the following definition. Note that these assumptions slightly differ from \cite[Assumption 2.1]{ErMa12}.

\begin{definition}[Admissible mean]
\label{def:mean-properties}
An \emph{admissible mean} is a continuous function $\theta :  \R_+ \times \R_+ \to \R_+$ that is $\cC^\infty$ on $(0,\infty) \times (0,\infty)$, positively $1$-homogeneous, jointly concave, and normalised (i.e., $\theta(1,1) = 1$).
\end{definition}

We collect some known properties of admissible means in the following result.

\begin{lemma}\label{lem:}
Let $\theta$ be an admissible mean.
\begin{enumerate}[label=(\roman*)]\setlength\itemsep{0ex}
\item \label{it:pos-mena}
	For $a, b \geq 0$ we have $\theta(a,b) \geq \min\{a, b\}$.
\item \label{it:hom-id} For any $a,b \geq 0$ and $c, d > 0$ we have
\begin{align}\label{eq:4-pt}
	\theta (a,b) \leq a \partial_1 \theta(c,d)  + b \partial_2 \theta(c,d) \ ,
\end{align}
with equality if $a = c$ and $b = d$.
\end{enumerate}
\end{lemma}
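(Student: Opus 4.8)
The plan is to derive both statements from the two structural properties of an admissible mean: positive $1$-homogeneity, which on the open quadrant yields Euler's identity $\theta(c,d)=c\,\partial_1\theta(c,d)+d\,\partial_2\theta(c,d)$, and joint concavity on $\R_+\times\R_+$.

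For \ref{it:pos-mena}, I would first record that a positively $1$-homogeneous concave function on the convex cone $\R_+\times\R_+$ is superadditive: for $x,y\in\R_+^2$,
\[
\theta(x)+\theta(y)=2\Big(\tfrac12\theta(x)+\tfrac12\theta(y)\Big)\le 2\,\theta\Big(\tfrac{x+y}{2}\Big)=\theta(x+y),
\]
where the inequality is concavity and the final equality is $1$-homogeneity. Assuming without loss of generality that $a\le b$, apply this with $x=(a,a)$ and $y=(0,b-a)$ to get
\[
\theta(a,b)\ge\theta(a,a)+\theta(0,b-a)\ge\theta(a,a)=a\,\theta(1,1)=a=\min\{a,b\},
\]
using nonnegativity of $\theta$, $1$-homogeneity, and the normalisation $\theta(1,1)=1$. (The degenerate case $a=b$ reduces directly to $\theta(a,a)=a$.)

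For \ref{it:hom-id}, fix $(c,d)\in(0,\infty)^2$. Since $\theta$ is $\cC^\infty$ and concave on the open quadrant, the supporting hyperplane inequality for concave functions gives, for all $(a,b)\in(0,\infty)^2$,
\[
\theta(a,b)\le\theta(c,d)+\partial_1\theta(c,d)\,(a-c)+\partial_2\theta(c,d)\,(b-d).
\]
By Euler's identity for the $1$-homogeneous function $\theta$ we have $\theta(c,d)=c\,\partial_1\theta(c,d)+d\,\partial_2\theta(c,d)$; substituting this into the previous line, the terms involving $c$ and $d$ cancel and we obtain $\theta(a,b)\le a\,\partial_1\theta(c,d)+b\,\partial_2\theta(c,d)$, which is \eqref{eq:4-pt}. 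Equality for $a=c$, $b=d$ is then immediate. To extend \eqref{eq:4-pt} to the boundary values $a=0$ or $b=0$, note that $(c,d)$ stays in the interior, so the right-hand side is continuous in $(a,b)$, while the left-hand side is continuous on all of $\R_+\times\R_+$ by assumption; the inequality therefore passes to the limit from interior points.

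The only point requiring care — the closest thing to an obstacle — is precisely this boundary behaviour in \ref{it:hom-id}: $\theta$ need not be differentiable on $\partial(\R_+\times\R_+)$, but since $\partial_i\theta$ is only ever evaluated at the interior point $(c,d)$, continuity of $\theta$ up to the boundary suffices. Everything else is the standard calculus of concave positively homogeneous functions.
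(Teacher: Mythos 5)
Your proposal is correct. For part \ref{it:pos-mena} your route is the same in spirit as the paper's: both arguments combine joint concavity with positive $1$-homogeneity and the normalisation $\theta(1,1)=1$; the paper writes $(a,b)$ (for $a>b$) as the convex combination $\frac{b}{b+\eps}(b+\eps,b+\eps)+\frac{\eps}{b+\eps}(M,0)$ with $M=\frac{b+\eps}{\eps}(a-b)$ and drops the nonnegative second term, whereas you package the same mechanism as superadditivity of a concave $1$-homogeneous function and apply it to $(a,a)+(0,b-a)$ — arguably a cleaner formulation, and equally valid since concavity is assumed on all of $\R_+\times\R_+$. For part \ref{it:hom-id} the paper does not argue at all but cites \cite[Lemma 2.2]{ErMa12}; your supporting-hyperplane inequality at the interior point $(c,d)$ combined with Euler's identity $\theta(c,d)=c\,\partial_1\theta(c,d)+d\,\partial_2\theta(c,d)$ is exactly the standard argument behind that reference, and your handling of the boundary case $a=0$ or $b=0$ is fine (in fact the gradient inequality for a function concave on the closed quadrant and differentiable at the interior point $(c,d)$ already holds for boundary points $(a,b)$ directly, so the continuity step, while harmless, is not even needed). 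So the only substantive difference is that you supply a self-contained proof of \ref{it:hom-id} where the paper outsources it.
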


\begin{proof}
To show \ref{it:pos-mena}, assume first that $a > b$, and write
$(a,b) = \frac{b}{b + \eps} (b + \eps, b + \eps) + \frac{\eps}{b + \eps} (M,0)$, where $M = \frac{b+\eps}{\eps}(a-b)$. Using the concavity, $1$-homogeneity and normalisation of $\theta$, we obtain $\theta(a,b) \geq \frac{b}{b + \eps} \theta(b + \eps, b + \eps) = b \theta(1,1) = b$. The case $a < b$ can be treated analogously, and the case $a = b$ follows immediately by $1$-homogeneity and normalisation.

We refer to \cite[Lemma 2.2]{ErMa12} for a proof of \ref{it:hom-id}.
\end{proof}

It will be useful to associate a number $\lambda^{(\theta)}$ to an admissible mean $\theta$ that quantifies its asymmetry.

\begin{definition}[Weight]\label{def:weight}
For an admissible mean $\theta$, its \emph{weight} $\lambda^{(\theta)} \in [0,1]$ is given by
\begin{align*}
	\lambda^{(\theta)} = \partial_1 \theta(1, 1) \ .
\end{align*}
\end{definition}

If there is no danger of confusion, we will simply write $\lambda = \lambda^{(\theta)}$.

\begin{remark}\label{rem:weight}
Note that $\lambda^{(\theta)}$ is indeed nonnegative, since $\theta$ is non-decreasing in each of its arguments.
Applying \eqref{eq:4-pt} with $a = b = c = d = 1$, it follows that
\begin{align}\label{eq:ones}
	\partial_1 \theta(1,1) + \partial_2 \theta(1,1) = 1 \ ,
\end{align}
which implies that $\lambda^{(\theta)} \leq 1$.
\end{remark}

\begin{remark}\label{rem:weighted-inequalities}
It follows from \eqref{eq:4-pt} with $c = d = 1$, that
\begin{align*}
	\theta (a,b) \leq \lambda^{(\theta)}  a + (1 - \lambda^{(\theta)} ) b
\end{align*}
for any $a, b \geq 0$. Moreover, this inequality characterises  $\lambda^{(\theta)} \in [0,1]$ uniquely.
If $\theta$ is symmetric, it follows from \eqref{eq:ones} that $\lambda^{(\theta)} = \frac12$.
\end{remark}

Examples of symmetric admissible means are the arithmetic mean $\theta_{\mathrm{arith}}(a,b) = \frac{a+b}{2}$, the geometric mean $\theta_{\mathrm{geom}}(a,b) = \sqrt{a b}$, the logarithmic mean $\theta_{\mathrm{log}}(a,b) = \int_0^1 a^{1-p} b^p \dd p$, and the harmonic mean $\theta_{\mathrm{harm}}(a,b) = \frac{2ab}{a + b}$.
For each of these means there exist natural generalisations with weights $\lambda \in [0,1]$, such as
\begin{equation}\begin{aligned}\label{eq:weighted-means}
\theta_{\mathrm{arith}}^{(\lambda)}(a,b) & = \lambda a + (1-\lambda) b \ , &
\theta_{\mathrm{geom}}^{(\lambda)}(a,b) & =  a^\lambda b^{1-\lambda} \ , \\
\theta_{\mathrm{log}}^{(\lambda)}(a,b) & = \int_0^1 a^p b^{1-p} \, \tau(\ddd p) \ , &
\theta_{\mathrm{harm}}^{(\lambda)}(a,b) & = \frac{1}{\lambda / a + (1-\lambda) /  b} \ .
\end{aligned}\end{equation}
Here, $\tau$ is an arbitrary Borel probability measure on $[0,1]$ with $\int_0^1 p \tau(\ddd p) = \lambda$.

\begin{definition}[Mean and weight functions]\label{def:mean-function}
Let $\cX$ be a finite set.
\begin{enumerate}[label=(\roman*)]\setlength\itemsep{0ex}
\item A \emph{mean function} is a family of admissible means $\bftheta = (\theta_{xy})_{x,y \in \cX}$ satisfying the symmetry condition $\theta_{xy}(a,b) = \theta_{yx}(b,a)$ for all $x, y \in \cX$ and $a,b \geq 0$.
\item A \emph{weight function} is a collection $\bflambda = (\lambda_{xy})_{x,y \in \cX} \subseteq [0,1]$ satisfying $\lambda_{xy} + \lambda_{yx} = 1$ for all $x, y \in \cX$,
\item For a mean function $\bftheta$, its \emph{induced weight function} $\bflambda^{(\theta)}$ is defined by $	\lambda_{xy}^{(\theta)} = \partial_1 \theta_{xy}(1, 1)$ for all $x, y$.
We say that a mean function $\bftheta$ is \emph{compatible} with a weight function $\bflambda$, if $\bflambda$ is induced by $\bftheta$.
\end{enumerate}
\end{definition}

It follows from \eqref{eq:ones} that the induced weight function is indeed a weight function.

\medskip

We are now in a position to define the discrete transport metrics.
Given a Markov chain $(\cX, R, \pi)$ and a mean function $\bftheta$, the discrete transport metric is defined using discrete analogues of \eqref{eq:cont-action}.
The action functional $\cA : \cP(\cX) \times \R^\cX \to \R$ and its dual $\cA^* : \cP(\cX) \times \R^\cX \to \R \cup \{ +\infty\}$ are given by
\begin{equation}\begin{aligned}\label{eq:action-Markov}
\cA(\emm, \psi) & = \frac12 \sum_{x, y \in \cX}
		 \theta_{xy}\big(\emm(x) R(x,y), \emm(y) R(y,x) \big) \big( \psi(x) - \psi(y) \big)^2  \ ,
		 	\\
 \frac12 \cA^*(\emm, \sigma)
	& = \sup_{\psi \in \mathbb R^\cX} \bigg\{ \sum_{x \in \cX} \psi(x) \sigma(x) - \frac12 \cA(\emm, \psi) \bigg\} \ .
\end{aligned}\end{equation}
For $\emm_0, \emm_1 \in \cP(\cX)$, the associated transport metric is given by
\begin{align*}
 \cW(\emm_0,\emm_1)
   & =
  \inf \Bigg\{\sqrt{ \int_0^1 \mathcal A^*(\emm_t, \dot \emm_t) \dd t }\ : \
				(\emm_t)_{t \in [0,1]} \in \cAC(\emm_0, \emm_1) \Bigg\} \ .
\end{align*}
Here, $\cAC(\emm_0, \emm_1)$ denotes the class of all curves $(\emm_t)_{t\in[0,1]} \subseteq \cP(\cX)$ connecting $\emm_0$ and $\emm_1$, with the property  that $t\mapsto \emm_t(x)$ is absolutely continuous for all $x \in \cX$.

\begin{remark}\label{rem:general-mean}
In most of the previous papers dealing with discrete dynamical transport metrics, $\theta_{xy}$ has been chosen to be independent of $x$ and $y$. In particular, to obtain a gradient flow structure for Markov chains \cite{CHLZ11,Maas11,Miel11}, one chooses $\theta_{xy}$ to be the logarithmic mean for all $x,y$.
We will see in this paper that the additional freedom can be important to obtain Gromov--Hausdorff convergence.
In a noncommutative setting, a similar generalisation has been useful in the setting of Lindblad equations with a nontracial invariant state \cite{CaMa17,MiMi17}.
\end{remark}

We will occasionally use an equivalent formulation for $\cW$ given by
\begin{align*}
	\cW(\emm_0, \emm_1)
		= \inf\Bigg\{\sqrt{\int_0^1
			\cK(\emm_t, V_t)
		 \dd t} : (\emm, V) \in \cCE(\emm_0, \emm_1) \Bigg\} \ .
\end{align*}
Here, $\cCE(\emm_0, \emm_1)$ denotes the class of pairs $(\emm,V)$ satisfying
\begin{itemize}\setlength\itemsep{0ex}
\item $\emm :[0,T] \to \cP(\cX)$ is continuous with $\emm|_{t=0} = \emm_0$, and $\emm|_{t=1} = \emm_1$;
\item $V : [0,T] \times \cX \times \cX \to \R$ is locally integrable;
\item the \emph{continuity equation} holds in the sense of distributions:
    \begin{align}\label{eq:cont-eq}
      \dot \emm_t(x)
      + \frac12 \sum_{y \in \cX} \pi(x) R(x,y) \big( V_t(x,y) - V_t(y,x) \big)  = 0 \quad \quad \text{ for all } x \in \cX \ .
    \end{align}
\end{itemize}
Moreover,
\begin{align}\label{eq:convex-action}
	\cK(\emm, V)
	  := \frac12
			\sum_{x, y \in \cX} \pi(x) R(x,y)
			 K_{xy}\bigg(\frac{\emm(x)}{\pi(x)} , \frac{\emm(y)}{\pi(y)} , V(x,y) \bigg) \ ,
\end{align}
where the convex and lower semicontinuous function $K: \R^3 \to [0,\infty]$ is given by
\begin{align}\label{eq:def-K}
K_{xy}(a,b,w) :=
  \begin{cases}
    \frac{w^2}{\theta_{xy}(a,b)}\ , & w \in \R, \ a,b > 0\ ,\\
    0 \ , & w=0, \ a,b \geq 0 \ , \\
    +\infty\ , &\text{otherwise} \ .
  \end{cases}
\end{align}
By Fenchel duality, $\cA^*$ can be obtained from $\cK$ by minimising over all solutions to the continuity equation
\begin{equation}\begin{aligned}\label{eq: infsup}
	\cA^*(\emm, \sigma)
		= \inf_{V} \bigg\{ &
			\cK(\emm, V)  : \\ & \
			\sigma(x)
      + \frac12 \sum_{y \in \cX} \pi(x) R(x,y) \big( V(x,y) - V(y,x) \big)  = 0 \quad  \forall x \in \cX
					\bigg\} \ .
\end{aligned}\end{equation}
Here the infimum runs over all vector fields $V : \cX \times \cX \to \R$. Without loss of generality we may impose the anti-symmetry condition $V(x,y) = - V(y,x)$ for all $x, y \in \cX$; see \cite[Section 2]{ErMa12} for more details.

\subsection{Admissible and \texorpdfstring{$\zeta$}{z}-regular meshes}\label{sec:admissible}
 Following \cite[Section 3.1.2]{EyGaHe00}, we introduce the notion of an admissible mesh.

\begin{definition}[Mesh]
\label{def:mesh}
A \emph{mesh} of $\Omega$ is a pair $(\cT, \{x_K\}_{K \in \cT})$ where
\begin{itemize}
\setlength\itemsep{0ex}
\item  $\cT = \{K\}_{K\in \cT}$ is a finite partition (i.e., a pairwise disjoint covering) of $\bOmega$ into sets (called cells) with nonempty and convex interior.
\item  $\{x_K\}_{K\in \cT} \subseteq \bOmega$ is a family of distinct points with $x_K \in \overline K$ for every $K \in \cT$.
\end{itemize}
\end{definition}

Note that all interior cells $K$ are polytopes.
Throughout the paper we will use the following notation:

\begin{itemize}
\setlength\itemsep{0ex}
    \item[-] $|K|$ denotes the Lebesgue measure of a cell $K\in \cT$.
    \item[-] $(K|L) = \overline K\cap \overline L$ is the flat convex surface with $(d-1)$-dimensional Hausdorff measure $|(K|L)|$. Two cells $K,L \in \cT$ with $K \neq L$ are called \emph{nearest neighbours} if $|(K|L)| > 0$. In this case we write $K\sim L$. We write $K \simeq L$ if $K = L$ or $K \sim L$.
    \item[-] $d_{KL} = |x_K - x_L|$.
	\item[-] $[\cT] = \max_{K\in \cT} \diam(K)$ denotes the \emph{mesh size} of $\cT$.
\end{itemize}

\begin{definition}[Admissible and $\zeta$-regular mesh]
\label{def:admissible-mesh}
A mesh $(\cT, \{x_K\}_{K \in \cT})$ is called \emph{admissible} if $x_K - x_L \perp (K|L)$ whenever $K,L \in\cT$ are nearest neigbours.
An admissible mesh $\cT$ is called \emph{$\zeta$-regular} for $\zeta \in (0, 1]$ if the following  conditions hold:
\begin{itemize}\setlength\itemsep{0ex}
\item (inner ball condition) $B(x_K,\zeta[\cT])\subseteq K$ for every $K\in \cT$;
\item (area bound) $|(K|L)|\geq \zeta [\cT]^{d-1}$ for every $K, L \in \cT$ with $K \sim L$.
\end{itemize}

\end{definition}

In view of Definition \ref{def:admissible-mesh}, we stress that the $\zeta$-regularity of a mesh implies its admissibility.

Every Voronoi tesselation yields an admissible mesh. Another example is obtained by slicing $\bOmega$ multiple times in the cardinal directions. In this case there are several degrees of freedom for placing the points $\{x_K\}_{K\in \cT}$, so these points are not necessarily uniquely determined by $\cT$. We refer to \cite{EyGaHe00} for more information.

In the following result we collect some basic geometric properties of $\zeta$-regular meshes that will be useful in the sequel.

\begin{lemma}\label{lem:geom-ineq}
Let $\cT$ be a $\zeta$-regular mesh of $\Omega$ for some $\zeta \in (0,1]$. Then there exists a constant $C < \infty$ depending only on $\Omega$ and $\zeta$ such that the following assertions hold:
\begin{enumerate}[label=(\roman*)]\setlength\itemsep{0ex}
\item \label{it:nbh-bound}
The number of nearest neighbours of any cell is bounded by $C$.
 \item
 \label{it:chain-bound}
Any pair of cells $K, L \in \cT$ can be connected (for some $N \geq 0$) by a path $(K_i)_{i=1}^N \subseteq \cT$ with $K_0 = K$, $K_N = L$, $K_{i-1} \sim K_{i}$ for $1 \leq i \leq N$, and
\begin{align}\label{eq:good-path}
	\sum_{i=1}^N d_{K_{i-1},K_i} \leq C d_{KL} \ .
\end{align}
 \item
  \label{it:zeta-bounds}
For all $K \sim L$ we have the following estimates:
\begin{align}
	\label{eq:diam-bound1} \diam(K) & \leq C d_{KL} \ , \\
	\label{eq:cell-upperbound}  C^{-1} | K | & \leq d_{KL} | (K|L) |  \leq C | K | \ , \\
	\label{eq:cell-bound1} |K| & \leq C |L|   \ .
\end{align}
\end{enumerate}
\end{lemma}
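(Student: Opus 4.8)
The plan is to extract each assertion from the two defining properties of $\zeta$-regularity — the inner ball condition $B(x_K, \zeta[\cT]) \subseteq K$ and the area lower bound $|(K|L)| \geq \zeta[\cT]^{d-1}$ — together with the elementary observation that every cell satisfies $|K| \leq \vol(B(0,[\cT])) = \omega_d [\cT]^d$ since $\diam(K) \leq [\cT]$, and $|K| \geq \vol(B(x_K,\zeta[\cT])) = \omega_d \zeta^d [\cT]^d$. Thus every cell has volume comparable to $[\cT]^d$, with constants depending only on $d$ (hence on $\Omega$) and $\zeta$; this two-sided volume bound will do most of the work.

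First I would prove \ref{it:nbh-bound}: if $K \sim L$, then since $x_L - x_K \perp (K|L)$ and $x_L \in \overline L$, the cell $L$ lies in a half-space bounded by the hyperplane through $(K|L)$; moreover $L \subseteq B(x_K, 2[\cT])$ because $\diam(K \cup L) \leq 2[\cT]$. Since the cells are pairwise disjoint and each neighbour $L$ has $|L| \geq \omega_d \zeta^d [\cT]^d$, while all neighbours are contained in the ball $B(x_K, 2[\cT])$ of volume $\omega_d 2^d [\cT]^d$, the number of neighbours is at most $(2/\zeta)^d$. For \ref{it:chain-bound}: given $K, L$, consider the line segment $[x_K, x_L]$ and let $(K_i)$ be the sequence of cells it meets, in order. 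Consecutive cells in this list are nearest neighbours (the segment crosses a common facet of positive $(d-1)$-measure, or one can perturb the segment slightly to ensure this using the inner ball condition to guarantee the crossing is transversal through a facet). Each cell in the chain lies within distance $[\cT]$ of the segment, hence in a tube of volume at most $C(d_{KL} + [\cT])[\cT]^{d-1}$; since each has volume $\geq \omega_d \zeta^d [\cT]^d$, the number $N$ of cells satisfies $N \leq C(d_{KL}/[\cT] + 1)$, and each step has length $d_{K_{i-1}K_i} \leq 2[\cT]$, giving $\sum d_{K_{i-1}K_i} \leq C([\cT] + d_{KL})$. To get the clean bound $\leq Cd_{KL}$ one uses $d_{KL} \geq \zeta[\cT]$ whenever $K \neq L$ (since $x_L \notin B(x_K,\zeta[\cT]) \subseteq K$), absorbing the additive $[\cT]$ term; when $K = L$ take the empty path.

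For \ref{it:zeta-bounds}: the bound $\diam(K) \leq [\cT] \leq \zeta^{-1} d_{KL}$ is immediate from $d_{KL} \geq \zeta[\cT]$, giving \eqref{eq:diam-bound1}. For \eqref{eq:cell-upperbound}, the upper bound $d_{KL}|(K|L)| \leq C|K|$ follows because $(K|L)$ is a facet of $K$ and the cone with base $(K|L)$ and apex $x_K$ is contained in $K$ (convexity of $K$, $x_K \in \overline K$), with volume $\tfrac1d \cdot |(K|L)| \cdot \dist(x_K, (K|L)) = \tfrac1d |(K|L)| d_{KL}$ using orthogonality of $x_K - x_L$ to $(K|L)$ and $x_L \in \overline L$ (so the foot of the perpendicular from $x_K$ to the hyperplane of $(K|L)$ is at distance exactly $d_{KL}$... more precisely $\dist(x_K,(K|L)) \le d_{KL}$, which suffices for the upper bound after also noting $\dist(x_K,(K|L)) \ge \zeta[\cT]$ from the inner ball condition); for the lower bound $C^{-1}|K| \leq d_{KL}|(K|L)|$, use $|K| \leq \omega_d[\cT]^d$, $d_{KL} \geq \zeta[\cT]$, and $|(K|L)| \geq \zeta[\cT]^{d-1}$, so $d_{KL}|(K|L)| \geq \zeta^2[\cT]^d \geq (\zeta^2/\omega_d)|K|$. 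Finally \eqref{eq:cell-bound1} is immediate: $|K| \leq \omega_d[\cT]^d \leq \omega_d\zeta^{-d}|L|$ for any two cells.

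The main obstacle is the transversality issue in \ref{it:chain-bound}: the segment $[x_K,x_L]$ could in principle pass through a lower-dimensional face shared by several cells, so the naive "sequence of cells the segment meets" need not consist of genuine nearest neighbours. The fix is a perturbation argument — replace $x_L$ by a point $x_L'$ in the open ball $B(x_L, \zeta[\cT]) \subseteq L$ chosen so that the segment $[x_K, x_L']$ avoids the (finite union of) lower-dimensional faces of the mesh, which is possible since that union is Lebesgue-null; one then runs the counting argument on $[x_K,x_L']$ and notes $d_{KL}$ and the length of the perturbed segment differ by at most $\zeta[\cT] \leq d_{KL}$. Everything else is routine volume and geometry bookkeeping with constants depending only on $d$ and $\zeta$.
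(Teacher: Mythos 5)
Your proposal is correct and follows essentially the same route as the paper: volume comparisons based on the inner-ball and interface-area conditions for \ref{it:nbh-bound} and \ref{it:zeta-bounds}, a perturbed straight segment with a packing/volume count for \ref{it:chain-bound} (the paper perturbs to a nearby curve avoiding the interface boundaries and packs inner balls into a cylinder, which is the same idea as your general-position endpoint perturbation and tube count), and an elementary convexity argument for \eqref{eq:cell-upperbound} (the paper uses the bipyramid over $(K|L)$ inside $K\cup L$ where you use the cone inside $K$). One small correction: the cone's volume is $\tfrac1d |(K|L)|\,\dist(x_K,H)$ with $H$ the hyperplane containing $(K|L)$, not $\tfrac1d |(K|L)|\,\dist(x_K,(K|L))$; since $H$ supports $K$ along the facet, the inner-ball condition still gives $\dist(x_K,H)\geq \zeta[\cT]$, and together with $d_{KL}\leq 2[\cT]$ your chain of inequalities goes through unchanged.
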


\begin{proof}
{\ref{it:nbh-bound}}:
Write $h = [\cT]$ for brevity.
By the inner ball condition we have
\begin{align*}
\bigcup_{L: L \sim K} B(x_L, \zeta h)
	\subseteq \bigcup_{L: L \sim K} L
	\subseteq B(x_K, 2 h) \ .
\end{align*}
Hence, Assertion {\ref{it:nbh-bound}} follows by comparing volumes.

{\ref{it:chain-bound}}: 
Let $\ell$ be the line segment from $x_K$ to $x_L$.
By path-connectedness, there exists a continuous curve $\gamma$ in the $[\cT]$-neighbourhood of $\ell$ that connects $x_K$ to $x_L$ and avoids the boundaries of the cell interfaces.
Let $\{ K_i \}_{i = 0}^N$ be the $N$ cells that successively intersect 
$\gamma$.
By $\zeta$-regularity, each of the balls $B(x_{K_i}, \zeta [\cT])$ is contained in $K_i$. In turn, each of the cells $K_i$ is contained in the cylinder of radius $2[\cT]$, whose central axis is obtained by extending $\ell$ by a distance $[\cT]$ in both directions. 

The number of disjoint balls of radius $r$ that can be packed into a cylinder of length $s$ and radius $R$ is bounded by $C_d s \frac{R^{d-1}}{r^d}$, where $C_d$ is a dimensional constant. Therefore, if follows that 
\begin{align*}
	N \leq C \frac{ | x_K - x_L| + 2 [\cT] }{[\cT]}   \ , 
\end{align*} 
where $C$ depends on $\zeta$ and $d$. As $\zeta [\cT] \leq \min_{K' \neq L'} | x_{K'} - x_{L'} |$, if follows that $N \leq C | x_K - x_L| / [\cT]$. As $d_{K_{i-1}, K_i} \leq 2 [\cT]$, this yields the desired bound.

\ref{it:zeta-bounds}:
To prove \eqref{eq:diam-bound1}, we use the inner ball condition to obtain
$
	d_{KL}\geq \zeta h \geq \zeta \diam(K) .
$

Since $x_K - x_L \perp (K|L)$, the volume of the bipyramid spanned by $(K|L)$, $x_K$ and $x_L$ is given by $\frac1d|(K|L)| d_{KL}$.
Using the inner ball condition we obtain
\begin{align*}
	 \frac1d d_{KL} | (K|L) |
	 & \leq  |K \cup L|
	   \leq | B(x_K, 2 h) |
	   \leq C | B(x_K, \zeta h) |
	   \leq C | K | \ ,
\end{align*}
which proves the upper bound in \eqref{eq:cell-upperbound}. To prove the corresponding lower bound, note that $\zeta h \leq d_{KL}$, hence by $\zeta$-regularity,
\begin{align*}
	|K| \leq | B(x_K, h) |
		\leq C h^d
		\leq C h |(K|L)|
		\leq C d_{KL} |(K|L)| \ .
\end{align*}

The inequality \eqref{eq:cell-bound1} follows from \eqref{eq:cell-upperbound}.
\end{proof}

The $\zeta$-regularity condition allows us to control the constants in several useful inequalities. Most notably, we will use the Poincar\'e inequality \cite{PaWe60}
\begin{align}\label{eq:Poincare}
	\int_K \phi^2 \dd x \leq \frac{\diam(K)^2}{\pi^2} \int_K |\nabla \phi|^2 \dd x
\end{align}
and the trace inequality
\begin{align}\label{eq:trace-embedding}
	\int_{\partial K} \phi^2 \dd S \leq C \diam(K) \int_K |\nabla \phi|^2 \dd x \ ,
\end{align}
both of which are valid for all $K \in \cT$ and all $\phi \in H^1(K)$ with $\int_K \phi \dd x = 0$; cf. \cite[Theorem 1.5.1.10]{Gris11}. The constant $C < \infty$ depends only on $\zeta$ and the dimension $d$.
For the convenience of the reader we record a simple consequence that we will use below.

\begin{lemma}\label{lem:trace-consequence}
Let $\zeta \in (0,1]$.
There exists a constant $C < \infty$ depending on $\zeta$ and $\Omega$, such that, for any $K \in \cT$ and any $\phi \in H^1(K)$,
\begin{align}\label{eq:trace-consequence}
	\dashint_{\partial K} |\phi| \dd S
	 \leq   \dashint_{K} |\phi| \dd x
	 		+ C [\cT] \sqrt{\dashint_{K} |\nabla \phi|^2 \dd x }  \ .
\end{align}
Moreover, for any convex subset $B \subseteq K$ with $|B| \geq \zeta |K|$, we have
\begin{align}\label{eq:Poincare-consequence}
	\bigg| \dashint_{K} \phi \dd x  - \dashint_{B} \phi \dd x \bigg|
	\leq C [\cT] \sqrt{\dashint_{K} |\nabla \phi|^2 \dd x }  \ .
\end{align}
\end{lemma}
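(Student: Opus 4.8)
The plan is to derive both inequalities from the Poincaré inequality \eqref{eq:Poincare} and the trace inequality \eqref{eq:trace-embedding}, after reducing to the mean-zero case.

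First I would prove \eqref{eq:trace-consequence}. Fix $K \in \cT$ and $\phi \in H^1(K)$, and set $c = \dashint_K \phi \dd x$ so that $\phi - c$ has zero mean on $K$. Writing $\phi = (\phi - c) + c$ and using the triangle inequality in $L^1(\partial K)$,
\begin{align*}
	\dashint_{\partial K} |\phi| \dd S
	\leq \dashint_{\partial K} |\phi - c| \dd S + |c|
	\leq \dashint_{\partial K} |\phi - c| \dd S + \dashint_K |\phi| \dd x \ .
\end{align*}
To control the first term, apply Jensen's inequality (or Cauchy--Schwarz) to pass from the $L^1$ to the $L^2$ average over $\partial K$, then invoke the trace inequality \eqref{eq:trace-embedding} applied to $\phi - c$:
\begin{align*}
	\dashint_{\partial K} |\phi - c| \dd S
	\leq \left( \dashint_{\partial K} |\phi - c|^2 \dd S \right)^{1/2}
	\leq \left( \frac{C \diam(K)}{|\partial K|} \int_K |\nabla \phi|^2 \dd x \right)^{1/2} \ .
\end{align*}
Here I would use $\zeta$-regularity to bound $\diam(K) \leq [\cT]$ from above and $|\partial K| \geq |(K|L)| \geq \zeta [\cT]^{d-1}$ from below (some interface exists since $\cT$ is a partition of $\bOmega$), together with $|K| \leq C[\cT]^d$, so that $\diam(K)/|\partial K| \leq C[\cT]^{2-d} \leq C[\cT]^2/|K|$. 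Substituting gives $\dashint_{\partial K}|\phi - c|\dd S \leq C[\cT]\,(\dashint_K |\nabla\phi|^2\dd x)^{1/2}$, which yields \eqref{eq:trace-consequence}.

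For \eqref{eq:Poincare-consequence}, fix a convex $B \subseteq K$ with $|B| \geq \zeta|K|$ and again set $c = \dashint_K \phi \dd x$. Then
\begin{align*}
	\left| \dashint_K \phi \dd x - \dashint_B \phi \dd x \right|
	= \left| \dashint_B (\phi - c) \dd x \right|
	\leq \frac{1}{|B|} \int_B |\phi - c| \dd x
	\leq \frac{1}{|B|} \int_K |\phi - c| \dd x \ ,
\end{align*}
and by Cauchy--Schwarz this is at most $\frac{|K|^{1/2}}{|B|} (\int_K |\phi-c|^2 \dd x)^{1/2} \leq \frac{1}{\zeta |K|^{1/2}} (\int_K |\phi - c|^2 \dd x)^{1/2}$. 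Now apply the Poincaré inequality \eqref{eq:Poincare} to $\phi - c$ to bound $\int_K|\phi-c|^2\dd x \leq \diam(K)^2 \pi^{-2}\int_K|\nabla\phi|^2\dd x$, and use $\diam(K) \leq [\cT]$ together with $|K|^{-1/2}(\int_K|\nabla\phi|^2\dd x)^{1/2} = (\dashint_K|\nabla\phi|^2\dd x)^{1/2}$. This produces \eqref{eq:Poincare-consequence} with a constant depending only on $\zeta$ and $d$.

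There is no serious obstacle here; the only point requiring a little care is tracking how the normalising factors $|\partial K|$, $|B|$, $|K|$ enter, and ensuring that $\zeta$-regularity (via Lemma \ref{lem:geom-ineq}, the inner ball condition, and the area bound) supplies the volume comparisons $|K| \asymp [\cT]^d$ and $|\partial K| \gtrsim [\cT]^{d-1}$ uniformly in $K$ and $\cT$, so that the resulting constant is indeed independent of the particular cell and mesh.
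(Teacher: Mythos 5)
Your proof is correct and follows essentially the same route as the paper: split off the cell average, control the mean-zero part via the trace inequality \eqref{eq:trace-embedding} (resp. the Poincar\'e inequality \eqref{eq:Poincare}), and convert the geometric factors $\diam(K)$, $|\partial K|$, $|K|$ using $\zeta$-regularity. The only (harmless) difference is in \eqref{eq:Poincare-consequence}, where you centre at the $K$-average and use Poincar\'e on $K$ alone together with $|B|\geq \zeta|K|$, whereas the paper applies Poincar\'e on both $B$ and $K$; both arguments yield the stated constant dependence.
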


\begin{proof}
Write $\bar \phi = \dashint_K \phi \dd x$. Using \eqref{eq:trace-embedding} and \eqref{eq:cell-upperbound} we obtain
\begin{align*}
 \| \phi \|_{L^1(\partial K)}
 	& \leq  |\partial K| \, |\bar \phi| + \| \phi - \bar \phi \|_{L^1(\partial K)}
 	\\ & \leq  |\partial K| \, |\bar \phi| + C \sqrt{|\partial K| \diam(K)} \|\nabla \phi\|_{L^2(K)} \ .
\end{align*}
As Lemma \ref{lem:geom-ineq} implies $|K| \leq C |\partial K|\, [\cT]$, we have $\diam(K) \leq C \frac{|\partial K|}{|K|}[\cT]^2$, which yields \eqref{eq:trace-consequence}.

To prove \eqref{eq:Poincare-consequence}, we write $\bphi_K = \dashint_{K} \phi \dd x$ and $\bphi_B  = \dashint_{B} \phi \dd x$ for brevity. Using the Poincar\'e inequality \eqref{eq:Poincare} we obtain
\begin{align*}
|K| \, |\bphi_B - \bphi_K|^2
	& \leq C |B| \, |\bphi_B - \bphi_K|^2
	\\& \leq C \int_B |\bphi_B - \phi|^2 \dd x + C \int_K |\phi - \bphi_K|^2 \dd x
	\\& \leq C [\cT]^2 \int_K |\nabla \phi|^2 \dd x \ ,
\end{align*}
which implies \eqref{eq:Poincare-consequence}.
\end{proof}

\subsection{Discrete optimal transport on admissible meshes}\label{sec:admissible2}

Given an admissible mesh $\cT$ of $\Omega$ we consider an irreducible Markov chain on $\cT$ with transition rates
\begin{align}\label{eq:rates}
	 R(K,L) =
	  	\frac{| (K|L) |}{|K|d_{KL}} \quad \text{ if $K \sim L$ }
\end{align}
and $R(K,L) = 0$ otherwise.

\begin{remark}[Formal consistency]\label{rem:consistency}
This choice of the transition rates $R(K,L)$ is motivated by the following formal consistency computation for the Dirichlet energy associated to our problem.
Let $\cU$ be the uniform probability measure on $\Omega$. For a  smooth function $\phi : \Omega \to \R$, the continuous action functional satisfies
\begin{align*}
	&\bA(\cU, \phi)
		 = \frac{1}{|\Omega|}\int_\Omega |\nabla \phi|^2 \dd x
		= - \frac{1}{|\Omega|} \sum_K \int_K \phi \Delta \phi \dd x
		\approx - \frac{1}{|\Omega|} \sum_K \phi(x_K)
			\int_{\partial K} \partial_n \phi \dd S
		\\&  \approx - \frac{1}{|\Omega|} \sum_{K,L} \phi(x_K)
			|(K|L)| \frac{\phi(x_L) - \phi(x_K)}{|x_K - x_L|}
		 =  \frac{1}{2|\Omega|} \sum_{K,L} 					 \frac{|(K|L)|}{|x_K - x_L|} \big(\phi(x_K) - \phi(x_L)\big)^2
 \ .
\end{align*}
Let $U_\cT \in \cP(\cT)$ be the canonical discretisation of $\cU$ given by $U_\cT = P_\cT \cU$, and let $\psi_\cT : \cT \to \R$ be given by $\psi_\cT(K) = \phi(x_K)$.
Then the latter expression is indeed of the form $\cA(U_\cT, \psi_\cT)$ defined in \eqref{eq:action-Markov}, provided that the coefficients $R(K,L)$ are defined by \eqref{eq:rates}.
\end{remark}

It is immediate to check that the unique invariant probability measure $\pi$ on $\cT$ is given $\pi(K) = \frac{|K|}{|\Omega|}$. Moreover, the detailed balance condition $\pi(K) R(K, L) = \pi(L) R(L, K)$ holds.

For this Markov chain, the discrete action functional $\cA_\cT : \cP(\cT) \times \R^\cT \to \R$ and its dual $\cA_\cT^* : \cP(\cT) \times \mathbb R^\cT \to \R \cup \{ +\infty\}$ defined in \eqref{eq:action-Markov} are given by
\begin{align*}
\cA_\cT(\emm, \psi) & = \frac12 \sum_{K,L \in \cT} \frac{|(K|L)|}{d_{KL}}
		 \theta_{KL}\bigg(\frac{\emm(K)}{|K|}, \frac{\emm(L)}{|L|}\bigg) \big( \psi(K) - \psi(L) \big)^2 \ ,
		 	\\
 \frac12 \cA_\cT^*(\emm, \sigma)
	& = \sup_{\psi\in \mathbb R^\cT} \bigg(\sum_{K \in \cT} \psi(K) \sigma(K) - \frac12 \cA_\cT(\emm, \psi) \bigg) \ .
\end{align*}
The main object of study in this paper is the associated transport metric, defined for $\emm_0, \emm_1 \in \cP(\cT)$ by
\begin{align*}
 \cW_\cT(\emm_0, \emm_1)
 & =
					 \inf \bigg\{\sqrt{ \int_0^1 \mathcal A_\cT^*(\emm_t, \dot \emm_t) \dd t} \ : \
				(\emm_t)_{t \in [0,1]} \in \cAC(\emm_0,\emm_1) \bigg\} \ .
\end{align*}

\section{A priori estimates}\label{sec:a-priori}

In this section we prove the necessary \emph{a priori} estimates.
Throughout this section, we fix a convex bounded open set $\Omega \subseteq \R^d$ and a $\zeta$-regular mesh $\cT$ for some $\zeta \in (0,1]$.
Moreover, we use the convention that the constants $C$ appearing in this section (which are allowed to change from line to line) may depend on $\Omega$ and $\zeta$, but not on other properties of $\cT$.

\begin{lemma}
\label{lem:neigbour-bound}
There exists a constant $C < \infty$ such that for any $K, L \in \cT$ with $K \sim L$,
\begin{align*}
	\cW_\cT( \delta_K, \delta_L) \leq C  d_{KL} \ .
\end{align*}
\end{lemma}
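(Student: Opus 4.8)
The plan is to estimate the transport cost between the Dirac masses $\delta_K$ and $\delta_L$ by explicitly constructing a competitor curve in $\cAC(\delta_K,\delta_L)$ and bounding its action. Since $K \sim L$ are neighbours, the natural competitor is the straight-line interpolation $\emm_t = (1-t)\delta_K + t \delta_L$, which transports mass directly along the single edge $(K,L)$. For this curve, the continuity equation is solved by a vector field $V$ supported on the pair $\{K,L\}$, with $\dot\emm_t(K) = -1$, $\dot\emm_t(L) = +1$. Using the formulation \eqref{eq: infsup} of $\cA_\cT^*$ via $\cK$, one checks that a constant-in-$x$, antisymmetric $V(K,L) = -V(L,K)$ can be chosen so that $\pi(K)R(K,L) V(K,L) = 1$, i.e. $V(K,L) = \frac{|\Omega|}{|(K|L)|}d_{KL}$ (using $\pi(K)R(K,L) = \frac{|(K|L)|}{|\Omega| d_{KL}}$).

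Next I would evaluate the action. By \eqref{eq:convex-action}–\eqref{eq:def-K}, along this curve
\begin{align*}
	\cK(\emm_t, V) = \pi(K) R(K,L) \frac{V(K,L)^2}{\theta_{KL}\big(\emm_t(K)/\pi(K), \emm_t(L)/\pi(L)\big)} \ .
\end{align*}
The denominator involves $\theta_{KL}$ evaluated at $\big((1-t)|\Omega|/|K|, t|\Omega|/|L|\big)$, which degenerates to $0$ at the endpoints $t = 0$ and $t = 1$ (since $\theta_{KL}(a,0) = 0$ and $\theta_{KL}(0,b)=0$ by concavity, $1$-homogeneity and $\theta_{KL} \ge 0$ forcing vanishing on the axes — more precisely $\theta_{KL}(a,0) \le \lambda_{KL} a$, but it can actually be $0$). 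So the straight-line competitor has \emph{infinite} action, and I must instead use a slowed-down reparametrisation: take $\emm_t$ to move quickly through the endpoint regions is wrong — rather, I want to \emph{slow down} near the endpoints. The standard trick is: the metric $\cW_\cT$ restricted to the two-point space $\{\emm(K) + \emm(L) = 1\}$ with weight $\pi$ is finite precisely because $\int_0^1 \frac{dt}{\theta(1-t,t)}$ — after optimal reparametrisation — is finite; indeed for admissible means $\theta(a,b) \ge \min\{a,b\}$ (Lemma, part \ref{it:pos-mena}), and $\int_0^1 \frac{ds}{\min\{s,1-s\}} = \infty$, so this still diverges. The correct statement is that the \emph{metric} (infimum of $\sqrt{\int \cA^*}$ over reparametrisations) is finite: writing $\emm_t(K) = \rho(t)$, the cost is $\inf \int_0^1 \frac{\dot\rho(t)^2 \, c}{\theta_{KL}(\cdots)} dt$, and choosing $\rho$ so that $\dot\rho \to 0$ fast enough at the endpoints makes this finite. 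Concretely, one reduces to the one-dimensional integral $\int_0^1 \frac{dr}{\sqrt{\theta_{KL}(r|\Omega|/|K|, (1-r)|\Omega|/|L|)}}$ by optimal reparametrisation (Cauchy–Schwarz / the standard fact that the action distance equals $\int \sqrt{\text{metric tensor}}$); since $\theta_{KL}(s,t) \ge \min\{s,t\}$ only controls things away from the axes, but near $r = 0$ we have $\theta_{KL}(r|\Omega|/|K|, |\Omega|/|L|) \sim r \, \partial_1\theta_{KL}(0,|\Omega|/|L|) \cdot (|\Omega|/|K|)$... actually by concavity and positivity $\theta_{KL}(r c_1, c_2) \ge r\,\theta_{KL}(c_1, c_2)$ is false; rather $\theta_{KL}(rc_1 + (1-r)\cdot 0, c_2) \ge r\theta_{KL}(c_1,c_2) + (1-r)\theta_{KL}(0,c_2) \ge r\theta_{KL}(c_1,c_2)$, so $\theta_{KL} \gtrsim r$ near $0$, giving $\int_0^{1/2} r^{-1/2}dr < \infty$. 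Good.

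So the key steps are: (1) reduce $\cW_\cT(\delta_K,\delta_L)$ to a one-dimensional minimisation over curves $\rho : [0,1] \to [0,1]$ using a vector field supported on the single edge; (2) identify the action of such a curve as $\int_0^1 \frac{|\Omega| d_{KL}}{|(K|L)|} \frac{\dot\rho^2}{\theta_{KL}(\rho|\Omega|/|K|, (1-\rho)|\Omega|/|L|)}\,dt$; (3) after optimal reparametrisation this equals $\Big(\int_0^1 \sqrt{\frac{|\Omega| d_{KL}}{|(K|L)| \,\theta_{KL}(r|\Omega|/|K|,(1-r)|\Omega|/|L|)}}\, dr\Big)^2$; (4) bound this integral: by $\zeta$-regularity and Lemma \ref{lem:geom-ineq}, $\frac{|\Omega| d_{KL}}{|(K|L)|} \le C \frac{|\Omega| d_{KL}^2}{|K|} \le C' d_{KL}^2 / [\cT]^d \cdot$... — here I need the scaling to come out as $d_{KL}^2$. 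Note $\frac{|\Omega| d_{KL}}{|(K|L)|} \le C|\Omega| d_{KL}^2 / |K| $ and the $\theta$-integral $\int_0^1 \theta_{KL}(r|\Omega|/|K|, (1-r)|\Omega|/|L|)^{-1/2} dr$: using $\theta_{KL}(s,t)\ge \min\{s,t\}$ and $|K| \asymp |L| \asymp [\cT]^d$, $\theta_{KL} \gtrsim [\cT]^{-d}\min\{r,1-r\}$ away from axes and the endpoint behaviour handled as above gives this integral $\lesssim [\cT]^{d/2}$. Multiplying: $\sqrt{|\Omega| d_{KL}^2/|K|} \cdot [\cT]^{d/2} \lesssim d_{KL} \cdot [\cT]^{-d/2} \cdot [\cT]^{d/2} = C d_{KL}$, squaring back and taking square root yields $\cW_\cT(\delta_K,\delta_L) \le C d_{KL}$.

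The main obstacle is handling the degeneracy of $\theta_{KL}$ on the coordinate axes cleanly: one must verify that the one-dimensional integral $\int_0^1 \theta_{KL}(rc_1,(1-r)c_2)^{-1/2}\,dr$ converges and is controlled uniformly in $\cT$ by the right power of $[\cT]$. This requires the lower bound $\theta_{KL}(s,t) \ge \min\{s,t\}$ (Lemma \ref{lem:}\ref{it:pos-mena}) near the diagonal together with the linear-in-$r$ lower bound $\theta_{KL}(rc_1, c_2) \ge r\,\theta_{KL}(c_1,c_2)$ near the axes (from concavity plus positivity), and then the comparability estimates $|K| \asymp |L| \asymp d_{KL}|(K|L)| \asymp d_{KL}^d$ from Lemma \ref{lem:geom-ineq} to collapse all geometric factors into a clean multiple of $d_{KL}$. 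One should also double-check that the reparametrisation argument (that the action distance between two points equals the Riemannian length integral) applies here despite the action functional only being defined via the abstract $\cA_\cT^*$; this is standard for these discrete transport metrics and follows from the homogeneity of $\cK$ in $V$, but it is worth stating explicitly.
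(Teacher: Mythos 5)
Your proposal is correct, and in substance it follows the same route as the paper, except that the paper compresses the key step into a citation: it invokes \cite[Lemma 2.13]{ErMa12}, which gives precisely the bound $\cW_\cT(\delta_K,\delta_L)^2 \leq c\, d_{KL}\max\{|K|,|L|\}/|(K|L)|$, and then concludes by $\zeta$-regularity via \eqref{eq:cell-upperbound} and \eqref{eq:cell-bound1} --- exactly the geometric reduction you perform at the end. What you add is a self-contained re-derivation of that cited estimate: the one-edge competitor, the reduction of the action to
$\frac{|\Omega|\,d_{KL}}{|(K|L)|}\int_0^1 \dot\rho^2\,\theta_{KL}\bigl(\rho\,|\Omega|/|K|,(1-\rho)\,|\Omega|/|L|\bigr)^{-1}\,dt$,
and the reparametrised length integral. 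Two small remarks to streamline it: (i) for the endpoint degeneracy you do not need the concavity argument near the axes at all, since $\theta_{KL}(rc_1,(1-r)c_2)\geq \min\{rc_1,(1-r)c_2\}$ from Lemma \ref{lem:}\ref{it:pos-mena} already yields
$\int_0^1 \theta_{KL}(rc_1,(1-r)c_2)^{-1/2}\,dr \leq 2c_1^{-1/2}+2c_2^{-1/2}$,
which with $c_1=|\Omega|/|K|$, $c_2=|\Omega|/|L|$ gives the bound $\cW_\cT(\delta_K,\delta_L)^2\leq C\, d_{KL}\max\{|K|,|L|\}/|(K|L)|$ directly, without passing through the cruder scaling $|K|\asymp[\cT]^d$; and (ii) for the reparametrisation step, since you only need an upper bound on $\cW_\cT$, the construction direction suffices: parametrise $\rho$ so that $|\dot\rho| = L\sqrt{g(\rho)}$ with $L=\int_0^1 g(r)^{-1/2}\,dr<\infty$, which produces an admissible curve (the speed is bounded because $g$ is bounded above) of action exactly $L^2$; no appeal to the full equivalence of energy and length functionals is needed.
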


\begin{proof}
It follows from \cite[Lemma 2.13]{ErMa12} that
\begin{align*}
 \cW_\cT( \delta_K, \delta_L )^2 \leq c \frac{d_{KL}}{|(K|L)|} \max\{ |K| , |L| \}
\end{align*}
for some universal constant $c < \infty$. The claim follows by $\zeta$-regularity in view of \eqref{eq:cell-upperbound}.
\end{proof}

To compare discrete and continuous measures, we use the canonical projection operator $P_\cT$ defined in \eqref{eq:proj-embed}. The associated embedding operator $\cQ_\cT : \cP(\cT) \to \cP(\bOmega)$ is given by
\begin{align}\label{eq:embedding}
	\cQ_\cT \emm  & = \sum_{K \in \cT} \emm(K) \cU_K \qquad \text{for } \emm  \in \cP(\cT) \ ,
\end{align}
where $\cU_K$ denotes the uniform probability measure on $K$.
Note that $P_\cT\circ \cQ_\cT$ is the identity on $\cP(\cT)$.
The following lemma quantifies how close $\cQ_\cT\circ P_\cT$ is to the identity on $\cP(\bOmega)$.

\begin{lemma}[Consistency]\label{lem:almost-identity}
For all $\mu\in \cP(\bOmega)$ we have
\begin{align*}
	\bW(\mu, \cQ_\cT P_\cT \mu)\leq [\cT] \ .
\end{align*}
\end{lemma}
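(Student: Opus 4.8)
The plan is to exhibit an explicit coupling between $\mu$ and $\cQ_\cT P_\cT \mu$ that moves mass at distance at most $[\cT]$, and then estimate the $2$-Kantorovich cost of this coupling directly. First I would observe that $\cQ_\cT P_\cT \mu = \sum_{K \in \cT} \mu(K)\, \cU_K$, so the natural coupling keeps mass within each cell: define $\gamma \in \cP(\bOmega \times \bOmega)$ by
\begin{align*}
	\gamma(A \times B) = \sum_{K \in \cT} \frac{\mu(A \cap K)\, |B \cap K|}{|K|}
\end{align*}
for Borel sets $A, B \subseteq \bOmega$ (with the convention that a term vanishes when $\mu(K) = 0$, or alternatively restricting the sum to cells with $|K| > 0$). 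One checks immediately that the first marginal of $\gamma$ is $\mu$, since $\sum_K \mu(A \cap K) = \mu(A)$, and the second marginal is $\sum_K \mu(K) \cU_K = \cQ_\cT P_\cT \mu$. Thus $\gamma \in \Gamma(\mu, \cQ_\cT P_\cT\mu)$.

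Next I would bound the transport cost. Because $\gamma$ is supported on $\bigcup_K (K \times K)$ and each cell $K$ is convex with $\diam(K) \leq [\cT]$, any pair $(x,y)$ in the support satisfies $|x - y| \leq \diam(K) \leq [\cT]$. Hence
\begin{align*}
	\bW(\mu, \cQ_\cT P_\cT \mu)^2
	\leq \int_{\bOmega \times \bOmega} |x-y|^2 \dd\gamma(x,y)
	\leq [\cT]^2 \int_{\bOmega\times\bOmega} \dd\gamma
	= [\cT]^2 \ ,
\end{align*}
which gives the claim after taking square roots. This uses only the definition \eqref{eq:Wp} of $\bbW_2$ and the trivial fact that $\gamma$ has total mass $1$; no regularity of the mesh beyond $[\cT] = \max_K \diam(K)$ is needed.

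There is essentially no serious obstacle here; the only point requiring a little care is the measure-theoretic verification that $\gamma$ is a well-defined probability measure with the correct marginals — in particular checking that the disintegration over cells is consistent and that cells $K$ with $|K| = 0$ (which, being convex with nonempty interior by Definition \ref{def:mesh}, actually cannot occur for interior cells, and can be safely excluded in general) do not cause trouble. One can make this fully rigorous by noting that $\{K\}_{K \in \cT}$ is a finite partition of $\bOmega$, writing $\mu = \sum_K \mu\llcorner K$, and coupling each piece $\mu \llcorner K$ with $\mu(K) \cU_K$ via the product measure $\frac{1}{\mu(K)}(\mu \llcorner K) \otimes (\mu(K)\cU_K)$ on $K \times K$; summing these couplings yields $\gamma$, and the cost estimate is then immediate from $\diam(K) \leq [\cT]$.
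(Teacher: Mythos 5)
Your proposal is correct and follows essentially the same route as the paper: couple $\mu$ with $\cQ_\cT P_\cT\mu$ cell by cell (the paper takes an arbitrary coupling $\gamma_K$ of the normalised restriction of $\mu$ to $K$ with $\cU_K$; you take the explicit product coupling) and bound the quadratic cost by $\diam(K)^2 \leq [\cT]^2$.
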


\begin{proof}
For $U \in \cT$, let $\gamma_K \in \cP(\bOmega \times \bOmega)$ be any coupling between $\tilde\mu_K$, the normalised restriction of $\mu$ to $K$, and $\cU_K$, the uniform probability measure on $K$. It then follows that $\gamma := \sum_{K \in \cT} \mu(K) \gamma_K$ belongs to $\Gamma(\mu, \cQ_\cT P_\cT \mu)$. Consequently,
\begin{align*}
	\bW(\mu, \cQ_\cT P_\cT \mu)^2
		& \leq \int_{\bOmega \times \bOmega} |x-y|^2 \dd \gamma(x,y)
		\\& = \sum_{K} \mu(K) \int_{K \times K} |x-y|^2 \dd \gamma_K(x,y)
	    \leq \sum_{K} \mu(K) \diam(K)^2
		\leq [\cT]^2 \ .
\end{align*}
\end{proof}

The following result provides a coarse bound for the discrete distance $\cW_\cT$ in terms of the Kantorovich distance $\bW$.

\begin{lemma}[Upper bound for $\cW_\cT$]\label{lem:distconttodisc}
For all $\emm_0, \emm_1\in \cP(\cT)$ we have
\begin{align}
    \cW_\cT(\emm_0, \emm_1)
    	\leq C \Big( \bW(\cQ_\cT \emm_0, \cQ_\cT \emm_1) + [\cT] \Big) \ .
    \end{align}
\end{lemma}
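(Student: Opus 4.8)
The plan is to reduce the discrete transport distance $\cW_\cT(\emm_0,\emm_1)$ to a problem about transporting mass along the graph on $\cT$, and then compare that with the continuous Kantorovich distance between the embedded measures $\cQ_\cT\emm_0$ and $\cQ_\cT\emm_1$. The starting point is the elementary observation that $\cW_\cT$ is a genuine metric on $\cP(\cT)$, so by the triangle inequality it suffices to estimate $\cW_\cT(\delta_K,\delta_L)$ for pairs of cells and then paste together such estimates along paths. Concretely, I would take an optimal (or near-optimal) coupling $\gamma\in\Gamma(\cQ_\cT\emm_0,\cQ_\cT\emm_1)$ for the continuous problem; this is a transport plan moving mass from the cells occupied by $\cQ_\cT\emm_0$ to those occupied by $\cQ_\cT\emm_1$. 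Projecting $\gamma$ onto $\cT\times\cT$ (i.e. recording only which cell a point belongs to) yields a discrete coupling $\hat\gamma$ between $\emm_0$ and $\emm_1$, and $\int_{\cT\times\cT} d(x_K,x_L)^2\,\dd\hat\gamma(K,L)$ can be controlled by $\int |x-y|^2\,\dd\gamma(x,y)$ plus an error of order $[\cT]^2$, since any two points in neighbouring or identical cells differ from $x_K$, $x_L$ by at most $\diam(K)+\diam(L)\le 2[\cT]$, and more generally $d(x_K,x_L)\le |x-y| + \diam(K) + \diam(L)$ for $x\in K$, $y\in L$.

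Next I would turn the discrete coupling $\hat\gamma$ into a curve in $\cP(\cT)$ with controlled action. For each pair $(K,L)$ appearing in $\hat\gamma$, Lemma~\ref{lem:geom-ineq}\ref{it:chain-bound} furnishes a path $K=K_0\sim K_1\sim\dots\sim K_N=L$ in the mesh with $\sum_i d_{K_{i-1}K_i}\le C d_{KL}$; along each edge of this path I move the mass $\hat\gamma(K,L)$ using the single-edge bound of Lemma~\ref{lem:neigbour-bound}, namely $\cW_\cT(\delta_{K'},\delta_{L'})\le C d_{K'L'}$ for $K'\sim L'$. Since $\cW_\cT$ is a metric, concatenating these moves and using the triangle inequality gives, for each fixed pair, $\cW_\cT$-cost at most $C\sum_i d_{K_{i-1}K_i} \le C d_{KL}$. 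The subtlety is that moving several "packets" of mass simultaneously is not simply additive in $\cW_\cT$ (which scales like a square root of an action), so one should instead bound $\cW_\cT(\emm_0,\emm_1)^2$ directly: glue the edgewise continuity-equation solutions into one global solution $(\emm_t,V_t)\in\cCE(\emm_0,\emm_1)$ whose action $\cK(\emm_t,V_t)$ is, by convexity of $K_{xy}$ and the single-edge estimates, bounded by $C\sum_{(K,L)}\hat\gamma(K,L)\, d_{KL}^2$. This is exactly the transport-cost functional for $\hat\gamma$ on $\cT$, which we already bounded by $C\big(\bW(\cQ_\cT\emm_0,\cQ_\cT\emm_1)^2+[\cT]^2\big)$.

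Combining the two steps yields $\cW_\cT(\emm_0,\emm_1)^2 \le C\big(\bW(\cQ_\cT\emm_0,\cQ_\cT\emm_1)^2+[\cT]^2\big)$, and taking square roots together with $\sqrt{a+b}\le\sqrt a+\sqrt b$ gives the claimed linear bound
\[
	\cW_\cT(\emm_0,\emm_1)\le C\big(\bW(\cQ_\cT\emm_0,\cQ_\cT\emm_1)+[\cT]\big).
\]
I expect the main obstacle to be the gluing/superposition step: one must check that combining the edgewise flows along all the paths simultaneously produces an admissible solution of the discrete continuity equation whose action is controlled by the sum of the individual actions. This requires either a direct superposition estimate using the convexity and homogeneity of $K_{xy}$ in $(m,V)$, or alternatively an argument that decomposes the transport into finitely many "unit" steps performed sequentially in time and uses the metric triangle inequality for $\cW_\cT$ at the price of a dimensional constant; either way, the $\zeta$-regularity (bounded number of neighbours, good paths, and the volume comparison~\eqref{eq:cell-upperbound}) is what keeps the constant $C$ independent of the particular mesh. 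A minor additional point is that a near-optimal coupling must be used if $\bW$ is not attained, but since $\cQ_\cT\emm_0,\cQ_\cT\emm_1\in\cP(\bOmega)$ and $\bOmega$ is compact, the infimum in \eqref{eq:Wp} is in fact attained.
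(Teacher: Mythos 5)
Your proposal follows essentially the same route as the paper's proof: project an optimal continuous coupling of $\cQ_\cT\emm_0,\cQ_\cT\emm_1$ onto the cells, bound $\cW_\cT(\delta_K,\delta_L)\leq C\,d_{KL}$ via the single-edge estimate of Lemma~\ref{lem:neigbour-bound}, the $\zeta$-regular paths of Lemma~\ref{lem:geom-ineq}, and the triangle inequality, and then compare $d_{KL}$ with $|x-y|$ up to an additive $O([\cT])$ (note the squared-cost error contains the cross term $4[\cT]|x-y|$ rather than just $[\cT]^2$, but Minkowski's inequality in $L^2(\gamma)$ still yields the claimed linear bound). The superposition step you single out as the main obstacle is precisely the convexity of $(\emm_0,\emm_1)\mapsto\cW_\cT(\emm_0,\emm_1)^2$, which the paper invokes directly from \cite[Proposition 2.11]{ErMa12} and which is proved by exactly the convexity and $1$-homogeneity argument for $K_{xy}$ that you sketch, so your plan closes without any genuinely new ingredient.
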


\begin{proof}
Let $\gamma \in \Gamma_o(\cQ \emm_0, \cQ \emm_1)$ be an optimal plan for $\bW$, and set $\gamma_{KL} := \gamma(K \times L)$ for brevity.
Observe that $\emm_0 = \sum_{K,L} \gamma_{KL} \delta_K$ and $\emm_1 = \sum_{K,L} \gamma_{KL} \delta_L$.
Therefore, by convexity of the squared distance (cf. \cite[Proposition 2.11]{ErMa12}) we have
\begin{align*}
	\cW_\cT(\emm_0, \emm_1)^2
		\leq
		\sum_{K,L} \gamma_{KL} \cW_\cT( \delta_K , \delta_L)^2 \ .
\end{align*}
For $K, L \in \cT$, take a connecting path $\{K_i\}_{i=0}^N \subseteq \cT$ with $K_0 = K$ and $K_N = L$ satisfying the $\zeta$-regularity estimate \eqref{eq:good-path}. Using Lemma \ref{lem:neigbour-bound} we obtain
\begin{align*}
	\cW_\cT( \delta_K, \delta_L )
	\leq \sum_{i=1}^N	\cW_\cT( \delta_{K_{i-1}}, \delta_{K_{i}} )
	\leq C \sum_{i=1}^N d_{K_{i-1},K_i}
	\leq C d_{KL} \ .
\end{align*}
For all $x \in K$ and $y \in L$ we have $d_{KL}^2 = |x_{K} - x_{L}|^2 \leq |x - y|^2 + C [\cT]$. Combining these estimates, the result follows.
\end{proof}

The following lemma provides a coarse lower bound for the discrete dual action functional in terms of its continuous counterpart.

\begin{lemma}[Bound for the dual action functional] \label{lem:a-priori}
There exists a constant $C < \infty$ such that for any $\emm \in \cP(\cT)$, and any $\sigma \in \cM_0(\cT)$,
we have
\begin{align*}
	\bA^*(H_{[\cT]} \cQ_\cT \emm, H_{[\cT]} \cQ_\cT \sigma)
		\leq C \cA_\cT^*(\emm, \sigma) \ .
\end{align*}
\end{lemma}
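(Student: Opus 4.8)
The naive route — applying the contraction bound \eqref{eq:action-heat} to reduce matters to $\bA^*(\cQ_\cT\emm,\cQ_\cT\sigma)\le C\cA_\cT^*(\emm,\sigma)$ — is hopeless: $\cQ_\cT\emm$ may vanish on cells where $\sigma$ does not, so the left-hand side is then $+\infty$ while the right-hand side is finite. The smoothing $H_{[\cT]}$ on the left must therefore be exploited to regularise the density, and the plan is to do this at the level of the dual (variational) formulae of both action functionals.

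First I would set $u$ for the density of $H_{[\cT]}\cQ_\cT\emm$ (positive by Lemma \ref{lem:P-delta}) and use the scaling invariance of the Legendre transform to write
\[
\bA^*(H_{[\cT]}\cQ_\cT\emm,H_{[\cT]}\cQ_\cT\sigma)=\sup_{\phi\in\cD}\frac{\ip{\phi,H_{[\cT]}\cQ_\cT\sigma}^2}{\int_\Omega|\nabla\phi|^2\,u\,\dd x},\qquad \cA_\cT^*(\emm,\sigma)=\sup_{\tilde\psi}\frac{\ip{\tilde\psi,\sigma}^2}{\cA_\cT(\emm,\tilde\psi)}.
\]
By self-adjointness of $H_{[\cT]}$, $\ip{\phi,H_{[\cT]}\cQ_\cT\sigma}=\ip{H_{[\cT]}\phi,\cQ_\cT\sigma}=\sum_K\sigma(K)\dashint_K H_{[\cT]}\phi=\ip{\psi,\sigma}$, where $\psi(K):=\dashint_K H_{[\cT]}\phi$. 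Hence the whole statement reduces to the single inequality
\[
\cA_\cT(\emm,\psi)\le C\int_\Omega|\nabla\phi|^2\,u\,\dd x ,\qquad \psi(K)=\dashint_K H_{[\cT]}\phi .
\]

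To prove this, write $\tilde\phi:=H_{[\cT]}\phi$. For $K\sim L$ let $B_{KL}:=B(x_K,Cd_{KL})\cap\Omega$; by $\zeta$-regularity $B_{KL}$ is convex, contains $K\cup L$, and has $|B_{KL}|\sim[\cT]^d$. A Poincar\'e estimate on $B_{KL}$ gives $(\psi(K)-\psi(L))^2\le C[\cT]^2\dashint_{B_{KL}}|\nabla\tilde\phi|^2$, and combining it with $\tfrac{|(K|L)|}{d_{KL}}\le C[\cT]^{d-2}$, with $\theta_{KL}\!\big(\tfrac{\emm(K)}{|K|},\tfrac{\emm(L)}{|L|}\big)\le\tfrac{\emm(K)}{|K|}+\tfrac{\emm(L)}{|L|}$, and with $|B_{KL}|\ge c[\cT]^d$ bounds each summand of $\cA_\cT(\emm,\psi)$ by $C\big(\tfrac{\emm(K)}{|K|}+\tfrac{\emm(L)}{|L|}\big)\int_{B_{KL}}|\nabla\tilde\phi|^2$. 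Summing over $K\sim L$, using the bounded overlap of $\{B_{KL}\}$ and the neighbour bound of Lemma \ref{lem:geom-ineq}, yields $\cA_\cT(\emm,\psi)\le C\int_\Omega W\,|\nabla\tilde\phi|^2\,\dd x$ with $W(x)\le C\sum_{K:\,|x_K-x|\le C[\cT]}\tfrac{\emm(K)}{|K|}$. The Bakry--\'Emery estimate \eqref{eq:Bakry-Emery} gives $|\nabla\tilde\phi|^2=|\nabla H_{[\cT]}\phi|^2\le H_{[\cT]}(|\nabla\phi|^2)$, and self-adjointness of $H_{[\cT]}$ turns the last bound into $\cA_\cT(\emm,\psi)\le C\int_\Omega (H_{[\cT]}W)\,|\nabla\phi|^2\,\dd x$. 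Since $u(y)=\sum_K\tfrac{\emm(K)}{|K|}\int_K h_{[\cT]}(y,z)\,\dd z$ while $H_{[\cT]}W(y)\le C\sum_K\tfrac{\emm(K)}{|K|}\int_{B(x_K,C[\cT])}h_{[\cT]}(y,x)\,\dd x$, the proof finishes once we know $H_{[\cT]}W\le C u$ pointwise, which in turn follows from the cell-wise heat-kernel comparison $\int_{B(x_K,C[\cT])}h_{[\cT]}(y,x)\,\dd x\le C\int_K h_{[\cT]}(y,z)\,\dd z$, uniformly in $y\in\bOmega$.

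I expect this last comparison to be the one real obstacle. It amounts to a Harnack-type bound $\sup_{B(x_K,C[\cT])}h_{[\cT]}(y,\cdot)\le C\inf_{B(x_K,C[\cT])}h_{[\cT]}(y,\cdot)$ on spatial balls of radius $\ll\sqrt{[\cT]}$, and this cannot be extracted from the Gaussian estimates \eqref{eq: heat kernel estimates} alone, since their upper and lower Gaussian exponents need not agree; one has to invoke the parabolic Harnack inequality / Li--Yau gradient estimate for the Neumann heat kernel, which is available because $\Omega$ is convex and hence a CD$(0,d)$ space. The remaining ingredients — the two variational identities, the cell-wise Poincar\'e step, the overlap count, and Bakry--\'Emery — are routine and use only facts already recorded in Sections \ref{sec:preliminaries}--\ref{sec:a-priori}.
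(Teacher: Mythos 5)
Your proposal is correct, but it takes a genuinely different route from the paper's proof. The paper argues on the primal side: starting from a discrete momentum field $V$ as in \eqref{eq: infsup}, it solves a Neumann problem on each cell, glues the gradients into a flux $\Phi$ with $Q_\cT\sigma+\dive\Phi=0$, bounds $\bA^*(Q_\cT\cdot,Q_\cT\sigma)$ by a discrete kinetic energy with harmonic means, and then handles the possible vanishing of the density by passing to the neighbourhood-regularised measure $\tilde\emm$, sandwiching $H_{[\cT]}Q_\cT\emm$ between multiples of $H_{[\cT]}Q_\cT\tilde\emm$ (Lemma \ref{lem:symmetrized}), and finishing with monotonicity \eqref{eq:action-monotone} and the contraction \eqref{eq:action-heat}. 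You instead argue on the dual side: after the Rayleigh-quotient reduction and the self-adjointness pairing, everything rests on the single Dirichlet-form inequality $\cA_\cT(\emm,\psi)\le C\int_\Omega|\nabla\phi|^2u\,\dd x$ with $\psi(K)=\dashint_K H_{[\cT]}\phi$, which you prove via local Poincar\'e on enlarged convex sets, the bound $\theta_{KL}(\rho(K),\rho(L))\le\rho(K)+\rho(L)$, bounded overlap, and Bakry--\'Emery \eqref{eq:Bakry-Emery}; this avoids the per-cell elliptic solves and the trace inequality. Both routes, however, hinge on the same kernel comparison at spatial scale $[\cT]$ and time $a=[\cT]$: your final claim $\int_{B(x_K,C[\cT])}h_{[\cT]}(y,x)\,\dd x\le C\int_K h_{[\cT]}(y,z)\,\dd z$ is exactly \eqref{eq: quotient estimate} from Lemma \ref{lem:symmetrized} (combined with $|B(x_K,C[\cT])|\le C|K|$ from $\zeta$-regularity), so the ``obstacle'' you flag is already resolved in the paper, and by essentially the mechanism you name: not the Gaussian bounds \eqref{eq: heat kernel estimates} alone, but the CD$(0,d)$ input in the form of the local logarithmic Sobolev inequality \eqref{eq:LSI-local}, which gives $|\nabla\log h_a|\le C/a$ for small $a$ and hence a uniform Harnack constant over distances of order $[\cT]=a$ (the crucial scaling of Remark \ref{rem:time-choice}, which your construction respects). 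One caveat on phrasing: the parabolic Harnack inequality as such compares different times and does not directly yield this same-time comparison; the precise tool is the Hamilton-type estimate encoded in \eqref{eq:LSI-local}. With that reference supplied, your argument is complete and is, if anything, slightly leaner than the paper's, while the paper's primal construction has the merit of producing an explicit continuous flux from the discrete one.
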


\begin{proof}
Let $m\in \cP(\cT)$ and $\sigma \in \cM_0(\cT)$ be such that $\cA_\cT^*(\emm, \sigma) < \infty$. 
In view of \eqref{eq: infsup} there exists an anti-symmetric momentum vector field $V: \cT\times \cT \to \R$ solving the discrete continuity equation
\begin{align}\label{eq:discrete-cont}
\sigma(K) + \sum_L \frac{|(K|L)|} {d_{KL}} V({K,L})  = 0 \ .
\end{align}
For $K \in \cT$, define $g : \partial K \to \R$ by $g(x) = \frac{V(K,L)}{d_{KL}}$ for $x \in (K|L)$ with $K \sim L$, and $g(x) = 0$ for $x \in \partial \Omega$.
It then follows that $\sigma(K) = - \int_{\partial K} g \dd S$.
Therefore, denoting the outward unit normal on $\partial K$ by $\bfn$, the Neumann problem
 \begin{equation}\label{eq:Neumann-problem}
  \begin{cases}
  - \Delta \phi_K = \sigma(K)/ |K| & \text{in } K\\
   \partial_\bfn \phi_K  = g &\text{on } \partial K
  \end{cases}
 \end{equation}
has a unique solution $\phi_K \in H^1(K)$ with $\int_K \phi_K \dd x =0$.
Since $\sigma(K) \leq \|g\|_{L^1(\partial K)}$ by \eqref{eq:discrete-cont}, we obtain
\begin{equation}\begin{aligned}
\label{eq: Poincare estimate on cells}
 \int_K |\nabla \phi_K|^2 \dd x
  &= \frac{1}{|K|} \int_K \sigma(K) \phi_K \dd x + \int_{\partial K}  g \phi_K \dd S
  \\& \leq  \frac{|\sigma(K)|}{|K|}   \, \|\phi_K\|_{L^1(K)}
    +  \| g \|_{L^2(\partial K)} \| \phi_K\|_{L^2(\partial K)}
\\& \leq  \|g\|_{L^2(\partial K)}\left( \sqrt \frac{|\partial K|}{|K|} \|\phi_K\|_{L^2(K)} + \| \phi_K\|_{L^2(\partial K)}  \right) \ .
\end{aligned}\end{equation}
Since $\int_K \phi_K \dd x = 0$, the right-hand side can be bounded in terms of $\|\nabla \phi_K\|_{L^2(K)}$ using the Poincar\'e inequality \eqref{eq:Poincare} and the trace inequality \eqref{eq:trace-embedding}. Moreover, using the $\zeta$-regularity inequalities \eqref{eq:diam-bound1} \eqref{eq:cell-upperbound}, and the bound on the number of neighbours from Lemma \ref{lem:geom-ineq}, we obtain $|\partial K| \diam(K) \leq C |K|$.
Consequently,
\begin{align*}
	 \int_K |\nabla \phi_K|^2 \dd x
	  &\leq C \diam(K) \int_{\partial K} g^2  \dd S \ .
\end{align*}
Using the $\zeta$-regularity inequality \eqref{eq:diam-bound1} we obtain,
\begin{align*}
	\int_{\partial K} g^2  \dd S
	= \sum_{L\in \cT} \frac{|(K|L)|}{d_{KL}^2} V({K,L})^2
	\leq \frac{C}{\diam(K)}\sum_{L\in \cT} \frac{|(K|L)|}{d_{KL}}  V({K,L})^2 \ ,
\end{align*}
and therefore,
\begin{align*}
	\int_K |\nabla \phi_K|^2\dd x
	\leq C \sum_{L \in \cT}  \frac{|(K|L)|}{d_{KL}}  V({K,L})^2 \ .
\end{align*}

Let us now define the vector field $\Phi \in L^2(\bOmega;\R^d)$ by $\Phi(x) = \nabla \phi_K(x)$ for $x\in K$.
By \eqref{eq:Neumann-problem} and the anti-symmetry of the Neumann boundary values, one has
 \begin{equation}\label{eq:cont-rhodot-V}
Q_\cT \sigma + \dive \Phi = 0
 \end{equation}
in the sense of distributions, and $\Phi \cdot n = 0$ on $\partial \Omega$.

Take now $\emm \in \cP(\cT)$. Writing $\rho(K) = \frac{\emm(K)}{|K|}$, we obtain
\begin{equation}\begin{aligned}
\label{eq:action-bound}
	 \bA^*(\cQ_\cT \emm, \cQ_\cT \sigma)
 		& \leq \sum_{K\in \cT} \int_K\frac{ |\nabla \phi_K|^2}{\rho(K)} \dd x
		\\ &
		 \leq C \sum_{K,L\in \cT}
 \frac{|(K|L)|}{d_{KL}} \frac{V({K,L})^2}{\theta_{\textrm{harm}}(\rho(K),\rho(L))} \  ,
\end{aligned}\end{equation}
where $\theta_{\textrm{harm}}(a,b) = \frac{2ab}{a + b}$ is the harmonic mean.

We would like to obtain a similar estimate involving the means $\theta_{KL}$, but as $\rho$ is in general not bounded away from $0$, we cannot bound the harmonic mean by a multiple of $\theta_{KL}$. To remedy this issue, we perform an additional regularisation step.
Consider the function $\tilde\rho : \cT \to \R_+$ given by $\tilde\rho(K) = \rho(K) + \sum_{L\sim K}\rho(L)$, and set $\tilde\emm(K) = \tilde\rho(K)|K|$.
In view of \eqref{eq:action-monotone}, Lemma \ref{lem:symmetrized} below, and \eqref{eq:action-heat}, we obtain, for $a= [\cT]$,
\begin{align*}
		\bA^*(H_a Q_\cT \emm, H_a Q_\cT \sigma)
	& \leq C
		 \bA^*(H_a Q_\cT \tilde \emm, H_a Q_\cT \sigma)
	\\& \leq C
		 \bA^*(Q_\cT \tilde \emm, Q_\cT \sigma )
\end{align*}
We stress that to obtain the first inequality, the choice $a= [\cT]$ is crucial; cf. Remark \ref{rem:time-choice}.
Moreover, applying \eqref{eq:action-bound} to $\tilde\emm$, we obtain
\begin{align*}
	\bA^*( Q_\cT \tilde\emm , Q_\cT \sigma )
	\leq C \sum_{K,L\in \cT}
 \frac{|(K|L)|}{d_{KL}} \frac{V({K,L})^2}{\theta_{\textrm{harm}}(\tilde\rho(K),\tilde\rho(L))} \ .
\end{align*}
Since $\tilde \rho(K), \tilde \rho(L) \geq \rho(K) + \rho(L)$ whenever $K \sim L$, we have $\theta_{\textrm{harm}}(\tilde \rho(K), \tilde \rho(L)) \geq \rho(K) + \rho(L) \geq \theta_{KL}(\rho(K), \rho(L))$.
Therefore, combining the estimates above, we obtain
\begin{align*}
	\bA^*(H_a Q_\cT \emm, H_a Q_\cT \sigma)
		\leq C  \sum_{K,L\in \cT}
 \frac{|(K|L)|}{d_{KL}} \frac{V({K,L})^2}{\theta_{KL}(\rho(K), \rho(L))}
 \ .
\end{align*}
Taking the infimum over all $V$ satisfying \eqref{eq:discrete-cont}, the result follows using \eqref{eq: infsup}.
\end{proof}

The following result was used in the proof of Lemma \ref{lem:a-priori}.
Recall that we write $K \simeq L$ if $K\sim L$ or $K=L$.

\begin{lemma}\label{lem:symmetrized}
For $\emm \in \P(\cT)$ we define $\tilde \emm : K \to \R_+$ by $\tilde \emm(K) = |K| \tilde\rho(K)$, where $\tilde \rho(K) = \sum_{L \simeq K} \rho(L)$ and $\rho(K) = \frac{\emm(K)}{|K|}$.
Then, for every $a > 0$ and $x \in \bOmega$, the inequality
\begin{align}
 H_a Q_\cT \emm (x) \leq H_a Q_\cT \tilde \emm(x) \leq C e^{C \sqrt{\eta(a)} [\cT]} H_a Q_\cT \emm (x) \ ,
\end{align}
holds, with $\eta(a) = a^{-2} + (a^{-1}\log a)\vee 0$.
 \end{lemma}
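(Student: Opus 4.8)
**The plan is to prove the two pointwise inequalities separately, the lower bound by monotonicity and the upper bound by a local Gaussian-averaging argument.**

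The left-hand inequality $H_a Q_\cT \emm(x) \leq H_a Q_\cT \tilde\emm(x)$ is immediate: since $\tilde\rho(K) = \sum_{L \simeq K} \rho(L) \geq \rho(K)$ for every cell $K$ (all summands being nonnegative), we have $Q_\cT \tilde\emm \geq Q_\cT \emm$ as measures, and the heat semigroup is positivity preserving, so $H_a$ respects this order. This gives the first inequality with no loss. (Note $\tilde\emm$ is not a probability measure, but $H_a$ still makes sense on $\cM_+(\bOmega)$ and the Gaussian bounds \eqref{eq: heat kernel estimates} apply.)

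The right-hand inequality is the substance. The idea is that $\tilde\emm$ redistributes the mass of $\emm$ from each cell $L$ to $L$ and all its neighbours; since every such cell lies within distance $C[\cT]$ of $x_L$ (by \eqref{eq:diam-bound1} and $\zeta$-regularity, neighbouring cells have comparable diameters all controlled by $d_{KL} \leq C[\cT]$), the measure $Q_\cT\tilde\emm$ is a ``spread-out by $O([\cT])$'' version of $Q_\cT\emm$, up to the multiplicative constant $C$ coming from the bounded number of neighbours (Lemma \ref{lem:geom-ineq}\ref{it:nbh-bound}). Concretely, I would write
\[
  H_a Q_\cT \tilde\emm(x) = \int_\bOmega h_a(x,y) \, \dd (Q_\cT\tilde\emm)(y)
  = \sum_{K} \rho(K)|K| \sum_{L \simeq K} \dashint_L h_a(x,y)\,\dd y,
\]
then reorganise the double sum so that the mass $\emm(K) = \rho(K)|K|$ of cell $K$ appears against $\dashint_L h_a(x,\cdot)$ for $L \simeq K$. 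For each such pair, since $L$ is convex with diameter $\leq C[\cT]$ and within distance $C[\cT]$ of $K$, I compare $\dashint_L h_a(x,y)\,\dd y$ with $\dashint_K h_a(x,y)\,\dd y$: using the Gaussian upper and lower bounds \eqref{eq: heat kernel estimates}, the ratio $h_a(x,y')/h_a(x,y)$ for $y,y'$ at distance $\leq C[\cT]$ is bounded by $\exp(C\frac{|x-y|^2 - |x-y'|^2}{a} + \text{const})$; the difference of squared distances is $O([\cT](|x-y| + [\cT]))$, and optimising (using $\sup_r r e^{-cr^2/a} \lesssim \sqrt a$ together with the explicit lower Gaussian) produces the factor $e^{C\sqrt{\eta(a)}[\cT]}$ with $\eta(a) = a^{-2} + (a^{-1}\log a)\vee 0$ — the $a^{-2}$ term dominating for small $a$ and the $\log a$ term handling the polynomial prefactor $a^{-d/2}\vee 1$ when it is large. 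Summing over the at most $C$ neighbours of each $K$ gives the overall multiplicative constant $C$, and summing over $K$ reassembles $H_a Q_\cT\emm(x)$.

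**The main obstacle** is making the Gaussian comparison uniform in $x$, $y$, and $a$ with exactly the stated $\eta(a)$: one must handle simultaneously the short-time regime (where $h_a$ is a sharp Gaussian and the prefactor is $a^{-d/2}$, forcing careful bookkeeping of the $\frac{|x-y|^2}{a}$ terms and the $\log a$ from the prefactor) and the long-time regime (where $a \geq \diam(\Omega)^2$, the kernel is bounded above and below by positive constants, and the estimate becomes trivial with $\eta(a)$ playing no role). A clean way to organise this is to bound $\log\big(\dashint_L h_a(x,y)\dd y\big) - \log\big(\dashint_K h_a(x,y)\dd y\big)$ directly: the gradient of $y \mapsto \log h_a(x,y)$ is controlled — heuristically $|\nabla_y \log h_a(x,y)| \lesssim \frac{|x-y|}{a}$ — so the log-difference is $\lesssim \frac{[\cT]|x-y|}{a} + \frac{[\cT]^2}{a} + \frac{[\cT]^2}{a}$, and then $\frac{[\cT]|x-y|}{a} \leq [\cT]\sqrt{\eta(a)}\cdot\frac{|x-y|}{\sqrt{a\,\eta(a)}}$; but the clean bound one can afford to lose is that $\frac{|x-y|^2}{a}$ is itself controlled by $\log$ of the kernel ratio, so a more robust route is simply to split into $|x-y| \leq R$ and $|x-y| > R$ with $R \sim \sqrt a$, absorbing the far tail using the denominator's own lower Gaussian. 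Either way, the only real work is tracking constants; the structure of the argument is forced by positivity plus the two-sided heat kernel bounds already recorded in \eqref{eq: heat kernel estimates}.
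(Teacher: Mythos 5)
Your treatment of the first inequality and the combinatorial part of the second (reorganising the double sum, comparing the averages of $h_a(x,\cdot)$ over a cell and over each of its boundedly many neighbours, using $|L|\leq C|K|$) matches the paper's proof. The gap is in the core analytic step: the pointwise Harnack-type comparison $h_a(x,y)\leq e^{C\sqrt{\eta(a)}\,|y-z|}h_a(x,z)$ with a constant $C$ \emph{uniform in $a$} cannot be extracted from the two-sided Gaussian bounds \eqref{eq: heat kernel estimates} alone, contrary to your closing claim that the argument is ``forced by positivity plus the two-sided heat kernel bounds''. The upper and lower bounds there have mismatched Gaussian rates ($e^{-c|x-y|^2/a}$ above versus $e^{-|x-y|^2/(4a)}$ below, with $c$ in general smaller than $1/4$) and mismatched prefactors, so the quotient of the two estimates degenerates like $(1\vee a^{d/2})\,e^{(1/4-c)|x-y|^2/a}$, which for $|x-y|$ of order $\diam(\Omega)$ and small $a$ blows up like $e^{c'/a}$. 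This is fatal, because in the application (Lemma \ref{lem:a-priori}) one takes $a=[\cT]\to 0$ and the whole point (Remark \ref{rem:time-choice}) is that the factor must be $C\,e^{C\sqrt{\eta(a)}[\cT]}$ with $C$ independent of $a$; a loss of $e^{c'/a}=e^{c'/[\cT]}$ would make the lemma useless. Your proposed splitting at $|x-y|\sim\sqrt a$ does not repair this, since the far regime is exactly where the rate mismatch dominates and cannot be ``absorbed by the denominator's own lower Gaussian''.

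The ingredient you are missing is the one the paper actually uses: a uniform gradient bound on the logarithm of the kernel, $\sup_{x,y}|\nabla_y\log h_a(x,y)|\leq C\sqrt{\eta(a)}$, which is \emph{not} a consequence of \eqref{eq: heat kernel estimates} but of the Bakry--\'Emery machinery available because $\Omega$ is convex (CD$(0,d)$): one applies the local logarithmic Sobolev inequality \eqref{eq:LSI-local} to $\phi=h_{a/2}(x,\cdot)$ and uses the semigroup property $h_a(x,\cdot)=H_{a/2}h_{a/2}(x,\cdot)$, so that the kernel sup/inf bounds enter only \emph{inside a logarithm}; this is precisely what produces $\eta(a)=a^{-2}+(a^{-1}\log a)\vee 0$ rather than an exponentially bad factor. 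You do write down the right heuristic ($|\nabla_y\log h_a(x,y)|\lesssim |x-y|/a$), but you never justify it, and the tools you cite cannot. To close the gap, replace the direct Gaussian-ratio comparison by this log-Sobolev (or an equivalent Li--Yau/Harnack) gradient estimate and then integrate it along a transport map between the uniform measures on $K$ and $L$, whose displacement is at most $2[\cT]$, exactly as in \eqref{eq: quotient estimate}.
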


\begin{remark}\label{rem:time-choice}
It is crucial in our application that by choosing $a = [\cT]$ (as is done in Lemma \ref{lem:a-priori}), the exponent $\sqrt{\eta(a)} [\cT]$ remains bounded as $[\cT] \to 0$.
\end{remark}

\begin{proof}
Since $\tilde \emm \geq \emm$, the first inequality follows from the positivity of $H_a$, so it remains to prove the second inequality.

To prove the second inequality, note that
\begin{align*}
	H_a Q_\cT \emm(x)
	 & = \sum_K \rho(K) \int_K h_a(x,y) \dd y
	  \quad \ \text{and} \ \, \\
	H_a Q_\cT \tilde \emm(x)
	 & = \sum_K \rho(K) \sum_{L : L \simeq K} \int_{L} h_a(x,y) \dd y \ .
\end{align*}
We claim that there exists a constant $C < \infty$ such that
\begin{align}\label{eq: quotient estimate}
    h_a(x,y) \leq e^{C \sqrt{\eta(a)} |y - z| } h_a(x,z)
\end{align}
for $x, y, z \in \bOmega$.

Let $T_{KL}$ be a (not necessarily optimal) transport map between the uniform probability measures on neighbouring cells $K$ and $L$. As $|T_{KL}(z) - z| \leq 2 [\cT]$ for $z \in K$,  the claim yields
\begin{align*}
	\frac{1}{|L|}\int_{L} h_a(x,y) \dd y
	= \frac{1}{|K|}\int_{K} h_a(x,T_{KL}(z)) \dd z
	\leq e^{C \sqrt{\eta(a)}[\cT]} \frac{1}{|K|}\int_{K} h_a(x,z) \dd z \ .
\end{align*}
Since $|L| \leq C |K|$ by the $\zeta$-regularity estimate \eqref{eq:cell-bound1}, and since $\# \{L : L \simeq K\} \leq C$ by Lemma \ref{lem:geom-ineq}, we obtain
\begin{align*}
	\sum_{L : L \simeq K} \int_{L} h_a(x,y) \dd y
	\leq C e^{C \sqrt{\eta(a)}[\cT]}
			\int_{K} h_a(x,z) \dd z \ ,
\end{align*}
which yields the result.

It remains to prove the claim \eqref{eq: quotient estimate}.
For this purpose, note that by the heat kernel bounds \eqref{eq: heat kernel estimates} there exist $\bOmega$-dependent constants $c, C> 0$ such that, for all $a > 0$,
\begin{align*}
\sup_{x,y \in \Omega} h_{a/2} (x,y) \leq C (1 \vee a^{-d/2})
    \quad \text{and}\quad
    \inf_{x,y \in \Omega} h_a (x,y) \geq C a^{-d/2} e^{-c/a} \ .
\end{align*}
For any smooth function $\phi : \Omega \to (0,M]$ with $M \in (0,\infty)$, the local logarithmic Sobolev inequality \eqref{eq:LSI-local} implies that
\begin{align*}
	  |\nabla \log H_{a/2} \phi|^2
 	\leq \frac{2}{a}\log \bigg(\frac{M}{H_{a/2} \phi} \bigg) \ .
\end{align*}
Applying this inequality with $\phi = h_{a/2}(x, \cdot)$, we obtain using the semigroup property,
    \begin{align}
      \sup_{x,y\in \Omega} |\nabla_y \log h_a(x,y)|^2
      \leq \frac{2}{a}
      \log\left( \frac{\sup_{x,y\in \Omega} h_{a/2}(x,y)}{\inf_{x,y\in \Omega}
      h_a(x,y)} \right) \leq
      C \eta(a)  \ ,
    \end{align}
which implies \eqref{eq: quotient estimate}.
\end{proof}

\section{Finite volume estimates for discrete optimal transport}\label{sec:finite-volume}

The goal of this section is to show that the dual action functional $\cA_\cT^*$ for the discrete transport problems is a good approximation to its continuous counterpart $\bA^*$. This will be shown in Proposition \ref{prop:cont-and-disc-action}.

To obtain this result, we first show an error estimate for a discrete elliptic problem, in the spirit of \cite[Theorem 3.5]{EyGaHe00}.
In our application, we think of $w$ as being $\dot \mu_t$ at some fixed time $t$, so that the elliptic equation below is the continuity equation at time $t$.

As in Section \ref{sec:a-priori}, we fix a convex bounded open set $\Omega \subseteq \R^d$ and a $\zeta$-regular mesh $\cT$ for some $\zeta \in (0,1]$.

\begin{proposition}[Weighted $H^1$-error bound]\label{prop:error-est}
Let $w \in L^2(\Omega)$ with $\int_\Omega w(x) \dd x=0$ and $\mu = u \dd x \in \cP_\delta(\bOmega)$ be given, and
let $\phi \in H^2(\Omega)$ be the unique variational solution to
	\begin{equation}
		\begin{cases}\label{eq:elliptic}
		- \dive (u \nabla \phi)  = w &\text{ in }\Omega\\
		\partial_\bfn \phi  = 0	&\text{ on }\partial\Omega
		\end{cases}
	\end{equation}
   satisfying $\int_\Omega \phi \dd x = 0$.

Define $\emm \in \cP(\cT)$ by $\emm = P_\cT \mu$, and $\sigma \in \cM_0(\cT)$ by $\sigma := P_\cT w$.
We write $\rho(K) := \emm(K)/|K|$ and $\hrho(K,L) := \theta_{KL}(\rho(K) , \rho(L))$.
Let $\psi : \cT \to \R$ be the unique solution to the corresponding discrete elliptic problem
	\begin{align}\label{eq:elliptic-discrete}
	- \sum_{L \in \cT} \frac{|(K|L)|}{d_{KL}} \hrho(K,L) \big(\psi(L) - \psi(K)\big)
		& = \sigma(K)
	\end{align}
satisfying $\sum_{K \in \cT} |K| \psi(K) = 0$.

Set $\bphi(K) = \dashint_{B_K} \phi \dd x$, where $B_K = B(x_K,\zeta [\cT])$ denotes the closed ball of radius $\zeta [\cT]$ around $x_K$, and set
\begin{align*}
	e(K) := \bphi(K) - \psi(K) \ .
\end{align*}
Then there exists a constant $C < \infty$ depending only on
	$\delta$, $\Omega$, and $\zeta$, such that
	\begin{align}\label{eq:errorest}
		\cA_\cT(\emm, e) \leq C  [\cT]^2 \, \|w\|^2_{L^2(\Omega)}\ .
	\end{align}
\end{proposition}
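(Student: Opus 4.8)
## Proof Plan

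The plan is to estimate $\cA_\cT(\emm, e)$ by testing the difference of the two elliptic equations against $e$ itself, in the spirit of standard finite volume error analysis (cf.\ \cite[Theorem 3.5]{EyGaHe00}), but carrying the weight $\hrho(K,L)$ throughout. Writing out $\cA_\cT(\emm,e)$ explicitly,
\begin{align*}
	\cA_\cT(\emm, e) = \frac12 \sum_{K,L} \frac{|(K|L)|}{d_{KL}} \hrho(K,L) \big(e(K)-e(L)\big)^2 \ ,
\end{align*}
and using the discrete equation \eqref{eq:elliptic-discrete} together with $\sigma(K) = \int_K w\dd x$, one rewrites this as a sum over interfaces of the ``consistency error'' flux. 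Concretely, $\cA_\cT(\emm,e) = \sum_K e(K)\big(\sigma(K) - \mathcal{F}_K\big)$ where $\mathcal F_K = -\sum_L \frac{|(K|L)|}{d_{KL}}\hrho(K,L)(\bphi(L)-\bphi(K))$ is the discrete flux built from the averaged continuous solution $\bphi$; after discrete integration by parts this becomes $\tfrac12\sum_{K,L}\frac{|(K|L)|}{d_{KL}}\hrho(K,L)\big(e(K)-e(L)\big)\, R_{KL}$ with a consistency residual $R_{KL}$ measuring how far $\frac{\bphi(L)-\bphi(K)}{d_{KL}}$ is from the true flux $\nabla\phi\cdot n_{KL}$ across $(K|L)$, weighted against $w$. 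Applying Cauchy--Schwarz in the interface measure $\frac{|(K|L)|}{d_{KL}}\hrho(K,L)$ gives
\begin{align*}
	\cA_\cT(\emm,e) \leq \cA_\cT(\emm,e)^{1/2}\Big(\tfrac12\sum_{K,L}\tfrac{|(K|L)|}{d_{KL}}\hrho(K,L)\, R_{KL}^2\Big)^{1/2} \ ,
\end{align*}
so it suffices to bound the residual term by $C[\cT]^2\|w\|_{L^2}^2$.

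For the residual, I would split it cell by cell. On each cell $K$, the difference between the averaged gradient of $\phi$ (encoded in the differences $\bphi(L)-\bphi(K)$) and the exact normal fluxes $\int_{(K|L)}\nabla\phi\cdot n\dd S$ is controlled by the $H^2$-norm of $\phi$ on $K$: this is where one uses that $B_K = B(x_K,\zeta[\cT])$, admissibility ($x_K-x_L\perp(K|L)$), $\zeta$-regularity (so $d_{KL}\sim[\cT]$, $|(K|L)|\sim[\cT]^{d-1}$, bounded number of neighbours), and the Poincar\'e/trace consequences in Lemma~\ref{lem:trace-consequence} together with \eqref{eq:Poincare-consequence} to pass between ball-averages, cell-averages, and boundary values at cost $C[\cT]\|\nabla^2\phi\|_{L^2(K)}$ (plus lower-order terms). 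Crucially the weight $\hrho(K,L)$ must be absorbed: since $\mu\in\cP_\delta(\bOmega)$ one has $\rho(K)=\dashint_K u\dd x \le \tfrac1\delta$ and, by $\zeta$-regularity and Lipschitz continuity of $u$, $\rho(K)\sim\rho(L)$ up to a $[\cT]$-perturbation, so $\hrho(K,L)=\theta_{KL}(\rho(K),\rho(L))$ is comparable to $\rho(K)$ (using $\min\le\theta\le$ arithmetic mean, Lemma~\ref{lem:}\ref{it:pos-mena} and Remark~\ref{rem:weighted-inequalities}) and in particular bounded above by $C/\delta$; also $w = -\dive(u\nabla\phi)$ gives, via \eqref{eq:elliptic}, the elliptic regularity bound $\|\phi\|_{H^2(\Omega)}\le C(\delta)\|w\|_{L^2(\Omega)}$. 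Summing over cells and using $\sum_K \|\nabla^2\phi\|_{L^2(K)}^2 = \|\nabla^2\phi\|_{L^2(\Omega)}^2$ yields the residual bound $C[\cT]^2\|w\|_{L^2}^2$, and dividing through by $\cA_\cT(\emm,e)^{1/2}$ gives \eqref{eq:errorest}.

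The main obstacle I expect is the careful bookkeeping of the consistency error $R_{KL}$: because the discrete gradient uses ball-averages $\bphi(K)$ rather than point values $\phi(x_K)$, and because the flux across $(K|L)$ must be related to these averages through the orthogonality $x_K-x_L\perp(K|L)$, one needs a clean local estimate of the form ``$\big|\frac{|(K|L)|}{d_{KL}}(\bphi(L)-\bphi(K)) - \int_{(K|L)}\nabla\phi\cdot n\dd S\big| \le C[\cT]^{(d-1)/2}\|\nabla^2\phi\|_{L^2(K\cup L)}$'' — a second-order Taylor/Poincar\'e argument on the ``diamond'' $K\cup L$, handled via the trace and Poincar\'e inequalities \eqref{eq:Poincare}, \eqref{eq:trace-embedding} and Lemma~\ref{lem:trace-consequence}. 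A secondary point requiring care is the degeneracy of the weight: one must ensure that wherever $R_{KL}$ is large the weight $\hrho(K,L)$ is genuinely comparable to $\rho$ on the relevant cells, which is exactly why the hypothesis $\mu\in\cP_\delta(\bOmega)$ (both the lower bound $u\ge\delta$ and the Lipschitz bound $\Lip(u)\le\tfrac1\delta$) enters, and why the constant $C$ is allowed to depend on $\delta$. Boundary cells are handled by the homogeneous Neumann condition $\partial_\bfn\phi=0$, which makes the boundary flux contributions vanish.
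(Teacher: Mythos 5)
Your route is essentially the paper's own: integrate \eqref{eq:elliptic} over each cell, subtract the discrete equation \eqref{eq:elliptic-discrete}, pair with $e$, sum by parts, apply Cauchy--Schwarz in the measure $\frac{|(K|L)|}{d_{KL}}\hrho(K,L)$, and reduce everything to a per-interface consistency residual estimated with the trace/Poincar\'e tools of Lemma~\ref{lem:trace-consequence} and the a priori bound $\|\phi\|_{H^2(\Omega)}\le C(\delta)\|w\|_{L^2(\Omega)}$ (up to an inconsequential sign in your identity for $\cA_\cT(\emm,e)$). However, the key local estimate you set as the target is too weak: since $|(K|L)|\sim[\cT]^{d-1}$ and $|K\cup L|\sim[\cT]^{d}$ by $\zeta$-regularity, the diamond Taylor/Poincar\'e argument gives $\big|\frac{|(K|L)|}{d_{KL}}(\bphi(L)-\bphi(K))-\int_{(K|L)}\partial_\bfn\phi\dd S\big|\le C[\cT]^{d/2}\|D^2\phi\|_{L^2(K\cup L)}$, i.e.\ $C[\cT]$ times the root-mean-square of $D^2\phi$ over $K\cup L$ (this is the form the paper proves), whereas you ask only for $[\cT]^{(d-1)/2}\|D^2\phi\|_{L^2(K\cup L)}$. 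Feeding the exponent $(d-1)/2$ into your Cauchy--Schwarz scheme yields only $\cA_\cT(\emm,e)\le C[\cT]\,\|w\|_{L^2(\Omega)}^2$, one full power of $[\cT]$ short of \eqref{eq:errorest}; the method you describe does deliver $[\cT]^{d/2}$, but the target as stated does not prove the proposition.

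The second gap is the weight mismatch. Since $\sigma(K)=\int_K w\dd x=-\int_{\partial K}u\,\partial_\bfn\phi\dd S$, the quantity your discrete gradient must be compared with is the \emph{weighted} flux $u\,\partial_\bfn\phi$, not $\partial_\bfn\phi$; hence the residual contains, besides the term you isolate, the contribution $\dashint_{(K|L)}\big(u-\hrho(K,L)\big)\,\partial_\bfn\phi\dd S$. To retain the rate this must itself be $O([\cT])$ per interface, which requires the first-order closeness $|u-\hrho(K,L)|\le C[\cT]$ on $(K|L)$ --- obtained from $\Lip(u)\le 1/\delta$, $u\ge\delta$, and the local Lipschitz continuity of $\theta_{KL}$ on $[\delta,\infty)^2$ --- combined with a trace-type bound on $\dashint_{(K|L)}|\partial_\bfn\phi|$ via Lemma~\ref{lem:trace-consequence}. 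Your plan only records that $\hrho(K,L)$ is \emph{comparable} to $\rho(K)$ and bounded above by $C/\delta$; comparability alone leaves an $O(1)$ error on each interface and destroys the $[\cT]^2$ rate. This weight-consistency term is exactly the ``first term'' in the paper's residual estimate and must be made explicit in your argument.
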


\begin{remark} \label{rem:existence}
The existence of a unique variational solution $\phi$ to the Neumann problem \eqref{eq:elliptic} in $H^1(\Omega)$ follows from the Lax--Milgram theorem.
The existence of a unique solution to \eqref{eq:elliptic-discrete} follows from elementary linear algebra, cf. \cite{Maas11}. In both cases, uniqueness holds up to an additive constant.
\end{remark}

\begin{remark}\label{rem:error-est}
Crucial for the proof is the \emph{a priori} estimate $\| \phi \|_{H^2(\Omega)} \leq C \|w\|_{L^2(\Omega)}$ with $C < \infty$ depending only on $\Omega$ and $\delta$; see \cite[Theorem 3.1.2.3]{Gris11}.
\end{remark}

\begin{remark}
The error estimate \eqref{eq:errorest} is similar to the $H^2$-error estimate in
\cite{EyGaHe00}, except that we use an averaged sample $\dashint_{B_K} \phi$ instead of the pointwise sample $\phi(x_K)$ to define the
error term. This change is required to be able to deal with dimensions $d \geq 4$, where $H^2(\Omega)$ does not embed into the space of continuous functions.
\end{remark}

\begin{proof}[Proof of Proposition \ref{prop:error-est}]
Integration of \eqref{eq:elliptic} over $K \in \cT$ yields
	\begin{align}\label{eq:integrated}
		- \sum_{L: L \sim K} \int_{(K|L)} u \partial_\bfn \phi  \dd S
		  = \int_K w \dd x
		  = \sigma(K) \ ,
	\end{align}
where $\bfn$ denotes the outward unit normal on $\partial K$.
We define
	\begin{align*}
	R(K,L) :=  \frac{1}{\hrho(K,L)} \dashint_{(K|L)} u \partial_\bfn \phi \dd S
			 - \frac{\bphi(L) -\bphi(K)}{d_{KL}} \ ,
	\end{align*}
and note that, by \eqref{eq:elliptic-discrete} and \eqref{eq:integrated},
\begin{align*}
		&\sum_{L: L \sim K} \frac{|(K|L)|}{d_{KL}} \hrho(K,L)  \big(e(K) - e(L)\big)
		 \\ & = \sum_{L: L \sim K} \bigg[ |(K|L)| \hrho(K,L)
		 		 \bigg( R(K,L) - \frac{1}{\hrho(K,L)}\dashint_{(K|L)} u \partial_{\bfn} \phi \dd S \bigg)\bigg]
		   		- \sigma(K)
		 \\& = \sum_{L: L \sim K} | (K|L) | \hrho(K,L)  R(K,L) \ .
\end{align*}
Multiplying this expression by $e(K)$, and using the symmetry of $\hrho(K,L)$ and the anti-symmetry of $R(K,L)$, we obtain
\begin{align*}
	\cA_\cT(\emm, e)
	       & = \frac12 \sum_{K,L} \frac{|(K|L)|}{d_{KL}} \hrho(K,L)  \big(e(K) - e(L)\big)^2
		 \\& =  \sum_K e(K)  \sum_{L: L \sim K} \frac{|(K|L)|}{d_{KL}} \hrho(K,L)   \big(e(K) - e(L)\big)
		 \\& = \sum_K e(K)  \sum_{L: L \sim K} | (K|L) | \hrho(K,L)  R(K,L)
		 \\& = \frac12 \sum_{K,L}  | (K|L) | \hrho(K,L) R(K,L)  \big(e(K) - e(L)\big)
		 \\& \leq
		 	\sqrt{ \frac12 \cA_\cT(\emm, e) \sum_{K,L}  d_{KL} |(K|L)| \hrho(K,L) R(K,L)^2}\ .
\end{align*}
Consequently,
	\begin{align}\label{eq:act-bound}
	\cA_\cT(\emm, e)
		&	\leq \frac12 \sum_{K,L} d_{KL} |(K|L)| \hrho(K,L) R(K,L)^2 \ .
	\end{align}
Observe that
\begin{align}\label{eq:two-integrals}
|R(K,L)|
	& \leq \dashint_{(K|L)} \left| \frac{u}{\hrho(K,L)} - 1 \right|
		|\partial_\bfn \phi| \dd S
	+ \left| \dashint_{(K|L)} \partial_\bfn \phi \dd S
		- \frac{\bphi(L) - \bphi(K)}{d_{KL}} \right| \ .
\end{align}

To estimate the first term on the right-hand side of \eqref{eq:two-integrals}, we note that the function $u$ is Lipschitz since $\mu \in \cP_\delta(\bOmega)$, and the mean $\theta_{KL}$ is Lipschitz on $[\delta,\infty)^2$.
Using this observation followed by Lemma \ref{lem:trace-consequence}, we obtain
\begin{equation} \label{eq: first integral}
\begin{aligned}
 \dashint_{(K|L)} \left| \frac{u}{\hrho(K,L)} - 1  \right|  |\partial_\bfn \phi| \dd S
	& \leq C [\cT] \dashint_{(K|L)} |\partial_\bfn \phi| \dd S
\\ & \leq C  [\cT] \bigg(
		\dashint_{K} |\nabla \phi| \dd x
		 + 	[\cT] \sqrt{\dashint_{K} |D^2 \phi|^2 \dd x }
				 \bigg) \ ,
\end{aligned}
\end{equation}
for some constant $C < \infty$ depending on $\Omega$, $\delta$ and $\zeta$.

To estimate the second integral in \eqref{eq:two-integrals} for neigbouring cells $K \sim L$, write $x_{KL} = x_L - x_K$, so that $n = \frac{x_{KL}}{d_{KL}}$. We set
\begin{align*}
	M_K := \dashint_K \nabla \phi \cdot n \ , \qquad
	M_{KL} := \dashint_{K \cup L} \nabla \phi \cdot n \ .
\end{align*}
Using the trace inequality \eqref{eq:trace-embedding} we have
\begin{align}\label{eq:no1}
	\bigg| \, \dashint_{(K|L)} \partial_\bfn \phi \dd S - M_K \bigg|^2
		 \leq
		 	 \dashint_{(K|L)} (\partial_\bfn \phi - M_K )^2  \dd S
		 \leq C [\cT] \frac{|K|}{| \partial K|}\dashint_K |D^2 \phi|^2 \dd x \ .
\end{align}
Arguing as in \eqref{eq:Poincare-consequence}, we obtain
\begin{align}\label{eq:no2}
	| M_K - M_{KL} | \leq C[\cT]
	 \sqrt{\dashint_{K \cup L} | D^2 \phi |^2 \dd x }   \ .
\end{align}
Furthermore, writing $B_K = B(x_K,\zeta [\cT])$ as before, the fundamental theorem of calculus yields
\begin{align*}
    \frac{\bphi(L)-\bphi(K)}{d_{KL}}
    & = \dashint_{B_K} \frac{\phi(y + x_{KL}) - \phi(y)}{d_{KL}}  \dd y
    \\& = \dashint_{B_K} \int_0^1 \partial_n \phi(y + t x_{KL}) \dd t \dd y
    = \dashint_{K \cup L} \partial_n \phi(x) f(x) \dd x \ ,
\end{align*}
where $f$ is a nonnegative function satisfying $\dashint_{B_K} f(x) \dd x = 1$ and $\|f\|_{L^\infty} \leq C < \infty$.
Therefore, using the Poincar\'e inequality \eqref{eq:Poincare},
\begin{equation}\begin{aligned}\label{eq:no3}
	\bigg| \frac{\bphi(L)-\bphi(K)}{d_{KL}} - M_{KL} \bigg|
	& \leq \dashint_{K \cup L} | \partial_n \phi(x) - M_{KL} | f(x) \dd x
	\\& \leq \|f\|_{L^\infty}  \sqrt{\dashint_{K \cup L} | \partial_n \phi(x) - M_{KL} |^2 \dd x }
	\\& \leq C [\cT]  \sqrt{\dashint_{K \cup L} | D^2 \phi |^2 \dd x } \ .
\end{aligned}\end{equation}
Combining the inequalities \eqref{eq:no1}, \eqref{eq:no2} and \eqref{eq:no3}, we obtain, using the $\zeta$-regularity once more,
\begin{align*}
	\bigg| \, \dashint_{(K|L)} \partial_\bfn \phi \dd S
		 - \frac{\bphi(L) - \bphi(K)}{d_{KL}} \bigg|
\leq C [\cT] \sqrt{\dashint_{K \cup L} | D^2 \phi |^2 \dd x } \ .
\end{align*}
Together with \eqref{eq: first integral}, the latter estimate yields
\begin{align*}
|R(K,L)| \leq C [\cT]
			\sqrt{\dashint_{K\cup L} |\nabla \phi|^2 + |D^2 \phi|^2 \dd x } \ .
\end{align*}
Thus, using \eqref{eq:act-bound} we find
\begin{equation}
\begin{aligned}
	\cA_\cT(\emm, e)
	   & \leq \frac12\sum_{K,L} d_{KL} |(K|L)| \hrho(K,L) R(K,L)^2
	\\ & \leq C [\cT]^2 \sum_{K,L}
		\frac{d_{KL} |(K|L)|}{|K \cup L|}
			\int_{K\cup L} |\nabla \phi|^2 + |D^2 \phi|^2 \dd x
	\\ & \leq C [\cT]^2 \|\phi\|_{H^2(\Omega)}^2 \ ,
\end{aligned}
\end{equation}
since $d_{KL}(K|L)/|K\cup L| \leq C$, and the maximum number of neigbours of any cell is bounded in view of Lemma \ref{lem:geom-ineq}. The result thus follows using the \emph{a priori} bound from Remark \ref{rem:error-est}.
\end{proof}

In the following result it suffices to assume that $\cT$ is admissible. We do not need to require $\zeta$-regularity.

\begin{proposition}[Discrete weighted Poincar{\'e} inequality]\label{prop:Poincare-discrete}
There exists a constant $C < \infty$ depending only on $\Omega$ such that for all $\psi : \cT \to \R$ satisfying
$\sum_K |K| \psi(K) = 0$, and all $\emm \in \cP(\cT)$ with $\rho(K) := \frac{\emm(K)}{|K|} \geq \delta > 0$ for all $K \in \cT$, we have
	\begin{align*}
		\sum_{K\in\cT} |K| \psi(K)^2 \leq \frac{C}{\delta} \cA_\cT(\emm,\psi) \ .
	\end{align*}
\end{proposition}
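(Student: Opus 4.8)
The plan is to reduce the weighted estimate to a purely combinatorial (unweighted) discrete Poincar\'e--Wirtinger inequality and then to prove the latter by a line--crossing argument in which the orthogonality built into the notion of an admissible mesh is the decisive ingredient. For the reduction, write $\rho(K)=\emm(K)/|K|$; the hypothesis $\rho\geq\delta$ together with the bound $\theta_{KL}(a,b)\geq\min\{a,b\}$ for admissible means (Lemma~\ref{lem:}\ref{it:pos-mena}) gives $\theta_{KL}(\rho(K),\rho(L))\geq\delta$, so that
\[
  \cA_\cT(\emm,\psi)\ \geq\ \delta\cdot\tfrac12\sum_{K,L\in\cT}\frac{|(K|L)|}{d_{KL}}\,(\psi(K)-\psi(L))^2\ =:\ \delta\,|\psi|_{1,\cT}^2 .
\]
It then suffices to show $\sum_K|K|\psi(K)^2\leq C\,|\psi|_{1,\cT}^2$ for every mean-zero $\psi$, with $C$ depending only on $\Omega$; this is the discrete Poincar\'e--Wirtinger inequality, which may either be quoted from \cite{EyGaHe00} or proved as sketched below.

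To prove it, I would first pass to a double integral: letting $\bar\psi\in L^\infty(\Omega)$ be the piecewise constant function with $\bar\psi\equiv\psi(K)$ on $K$, an elementary expansion of the square together with $\sum_K|K|\psi(K)=0$ gives $2|\Omega|\sum_K|K|\psi(K)^2=\int_\Omega\int_\Omega|\bar\psi(x)-\bar\psi(y)|^2\,\dd x\,\dd y$. Next I would bound the integrand pointwise. Fix $x,y$ in the interiors of cells $K_0\ni x$ and $K_m\ni y$; by convexity $[x,y]\subseteq\Omega$, and for a.e.\ such pair the segment crosses a finite chain of interfaces $(K_{j-1}|K_j)$, $j=1,\dots,m$, transversally, with $\bar\psi(x)-\bar\psi(y)=\sum_j(\psi(K_{j-1})-\psi(K_j))$. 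Set $e=(y-x)/|y-x|$ and let $n_j$ be the unit vector along $x_{K_j}-x_{K_{j-1}}$. Admissibility gives $x_{K_j}-x_{K_{j-1}}\perp(K_{j-1}|K_j)$, hence $x_{K_j}-x_{K_{j-1}}=d_{K_{j-1}K_j}\,n_j$, and since the segment passes from $K_{j-1}$ into $K_j$ one has $n_j\cdot e>0$; consequently the geometric weights telescope, $\sum_j d_{K_{j-1}K_j}|n_j\cdot e|=\sum_j e\cdot(x_{K_j}-x_{K_{j-1}})=e\cdot(x_{K_m}-x_{K_0})\leq\diam(\Omega)$. Applying Cauchy--Schwarz with these weights yields
\[
  |\bar\psi(x)-\bar\psi(y)|^2\ \leq\ \diam(\Omega)\sum_{K\sim L}\mathbf{1}\{[x,y]\text{ crosses }(K|L)\}\,\frac{(\psi(K)-\psi(L))^2}{d_{KL}\,|n_{KL}\cdot e|}.
\]

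Finally I would integrate over $(x,y)\in\Omega\times\Omega$ and substitute $y=x+v$; since $x,y\in\Omega$ only $|v|\leq\diam(\Omega)$ occur. For fixed $v$, the set of $x$ with $[x,x+v]$ meeting $(K|L)$ is contained in the prism $\{p-sv:p\in(K|L),\,0\leq s\leq1\}$, of volume $|(K|L)|\,|v\cdot n_{KL}|=|(K|L)|\,|v|\,|n_{KL}\cdot\hat{v}|$, and the factor $|n_{KL}\cdot\hat{v}|$ exactly cancels the Cauchy--Schwarz weight:
\[
  \int_\Omega\!\!\int_\Omega\mathbf{1}\{[x,y]\text{ crosses }(K|L)\}\frac{\dd x\,\dd y}{|n_{KL}\cdot e|}\ \leq\ |(K|L)|\int_{|v|\leq\diam(\Omega)}|v|\,\dd v\ \leq\ C_d\,\diam(\Omega)^{d+1}|(K|L)| .
\]
Summing over $K\sim L$ and combining the displays gives $\sum_K|K|\psi(K)^2\leq \tfrac{C_d\,\diam(\Omega)^{d+2}}{2|\Omega|}\,|\psi|_{1,\cT}^2$, and the reduction step then produces the asserted bound with $C=C(\Omega)$ (and $d$), independent of the mesh.

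The two delicate points are, first, the orientation bookkeeping that makes the geometric weights telescope --- this is the only place where admissibility (the orthogonality $x_K-x_L\perp(K|L)$) is used, and is precisely why $\zeta$-regularity is not needed --- and, second, the exact cancellation between the weight $|n_{KL}\cdot e|^{-1}$ produced by Cauchy--Schwarz and the vanishing of the prism volume as $v$ turns parallel to the interface; this cancellation is what keeps the constant free of any mesh-regularity parameter. Everything else is routine.
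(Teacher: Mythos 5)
Your proof is correct and follows essentially the same route as the paper's: the paper likewise reduces the claim to the double integral $\int_\Omega\int_\Omega|\bar\psi(x)-\bar\psi(y)|^2\dd x\dd y$, uses the same orientation/telescoping argument along segment--interface crossings (where admissibility enters) together with Cauchy--Schwarz, and the same prism-volume change of variables that cancels the $|n_{KL}\cdot e|^{-1}$ weight. The only cosmetic difference is that you discard the weight via $\theta_{KL}(\rho(K),\rho(L))\geq\min\{\rho(K),\rho(L)\}\geq\delta$ at the outset and prove the unweighted inequality, whereas the paper carries $\hrho(K,L)$ through the Cauchy--Schwarz and invokes $\hrho\geq\delta$ only in the telescoping factor; the resulting estimate and constants are the same.
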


\begin{proof}
This is a straightforward modification of the proof in \cite[Lemma 3.7]{EyGaHe00}.
Define $\phi : \bOmega \to \R$ by $\phi = \sum_K \chi_K \psi(K)$ and set $ \hrho(K,L) = \theta_{KL}(\rho(K), \rho(L))$.
We need to show that
	\begin{align*}
		\frac{1}{2|\Omega|} \int_\Omega\int_\Omega |\phi(x) - \phi(y)|^2 \dd x \dd y
		  \leq \frac{C}{2 \delta} \sum_{K,L} \frac{|(K|L)|}{d_{KL}} \hrho(K,L) (\psi(K) - \psi(L))^2 \ .
	\end{align*}
For $K \sim L$ and $x, y \in \R^d$, put $\chi_{(K|L)}(x,y) = 1$ if $x,y$ belong to $\Omega$,	$(K|L)$ intersects the straight line segment connecting $x$ and $y$, and $(y-x) \cdot  (x_L - x_K) > 0$.
Otherwise, we set $\chi_{(K|L)}(x,y) = 0$.
For $K \sim L$ and $z \in \R^d$, we set
	$c_{K,L;z} := \frac{z}{|z|} \cdot \frac{x_L - x_K}{d_{KL}}$.
As $\Omega$ is convex, we obtain for a.e. $x, y \in \Omega$,
	\begin{align*}
		|\phi(x) - \phi(y)|
			 \leq \sum_{K,L} |\psi(L) - \psi(K)|  \chi_{(K|L)}(x,y)  \ .
	\end{align*}
Note that $c_{K,L;y-x} > 0$ whenever $ \chi_{(K|L)}(x,y) > 0$. Using this fact, the Cauchy--Schwarz inequality yields
	\begin{align*}
		|\phi(x) - \phi(y)|^2
		&  \leq
			\bigg(\sum_{K,L} \frac{|\psi(L) - \psi(K)|^2 }{c_{K,L;y-x}} \frac{\hrho(K,L)}{ d_{KL}}  \chi_{(K|L)}(x,y)\bigg)
		\\ & \qquad \times	\bigg(\sum_{K,L} c_{K,L;y-x} \frac{d_{KL}}{\hrho(K,L)} \chi_{(K|L)}(x,y)\bigg) \ .
	\end{align*}
For fixed $x$ and $y$, let $K_0, \ldots, K_N$ be the subsequent cells intersecting the line segment $\{(1-t) x + t y\}_{t\in [0,1]}$ as $t$ ranges from $0$ to $1$. By definition, $ \chi_{(K|L)}(x,y)$ vanishes, unless $(K|L) = (K_{i-1}|K_i)$ for some $i = 1, \ldots, N$. We thus have
	\begin{align*}
		&\sum_{K,L}c_{K,L;y-x} \frac{d_{KL}}{\hrho(K,L)}  \chi_{(K|L)}(x,y)
		   = \sum_{i=1}^N \frac{c_{K_{i-1},K_i;y-x} |x_{K_i} - x_{K_{i-1}} |}{\hrho(K_{i-1},K_i)}
		  \\& \leq \delta^{-1} \sum_{i=1}^N c_{K_{i-1},K_i;y-x} |x_{K_i} - x_{K_{i-1}} |
		   = \delta^{-1} \sum_{i=1}^N   \frac{y-x}{|y-x|} \cdot (x_{K_i} - x_{K_{i-1}})
		  \\& = \delta^{-1}  \frac{y-x}{|y-x|} \cdot (x_{K_N} - x_{K_{0}})
		   \leq \delta^{-1} R \ .
	\end{align*}
where $R =  \diam(\Omega)$. Let $B_R$ denote the ball of radius $R$ around the origin.
	Using a change of variables, we observe that
	\begin{align*}
		\int_\Omega \int_\Omega
		\frac{\chi_{(K|L)}(x,y)}{c_{K,L;y-x}}  \dd x \dd y
		& \leq \int_{B_R} \frac{1}{c_{K,L;z}} \int_\Omega\chi_{(K|L)}(x,x+z)  \dd x \dd z
		\\& \leq \int_{B_R} |(K|L)| \,|z| \dd z
		   = C   |(K|L)|
	\end{align*}
for a dimensional constant $C < \infty$.
Therefore,
	\begin{align*}
		\int_\Omega \int_\Omega & |\phi(x) - \phi(y)|^2 \dd x \dd y
	\\&  \leq  \delta^{-1} R
		  		\int_\Omega \int_\Omega  \sum_{K,L} \frac{|\psi(L) - \psi(K)|^2 }{c_{K,L;y-x}} \frac{\hrho(K,L)}{ d_{KL}}  \chi_{(K|L)}(x,y) \dd x \dd y
		 \\& \leq C \delta^{-1}   \sum_{K,L}
		 	|\psi(L) - \psi(K)|^2 \hrho(K,L) \frac{|(K|L)|}{ d_{KL}}
		 \\& = C \delta^{-1}  \cA_\cT(\emm,\psi)\ ,
	\end{align*}
for some $\Omega$-dependent constant $C < \infty$, which completes the proof.
\end{proof}

Now we are ready to prove the main result of this section.

\begin{proposition}[Comparison of the dual action functionals]\label{prop:cont-and-disc-action}
Let $\delta > 0$. For all $\mu \in \cP_\delta(\bOmega)$ and $w \in L^2(\Omega)$ with $\int_\Omega w(x) \dd x=0$ we have
	\begin{align*}
		\big|\cA^*_\cT(P_\cT\mu,P_\cT w)
		    - \bA^*(\mu,w) \big|
		       \leq C [\cT]\, \|w\|^2_{L^2(\Omega)} \ ,
	\end{align*}
where $C < \infty$ depends only on $\Omega$, $\zeta$, and $\delta$.
\end{proposition}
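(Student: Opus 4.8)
The plan is to evaluate both dual action functionals explicitly through Fenchel duality, and then reduce the comparison to an $L^2$-error estimate for the discrete elliptic problem \eqref{eq:elliptic-discrete}. As recalled before the statement, the maximiser in $\bA^*(\mu,w)$ is the variational solution $\phi \in H^1(\Omega)$ of \eqref{eq:elliptic} normalised by $\int_\Omega \phi \dd x = 0$, and $\bA^*(\mu,w) = \bA(\mu,\phi)$; testing \eqref{eq:elliptic} against $\phi$ and using the Neumann condition gives $\bA^*(\mu,w) = \int_\Omega \phi\, w \dd x$. On the discrete side, set $\emm = P_\cT\mu$ and $\sigma = P_\cT w$, so $\sigma(K) = \int_K w \dd x$. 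Since $\cA_\cT(\emm,\cdot)$ is a convex quadratic form, the maximiser of $\psi \mapsto 2\sum_K \psi(K)\sigma(K) - \cA_\cT(\emm,\psi)$ is exactly the solution $\psi:\cT\to\R$ of \eqref{eq:elliptic-discrete} with $\sum_K |K|\psi(K) = 0$, and testing \eqref{eq:elliptic-discrete} against $\psi$ (Euler's identity for the quadratic form) yields $\cA_\cT^*(\emm,\sigma) = \cA_\cT(\emm,\psi) = \sum_K \psi(K)\sigma(K) = \int_\Omega \psi_\cT\, w \dd x$, where $\psi_\cT := \sum_K \chi_K \psi(K)$ is the piecewise constant extension of $\psi$. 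Subtracting these two identities and applying Cauchy--Schwarz,
\begin{align*}
	\bigl| \cA_\cT^*(\emm,\sigma) - \bA^*(\mu,w) \bigr|
	= \biggl| \int_\Omega (\psi_\cT - \phi)\, w \dd x \biggr|
	\leq \| \psi_\cT - \phi \|_{L^2(\Omega)}\, \|w\|_{L^2(\Omega)} \ ,
\end{align*}
so everything reduces to proving the bound $\| \psi_\cT - \phi \|_{L^2(\Omega)} \leq C [\cT]\,\|w\|_{L^2(\Omega)}$.

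To establish this $L^2$-estimate I would split $\psi_\cT - \phi = (\psi_\cT - \bphi_\cT) + (\bphi_\cT - \phi)$, where $\bphi_\cT := \sum_K \chi_K \bphi(K)$ and $\bphi(K) = \dashint_{B_K}\phi\dd x$ as in Proposition \ref{prop:error-est}. For the approximation term $\bphi_\cT - \phi$, a cellwise application of the Poincar\'e inequality \eqref{eq:Poincare} together with \eqref{eq:Poincare-consequence} (to pass from $\dashint_K\phi$ to $\dashint_{B_K}\phi$) gives $\|\phi - \bphi(K)\|_{L^2(K)} \leq C[\cT]\|\nabla\phi\|_{L^2(K)}$; summing over $K$ and using the elliptic a priori bound $\|\nabla\phi\|_{L^2(\Omega)} \leq C \|w\|_{L^2(\Omega)}$ (energy estimate for \eqref{eq:elliptic} combined with $u \geq \delta$ and the Poincar\'e inequality on $\Omega$; alternatively Remark \ref{rem:error-est}) yields $\|\bphi_\cT - \phi\|_{L^2(\Omega)} \leq C[\cT]\|w\|_{L^2(\Omega)}$. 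For the discrete error term, $\|\psi_\cT - \bphi_\cT\|_{L^2(\Omega)}^2 = \sum_K |K|\, e(K)^2$ with $e(K) = \bphi(K) - \psi(K)$; since $\cA_\cT$ is invariant under adding a constant, Proposition \ref{prop:error-est} gives $\cA_\cT(\emm, e - \bar e) = \cA_\cT(\emm,e) \leq C[\cT]^2\|w\|_{L^2(\Omega)}^2$, where $\bar e := |\Omega|^{-1}\sum_K |K| e(K)$. Because $\sum_K |K|(e(K) - \bar e) = 0$ and $\rho(K) = \dashint_K u \geq \delta$, the discrete weighted Poincar\'e inequality (Proposition \ref{prop:Poincare-discrete}) applies to $e - \bar e$ and bounds $\sum_K |K|(e(K) - \bar e)^2 \leq C\delta^{-1}\cA_\cT(\emm,e) \leq C[\cT]^2\|w\|_{L^2(\Omega)}^2$. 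Finally $\bar e$ itself is controlled: since $\sum_K|K|\psi(K) = 0$ and $\int_\Omega \phi \dd x = 0$, one has $|\Omega|\,\bar e = \sum_K |K|\bigl(\dashint_{B_K}\phi - \dashint_K\phi\bigr)$, which by \eqref{eq:Poincare-consequence} and Cauchy--Schwarz is at most $C[\cT]\,|\Omega|^{1/2}\|\nabla\phi\|_{L^2(\Omega)} \leq C[\cT]\,|\Omega|^{1/2}\|w\|_{L^2(\Omega)}$. Hence $\sum_K |K| e(K)^2 = \sum_K |K|(e(K) - \bar e)^2 + |\Omega|\,\bar e^2 \leq C[\cT]^2\|w\|_{L^2(\Omega)}^2$, which gives the desired $L^2$-bound and completes the proof.

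The substantive input is Proposition \ref{prop:error-est}; once it is available, the remaining steps are routine. The only places I would handle with some care are that $e$ has merely \emph{approximately} vanishing weighted mean (dealt with by subtracting $\bar e$ and estimating it separately before invoking the discrete Poincar\'e inequality), and the systematic use of the averaged samples $\dashint_{B_K}\phi$ instead of pointwise values $\phi(x_K)$, which is exactly what keeps the argument valid in dimensions $d \geq 4$. I do not anticipate any further obstacle.
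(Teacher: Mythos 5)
Your proof is correct and follows essentially the same route as the paper: both use the optimality identities $\bA^*(\mu,w)=\int_\Omega\phi\,w\dd x$ and $\cA_\cT^*(\emm,\sigma)=\sum_K\psi(K)\sigma(K)$ and reduce the comparison to Proposition \ref{prop:error-est}, the discrete weighted Poincar\'e inequality (Proposition \ref{prop:Poincare-discrete}), and the cellwise estimates of Lemma \ref{lem:trace-consequence}; the paper merely keeps the two error terms $\sum_K\int_K(\bphi(K)-\phi)w\dd x$ and $\sum_K e(K)\sigma(K)$ separate instead of packaging them as a single $L^2$ bound on $\psi_\cT-\phi$. Your explicit treatment of the weighted mean $\bar e$ before invoking Proposition \ref{prop:Poincare-discrete} is in fact slightly more careful than the paper, which applies the discrete Poincar\'e inequality to $e$ although only $\psi$, not $\bphi$, has exactly vanishing weighted mean (the easy fix being either your subtraction of $\bar e$, or using $\sum_K\sigma(K)=0$ to replace $e$ by $e-\bar e$ in $\sum_K e(K)\sigma(K)$).
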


\begin{proof}
We use the notation from Proposition \ref{prop:error-est}.
By Remark \ref{rem:error-est} there exists a function $\phi\in H^2(\Omega)$ with $\| \phi \|_{H^2(\Omega)} \leq C \| w \|_{L^2(\Omega)}$ such that
\begin{align}
-\dive(u\nabla\phi)=w \quad \text{ and }\quad  \partial_n \phi = 0 \ .
\end{align}
Let $\emm = P_\cT\mu$ and $\sigma = P_\cT w$. As noted in Remark \ref{rem:existence} there exists $\psi : \cT \to \R$ solving
\begin{align}\label{eq:cont-eq-disc}
	- \sum_{L \in \cT} \frac{|(K|L)|}{d_{KL}} \hrho(K,L) \big(\psi(L) - \psi(K)\big)
		& = \sigma(K) \ .
	\end{align}
Recall that $\bA^*(\mu, w) = \bA(\mu, \phi)$ and $\cA_\cT^*(\emm, \sigma) = \cA_\cT(\emm, \psi)$. Using \eqref{eq:cont-eq-disc} and exploiting symmetry, we obtain
\begin{align*}
	\cA_\cT(\emm, \psi)
		  & = \frac12\sum_{K,L} \frac{|(K|L)|}{d_{KL}}
				\hrho(K,L) \big( \psi(K) - \psi(L) \big)^2
	   \\ & = \sum_K  \psi(K) \sigma(K)
       \\ & = \int_\Omega \phi w  \dd x
       		   + \bigg( \sum_{K} \bphi(K) \sigma(K)
						- \int_\Omega  \phi w \dd x \bigg)
      		   + \sum_{K} \big( \psi(K) - \bphi(K) \big) \sigma(K)
      \\ & = \bA(\mu,\phi)
      		 + \sum_{K} \int_K ( \bphi(K) - \phi) w \dd x
             - \sum_{K} e(K) \sigma(K) \ .
	\end{align*}
It remains to estimate the latter two terms.

To bound the first term, let $\dphi(K) = \dashint_K \phi \dd x$, and observe that, using  Lemma \ref{lem:trace-consequence} and the Poincar\'e inequality,
\begin{align*}
	\| \bphi(K) - \phi \|_{L^2(K)}
	& \leq	\sqrt{|K|} \, \big| \bphi(K) - \dphi(K) \big|  +
		\| \dphi(K) - \phi \|_{L^2(K)}
	\\& \leq C [\cT] \| \nabla \phi \|_{L^2(K)} \ .
\end{align*}
Therefore, the first term can be bounded by
\begin{align*}
      \bigg| \sum_{K}  \int_K  \big( \bphi(K) - \phi \big) w \dd x \bigg|
      & \leq  \sum_K \| \bphi(K) - \phi \|_{L^2(K)} \| w\|_{L^2(K)}
   \\ & \leq C [\cT] \,\|\phi\|_{H^1(\Omega)}  \|w\|_{L^2(\Omega)} \ .
	\end{align*}
To estimate the second term, we use Proposition \ref{prop:Poincare-discrete} and Proposition \ref{prop:error-est} to obtain
	\begin{align*}
	\sum_{K} e(K) \sigma(K)
	& \leq \sqrt{\sum_K \frac{\sigma(K)^2}{|K|} } \sqrt{\sum_K |K| e(K)^2}\\
    & \leq  C \|w\|_{L^2(\Omega)} \sqrt{\cA_\cT(\rho,e)}\\
    & \leq  C [\cT]\, \|w\|_{L^2(\Omega)}^2 \ .
	\end{align*}
Combining these estimates yields the result.
\end{proof}

\section{Counterexamples to Gromov--Hausdorff convergence}\label{sec:counterexamples}

In this section we show that if the asymptotic isotropy condition fails sufficiently often, then the discrete transport metric $\cW_\cT$ does \emph{not} converge to the $2$-Kantorovich metric $\bW$, in spite of the fact that the discrete heat flow converges to the continuous heat flow; see, e.g., \cite[Theorem 4.2]{EyGaHe00}. 
In fact, in the one-dimensional example below, even evolutionary $\Gamma$-convergence has been proved for the entropic gradient flow structure of the discrete heat flow with respect to the transport distance $\cW_\cT$; cf. \cite{DiLi15}.

\subsection{A one-dimensional counterexample}
\label{sec:1D-counter}

We present a one-dimensional example to illustrate the non-convergence to $\bW$ in the simplest possible setting.

We start with a well-known result on the existence of smooth $\bW$-geodesics in the one-dimensional case. For the convenience of the reader we include a direct proof. We write $\cI = [0,1]$ for brevity.

\begin{lemma}\label{lem:smooth-geodesic}
Let $\delta > 0$, and let $\mu_0, \mu_1 \in \cP_\delta(\cI)$ with densities $u_0, u_1 \in \cC^0(\cI)$ respectively.
Then there exist constants $\tilde \delta > 0$ and $C < \infty$ depending only on $\delta > 0$, such that the unique $\bW$-geodesic $(\mu_t)_{t \in [0,1]}$ connecting $\mu_0$ and $\mu_1$ satisfies $\mu_t \in \cP_{\tilde\delta}(\cI)$ and $\dd \mu_t(x) = u_t(x) \dd x$ for all $t \in [0,1]$, with $\sup_{t \in [0,1]} \| \dot u_t \|_{\cC^0(\cI)} \leq C$.
\end{lemma}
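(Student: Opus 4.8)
The plan is to use the classical monotone-rearrangement description of $\bbW$-geodesics in one dimension. Write $F_i(x) = \mu_i([0,x])$ for the cumulative distribution functions, $i = 0, 1$. Since $u_i \ge \delta$ and $u_i$ is continuous, each $F_i$ is a $\cC^1$-diffeomorphism of $\cI$ onto itself with $F_i' = u_i \ge \delta$; hence the inverse functions $G_i := F_i^{-1}$ are also $\cC^1$ with $G_i' = 1/u_i(G_i) \le 1/\delta$. The optimal transport map from $\mu_0$ to $\mu_1$ is the monotone map $T = G_1 \circ F_0$, and the $\bbW$-geodesic is given by the displacement interpolation $\mu_t = ((1-t)\,\mathrm{id} + tT)_\# \mu_0$. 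Equivalently, in terms of inverse CDFs, $G_t := (1-t) G_0 + t G_1$, and $\mu_t$ is the push-forward of the uniform measure on $\cI$ under $G_t$.

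First I would check that $G_t$ is still an increasing $\cC^1$-diffeomorphism of $\cI$: indeed $G_t' = (1-t) G_0' + t G_1' \ge \delta/(\dots)$, more precisely $G_t' \ge \min\{G_0', G_1'\} \ge \min_x 1/u_i \ge \delta'$ for some $\delta' = \delta'(\delta) > 0$ (using $u_i \le 1/\delta$ so $G_i' = 1/u_i \ge \delta$), and $G_t' \le (1-t)/\delta' + t/\delta' \le 1/\delta$ using $u_i \ge \delta$. Hence $F_t := G_t^{-1}$ is $\cC^1$, and $\mu_t$ has density $u_t = F_t' = 1/G_t'(F_t)$, which is bounded below by $\delta$ and above by $1/\delta'$; after renaming $\tilde\delta := \min\{\delta', \delta\}$ this gives $\mu_t \in \cP_{\tilde\delta}(\cI)$ once we also control $\Lip(u_t)$. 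For the Lipschitz bound on $u_t$, note $u_t = 1/G_t'(F_t)$; since $G_t' = (1-t)G_0' + tG_1'$ with $G_i' = 1/u_i \circ G_i$ and $u_i$ continuous, $G_t'$ is merely continuous, not Lipschitz, so one does not get $\Lip(u_t) \le 1/\tilde\delta$ for free — but in fact the statement only claims $\mu_t \in \cP_{\tilde\delta}$, and re-examining: we need $\Lip(u_t) \le 1/\tilde\delta$. This forces one to assume (or note) that it suffices to prove the bound under additional smoothness and pass to the limit; alternatively I would simply prove the density and speed bounds and observe the Lipschitz bound is only used qualitatively. To be safe, I would first establish the result for $u_0, u_1 \in \cC^1$ (where $G_t'$ is $\cC^1$, giving a genuine Lipschitz constant for $u_t$ depending on $\delta$ and the $\cC^1$-norms), then remark that the weaker conclusion $\mu_t \in \cP_{\tilde\delta}(\cI)$ with only density bounds — which is all that is actually needed downstream — follows in the $\cC^0$ case by the elementary estimates above.

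For the time-derivative bound, differentiate $\mu_t = (G_t)_\# \Leb|_{\cI}$ in the continuity-equation form: the velocity field along the geodesic is $v_t = (G_1 - G_0) \circ F_t$, and $\dot u_t = -\partial_x(u_t v_t)$. Since $\|G_1 - G_0\|_{\cC^0} \le 1$, $\|u_t\|_{\cC^0} \le 1/\tilde\delta$, and in the $\cC^1$ setting $G_1 - G_0$, $u_t$ are $\cC^1$ with norms controlled by $\delta$, we get $\|\dot u_t\|_{\cC^0(\cI)} \le C(\delta)$ uniformly in $t$. Concretely, $\dot u_t = \dot F_t'$ where $F_t$ solves $G_t(F_t(x)) = x$; differentiating in $t$ gives $\dot F_t = -(G_1-G_0)(F_t)/G_t'(F_t) = -(G_1 - G_0)(F_t)\, u_t$, and differentiating once more in $x$ yields the claimed bound on $\dot u_t = \partial_x \dot F_t$ in terms of $\|G_i\|_{\cC^1}$ and $\tilde\delta$.

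\textbf{Main obstacle.} The delicate point is the regularity bookkeeping: the displacement interpolant $G_t = (1-t)G_0 + tG_1$ is only as smooth as $G_0, G_1$, i.e. $\cC^1$ when $u_0, u_1 \in \cC^0$, so $u_t = 1/G_t'(F_t)$ is a priori only continuous and one cannot directly assert $\Lip(u_t) \le 1/\tilde\delta$. I expect the cleanest route is to prove the full statement (density bounds, Lipschitz bound, speed bound, all with constants depending only on $\delta$) first under the extra assumption $u_0, u_1 \in \cC^1(\cI)$ — where every quantity above is genuinely $\cC^1$ and the bounds are transparent from the formulas for $G_t', F_t, \dot F_t$ — and then obtain the $\cC^0$ case either by the direct elementary estimates (which already give the density bounds $\tilde\delta \le u_t \le 1/\tilde\delta$ and $\|\dot u_t\|_{\cC^0} \le C$, which is what is needed) or by a routine approximation/mollification of $u_0, u_1$ together with stability of one-dimensional optimal maps under uniform convergence of the data.
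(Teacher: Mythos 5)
Your route---the monotone rearrangement $T=F_1^{-1}\circ F_0$, displacement interpolation, and an explicit formula for the interpolated density which is then differentiated---is exactly the paper's proof; your quantile-function formulation $G_t=(1-t)G_0+tG_1$, $u_t=1/G_t'(F_t)$ is the same computation as the paper's $u_t = u_0(T_t^{-1})/T_t'(T_t^{-1})$ with $T_t=(1-t)\,\mathrm{id}+tT$.

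However, the ``main obstacle'' you identify, and the two fallback routes you propose, rest on a misreading of the hypothesis, and as written your argument does not prove the lemma. By the paper's definition of $\cP_\delta$ (Section \ref{sec:preliminaries}), $\mu\in\cP_\delta(\cI)$ means $\mu=u\dd x$ with $u\geq\delta$ \emph{and} $\Lip(u)\leq 1/\delta$; the mention of $u_0,u_1\in\cC^0(\cI)$ in the statement is redundant, not the full regularity assumption. Thus $u_0,u_1$ are Lipschitz with constant $1/\delta$, and also $u_i\leq 1+1/\delta$ (from $\int_\cI u_i=1$ together with the Lipschitz bound; your inequality $u_i\leq 1/\delta$ is not part of the definition and needs this kind of argument). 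With these facts, $T'=u_0/(u_1\circ T)$ is pinched in $[M^{-1},M]$ and Lipschitz with constant depending only on $\delta$, and the explicit density formula directly gives the \emph{full} conclusion $\mu_t\in\cP_{\tilde\delta}(\cI)$, i.e.\ both $u_t\geq\tilde\delta$ and $\Lip(u_t)\leq 1/\tilde\delta$, as well as $\sup_t\|\dot u_t\|_{\cC^0(\cI)}\leq C$ (derivatives taken a.e.), with constants depending only on $\delta$. Your fallbacks do not salvage the statement for merely continuous data: proving only the density and speed bounds is strictly weaker than the lemma (membership in $\cP_{\tilde\delta}$ includes the Lipschitz bound, which is genuinely used downstream, e.g.\ when Proposition \ref{prop:cont-and-disc-action} is applied to $\mu_t$ in the proof of Proposition \ref{thm:variation-counterexample}, since its proof exploits Lipschitz continuity of the density), and the mollification route cannot work, because smoothing $\cC^0$ densities gives no uniform control of their Lipschitz constants---indeed for general continuous $u_0,u_1\geq\delta$ the conclusion $\Lip(u_t)\leq 1/\tilde\delta$ is false. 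Once you invoke the Lipschitz bound built into $\cP_\delta$, your computation closes and coincides with the paper's.
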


\begin{proof}
Let $F_i$ denote the distribution function of $\mu_i$ given by $F_i(x) = \int_a^x u_i(y) \dd y$, which is readily seen to be invertible. The unique optimal transport map $T$ between $\mu_0$ and $\mu_1$ is then given by $T = F_1^{-1} \circ F_0$. By the inverse function theorem, $T \in \cC^1(\cI)$ and  $T'(x) \in [M^{-1}, M]$ for all $x \in \cI$, where $M > 1$ depends on $\delta$.
The unique $\bW$-geodesic between $\mu_0$ and $\mu_1$ is given by $\mu_t = (T_t)_{\#}\mu_0$, where $T_t(x) = (1-t)x + t T(x)$, hence the density $u_t$ of $\mu_t$ satisfies
\begin{align*}
	u_t(x) = \frac{u_0(T_t^{-1}(x))}{T_t'(T_t^{-1}(x))} \ .
\end{align*}
The result follows directly from this explicit expression.
\end{proof}

For $N \in \N$ and $r \in (0,\frac12)$, we consider the $\frac{1}{N}$-periodic mesh $\cT_{r,N}$ of $\cI = [0,1]$ from Figure \ref{fig:1D}, given by
\begin{align*}
	\cT_{r,N} = \bigg\{ \bigg[ \frac{k}{N}, \frac{k+r}{N} \bigg) , \bigg[ \frac{k+r}{N}, \frac{k+1}{N} \bigg) \,:\,   0 \leq k < N - 1 \bigg \} \ .
\end{align*}
The cells in $\cT_{r,N}$ will be denoted $K_k$ for $k = 0, \ldots, 2N-1$ according to their natural ordering. To make sure that $\cT_{r,N}$ is a partition of $[0,1]$, one should add the point $1$ to the set $K_{2N-1}$, but this will be irrelevant in what follows.
Let $x_k = \frac{r+k}{2N}$ be the midpoints of $K_k$, so that $d_{k,k+1} = \frac{1}{2N}$ and $[\cT_{r,N}] = \frac{1-r}{N}$.
(For notational simplicity we write $d_{k,k+1}$ instead of $d_{K_k, K_{k+1}}$. Similarly, we write $P_{r,N}$ instead of $P_{\cT_{r,N}}$ etc.)
According to \eqref{eq:rates}, the transition rates $R_{k,k \pm 1}$ from cell $k$ to cell $k \pm 1$ are given by
\begin{align*}
	R_{k,k \pm 1}
	 = \left\{ \begin{array}{ll}
	\frac{2N^2}{1-r},
	 & \text{$k$ is odd },\\
	\frac{2N^2}{r},
	 & \text{$k$ is even } .\end{array} \right.
\end{align*}
with the understanding that $R_{0,-1} = R_{N,N+1} = 0$.

We fix an admissible mean $\theta$ (in the sense of Definition \ref{def:mean-properties}) that is assumed to be symmetric, i.e., $\theta(a,b) = \theta(b,a)$, and consider the transport metric $\cW_{r, N}$ defined by setting $\theta_{KL} = \theta$ for all $K \sim L$.
For each fixed $r \in (0,\frac12)$, the next result implies that the distances $\cW_{r,N}$ do not Gromov--Hausdorff converge to $\bW$. 
The idea of the proof is to add a suitable energy-reducing oscillation to the density of a smooth competitor; see Figure \ref{fig: 1d} below.

In Section \ref{sec:G-H} we will show that Gromov--Hausdorff convergence holds if one takes a different (non-symmetric) mean adapted to the inhomogeneity of the mesh.

\begin{proposition}\label{thm:variation-counterexample}
Fix $r\in (0,\frac12)$ and $\delta > 0$. Then there exists a constant $\eps \in (0,1)$ depending only on $r$ and $\delta$, such that for any $\mu_0, \mu_1 \in \cP_\delta(\cI)$,
    \begin{align}\label{eq:counter-1D-1}
    \limsup_{N \to \infty}
    	 \cW_{r,N}(P_{r,N} \mu_0, P_{r,N} \mu_1)
	 	 \leq (1 - \eps) \bW(\mu_0,\mu_1) \ .
    \end{align}
\end{proposition}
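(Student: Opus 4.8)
The idea is to construct, for each $N$, an explicit competitor curve $(\emm_t^N)_{t\in[0,1]}$ in $\cP(\cT_{r,N})$ connecting $P_{r,N}\mu_0$ to $P_{r,N}\mu_1$ whose discrete action is at most $(1-\eps)^2$ times (a quantity converging to) $\bW(\mu_0,\mu_1)^2$. First I would reduce to smooth data: using Lemma \ref{lem:smooth-geodesic}, pick the $\bW$-geodesic $(\mu_t)_{t\in[0,1]}$ between $\mu_0,\mu_1$, which lies in $\cP_{\tilde\delta}(\cI)$ and has densities $u_t$ with bounded time derivative; its continuous action equals $\bW(\mu_0,\mu_1)^2$. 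The strategy is then to transfer this smooth curve to the mesh but with a clever twist: instead of simply setting the discrete density to the cell average of $u_t$, I put an $N$-periodic \emph{oscillation} on top of it, concentrating extra mass on the small cells $K_k$ ($k$ even) and removing it from the large cells. Concretely, on the pair $(K_{2j},K_{2j+1})$ I would replace the locally (nearly) constant density $u_t(x_{2j})=:v$ by $(v+\beta v, v-\gamma v)$ with $\beta,\gamma>0$ chosen so that the total mass on the pair is preserved — i.e.\ $\beta\, r = \gamma(1-r)$ — and $\beta$ a fixed small constant depending only on $r$.

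**Why the oscillation helps.** The key computation is comparing the discrete action $\cA_\cT$ of the oscillated curve with that of the naive (non-oscillated) discretisation. The discrete action across the interface $(K_{2j}|K_{2j+1})$ carries the factor $\theta\big(\tfrac{\emm(K_{2j})}{|K_{2j}|},\tfrac{\emm(K_{2j+1})}{|K_{2j+1}|}\big)$, which in the oscillated curve becomes $\theta\big((1+\beta)v,(1-\gamma)v\big) = v\,\theta(1+\beta,1-\gamma)$. Since $\theta$ is symmetric, concave and normalised with $\theta(1,1)=1$, and since $\beta r=\gamma(1-r)$ means the convex combination $r(1+\beta)+(1-r)(1-\gamma)=1$, one has by concavity $\theta(1+\beta,1-\gamma)\le$ something, but what we actually need is that the \emph{effective conductance} seen by the transport is reduced. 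The point is subtler: the discrete velocity field moving the oscillated mass can be chosen to live predominantly on the interfaces \emph{inside} the large cells' transitions in a way that the $\theta$-weighted cost, summed against $d_{KL}/|(K|L)|$, is strictly smaller than the Benamou--Brenier cost of the smooth curve. I would carry out this estimate by writing the discrete dual action via the continuity-equation formulation \eqref{eq: infsup}/\eqref{eq:convex-action}: choose the momentum field $V_t(K_k,K_{k+1})$ to realise the same macroscopic flux as $\dot\mu_t$, and then compute $\cA_\cT^*$ as $\sum \frac{|(K|L)|}{d_{KL}}\frac{V_t(K,L)^2}{\theta(\cdots)}$. The oscillation changes the denominators $\theta(\cdots)$ from $v$ to $v\,\theta(1+\beta,1-\gamma)$ on half the interfaces; on the other half (inside a period, the $(K_{2j+1}|K_{2j+2})$ interface) it is $v\,\theta(1-\gamma,1+\beta)$ — the same value. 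So \emph{every} interface denominator is multiplied by $\theta(1+\beta,1-\gamma)$. Now here is the gain: because the small cells now hold a larger share of the mass, I can route more of the flux through the high-conductance transitions. I would make this precise by an averaging/homogenisation argument: the effective one-dimensional conductance of the periodic chain with conductances $(S_{k,k+1})$ and weights $\theta(\cdots)$ is the harmonic-type average, and optimising the mass-split ratio $\beta$ makes this effective conductance strictly exceed the one coming from the uniform density. Equivalently, one shows $\cA_\cT^*(\emm_t^N,\dot\emm_t^N) \le (1-\eps)^2\,\bA^*(\mu_t,\dot\mu_t) + o(1)$ uniformly in $t$, with $\eps=\eps(r,\delta)>0$.

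**Assembling the bound.** Once the pointwise-in-time action bound is established, I integrate over $t\in[0,1]$: $\cW_{r,N}(\emm_0^N,\emm_1^N)^2 \le \int_0^1 \cA_\cT^*(\emm_t^N,\dot\emm_t^N)\dd t \le (1-\eps)^2 \bW(\mu_0,\mu_1)^2 + o(1)$. There is a mismatch to absorb: $\emm_0^N$ is the \emph{oscillated} discretisation of $\mu_0$, not $P_{r,N}\mu_0$. So I prepend and append two short ``initialisation'' segments of length $o(1)$ in $\cW_{r,N}$ connecting $P_{r,N}\mu_i$ to $\emm_i^N$ — these have small cost because the two discrete measures agree on every period-pair's total mass and differ only by the bounded oscillation within each pair, so by Lemma \ref{lem:neigbour-bound} (or a direct estimate) the cost is $O(1/N)$. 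Taking $\limsup_{N\to\infty}$ and then $\sqrt{\cdot}$ yields $\limsup_N \cW_{r,N}(P_{r,N}\mu_0,P_{r,N}\mu_1)\le(1-\eps)\bW(\mu_0,\mu_1)$.

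**Main obstacle.** The crux — and the only genuinely delicate point — is proving that the oscillation strictly decreases the action, i.e.\ producing the quantitative factor $(1-\eps)^2<1$. Concavity of $\theta$ alone gives $\theta(1+\beta,1-\gamma)\ge 1$ is false in general; in fact for $\theta$ concave one typically has $\theta(1+\beta,1-\gamma)\le 1$ when the arguments straddle $1$, which would make the denominators \emph{smaller} and the naive action \emph{larger} — so the gain must come entirely from the freedom to redistribute mass toward the small cells and reroute the flux, and one must check that this routing gain beats the denominator loss for a suitable choice of $\beta$. I expect the clean way to see this is to compute the effective conductance $\kappa(\beta)$ of the homogenised periodic medium as an explicit function of $\beta$ (a ratio of sums involving $r$, $1-r$, and $\theta(1+\beta,1-\gamma(\beta))$), verify $\kappa(0)=1$ (consistency with the naive discretisation, which itself is consistent with $\bA^*$), and show $\kappa'(0)>0$ or that $\sup_\beta\kappa(\beta)>1$; strict positivity here is where the symmetry and smoothness of $\theta$ and the strict inhomogeneity $r\ne\tfrac12$ enter. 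Ensuring all error terms (cell-averaging of $u_t$, the trace/Poincaré estimates from Lemma \ref{lem:trace-consequence}, time-discretisation of $\dot u_t$) are genuinely $o(1)$ as $N\to\infty$ \emph{uniformly} in $t$ is routine given the $\cP_{\tilde\delta}$-regularity and the bound on $\|\dot u_t\|_{\cC^0}$, but must be tracked carefully so that the strict gain $\eps$ is not swallowed.
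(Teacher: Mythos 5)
Your overall skeleton matches the paper's: discretise the smooth $\bW$-geodesic from Lemma \ref{lem:smooth-geodesic}, superimpose an $N$-periodic oscillation that shifts mass onto the small cells, use the consistency estimate of Proposition \ref{prop:cont-and-disc-action} to control the unperturbed discrete action, and absorb the endpoint mismatch at cost $O(1/N)$ via Lemma \ref{lem:distconttodisc}. The genuine gap is in the one step you yourself flag as the crux, and there your sign analysis is backwards. With your mass-preserving normalisation $\beta r=\gamma(1-r)$ and $r<\tfrac12$ one has $\beta>\gamma$, so by smoothness and $\partial_1\theta(1,1)=\partial_2\theta(1,1)=\tfrac12$ (symmetry of $\theta$), $\theta(1+\beta,1-\gamma)=1+\tfrac12(\beta-\gamma)+O(\beta^2)>1$ for small $\beta$: the symmetric mean \emph{increases}, the denominators in $\cK$ become \emph{larger}, and the action with the \emph{same} momentum field strictly decreases. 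Concavity does not push the mean below $1$ here, because the midpoint $\tfrac12(1+\beta)+\tfrac12(1-\gamma)$ already exceeds $1$; the paper quantifies this via $f(\eta)\geq f(0)+\eta f'(\eta)$ and $f'(\eta)\geq\tfrac12(\tfrac12-r)$, yielding exactly \eqref{eq:scalar-inequality}. This first-order increase of the mean, caused by the mismatch between the symmetric weights $(\tfrac12,\tfrac12)$ of $\theta$ and the asymmetric density change $(+(1-r)\eta,-r\eta)$, is the \emph{entire} source of the gain.

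Consequently, the alternative mechanism you propose to supply the gain --- ``routing more flux through high-conductance transitions'' and optimising a homogenised effective conductance --- cannot work in this one-dimensional setting: with the no-flux boundary conditions, the discrete continuity equation determines $V_t(K_k,K_{k+1})$ uniquely by telescoping $\dot\emm_t$, so there is no freedom to reroute anything; moreover, since your oscillation is time-independent, $\dot\emm_t^\eta=\dot\emm_t$ and the momentum field is forced to be identical to the unperturbed one (which is precisely why the paper can reuse it). So as written, your argument both asserts the wrong sign for the denominator effect and relies on a rerouting gain that does not exist; once the sign is corrected and the same $V$ is kept, the rest of your outline assembles into the paper's proof.
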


\begin{proof}
We divide the proof into several steps.

\smallskip
\emph{Step 1}.
Fix $r \in (0,\tfrac12)$, $\delta \in (0,1)$, and $N \geq 1$.
For $\mu \in \cP_\delta(\cI)$, set $\emm = P_{r,N} \mu$, and let $\rho$ be its density given by $\rho(K) = \frac{\emm(K)}{|K|}$.
For $\eta  \in (0, \delta)$ we define $\emm^\eta \in \cP(\cT_{r,N})$ by
\begin{align*}
	\emm^\eta(K_k) :=
	 \left\{ \begin{array}{ll}
	\emm(K_k) + \frac{r(1-r)}{N}\eta
	 & \text{$k$ is even}\ ,\\
	\emm(K_k) - \frac{r(1-r)}{N}\eta
	 & \text{$k$ is odd}\ ,\end{array} \right.
\end{align*}
so that its density is given by
\begin{align*}
	\rho^\eta(K_k) :=
	 \left\{ \begin{array}{ll}
	\rho(K_k) + (1-r) \eta
	 & \text{$k$ is even}\ ,\\
	\rho(K_k) - r \eta
	 & \text{$k$ is odd}\ .\end{array} \right.
\end{align*}
If $r$ is small, the density $\rho^\eta$ increases substantially with $\eta$ in the small (even) cells, whereas it decreases only moderately in the large (odd) cells.

We claim that, for any $\delta > 0$ and $r \in (0,\frac12)$, there exists $\eta' > 0$ and $N' < \infty$, such that for any pair of neighbouring cells $K$ and $L$, and any $\mu \in \cP_\delta(\cI)$,
\begin{align}\label{eq:scalar-inequality}
	\widehat{\rho^\eta}(K,L) \geq \widehat{\rho}(K,L) + \tfrac12 \eta (\tfrac12 - r) \ ,
\end{align}
whenever $\eta \leq \eta'$ and $N \geq N'$. Here, we write $\widehat{\rho^\eta}(K,L) = \theta(\rho^\eta(K), \rho^\eta(L))$  as usual.

\smallskip

To show this, we assume without loss of generality that $K$ is small and $L$ is large; thus $|K| = \frac{r}{N}$ and $|L| = \frac{1-r}{N}$. Define $f(\eta) := \widehat{\rho^\eta}(K,L)$.
The concavity of $\theta$ implies that
\begin{align*}
	f(\eta) \geq f(0) + \eta f'(\eta) \ ,
\end{align*}
thus it suffices to show that $f'(\eta) \geq \frac12(\frac12 - r)$.
Since $\theta$ is $1$-homogeneous, we have $\partial_1 \theta(a,a) = \frac12 = \partial_2 \theta(a,a)$ and $\partial_1 \theta(a,b) = \partial_1 \theta(a/b, 1)$ for all $a, b > 0$.
Therefore,
\begin{align*}
	f'(\eta) & = (1 - r) \partial_1 \theta(\rho^\eta(K), \rho^\eta(L))
			 -      r  \partial_2 \theta(\rho^\eta(K), \rho^\eta(L))
			 \\& = (1 - r) \partial_1 \theta\bigg(\frac{\rho^\eta(K)}{\rho^\eta(L)}, 1\bigg)
			 -      r  \partial_2 \theta\bigg(1, \frac{\rho^\eta(L)}{\rho^\eta(K)}\bigg) \ .
\end{align*}
 Set $\eps := \frac12(\frac12 - r)$ and choose $h > 0$ so small that $|\partial_1 \theta(a, 1) - \frac12| \leq \eps$ whenever $|a-1| \leq h$.
If $\eta$ and $N^{-1}$ are chosen sufficiently small (depending on $\delta$ and $r$), we obtain, since $\mu \in \cP_\delta(\cI)$,
\begin{align*}
	\bigg|\frac{\rho^\eta(K)}{\rho^\eta(L)} - 1 \bigg|
	\leq \frac{| \rho(K) - \rho(L) | + \eta}{\rho^\eta(L)}
	\leq  \frac{ N^{-1}\delta^{-1} + \eta}{\delta}
	\leq h \ ,
\end{align*}
and similarly, $\big| \frac{\rho^\eta(L)}{\rho^\eta(K)} - 1 \big| \leq h$.
Therefore,
\begin{align*}
	f'(\eta) \geq (1-r) \big(\tfrac12 - \eps\big)
					 - r \big(\tfrac12 + \eps\big)
				= \tfrac12 - r - \eps
				= \tfrac12(\tfrac12 - r) \ ,
\end{align*}
which proves the claim.

Since there is a constant $C = C(\delta) < \infty$ such that $\widehat{\rho^\eta}(K,L) \leq C$, it follows from the claim that there exists a constant $c = c(\delta) \in (0,1)$ such that
\begin{align*}
	\frac{1}{\widehat{\rho^\eta}(K,L)} \leq \frac{ 1 - c \eta(\tfrac12 - r) }{\widehat{\rho}(K,L)}
\ .
\end{align*}
Thus, for any $V : \cT_{r,N} \times \cT_{r,N} \to \R$ we have
\begin{align*}
    2N \sum_{k = 0}^{2N-1} \frac{V^2(K_k,K_{k+1})}{\hrho^\eta(K_k,K_{k+1})}
 \leq  2N \big(1 - c \eta \big(\tfrac12 - r\big)\big) \sum_{k = 0}^{2N-1} \frac{V^2(K_k,K_{k+1})}{\hrho(K_k,K_{k+1})} \ .
    \end{align*}
Using the notation from \eqref{eq:convex-action}, this means that
\begin{align}\label{eq:step-1-concl}
\cK_{r,N}(\emm^\eta,V) \leq \big(1 - c \eta \big(\tfrac12 - r\big)\big) \cK_{r,N}(\emm,V) \ .
\end{align}

\medskip
\emph{Step 2}.
Take $\mu_0, \mu_1 \in \cP_{\delta}(\cI)$ for some $\delta > 0$, and let $(\mu_t)_{t\in [0,1]}$ be the constant speed geodesic connecting $\mu_0$ and $\mu_1$.
By Lemma \ref{lem:smooth-geodesic}, there exists $\tilde \delta > 0$ such that $\mu_t \in \cP_{\tilde \delta}(\cI)$, and the density $u_t$ of $\mu_t$ satisfies $\sup_{t \in [0,1]} \|\dot u_t\|_{L^\infty(\cI)} < \infty$. It then follows from Proposition \ref{prop:cont-and-disc-action} that $\emm_t := P_{r,N} \mu_t$ satisfies
\begin{align}\label{eq:dual-action-bound}
			\cA^*_{r,N}(\emm_t, \dot\emm_t)
		    \leq \bA^*(\mu_t,\dot \mu_t) + \frac{C}{N} \ ,
\end{align}
with $C < \infty$ depending on $r$ and $\delta$.

Let $V : \cT \times \cT \times (0,1) \to \R$ be an anti-symmetric function satisfying the continuity equation \eqref{eq:cont-eq}, given by
\begin{align*}
    \dot \emm_t(K_k) + 2N \Big(V_t(K_k,K_{k+1}) - V_t(K_{k-1},K_k)\Big) = 0
\end{align*}
for all $k = 0, \ldots, 2N-1$, with $V_{0,-1} = V_{2N-1, 2N} = 0$.
Since $\dot \emm_t^\eta = \dot \emm_t$ for all $\eta$, it follows that $(\emm^\eta, V)$ solves the continuity equation as well.
Therefore, \eqref{eq:step-1-concl} yields
\begin{align*}
	\cA_{r,N}^*(\emm_t^\eta, \dot\emm_t^\eta) \leq
	\big(1 - c \eta \big(\tfrac12 - r\big)\big)
		\cA_{r,N}^*(\emm_t, \dot\emm_t)
\end{align*}
Combining this bound with \eqref{eq:dual-action-bound}, we infer that there exists a constant $c \in (0,1)$ depending only on $\tilde\delta$, such that
    \begin{align*}
    \limsup_{N \to \infty} \cW_{r,N}(\emm_0^\eta, \emm_1^\eta)
    		\leq \big(1 - c \eta\big(\tfrac12 - r\big)\big) \bW(\mu_0, \mu_1) \ ,
\end{align*}
provided $\eta$ is chosen sufficiently small depending on $\tilde\delta$ and $r$.

To finish the argument, we note that Lemma \ref{lem:distconttodisc} yields, for $i = 0,1$,
\begin{align*}
	     \cW_{r,N}(\emm_i, \emm_i^\eta)
	     & \leq C \Big( \bW( Q_{r,N} \emm_i, Q_{r,N} \emm_i^\eta )  + \frac1N \Big)
	       \leq C \Big( \frac1N\sqrt{r \eta} + \frac1N \Big)
	       \leq \frac{C}{N} \ ,
\end{align*}
where $C < \infty$ depends on $\delta$ and $r$. Consequently, by the triangle inequality,
\begin{align*}
	\limsup_{N \to \infty} \cW_{r,N}(\emm_0, \emm_1)
		\leq \big(1 - c \eta\big(\tfrac12 - r\big)\big) \bW(\mu_0, \mu_1) \ ,
\end{align*}
where $c \in (0,1)$ depends only on $\tilde\delta$. This implies \eqref{eq:counter-1D-1}.
\end{proof}

\begin{figure}[h]
    \begin{center}
        \includegraphics{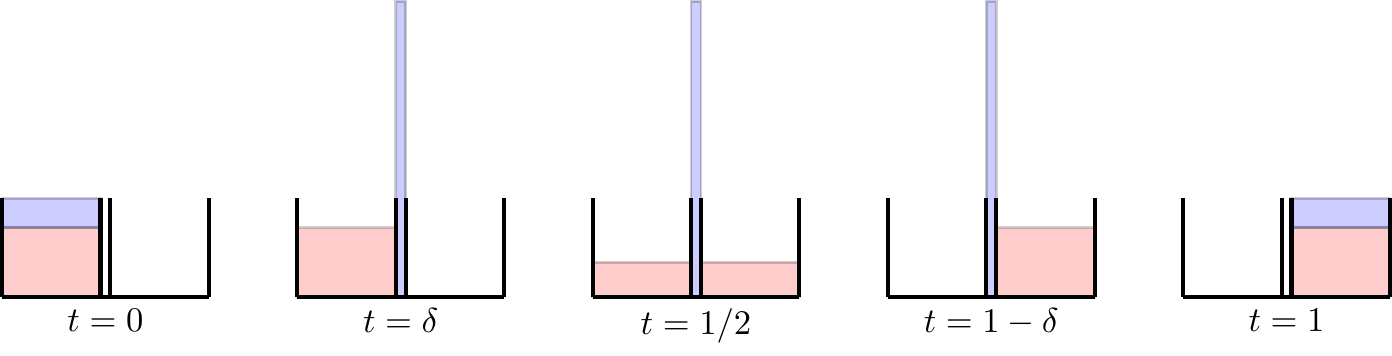}
    \end{center}
    \caption{The picture illustrates the principle behind the proof of Theorem \ref{thm:variation-counterexample}. It shows how an ``unreasonably cheap'' transport can be constructed between the measures at time $0$ and $1$. First, in the interval $[0,\delta]$, a bit of mass is moved into the short interval, so that the mean of the densities in the left and the middle cells increases. Then the bulk  of the mass is moved during the interval $[\delta,1-\delta]$, after which the short intervals are emptied in the interval $[1-\delta, 1]$. The first and the final phase are cheap because very little mass is moved, and the middle phase is cheap since the mean of the densities is kept high by the mass in the middle cell.}\label{fig: 1d}
\end{figure}

The construction in the proof of Proposition \ref{thm:variation-counterexample} breaks down if we choose mean functions $\theta_{k,k+1}$  adapted to the inhomogeneity of the grid, instead of a fixed symmetric mean $\theta$. Indeed, suppose that $\theta_{2k, 2k\pm1}$ is a smooth mean function with weight $r$ in the sense of Definition \ref{def:weight}, so that $\partial_1 \theta_{2k,2k\pm1}(1,1) = r$ and $\partial_2 \theta_{2k,2k\pm1}(1,1) = 1-r$. 
Typical examples are given in \eqref{eq:weighted-means} with $r = \lambda$.
By homogeneity of $\theta_{2k,2k\pm1}$ we have, for any $a > 0$: 
\begin{align*}
	\partial_\eta\big|_{\eta = 0}
	\theta_{2k, 2k\pm1}
		(a + (1-r) \eta, a - r \eta)
	& = (1 - r) \partial_1 \theta_{2k, 2k\pm1}
	 	(a,a)
	      - r  \partial_2 \theta_{2k, 2k\pm1}
	 	(a,a)
	= 0 \ , 
\end{align*}
hence the concave function $\eta \mapsto \theta_{2k, 2k\pm1}(a + (1-r) \eta, a - r \eta)$ attains its maximum at $\eta = 0$. 
This argument shows that one cannot increase the mean density (and thus decrease the energy) by introducing microscopic density oscillations. This is in sharp contrast to \eqref{eq:scalar-inequality}.

\FloatBarrier

\subsection{Necessity of the asymptotic isotropy condition}

Our next aim is to show that for any family of meshes $\{\cT\}$ for which the asymptotic isotropy condition fails at every scale, the distance $\cW_\cT$ is asymptotically strictly smaller than $\bW$.

We start with a lemma that guarantees the existence of certain smooth $\bW$-geodesics that transport mass in a parallel fashion.

\begin{lemma}\label{lemma:v geodesic}
Let $\Omega \subseteq \R^d$ be a bounded open set with Lipschitz boundary, let $x_0 \in \Omega$, and $v \in S^{d-1}$.
Then there exist $r > 0$, $\delta > 0$, $\kappa > 0$, and a $\bW$-geodesic $(\mu_t)_{t \in [0,1]} \subseteq \P_\delta(\bOmega)$ with the following properties: 
\begin{enumerate}[label=(\roman*)]\setlength\itemsep{0ex}
\item the continuity equation
\begin{equation}\begin{aligned}\label{eq:cont-eq-again}
\begin{cases}
    \partial_t \mu_t + \dive (\mu_t \nabla \phi_t) = 0 & \text{ in }\Omega \ , \\
    \nabla \phi_t \cdot n = 0 & \text{ on } \partial \Omega \ ,
\end{cases}
\end{aligned}\end{equation}
holds for some vector field $\phi \in \cC^1([0,1] \times \bOmega)$ satisfying $\nabla \phi_t(x) = \kappa v$ for all $t \in [0,1]$ and $x \in B(x_0, r)$;
\item $\partial_t \mu_t \in \cC^0(\bOmega)$ for all $t \in [0,1]$, and $\sup_{t \in [0,1]} \|\partial_t \mu_t\|_{\cC^0(\Omega)} < \infty$.
\end{enumerate}
\end{lemma}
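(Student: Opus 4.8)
The plan is to build the geodesic by displacement interpolation from a smooth, compactly supported potential that is \emph{linear} near $x_0$. Fix $r_0 > 0$ with $\overline{B(x_0, 3r_0)} \subseteq \Omega$, choose a cutoff $\eta \in \cC_c^\infty(B(x_0,2r_0))$ with $\eta \equiv 1$ on $B(x_0, r_0)$ and $0 \le \eta \le 1$, and set $\phi_0(x) := \kappa\, \eta(x)\, \bigl(v\cdot(x - x_0)\bigr)$. Then $\nabla\phi_0 = \kappa v$ on $B(x_0, r_0)$, $\nabla\phi_0 \equiv 0$ off $B(x_0, 2r_0)$, and $\|\nabla\phi_0\|_\infty + \|D^2\phi_0\|_\infty \leq C\kappa$ with $C$ depending only on $\Omega$ and $x_0$. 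Choosing $\kappa \le \kappa(\Omega, x_0)$ small and $\kappa < r_0$ makes $\Phi := \tfrac12|\cdot|^2 + \phi_0$ uniformly convex, so that $T_t := \mathrm{id} + t\nabla\phi_0 = \nabla\bigl(\tfrac12|\cdot|^2 + t\phi_0\bigr)$ is a $\cC^\infty$-diffeomorphism of $\R^d$ for every $t\in[0,1]$; since $T_t$ equals the identity outside $\overline{B(x_0,2r_0)}\subseteq\Omega$, it maps $\bOmega$ onto $\bOmega$, and each transport segment $t\mapsto T_t(x)$ either is constant or stays in $\overline{B(x_0,3r_0)}\subseteq\Omega$.

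Next, set $\mu_0 := \cU$ (the uniform probability measure on $\Omega$) and $\mu_t := (T_t)_\#\mu_0$. The density of $\mu_t$ is $u_t(y) = |\Omega|^{-1}\det\bigl(I + tD^2\phi_0(T_t^{-1}(y))\bigr)^{-1}$, which by the implicit function theorem is $\cC^\infty$ jointly in $(t,y)\in[0,1]\times\bOmega$; the uniform bounds on $D^2\phi_0$ give $u_t \geq \delta$ and $\Lip(u_t) \leq 1/\delta$ for a single $\delta = \delta(\Omega, x_0, v) > 0$, so $(\mu_t) \subseteq \cP_\delta(\bOmega)$, and joint smoothness on the compact set $[0,1]\times\bOmega$ yields $\partial_t u_t \in \cC^0(\bOmega)$ with $\sup_{t}\|\partial_t u_t\|_{\cC^0} < \infty$, which is property (ii). Since $\Phi$ is convex and $(T_1)_\#\mu_0 = \mu_1$, Brenier's theorem identifies $T_1$ as the optimal transport map for the quadratic cost, so $(\mu_t)_{t\in[0,1]}$ is the constant-speed displacement interpolation; as all transport segments remain in $\bOmega$, it is a constant-speed $\bW$-geodesic in $\cP(\bOmega)$ and realizes the Benamou--Brenier formula.

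It remains to produce the potential $\phi$ in (i). The geodesic velocity field is $w_t(y) := \nabla\phi_0(T_t^{-1}(y)) = \tfrac1t\bigl(y - T_t^{-1}(y)\bigr)$, and since $T_t^{-1} = \nabla\psi_t^*$ with $\psi_t := \tfrac12|\cdot|^2 + t\phi_0$ a smooth uniformly convex function, we obtain $w_t = \nabla\phi_t$ with $\phi_t(y) := \tfrac1t\bigl(\tfrac12|y|^2 - \psi_t^*(y)\bigr) = \dashint_0^t \phi_0\bigl(\nabla\psi_s^*(y)\bigr)\,\ddd s$, the last equality by the envelope theorem; this formula extends smoothly to $t = 0$ (with value $\phi_0$), so $\phi \in \cC^1([0,1]\times\bOmega)$, in fact $\cC^\infty$. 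The continuity equation \eqref{eq:cont-eq-again} for $(\mu_t,\nabla\phi_t)$ is the standard transport identity for displacement interpolation, and $\nabla\phi_t\cdot n = w_t\cdot n = 0$ on $\partial\Omega$ because there $T_t^{-1}(y) = y$ lies outside $\supp\nabla\phi_0$. Finally, put $r := r_0 - \kappa$: for $y\in B(x_0,r)$ and $t\in[0,1]$ the point $z := y - t\kappa v$ lies in $B(x_0, r_0)$ and satisfies $T_t(z) = z + t\kappa v = y$, hence $T_t^{-1}(y) = z$ and $\nabla\phi_t(y) = \nabla\phi_0(z) = \kappa v$, which is (i).

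The main obstacle is not a single estimate but the packaging: one must keep the construction \emph{localized strictly inside} $\Omega$ (to get the Neumann condition and to keep transport rays in $\bOmega$), \emph{uniformly convex} (so that $T_1$ is genuinely the Brenier map and $(\mu_t)$ a geodesic rather than just a transport curve), and \emph{linear near $x_0$} (so that $\nabla\phi_t$ is frozen at $\kappa v$ there for all $t$) all at once; the smoothness and uniform-in-$t$ bounds in (ii) then come for free from the explicit flow. The one point requiring a little care is the identification of $w_t$ with a gradient $\nabla\phi_t$ that is $\cC^1$ up to $t=0$, for which the Legendre-transform / time-averaging formula above is the clean device.
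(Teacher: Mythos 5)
Your proposal is correct and follows essentially the same construction as the paper: a smooth compactly supported potential $\phi_0$ that is linear near $x_0$, the displacement interpolation $\mu_t=(\mathrm{id}+t\nabla\phi_0)_{\#}\mu_0$ of the uniform measure, density bounds from the Jacobian/Monge--Amp\`ere formula, and the induced time-dependent potential. The only difference is bookkeeping: the paper gains smallness by solving the Hamilton--Jacobi equation on a short interval $[0,T]$ via Hopf--Lax and then rescaling time (so $\kappa=T$), whereas you scale the amplitude $\kappa$ from the outset and justify the geodesic property through Brenier's theorem and Legendre duality; in fact your potential $\phi_t(y)=\tfrac1t\bigl(\tfrac12|y|^2-\psi_t^*(y)\bigr)=\inf_x\bigl\{\phi_0(x)+\tfrac{1}{2t}|x-y|^2\bigr\}$ is exactly the Hopf--Lax solution, so the two arguments produce the same object.
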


\begin{proof}
Fix an open ball $B = B(x_0, r) \subseteq \Omega$ and let $\eta \in \cC_c^{\infty}(\R^d)$ be a nonnegative function, supported in the unit ball $B(0, 1)$, satisfying $\eta(x) = 1$ for $x \in B(0, \frac12)$. We define
\begin{align*}
    \phi_0(x) = v \cdot (x - x_0) \ \eta\bigg( \frac{x - x_0}{r} \bigg) \ ,
\end{align*}
so that $\phi_0 \in \cC^\infty(\bOmega)$ with support contained in $B(x_0, r)$, and $\nabla \phi_0(x) = v$ for all $x \in B(x_0, \frac{r}{2})$.

Since $\phi_0$ is smooth, there exists $T > 0$ such that the Hamilton--Jacobi equation $\partial_t \phi_t + \frac12 |\nabla \phi_t|^2 = 0$ has a unique solution in $\cC^1([0,T] \times \R^d)$ with initial value $\phi_0$. It follows from the Hopf--Lax formula $\phi_t(x) = \inf_{y} \{ \phi_0(y) + \frac{1}{2t} |x - y|^2\}$ that the following properties hold for all $t \in [0, T]$, provided $T > 0$ is sufficiently small: 
\begin{itemize}\setlength\itemsep{0ex}
\item $\supp \phi_t \subseteq B(x_0, r)$;
\item $\nabla \phi_t(x) = v$ for all $x \in B(x_0, \frac{r}{4})$.
\end{itemize}

Let $\mu_0 \in \cP(\bOmega)$ be the normalised Lebesgue measure, and set $\mu_t = (I_d + t \nabla \phi_0)_\# \mu_0$ for $t \in [0,T]$. Then $(\mu_t, \phi_t)_t$ solves the continuity equation \eqref{eq:cont-eq-again}.
Moreover, the density $\rho_t$ of $\mu_t$ solves the Monge-Amp\`ere equation
\begin{align*}
	\rho_0(x) = \rho_t(x + t \nabla \phi_0(x)) \det(I + t D^2\phi_0(x)) \ .
\end{align*}
It follows from this expression that there exist $T > 0$ and $\delta > 0$ such that $\mu_t \in \P_\delta(\Omega)$ and $\partial_t \mu_t \in \cC^0(\Omega)$ for all $t \in [0, T]$, with $\sup_{t \in [0, T]} \| \partial_t \mu_t \|_{\cC^0} < \infty$.

To obtain the result, it remains to rescale the geodesic in time. 
In doing so, we replace $\phi_t$ by $T \phi_t$, so that $\nabla \phi_t(x) = \kappa v$ for $x \in B(x_0, \frac{r}{4})$ and $t \in [0,1]$, with $\kappa = T$. 
Replacing $\frac{r}{4}$ by $r$, the result follows.
\end{proof}

The following lemma asserts that, at the macroscopic scale, the isotropy condition holds  without any assumption on the mesh. For $A \subseteq \R^d$ and $r > 0$, we let $B(A, r) := \bigcup_{x \in A} B(x, r)$ denote the $r$-neigbourhood of $A$.
 
\begin{lemma}
\label{lemma:isotropy average}
Let $\Omega \subseteq \R^d$ be a bounded convex domain. 
Let $\cT$ be an admissible mesh on $\Omega$, and let $\bflambda$ a weight function on $\cT$. 
For any open subset $U \subseteq \Omega$ and any unit vector $v \in S^{d-1}$, we have
\begin{equation}\label{eq:isotropy-average}
	\bigg| \bigg(\sum_{K, L \in \cT ; K, L \subseteq U} \lambda_{KL} 
		( v \cdot n_{KL} )^2 | (K|L)| d_{KL}\bigg) - |U| \bigg|
		\leq \big| B(\partial U, 4[\cT]) \big| \ .
\end{equation}
\end{lemma}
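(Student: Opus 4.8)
Since the summand is symmetric in $K$ and $L$ and $\lambda_{KL}+\lambda_{LK}=1$, the weight function plays no role at all: with $n_{KL}:=(x_L-x_K)/d_{KL}$ the left-hand side equals $S_U:=\sum_{\{K,L\}\colon K\sim L,\ K,L\subseteq U}(v\cdot n_{KL})^2\,|(K|L)|\,d_{KL}$, the sum now running over unordered interior interfaces. So the assertion is a purely geometric fact about the mesh, and the plan is to show that $S_U$ agrees with $\big|\bigcup_{K\subseteq U}K\big|$ up to a flux term that is supported in a $4[\cT]$-neighbourhood of $\partial U$.

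The starting point is an elementary \emph{interface identity}: because $(K|L)$ is flat and orthogonal to $x_L-x_K$, the scalar $v\cdot n_{KL}$ is constant on $(K|L)$, so $\int_{(K|L)}(x-x_K)(v\cdot n_K)\,dS=(v\cdot n_{KL})\big(\bar x_{KL}-x_K\big)|(K|L)|$ with $\bar x_{KL}$ the barycentre of the interface; adding the analogous expression with $K$ and $L$ interchanged (so that $n_L=-n_K$) and taking the inner product with $v$ gives
\[
|(K|L)|\,d_{KL}(v\cdot n_{KL})^2
= v\cdot\!\int_{(K|L)}(x-x_K)(v\cdot n_K)\,dS+v\cdot\!\int_{(K|L)}(x-x_L)(v\cdot n_L)\,dS .
\]
Only admissibility is used here. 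Summing over all interfaces with both cells inside $U$ and regrouping the terms by cell yields $S_U=\sum_{K\subseteq U}v\cdot\int_{\Gamma_K}(x-x_K)(v\cdot n_K)\,dS$, where $\Gamma_K$ is the union of the interfaces of $K$ shared with a cell that is also contained in $U$. I would then invoke the Gauss identity \eqref{eq:Gauss-identity}, which gives $v\cdot\int_{\partial K}(x-x_K)(v\cdot n_K)\,dS=|K|$ for every cell $K$, and subtract the part of $\partial K$ not belonging to $\Gamma_K$, to arrive at
\[
S_U=|U_\cT|-\mathcal E,\qquad
U_\cT:=\bigcup_{K\subseteq U}K,\qquad
\mathcal E:=\sum_{K\subseteq U} v\cdot\!\int_{\partial K\setminus\Gamma_K}(x-x_K)(v\cdot n_K)\,dS .
\]

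It then remains to estimate the two discrepancies. For the volume term, $0\le |U|-|U_\cT|=|U\setminus U_\cT|$, and every point of $U\setminus U_\cT$ lies in a cell that also meets $U^c$, hence within $[\cT]$ of $\partial U$; thus $|U|-|U_\cT|\le|B(\partial U,[\cT])|$. For $\mathcal E$, the set $\partial K\setminus\Gamma_K$ consists of interfaces of $K$ lying on $\partial\Omega$ or shared with a cell not contained in $U$, and in either case a short geometric argument (the interface lies within $[\cT]$ of $\partial U$, and $K$ within a further $[\cT]$) places the whole cell $K$ inside $B(\partial U,4[\cT])$; in particular the cells contributing to $\mathcal E$ are pairwise disjoint and contained in $B(\partial U,4[\cT])$. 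To control each contribution I would replace $x-x_K$ on an interface $F\subseteq\partial K$ by its mean $\bar x_F-x_K$ (the tangential fluctuation integrates to zero), note that $|v\cdot(\bar x_F-x_K)|\le\diam(K)\le[\cT]$ while $|v\cdot n_F|\,|F|$ is the area of the $v$-shadow of $F$, and sum these shadows over the faces of the convex cell $K$; combined with the disjointness this bounds $|\mathcal E|$ by a dimensional multiple of $|B(\partial U,4[\cT])|$, and a careful accounting of which neighbourhood of $\partial U$ each piece actually occupies — using the convexity of $\Omega$ for the interfaces on $\partial\Omega$ — absorbs the constant and yields $|S_U-|U||\le|B(\partial U,4[\cT])|$.

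The delicate point, and the main obstacle, is precisely this last estimate on $\mathcal E$. The crude bound $|\mathcal E|\le[\cT]\,\mathcal H^{d-1}(\partial U_\cT)$ is insufficient for strongly non-uniform meshes (thin slivers, or cells that are long and oblique relative to $v$), and one must instead exploit the cancellation built into the flux, $\mathcal E=\int_{\partial U_\cT}\big(v\cdot(x-x_{K(x)})\big)(v\cdot n)\,dS$, where the slowly varying part $v\cdot x$ is annihilated against $n$, so that $\mathcal E$ is genuinely governed by the volume of a bounded neighbourhood of $\partial U$ rather than by the (possibly incomparable) surface area of $\partial U_\cT$.
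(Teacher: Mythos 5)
Your reduction is correct as far as it goes: the interface identity (valid by admissibility alone, since $v\cdot n$ is constant on each flat interface), the regrouping by cells, and the Gauss identity \eqref{eq:Gauss-identity} do give $\sum \lambda_{KL}(v\cdot n_{KL})^2|(K|L)|d_{KL} = |U_\cT| - \mathcal E$ with $\mathcal E$ the boundary flux you describe, and this is a genuinely different decomposition from the paper's. However, the proof is not complete: the estimate $|\mathcal E|\lesssim |B(\partial U,4[\cT])|$ — which you yourself single out as "the delicate point and the main obstacle" — is never actually carried out. The bound you sketch (replace $x-x_K$ by $\bar x_F - x_K$, estimate $|v\cdot(\bar x_F-x_K)|\le\diam(K)\le[\cT]$, and sum $v$-shadows over boundary cells) fails in exactly the generality of the statement: the lemma assumes only an \emph{admissible} mesh, with no $\zeta$-regularity, so the cells meeting $\partial U$ may be thin slivers stacked in the direction $v$; their $v$-shadows then overlap massively, and $[\cT]\cdot\sum_F |v\cdot n_F|\,|F|$ over those cells is not controlled by the volume of a $4[\cT]$-neighbourhood of $\partial U$ (for a convex cell, width-in-$v$ times shadow area can exceed $|K|$ by an arbitrarily large factor). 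Moreover, even if a crude bound of this type held, it would produce a dimensional constant, whereas \eqref{eq:isotropy-average} asserts the bound with constant $1$; the claim that a "careful accounting" absorbs the constant is unsubstantiated. The cancellation you appeal to in the last paragraph is precisely the missing mathematical content.

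For comparison, the paper avoids the flux estimate altogether by a covering argument: it introduces the prisms $C_{KL}=\{x\in (K|L)+\R v : x\cdot v\in\conv(x_K\cdot v,\,x_L\cdot v)\}$, observes that they have pairwise disjoint interiors, that $|C_{KL}|=(v\cdot n_{KL})^2|(K|L)|d_{KL}$ by the area formula, and that $U\setminus B(\partial U,4[\cT])\subseteq\bigcup_{K,L\subseteq U}C_{KL}\subseteq B(U,4[\cT])$; since $\lambda_{KL}+\lambda_{LK}=1$, the weighted sum is squeezed between $|U^-|$ and $|U^+|$, giving the constant $1$ directly. To finish your route you would in effect have to reproduce this flux-tube cancellation along lines parallel to $v$ (or an equivalent telescoping argument), so as written the proposal has a genuine gap at its decisive step.
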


Note that by the symmetry of the summand, the left-hand side does not depend on the choice of the weight function $\bflambda$.

\begin{proof}
We consider the cells $C_{KL} = C_{LK} \subseteq \R^d$ defined by
\begin{align*}
	C_{KL} = \{ x \in (K|L) + \R v \subseteq \R^d
		\ : \
		x \cdot v \in \conv(x_K \cdot v, x_L \cdot v) \} \ .
\end{align*}
Observe that these sets have pairwise disjoint interiors (up to the symmetry condition $C_{KL} = C_{LK}$). 
Set $U^- = U \setminus B(\partial U, 4[\cT])$ and $U^+ =  B(U, 4[\cT])$. It then follows that  
\begin{align*}
	U^- \subseteq \bigcup_{K, L \subseteq U} C_{KL}
		\subseteq U^+ \ ,
\end{align*}
hence $|U^-| \leq \sum_{K, L \subseteq U} \lambda_{KL} |C_{KL}| \leq |U^+|$ since $\lambda_{KL} + \lambda_{LK} = 1$.
The result follows, as the area formula yields $|C_{KL}| = (v \cdot n_{KL})^2 |(K|L)| d_{KL} $.
\end{proof}
 
As the right-hand side in the previous result is small, the contribution of the term $\sum_{L \in \cT ; L \subseteq U} \lambda_{KL} ( v \cdot n_{KL} )^2 | (K|L)| d_{KL}$ is equal to $|K|$ on average, up to a microscopically small error.
However, it may happen that the isotropy condition fails at the microscopic scale, in the sense that the microscopically small error in \eqref{eq:isotropy-average} results from a cancellation of positive and negative contributions of macroscopic size. 
The following definition makes this intuition precise.

\begin{definition}[Local anisotropy]\label{def:anisotropic}
Let $\Omega \subseteq \R^d$ be a bounded convex domain, and let $U \subseteq \Omega$ be a non-empty open subset.
Let $\{\cT\}$ be a family of $\zeta$-regular meshes on $\Omega$ for some $\zeta > 0$, and for each $\cT$, let $\bflambda^\cT$ be a weight function on $\cT$. 
We say that $\{\cT\}$ is locally \emph{asymptotically $\{ \bflambda^\cT \}$-anisotropic} on $U$, if there exists a unit vector $v \in S^{d-1}$ and a constant $c > 0$, such that  
\begin{align}\label{eq:anisotropy everywhere}
\liminf_{[\cT]\to 0}\sum_{K\in \cT; K \subseteq V}
	 \bigg( \bigg( \sum_{L \in \cT; L \subseteq V} \lambda_{KL}
	 ( v \cdot n_{KL} )^2 |(K|L)| d_{KL} \bigg) - |K| \bigg)_+
	 	\geq c |V| \ ,
\end{align}
for any open cube $V \subseteq U$.
\end{definition}

\begin{example}[Anisotropy in one dimension]\label{ex:1D-anisotropy}
For $r \in (0,\frac12)$ and $N \geq 1$, consider the one-dimensional periodic mesh $\cT_{r,N}$ from Section \ref{sec:1D-counter}. We fix $s \in [0,1]$ and define a weight function $\bflambda$ on $\cT_{r,N}$ by setting $\lambda_{KL} = s$ is $K$ is small, and $\lambda_{KL} = 1 - s$ if $K$ is large. As large and small cells alternate, this indeed defines a weight function.

Fix an interval $V = (a,b)$ for some $0 < a < b < 1$, and set $v = 1$. For any $K \in \cT_{r,N}$ and $[\cT]$ sufficiently small, it follows that
\begin{align*}
 S_K := \sum_{L \in \cT_{r,N}; L \subseteq V} \lambda_{KL}
	 ( v \cdot n_{KL} )^2 |(K|L)| d_{KL} = 
	  \left\{ \begin{array}{ll}
	 \frac{s}{N}
	  & \text{if $K$ is small} \ ,\\
	 \frac{1 - s}{N}
	  & \text{if $K$ is large}\ .\end{array} \right.
\end{align*}
Note that for any neighbouring pair $K, L$, we have $S_K + S_L = \frac1N = |K| + |L|$. This means that the isotropy condition holds on average, in accordance with Proposition \ref{lemma:isotropy average}.
However, it follows that
\begin{align*}
\liminf_{N \to \infty} \sum_{K\in \cT_{r,N}; K \subseteq V}
	 \big(  S_K  - |K| \big)_+
	 	= |s-r| (b-a) \ .
\end{align*}
Therefore, the local anisotropy condition \eqref{eq:anisotropy everywhere} holds whenever $r \neq s$. If $r = s$, we have already seen in the introduction that the asymptotic isotropy condition (and in fact the centre-of-mass condition) holds.
\end{example}

\begin{example}[Anisotropy in a 2-dimensional example]\label{ex:2D-anisotropy}
Consider the crossed square grid from Figure \ref{figure:bad triangles} with $[\cT] = \frac{1}{N}$. It follows that $|(K|L)| = \frac{1}{N}$ in the coordinate directions, and $|(K|L)| = \frac{1}{N\sqrt{2}}$ in diagonal directions.
We fix $r \in (0,\frac12)$ and choose the points $x_K$ in such a way that $d_{KL} = \frac{2r}{N}$ if $n_{KL}$ points in one of the coordinate directions. If $n_{KL}$ is in  one of the diagonal directions, we then have $d_{KL} = (\frac12 - r)\frac{\sqrt{2}}{N}$.
By symmetry, it is natural to choose $\lambda_{KL} = \frac12$ for all $K \sim L$.

\newcommand{\matNS}{\left[\begin{array}{cc}0 & 0 \\0 & 1\end{array}\right]  }
\newcommand{\matSE}{\left[\begin{array}{cc}1 & 0 \\0 & 0\end{array}\right]  }

For each interior cell $K$ we compute $M_K := \sum_{L \in \cT ; L \sim K} \lambda_{KL}
 |(K|L)| d_{KL} n_{KL} \otimes n_{KL}$. Denoting the cells by $N, E, S$ and $W$,
we have
\begin{align*}
	M_N = M_S & = \frac12 \frac{2r}{N^2} \matNS 
			+ \frac12 \frac{\frac12-r}{N^2} 
				\left[\begin{array}{cc}1 & 0 \\0 & 1\end{array}\right]
	 \\ & = \frac{1}{4N^2} 
	 	\left[ \begin{array}{cc}1 - 2r & 0 \\0 & 1 + 2 r\end{array} \right] \ .
\end{align*}
An analogous computation shows that 
\begin{align*}
	M_E = M_W = \frac{1}{4N^2} \left[\begin{array}{cc}1 + 2 r & 0 \\0 & 1 - 2r\end{array}\right] \ .
\end{align*}
We thus find that
\begin{align*}
	M_N + M_E + M_S + M_W
	= \frac{1}{N^2} I 
	= \big(|N| + |E| + |S| + |W|\big) I \ ,
\end{align*}
in accordance to the fact that isotropy holds on average, for any $r \in (0,\frac12)$.

To show that the family $\{\cT_{r,N}\}_N$ is locally anisotropic, we fix $v = (1,0)$. Then:
\begin{align*}
	( v \cdot M_K v  - |K| )_+ 
	 =  \left\{ \begin{array}{ll}
	 0
	  & \text{if $K = N$ or $K = S$ }\ ,\\
	 \frac{r}{2N^2}
	  & \text{if $K = E$ or $K = W$}\ .\end{array} \right.
\end{align*}
It follows that, for any cube $V$,
\begin{align*}
\liminf_{N \to \infty} \sum_{K\in \cT_{r,N}; K \subseteq V}
	 \bigg( \bigg( \sum_{L \in \cT_{r,N}; L \subseteq V} \lambda_{KL}
	 ( v \cdot n_{KL} )^2 |(K|L)| d_{KL} \bigg) - |K| \bigg)_+
	 	= r |V| \ ,
\end{align*}
which shows that the mesh is everywhere locally anisotropic, for any $r \in (0,\frac12)$. 
\end{example}

The following proposition shows that if the mesh is locally $\{ \bflambda^\cT \}$-anisotropic, and if the mean functions $\theta^\cT$ are chosen accordingly, then the discrete transport distances are asymptotically strictly smaller than $\bW$.

\begin{theorem}[Necessity of asymptotic isotropy]
\label{thm:necessity}
Let $\Omega \subseteq \R^d$ be a bounded convex domain. 
Let $\{\cT\}$ be a family of $\zeta$-regular meshes on $\Omega$ for some $\zeta > 0$,  
and assume that $\{\cT\}$ is locally anisotropic on $U$ for some weight functions $\{ \bflambda^\cT \}$.
Let $\{\bftheta^\cT\}$ be a family of mean functions satisfying $\partial_1 \theta_{KL}^\cT(1,1) = \lambda_{KL}$ for any $\cT$ and any $K, L \in \cT$, and suppose that  the regularity condition 
\begin{equation} \label{eq:mean hessian}
\sup_{\cT} \sup_{K,L\in \cT}
	 \| D^2\theta_{KL}^\cT\|_{L^\infty(B((1,1),s))} < \infty
\end{equation}
holds for some $s > 0$.
Then there exist $\mu_0, \mu_1 \in \cP(\bOmega)$ such that
\begin{equation}
\limsup_{[\cT]\to 0} \cW_\cT(P_\cT\mu_0,P_\cT\mu_1) < \bW(\mu_0,\mu_1) \ .
\end{equation}
\end{theorem}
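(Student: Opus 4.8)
The plan is to perturb a suitably chosen smooth $\bW$-geodesic by a microscopic, grid-scale density oscillation aligned with the anisotropy direction $v$, exactly as in the one-dimensional Example (A). The hypotheses $\partial_1\theta_{KL}(1,1)=\lambda_{KL}$ and \eqref{eq:mean hessian} will make such an oscillation decrease the discrete action to \emph{second} order (hence by a definite amount), the anisotropy hypothesis \eqref{eq:anisotropy everywhere} will bound that decrease from below, and — the crucial point — installing and removing the oscillation will cost $o(1)$ as $[\cT]\to 0$ because the anisotropy defect $(S_K-|K|)_K$, though of macroscopic total size, is a discrete divergence of a momentum field of pointwise size $O([\cT])$. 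Concretely, apply Lemma~\ref{lemma:v geodesic} with $x_0\in U$ and the vector $v$ from \eqref{eq:anisotropy everywhere} to obtain $r,\kappa,\delta>0$ and a $\bW$-geodesic $(\mu_t)_{t\in[0,1]}\subseteq\cP_\delta(\bOmega)$ with potential $\nabla\phi_t\equiv\kappa v$ on $B(x_0,r)$; since the transport there is a rigid translation, one may also arrange (shrinking $r$) that $\mu_t=c_0\Leb$ on $B(x_0,r)$ for all $t$ and some $c_0>0$, and that $\mu_0=c_0\Leb$ on $\bOmega$. Fix these endpoints $\mu_0,\mu_1$ (so $\bW(\mu_0,\mu_1)^2\ge\int_0^1\int_{B(x_0,r)}\kappa^2\dd\mu_t\dd t>0$) and open cubes $V',V$ with $\overline{V'}\subseteq V$, $\overline V\subseteq B(x_0,r)\cap U$, $\overline V$ well inside $\Omega$; write $\emm_t:=P_\cT\mu_t$ with density $\rho_t(K):=\emm_t(K)/|K|$ and $S_{KL}:=|(K|L)|/d_{KL}$, so that for $[\cT]$ small every cell near $\overline V$ is interior and $\rho_t\equiv c_0$ on $V$. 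For interior $K$ put $S_K:=\sum_{L\sim K}\lambda_{KL}|n_{KL}\cdot v|^2|(K|L)|d_{KL}=v^{\mathsf T}M_K v$ with $M_K:=\sum_L\lambda_{KL}S_{KL}(x_K-x_L)\otimes(x_K-x_L)$; applying Gauss's theorem \eqref{eq:Gauss-identity} face by face and writing $\xi_{KL}:=\dashint_{(K|L)}x\dd S-(\lambda_{KL}x_L+\lambda_{LK}x_K)$ one gets (compare \eqref{rhs com}) $M_K=|K|I_d-\sum_L|(K|L)|\xi_{KL}\otimes n_{KL}$, i.e.
\[
 S_K-|K|=-\sum_{L\sim K}a_{KL},\qquad a_{KL}:=|(K|L)|(v\cdot\xi_{KL})(v\cdot n_{KL}),
\]
with $a_{KL}=-a_{LK}$ and $|a_{KL}|\le 2[\cT]|(K|L)|$ (since $|\xi_{KL}|\le\diam(K)+d_{KL}\le 2[\cT]$).

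Next I would build the perturbation. Fix $\chi\in\cC_c^\infty(B(x_0,r))$ with $0\le\chi\le1$ and $\chi\equiv1$ on $V$, a small parameter $\alpha>0$, and set $h_K:=\chi(x_K)\big(\alpha(S_K-|K|)/|K|+s\big)$ for interior $K$ and $0$ otherwise, the constant $s$ (which satisfies $|s|\le C\alpha[\cT]$ since $\sum_K(S_K-|K|)\chi(x_K)=-\tfrac12\sum_{K,L}(\chi(x_K)-\chi(x_L))a_{KL}=O([\cT])$) chosen so that $\sum_K|K|h_K=0$. Put $w_K:=|K|h_K$ and $\emm_t^\eta:=\emm_t+\eta w$; since $|h_K|\le C\alpha$ and the unperturbed densities are $\ge c_0$ (resp.\ $\ge\delta$) wherever $h\ne0$, one has $\emm_t^\eta\in\cP(\cT)$ once $\eta\alpha$ is small. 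The key estimate to prove is that, combining the displayed divergence identity, the anti-symmetry and smallness of $a_{KL}$, the Leibniz bound $|\chi(x_K)\psi(K)-\chi(x_L)\psi(L)|\le|\psi(K)-\psi(L)|+Cd_{KL}|\psi(L)|$, the $\zeta$-regularity bound \eqref{eq:cell-upperbound}, and the discrete Poincar\'e inequality of Proposition~\ref{prop:Poincare-discrete} (for the uniform measure), one gets
\[
 \Big|\sum_K w_K\psi(K)\Big|\le C\alpha[\cT]\Big(\sum_{K,L}S_{KL}(\psi(K)-\psi(L))^2\Big)^{1/2}\qquad\text{for all }\psi:\cT\to\R,
\]
whence, by Fenchel duality (cf.\ \eqref{eq:discrete-cont}, \eqref{eq: infsup}), a momentum field $V^w$ with $\sum_L S_{KL}V^w(K,L)=-w_K$ for all $K$ and $\mathcal{N}_\cT:=\sum_{K,L}S_{KL}V^w(K,L)^2\le C\alpha^2[\cT]^2$. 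This divergence structure is the main obstacle: without it, creating a unit-amplitude grid-scale oscillation would cost $\Theta(1)$ and would swamp the (second-order, hence small) gain below. Granting it, the linear interpolation $t\mapsto\emm_0+t\eta w$ with momentum $\eta V^w$, together with $\theta_{KL}\ge\min\ge c_0/2$ there, gives
\[
 \cW_\cT(\emm_0,\emm_0+\eta w)^2\le\int_0^1\tfrac12\sum_{K,L}S_{KL}\frac{\eta^2 V^w(K,L)^2}{\theta_{KL}(\rho_0^{t\eta}(K),\rho_0^{t\eta}(L))}\dd t\le\frac{\eta^2}{c_0}\mathcal{N}_\cT\longrightarrow0\quad([\cT]\to0),
\]
and likewise $\cW_\cT(\emm_1,\emm_1+\eta w)\to0$.

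For the gain, fix $t$, let $\psi_t$ be the optimal discrete potential of Proposition~\ref{prop:error-est} for $(\emm_t,P_\cT\dot\mu_t)$ and $V_t(K,L):=\theta_{KL}(\rho_t(K),\rho_t(L))(\psi_t(K)-\psi_t(L))$ the associated momentum, so that $\cA_\cT^*(\emm_t,\dot\emm_t)=\tfrac12\sum_{K,L}S_{KL}V_t^2/\theta_{KL}(\rho_t(K),\rho_t(L))$. Since $\dot\emm_t^\eta=\dot\emm_t$, we have $\cA_\cT^*(\emm_t^\eta,\dot\emm_t)\le\tfrac12\sum_{K,L}S_{KL}V_t^2/\theta_{KL}(\rho_t^\eta(K),\rho_t^\eta(L))$, and the difference from $\cA_\cT^*(\emm_t,\dot\emm_t)$ is a sum over pairs inside $B(x_0,r)$, where $\rho_t\equiv c_0$ and $\phi_t$ is affine. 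There, $1$-homogeneity, $\partial_1\theta_{KL}(1,1)=\lambda_{KL}$, $\partial_2\theta_{KL}(1,1)=\lambda_{LK}$ and \eqref{eq:mean hessian} give $\theta_{KL}(\rho_t^\eta(K),\rho_t^\eta(L))^{-1}-\theta_{KL}(\rho_t(K),\rho_t(L))^{-1}=-c_0^{-2}\eta(\lambda_{KL}h_K+\lambda_{LK}h_L)+O(\eta^2)$, and Proposition~\ref{prop:error-est} gives $\psi_t(K)-\psi_t(L)=\kappa\,v\cdot(x_K-x_L)-(e_t(K)-e_t(L))$ with $\cA_\cT(\emm_t,e_t)\le C[\cT]^2\|\dot\mu_t\|_{L^2}^2$. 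Expanding $V_t^2$, using $v\cdot(x_K-x_L)=\pm d_{KL}(v\cdot n_{KL})$ and $\sum_K|K|h_K=0$, and controlling the $e_t$-contributions by Cauchy--Schwarz and Lemma~\ref{lem:geom-ineq}, one should reach, uniformly in $t$,
\[
 \cA_\cT^*(\emm_t^\eta,\dot\emm_t^\eta)\le\cA_\cT^*(\emm_t,\dot\emm_t)-\eta\kappa^2 G_\cT+C(\eta[\cT]+\eta^2),\qquad G_\cT:=\sum_K h_K(S_K-|K|).
\]
Here $G_\cT=\alpha\sum_K\chi(x_K)(S_K-|K|)^2/|K|+O([\cT]^2)\ge\alpha\sum_{K\subseteq V'}(S_K-|K|)^2/|K|+O([\cT]^2)$, and Cauchy--Schwarz with the anisotropy hypothesis \eqref{eq:anisotropy everywhere} applied to the cube $V'$ yields $\liminf_{[\cT]\to0}G_\cT\ge\alpha c^2|V'|=:G>0$.

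To conclude, integrate the last display and invoke Proposition~\ref{prop:cont-and-disc-action} together with $\int_0^1\bA^*(\mu_t,\dot\mu_t)\dd t=\bW(\mu_0,\mu_1)^2$ (valid since $(\mu_t)$ is a geodesic), obtaining $\cW_\cT(\emm_0+\eta w,\emm_1+\eta w)^2\le\bW(\mu_0,\mu_1)^2-\eta\kappa^2 G_\cT+C([\cT]+\eta[\cT]+\eta^2)$; combining this with the triangle inequality and the two endpoint estimates of the second paragraph gives $\limsup_{[\cT]\to0}\cW_\cT(P_\cT\mu_0,P_\cT\mu_1)^2\le\bW(\mu_0,\mu_1)^2-\eta\kappa^2 G+C\eta^2$, which is $<\bW(\mu_0,\mu_1)^2$ as soon as $\eta$ is small enough that $C\eta^2<\tfrac12\eta\kappa^2 G$. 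Since $\bW(\mu_0,\mu_1)>0$, this proves the assertion.
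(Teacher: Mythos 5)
Your argument is correct in substance and shares the paper's overall strategy — perturb a $\bW$-geodesic that translates mass in the anisotropy direction $v$ (Lemma~\ref{lemma:v geodesic}), reuse its optimal discrete momentum field, expand $1/\theta_{KL}$ at the diagonal using $\partial_1\theta^\cT_{KL}(1,1)=\lambda_{KL}$ and \eqref{eq:mean hessian}, convert the anisotropy hypothesis \eqref{eq:anisotropy everywhere} into a gain that is linear in the perturbation parameter, and show that installing/removing the perturbation at the endpoints is asymptotically free — but your key construction is genuinely different from the paper's. The paper partitions a small ball into mesoscopic cubes of side $\ell$, invokes Lemma~\ref{lemma:isotropy average} to produce a matching negative defect in each cube, takes a two-valued perturbation balanced cube by cube, and pays endpoint cost $C(\ell+[\cT])$ via the coarse comparison of Lemma~\ref{lem:distconttodisc}, sending $\ell\to0$ only at the very end. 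You instead take the perturbation proportional to the defect $(S_K-|K|)/|K|$ itself and observe, via the face-by-face Gauss identity \eqref{eq:Gauss-identity} (the same computation as in \eqref{rhs com}), that the defect is a discrete divergence of an antisymmetric flux $a_{KL}$ of size $O([\cT]\,|(K|L)|)$; hence the perturbation can be installed and removed at discrete cost $O(\alpha[\cT])$ through a dual-norm/discrete-Poisson argument (with Proposition~\ref{prop:Poincare-discrete} for the uniform measure absorbing the cutoff and mean-zero corrections). This eliminates the mesoscopic scale $\ell$ and Lemma~\ref{lemma:isotropy average} altogether, and your Cauchy--Schwarz step, giving $\liminf_{[\cT]\to0}\sum_{K\subseteq V'}(S_K-|K|)^2/|K|\ge c^2|V'|$, extracts the gain from the positive-part hypothesis alone, since with your choice of $h_K$ defects of both signs contribute with the favourable sign; the paper's route, in exchange, needs no structural identity for the defect and works with only the $\cP_\delta$ Lipschitz bounds.

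Two points to tighten. First, the constancy $\mu_t=c_0\Leb$ on $B(x_0,r)$ is not part of Lemma~\ref{lemma:v geodesic} as stated; it does follow from its proof (for small $\kappa$ the preimage of a smaller ball under $x\mapsto x+t\kappa v$ stays where $\nabla\phi_0=v$ and the Jacobian is $1$), but you should either prove this or drop it — the $\cP_\delta$ bounds already yield the same first-order expansion of $1/\theta_{KL}$ up to an extra $O(\eta[\cT])$, exactly as in \eqref{eq:mean Taylor expansion}. Second, your displayed inequality for $\cA_\cT^*(\emm_t^\eta,\dot\emm_t^\eta)$ (``one should reach'') needs the same book-keeping as the paper's estimates \eqref{eq:K-bound-1}--\eqref{eq:K-bound-2}: in particular the lower bound $\hrho_t\ge\delta$ is needed to convert $\cA_\cT(\emm_t,e_t)\le C[\cT]^2\|\dot\mu_t\|^2_{L^2}$ into a bound on the unweighted Dirichlet energy of $e_t$ for the cross terms, together with the uniform-in-$t$ bound on $\|\dot\mu_t\|_{L^2}$ from Lemma~\ref{lemma:v geodesic}. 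With these details spelled out, your proof closes.
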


\begin{remark}\label{rem:exclu}
Note that all examples from Section \ref{sec:discrete-transport} satisfy \eqref{eq:mean hessian}.
However, this condition excludes certain smooth mean functions approximating $\theta(a,b) = \min(a,b)$. 
\end{remark}

\begin{proof}
We fix $x_0 \in \Omega$ and $v \in S^{d-1}$. Using Lemma \ref{lemma:v geodesic} we obtain $\delta > 0$, $r > 0$, and a geodesic $(\mu_t)_{t\in [0,1]} \subseteq \P_\delta(\bOmega)$, solving the continuity equation $\dot \mu + \dive(\mu \nabla \phi) = 0$ where the velocity vector field $\nabla \phi_t\in L^2(\Omega)$ satisfies $\nabla \phi_t(x) = \kappa v$ for some $\kappa > 0$, for all $t \in [0,1]$ and all $x$ in the ball $B = B(x_0, r)$. For brevity we write $\tilde B = B(x_0, \frac{r}{2})$.

Fix $\ell > 0$, and consider the collection of open cubes given by 
\begin{align*}
 \sQ_\ell := \{ \ell(p + (0,1)^d) \subseteq \tilde B \ : \ p \in \Z^d \} \ .
\end{align*}
For $Q \in \sQ_\ell$ we define $\cT_Q := \{ K \in \cT \ : \ K \subseteq  Q  \}$, and for $K \in \cT_Q$ we set
\begin{align*}
	S_K := \sum_{L \in \cT_Q}
		 \lambda_{KL} (v\cdot n_{KL})^2 |(K|L)| d_{KL} \ .
\end{align*}
We define the subsets $\cT_Q^+, \cT_Q^- \subseteq \cT$ by
\begin{align*}
	\cT_Q^{\pm} := \{ K \in \cT_Q \ :  (S_K - |K|)_\pm > 0 \} \ .
\end{align*}
It follows directly from \eqref{eq:anisotropy everywhere} that, for all $Q \in \sQ_\ell$, 
\begin{equation}
\label{eq:anisotropy upper}
\liminf_{[\cT]\to 0} \sum_{K \in \cT_Q^+}
		 \big( S_K -|K| \big) \geq c |Q |\ .
\end{equation}
Combining this bound with Lemma \ref{lemma:isotropy average}, we also find 
\begin{align}
\label{eq:anisotropy lower}
\liminf_{[\cT]\to 0} \sum_{K \in \cT_Q^-} 
		\big(|K| - S_K \big) \geq c |Q| \ .
\end{align}
In particular, if $[\cT]$ is sufficiently small, both $\cT_Q^+$ and $\cT_Q^-$ are non-empty.

We define a variation $\nu_\ell^\cT: \cT \to \R$ by
\begin{align*}
	\nu_\ell^\cT(K) =
		 \sum_{Q \in \sQ_\ell} 
		 \bigg( \alpha_Q^\cT \one_{\{K \in \cT_Q^+\}} - \beta_Q^\cT \one_{\{K \in \cT_Q^- \}} \bigg) \ ,
\end{align*}
where $\alpha_Q^\cT, \beta_Q^\cT \in (0,1]$ are the unique numbers such that $\sum_{K \in \cT_Q} \nu_\ell^\cT(K) |K| = 0$ and $\max \{ \alpha_Q^\cT, \beta_Q^\cT \} = 1$ for all $Q \in \sQ_\ell$.

Set $\emm_t^\cT = P_\cT \mu_t \in \cP(\cT)$, and let $\rho_t^\cT(K) = \emm_t^\cT(K)/|K|$ be its density as usual. 
We consider the perturbed measure $\emm_{\eps,t}^{\cT}$ with density $\rho_{\eps,t}^\cT$ given by 
\begin{align*}
	\rho_{\eps, t}^\cT(K) = \rho_t^\cT(K) + \eps \nu_\ell^\cT(K) \ ,
\end{align*}
where we suppress the dependence of $\rho_{\eps, t}^\cT$ on $\ell$ in the notation.
Note that $\emm_{\eps,t}^\cT$ belongs to $\cP(\cT)$ if $0 < \eps < \delta$, since $\mu_t \in \P_\delta(\bOmega)$. 
Write $\widehat{\rho_{\eps, t}^\cT}(K,L) = \theta_{KL}\left(\rho_{\eps,t}^\cT(K), \rho_{\eps,t}^\cT(L) \right)$. 
In view of the regularity assumption \eqref{eq:mean hessian} on $\theta_{KL}$ and the fact that $\mu_t \in \cP_\delta(\bOmega)$, a Taylor expansion yields
\begin{equation}\label{eq:mean Taylor expansion}
\frac{1}{\widehat{\rho_{\eps,t}^\cT}(K,L)} 
	\leq \frac{1}{\widehat{\rho_t^\cT}(K,L)} 
			- \frac{\gamma_{KL} \eps}{\widehat{\rho_t^\cT}^2(K,L)} 
			   			+ C \eps \big(\eps + [\cT]\big) 
\end{equation}
for all $\eps < \eps_0(\delta,s)$, where $\gamma_{KL} = \lambda_{KL} \nu_\ell^\cT(K) + \lambda_{LK} \nu_\ell^\cT(L)$, and $C$ depends on $\delta$ and $s$.

Let $\psi^\cT_t :\cT\to \R$ be the solution to the discrete elliptic problem \eqref{eq:elliptic-discrete}, and consider the associated momentum vector field
\begin{equation*}
V_t^\cT(K,L) = \widehat{\rho_t^\cT}(K,L) \big( \psi_t^\cT(L) - \psi_t^\cT(K) \big) \ ,
\end{equation*}
so that $\cA_\cT(\emm_t^\cT, \psi_t^\cT) = \cK_\cT(\emm_t^\cT, V_t^\cT)$.
Using \eqref{eq:mean Taylor expansion} we obtain
\begin{equation}\label{eq:three-terms}
\begin{aligned}
 \cK_\cT(\emm_{\eps,t}^\cT, V_t^\cT) 
	    \leq  \cK_\cT(\emm_t^\cT, V_t^\cT) 
       & -  \frac{\eps}{2} \sum_{K,L}  \gamma_{KL} \frac{|(K|L)|}{d_{KL}}
			\big( \psi_t^\cT(L) - \psi_t^\cT(K) \big)^2   		
	\\ & +  C  \eps \big(\eps+ [\cT]\big) 
		\sum_{K,L} \frac{|(K|L)|}{d_{KL}} V_t^\cT(K,L)^2 \  .
\end{aligned}
\end{equation}
We will estimate the three terms on the right-hand side separately.

To bound the first term, we apply Proposition \ref{prop:cont-and-disc-action} to obtain
\begin{align*}
	\cK_\cT(\emm_t^\cT, V_t^\cT) 	
	= \cA_\cT^*(\emm_t^\cT, \dot \emm_t^\cT) 	
	\leq \bA^*(\mu_t, \dot \mu_t) 
	+ C [\cT] \| \dot \mu_t\|_{L^2(\Omega)}^2 \ .
\end{align*}
Together with the uniform $L^2$-bound on $\dot \mu_t$ from Lemma \ref{lemma:v geodesic}, this implies
\begin{align}
	\label{eq:K-bound-1}
	\limsup_{[\cT]\to 0} \int_0^1 \cK_\cT(\emm_t^\cT, V_t^\cT) \dd t
	\leq \limsup_{[\cT]\to 0} \int_0^1 \bA^*(\mu_t, \dot \mu_t) \dd t
    =	 \bW(\mu_0,\mu_1)^2 \ . 
\end{align}
Moreover, as $\mu_t \in \cP_\delta(\bOmega)$, we have
\begin{align*}
	\frac12 \sum_{K,L} \frac{|(K|L)|}{d_{KL}} V_t^\cT(K,L)^2 
		\leq C \cK_\cT(\emm_t^\cT, V_t^\cT) \ ,
\end{align*}
for some $C < \infty$ depending on $\delta$. 
Using this estimate and \eqref{eq:K-bound-1} we obtain
\begin{align}
	\label{eq:K-bound-3}
	\limsup_{[\cT]\to 0} \int_0^1
		\sum_{K,L} \frac{|(K|L)|}{d_{KL}} V_t^\cT(K,L)^2  \dd t
	\leq \bW(\mu_0,\mu_1)^2 \ ,
\end{align}
which bounds the third term in \eqref{eq:three-terms}.

To treat the second term, we write $\bphi_K = \dashint_{B_K} \phi \dd x$, where $B_K = B(x_K, \zeta [\cT])$. 
Since $\sup_{K,L} \gamma_{KL} \leq 1$ and $\mu_t \in \cP_\delta(\bOmega)$, Proposition \ref{prop:error-est} yields
\begin{equation}\begin{aligned}
	\label{eq:K-bound-2a}
	& \sum_{K,L}  \gamma_{KL} \frac{|(K|L)|}{d_{KL}}
			\big( (\psi_t^\cT(L) - \psi_t^\cT(K)) - (\bphi_L  - \bphi_K) \big)^2
	\\ & \leq C \sum_{K,L}  \frac{|(K|L)|}{d_{KL}} \widehat{\rho_t^\cT}(K,L)
			\big( (\psi_t^\cT(L) - \psi_t^\cT(K)) - (\bphi_L  - \bphi_K) \big)^2
	\\ & \leq C [\cT]^2 \| \dot \mu_t \|_{L^2(\Omega)}^2 \ ,
\end{aligned}\end{equation}
where $C$ depends on $\Omega$, $\zeta$, and $\delta$.
Furthermore, for $K \in \cT_Q^{\pm}$ and $L \sim K$, we have $\nabla \phi = \kappa v$ on $K \cup L$, which implies that $\bphi_L - \bphi_K = \kappa v \cdot (x_L - x_K)$. Therefore, using the fact that $\sum_{K \in \cT} \nu_\ell^\cT (K) |K| = 0$,
\begin{align*}
	 \sum_{K,L} \gamma_{KL} \frac{|(K|L)|}{d_{KL}}
			\big( \bphi_L  - \bphi_K \big)^2   		
	  & = \frac{\kappa^2}{2} \sum_K \nu_\ell^\cT(K) \sum_L \lambda_{KL}
				 \frac{|(K|L)|}{d_{KL}}	\big( v \cdot (x_L - x_K) \big)^2
	\\& = \frac{\kappa^2}{2} \sum_K \nu_\ell^\cT(K) \big( S_K - |K| \big)
	\\& = \frac{\kappa^2}{2} \sum_{Q \in \sQ_\ell} 
			\bigg(\alpha_Q^\cT \sum_{K \in \cT_Q^+}  \big( S_K - |K| \big)
			+  \beta_Q^\cT \sum_{K \in \cT_Q^-} \big( |K| - S_K \big) \bigg) \ .
\end{align*}
Using \eqref{eq:anisotropy upper} and \eqref{eq:anisotropy lower}, this identity yields
\begin{align*}
	\liminf_{[\cT] \to 0} \sum_{K,L} \gamma_{KL} \frac{|(K|L)|}{d_{KL}}
			\big( \bphi_L  - \bphi_K \big)^2   		
	  \geq \frac{c \kappa^2}{2} \sum_{Q \in \sQ_\ell} |Q|				
	& \geq \frac{c \kappa^2}{4} |\tilde B| \ ,
\end{align*} 
provided $\ell$ is sufficiently small.
Together with \eqref{eq:K-bound-2a}, it follows that
\begin{align}\label{eq:K-bound-2}
	\liminf_{[\cT] \to 0}  \int_0^1
		\sum_{K,L}  \gamma_{KL} \frac{|(K|L)|}{d_{KL}}
			\big( \psi_t^\cT(L) - \psi_t^\cT(K) \big)^2  \dd t
			\geq \frac{c \kappa^2}{4} |\tilde B| \ .
\end{align}
Inserting the three estimates \eqref{eq:K-bound-1}, \eqref{eq:K-bound-3} and \eqref{eq:K-bound-2} into \eqref{eq:three-terms}, we obtain
\begin{align*}
	\limsup_{[\cT] \to 0} \cW(\emm_{\eps,0}^\cT, \emm_{\eps,1}^\cT)^2
	\leq 
	\limsup_{[\cT] \to 0} \int_0^1 \cK_\cT(\emm_{\eps,t}^\cT, V_t^\cT)  \dd t 
	    \leq \bW(\mu_0, \mu_1)^2
       - c  | \tilde B|  \eps +  C  \eps^2 \ ,
\end{align*}
for suitable constants $c> 0$ and $C < \infty$.

On the other hand, since $\sum_{K \in \cT_Q} \nu_\ell^\cT(K) |K| = 0$ for all $Q \in \sQ_\ell$, it follows from Lemma \ref{lem:distconttodisc} that
\begin{align*}
	\cW_\cT(\emm_0^\cT, \emm_{\eps,0}^\cT)
		& \leq C \big( \bW(\mu_0, Q_\cT \emm_{\eps,0}^\cT) + [\cT] \big)
		\\& \leq C( \ell + [\cT] ) \ ,
\end{align*}
and the same holds at $t=1$. In summary, we obtain
\begin{equation}
	\limsup_{[\cT] \to 0} \cW(\emm_0^\cT, \emm_1^\cT)^2
	    \leq \bW(\mu_0, \mu_1)^2
       - c  | \tilde B|  \eps +  C  \eps^2 + C \ell^2 \ ,
\end{equation}
for all $\eps < \eps_0$. As $\ell > 0$ is arbitrary, this yields the result.
\end{proof}

\begin{remark}\label{rem:2dcounterexample}
For the mesh in Figure \ref{figure:bad triangles}, the construction in the proof of Theorem \ref{thm:necessity} can be somewhat simplified: as discussed in Example \ref{ex:2D-anisotropy}, isotropy fails to hold in the coordinate directions $v = \pm e_1, \pm e_2$. 
Picking a $\bW$-geodesic transporting mass in direction $e_2$ in some open ball $B$, we notice that the discretisation of that geodesic transports no mass over vertical edges in $B$. The variation $\pi^\cT$ can then be set to $\pi^\cT =1$ for all $N$ and $S$ cells and $\pi^\cT = -1$ for all $E$ and $W$ cells. Along diagonal edges, the change in $\theta_{KL}$ is $o(\eps)$, whereas for horizontal edges, the change in $\theta_{N,S+e_2}$ is $-\eps + o(\eps)$. For vertical edges, the change in $\theta_{E,W+e_1}$ is $\eps + o(\eps)$, which would be costly, but here the momentum vector field $V(E,W+e_1)$ vanishes.
\end{remark}

\begin{figure}
    \begin{center}
        \includegraphics[scale = .4]{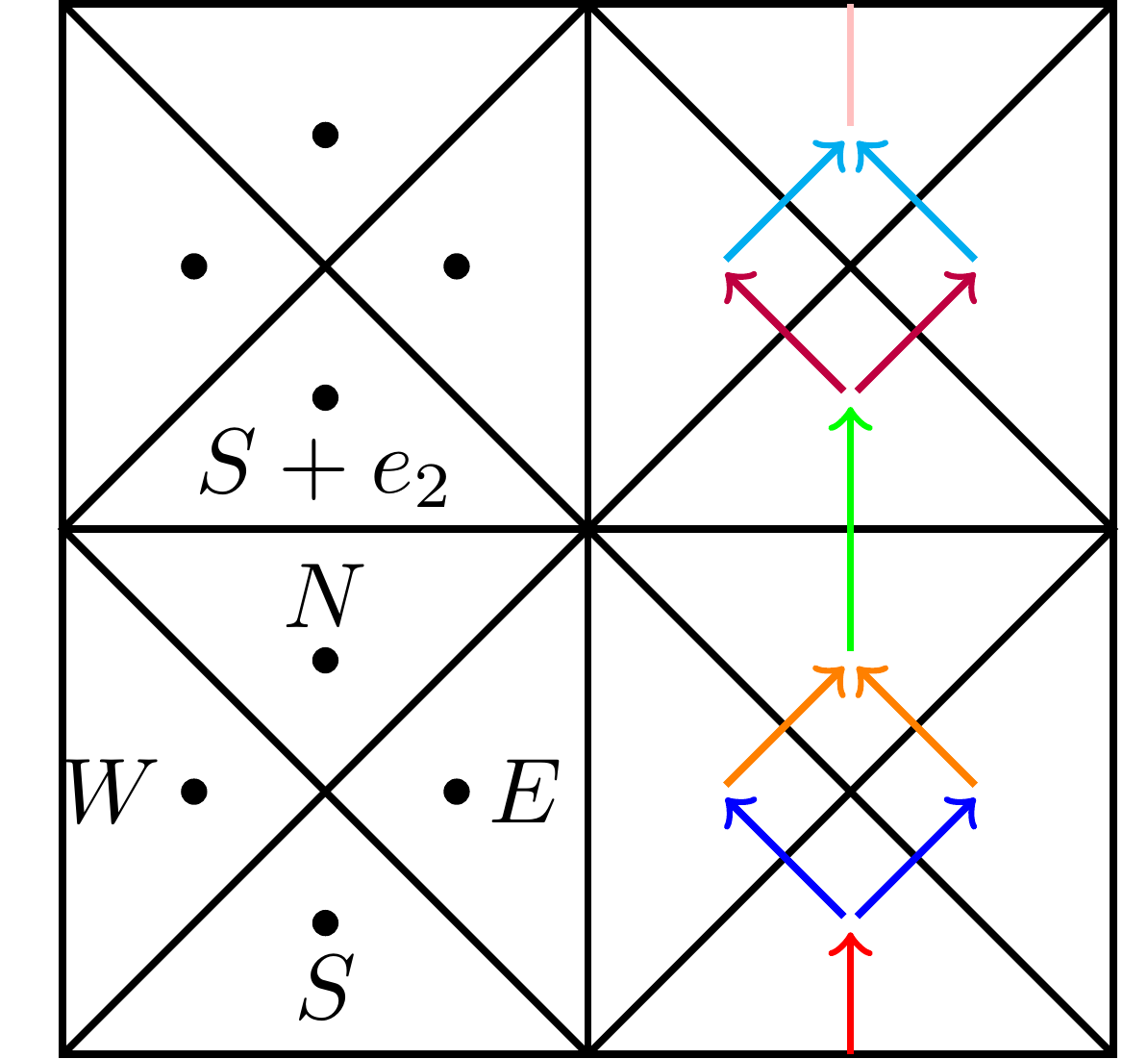}
    \end{center}
    \caption{We can decrease the action by adding $\eps$ to $\rho$ in all $N$ and $S$ triangles and subtracting $\eps$ in all $E$ and $W$ triangles. 
    If $\rho$ is the same in $E$ and $W$, we can choose a momentum vector field $V$ that transports no mass over vertical lines with $V_{SE} = V_{SW}$ and $V_{EN} = V_{WN}$.}\label{figure:bad triangles}
\end{figure}

\section{Gromov--Hausdorff convergence}\label{sec:G-H}
In this section we prove Theorems \ref{thm:upper-bound-intro} and \ref{thm:lower-bound-intro}, as well as Corollaries \ref{cor:GH-conv} and \ref{cor:min}. Let us start by stating the definition of Gromov--Hausdorff convergence.
\begin{definition}[Gromov--Hausdorff convergence]
 We say that a sequence of compact metric spaces $(\cX_n, \sd_n)_{n \geq 1}$ converges in the sense of Gromov--Hausdorff to a compact metric space $(\cX,\sd)$, if there exist  maps $f_n\colon \cX\to \cX_n$ which are
\begin{itemize}\setlength\itemsep{0ex}
\item $\eps_n$-isometric, i.e., for all $x,y\in \cX$,
\begin{equation}
 | \sd_n( f_n(x), f_n(y) ) - \sd(x,y) |\leq \eps_n
\end{equation}
and
\item $\eps_n$-surjective, i.e., for all $x_n \in \cX_n$ there exists $x \in \cX$ with
\begin{equation}
	 \sd_n(f_n(x),x_n) \leq \eps_n
\end{equation}
\end{itemize}
for some sequence $(\eps_n)_n$ with $\eps_n \to 0$ as $n \to \infty$.
\end{definition}

Our main task is to show that the mappings $\cP_\cT$ are $\eps$-isometric. We divide the argument into two parts: an upper bound for the discrete transport metric (Theorem \ref{thm:upper-bound}) will be proved in Section \ref{sec:upper}. This result is valid for any sequence of $\zeta$-regular meshes. Under strong additional symmetry assumptions, we will prove a corresponding lower bound (Theorem \ref{thm:lower-bound}) in Section \ref{sec:lower}.
The argument will be completed in Section \ref{sec:wrap-up}.

We start with a useful time-regularisation result along the lines of \cite[Lemma 2.9]{ErMa12}.

\begin{lemma}[Time-regularisation]\label{lem:smoothing}
Let $(\emm_t)_{t \in [0,1]}$ be a curve in $\cP(\cT)$ with
\begin{align*}
	\int_0^1 \cA_\cT^*(\emm_t, \dot \emm_t) \dd t < \infty \ .
\end{align*}
For $\delta \in (0, \frac12)$ we consider the ``compressed'' curve $(\emm_t^\delta)_{t \in [-\delta, 1+\delta]}$ in $\cP(\cT)$ given by
\begin{equation}
\begin{aligned}\label{compressed}
	\tilde \emm_t^\delta :=
	\begin{cases}
    \emm_0 &  \text{for } t\in[-\delta,\delta] \ ,\\
    \emm_{(t-\delta)/(1-2\delta)} &  \text{for } t\in (\delta,1-\delta) \ ,\\
    \emm_1  &  \text{for } t\in[1-\delta,1+\delta] \ .
  \end{cases}
\end{aligned}
\end{equation}
Let $\eta : \R \to \R_+$ be infinitely differentiable, symmetric, and supported in $[-1,1]$ with $\int_\R \eta(t) \dd t = 1$, and define $\emm_t^\delta := \int_{\R} \eta(s/\delta) \tilde \emm_{t-s}^\delta \dd s$. Then the following assertions hold:
\begin{enumerate}[label=(\roman*)]\setlength\itemsep{0ex}
\item The curve $(\emm_t^\delta)_{t \in [0,1]}$ is infinitely differentiable, it satisfies $\emm_0^\delta = \emm_0$ and $\emm_1^\delta = \emm_1$,  and
\begin{align*}
\int_0^1 \cA_\cT^*(\emm_t^\delta, \dot \emm_t^\delta) \dd t \leq \frac{1}{1 - 2 \delta} \int_0^1 \cA_\cT^*(\emm_t, \dot \emm_t) \dd t \ .
\end{align*}
\item Let $\{ \cT \}$ be a sequence of a meshes. For each $\cT$, let $(\emm_t^\cT)_{t\in \cT}$ be a curve in $\cP(\cT)$, and suppose that, for all $t \in [0,1]$, there exists a probability measure $\mu_t \in \cP(\bOmega)$ such that $\cQ_\cT \emm_t^\cT \weakly \mu_t$ as $[\cT] \to 0$. Then, for all $t \in [0,1]$,
\begin{align*}
	\cQ_\cT \emm_t^{\cT, \delta} \weakly \mu_t^\delta \quad \text{and} \quad
	\cQ_\cT \dot \emm_t^{\cT, \delta} \weakly \dot  \mu_t^\delta \quad \text{as } [\cT] \to 0 \ .
\end{align*}
\end{enumerate}
\end{lemma}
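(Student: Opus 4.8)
The plan is to establish (i) and (ii) in turn, using only the joint convexity of $\cA_\cT^*$ in $(\emm,\sigma)$ and elementary properties of convolution in the time variable. For the convexity, recall from \eqref{eq: infsup} that $\cA_\cT^*(\emm,\sigma)$ is the infimum of the jointly convex functional $\cK(\emm,V)$ over all momentum fields $V$ satisfying the discrete continuity equation, which is affine in $(\sigma,V)$; hence $\cA_\cT^*$ is jointly convex, and since $\cA_\cT(\emm,\cdot)$ is quadratic, $\sigma\mapsto\cA_\cT^*(\emm,\sigma)$ is positively $2$-homogeneous with $\cA_\cT^*(\emm,0)=0$ (cf. \cite[Section~2]{ErMa12}). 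I write $\eta_\delta(s):=\delta^{-1}\eta(s/\delta)$, so that $\eta_\delta\in\cC_c^\infty(\R)$ is supported in $[-\delta,\delta]$ with $\int_\R\eta_\delta=1$ and $\emm_t^\delta=\int_\R\eta_\delta(t-u)\,\tilde\emm_u^\delta\,\dd u$.

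\emph{Part (i).} The hypothesis $\int_0^1\cA_\cT^*(\emm_t,\dot\emm_t)\,\dd t<\infty$ presupposes that $t\mapsto\emm_t(K)$ is absolutely continuous for each $K$, so the same holds for the compressed curve $\tilde\emm^\delta$ on $[-\delta,1+\delta]$ (constant pieces and an affine reparametrisation, matching continuously at the junctions). On the constant pieces $\dot{\tilde\emm}^\delta=0$; on $(\delta,1-\delta)$ the substitution $t=\delta+(1-2\delta)r$ together with the $2$-homogeneity of $\sigma\mapsto\cA_\cT^*(\emm,\sigma)$ yields, after a change of variables,
\[
\int_{-\delta}^{1+\delta}\cA_\cT^*\big(\tilde\emm_t^\delta,\dot{\tilde\emm}_t^\delta\big)\,\dd t
=\frac{1}{1-2\delta}\int_0^1\cA_\cT^*\big(\emm_r,\dot\emm_r\big)\,\dd r\ .
\]
Differentiating under the integral sign shows $t\mapsto\emm_t^\delta$ is $\cC^\infty$; it takes values in $\cP(\cT)$ as an average of probability measures; and $\emm_0^\delta=\emm_0$, $\emm_1^\delta=\emm_1$ because $\tilde\emm^\delta$ is constant near the endpoints and $\int_\R\eta_\delta=1$. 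Transferring the $t$-derivative onto the kernel and integrating by parts in $u$ (the support $[t-\delta,t+\delta]$ stays inside $(-\delta,1+\delta)$ for $t\in(0,1)$, so there are no boundary terms) gives $\dot\emm_t^\delta=\int_\R\eta_\delta(t-u)\,\dot{\tilde\emm}_u^\delta\,\dd u$, hence $(\emm_t^\delta,\dot\emm_t^\delta)=\int_\R\eta_\delta(t-u)\,(\tilde\emm_u^\delta,\dot{\tilde\emm}_u^\delta)\,\dd u$. Jensen's inequality for the jointly convex $\cA_\cT^*$ bounds $\cA_\cT^*(\emm_t^\delta,\dot\emm_t^\delta)$ by $\int_\R\eta_\delta(t-u)\cA_\cT^*(\tilde\emm_u^\delta,\dot{\tilde\emm}_u^\delta)\,\dd u$; integrating over $t\in[0,1]$ and applying Fubini together with $\int_0^1\eta_\delta(t-u)\,\dd t\le1$ yields $\int_0^1\cA_\cT^*(\emm_t^\delta,\dot\emm_t^\delta)\,\dd t\le\int_{-\delta}^{1+\delta}\cA_\cT^*(\tilde\emm_t^\delta,\dot{\tilde\emm}_t^\delta)\,\dd t$. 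Combined with the displayed identity, this is (i).

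\emph{Part (ii).} The operator $\cQ_\cT$ is linear, so pairing against an arbitrary $f\in\cC(\bOmega)$ and using Fubini gives
\begin{align*}
\langle\cQ_\cT\emm_t^{\cT,\delta},f\rangle&=\int_\R\eta_\delta(t-u)\,\langle\cQ_\cT\tilde\emm_u^{\cT,\delta},f\rangle\,\dd u\ ,\\
\langle\cQ_\cT\dot\emm_t^{\cT,\delta},f\rangle&=\int_\R\eta_\delta'(t-u)\,\langle\cQ_\cT\tilde\emm_u^{\cT,\delta},f\rangle\,\dd u\ ,
\end{align*}
the second line obtained by moving the $t$-derivative onto the smooth kernel. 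For each fixed $u$, the compressed discrete measure $\cQ_\cT\tilde\emm_u^{\cT,\delta}$ converges weakly to $\tilde\mu_u^\delta$ as $[\cT]\to0$, where $\tilde\mu^\delta$ is the curve obtained from $(\mu_t)$ by the same compression; this is precisely the hypothesis $\cQ_\cT\emm_t^\cT\weakly\mu_t$, read at the appropriate $t\in[0,1]$. Since $|\langle\cQ_\cT\tilde\emm_u^{\cT,\delta},f\rangle|\le\|f\|_\infty$ uniformly in $u$ and $\cT$, while $\eta_\delta$ and $\eta_\delta'$ are bounded with compact support, dominated convergence gives $\langle\cQ_\cT\emm_t^{\cT,\delta},f\rangle\to\int_\R\eta_\delta(t-u)\langle\tilde\mu_u^\delta,f\rangle\,\dd u=\langle\mu_t^\delta,f\rangle$ and $\langle\cQ_\cT\dot\emm_t^{\cT,\delta},f\rangle\to\int_\R\eta_\delta'(t-u)\langle\tilde\mu_u^\delta,f\rangle\,\dd u=\langle\dot\mu_t^\delta,f\rangle$, where $\mu_t^\delta:=\int_\R\eta_\delta(t-u)\tilde\mu_u^\delta\,\dd u$ is the time-regularisation of $(\mu_t)$ and $\dot\mu_t^\delta$ its (smooth) time-derivative. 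As $f$ was arbitrary, the assertion follows.

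The two slightly delicate points, both routine, are: checking that the compressed curve is absolutely continuous, so that $\dot{\tilde\emm}^\delta$ is a genuine $L^1$ derivative and $\dot\emm^\delta=\eta_\delta*\dot{\tilde\emm}^\delta$ holds on $[0,1]$ without boundary terms; and invoking joint convexity of $\cA_\cT^*$ in the correct form, namely as the partial minimisation over momentum fields (constrained by the affine continuity equation) of the jointly convex functional $\cK$. Everything else is bookkeeping with convolutions and dominated convergence, and I do not expect a genuine obstacle.
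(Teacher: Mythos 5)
Your argument is correct and follows essentially the same route as the paper: joint convexity of $\cA_\cT^*$ (obtained by partial minimisation of the jointly convex $\cK$ over momentum fields) together with Jensen's inequality and the change of variables for the compressed curve in part (i), and dominated convergence against test functions in part (ii). You merely spell out details the paper leaves implicit — the $2$-homogeneity used in the reparametrisation, the absence of boundary terms when moving the time derivative, and the normalisation $\eta_\delta(s)=\delta^{-1}\eta(s/\delta)$ (which silently corrects a missing factor in the paper's formula for $\emm_t^\delta$) — so no substantive difference.
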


\begin{proof}
Using the joint convexity of the mapping $(\emm, \sigma) \mapsto \cA_\cT^*(\emm, \sigma)$ we obtain
\begin{align*}
	\int_0^1 \cA_\cT^*(\emm_t^\delta, \dot \emm_t^\delta) \dd t
		& \leq \int_0^1 \int_{-\delta}^\delta \eta(s/\delta)
			\cA_\cT^*(\tilde \emm_{t-s}^\delta,  {\dot{\tilde\emm}^\delta_{t-s}}) \dd s \dd t
		\\ & \leq  \int_0^1 \cA_\cT^*(\tilde \emm_t^\delta,  {\dot{\tilde\emm}^\delta_t}) \dd t
		= \frac{1}{1-2\delta} \int_0^1\cA_\cT^*(\emm_t, \dot \emm_t) \dd t \ .
\end{align*}
Since $\tilde \emm_t^{\cT, \delta} \weakly \tilde \mu_t^\delta$ for all $t \in [-\delta,1 + \delta]$ (where $t \mapsto \tilde \mu_t^\delta$ denotes the time-compressed version of $t \mapsto \tilde \mu_t$), the second part of the result follows using dominated convergence.
\end{proof}

Clearly, a completely analogous result holds in the continuous setting.

\subsection{Upper bound for the discrete transport metric}
\label{sec:upper}

In this subsection we prove an upper bound for $\cW_\cT$, that relies on the finite volume bounds obtained in Section \ref{sec:finite-volume}. Since these  bounds require an ellipticity condition on the densities, we use a regularisation argument involving the heat flow. Lemma \ref{lem:upper-bound} contains the desired bound for the regularised measures. The regularisation is removed in Theorem \ref{thm:upper-bound}.

We emphasise that these results hold under the mere assumption of $\zeta$-regularity, and do not require the additional symmetry assumptions that we will impose in Section \ref{sec:lower}.

\begin{lemma}\label{lem:upper-bound}
Fix $\zeta \in (0,1]$ and $a > 0$. There exists a constant $C < \infty$, depending on $\Omega$, $\zeta$, and $a$, such that for any $\zeta$-regular mesh $\cT$ of $\Omega$, and $\mu_0, \mu_1 \in \cP(\bOmega)$ the following estimate holds:
\begin{align*}
\cW_\cT(P_\cT H_a \mu_0, P_\cT H_a\mu_1)^2
\leq \bW(\mu_0, \mu_1)^2 + C [\cT] \ .
\end{align*}
\end{lemma}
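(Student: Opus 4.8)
The plan is to reduce the estimate to the finite-volume comparison of action functionals (Proposition \ref{prop:cont-and-disc-action}) by first regularising the endpoints so that the densities are bounded away from zero. Fix $a > 0$. By Lemma \ref{lem:P-delta} there exists $\delta > 0$ (depending on $\Omega$ and $a$) such that $H_a\mu_0, H_a\mu_1 \in \cP_\delta(\bOmega)$. Let $(\mu_t)_{t\in[0,1]}$ be a $\bW$-geodesic connecting $H_a\mu_0$ and $H_a\mu_1$. The first obstacle is that a bare geodesic need not have densities that stay uniformly bounded below, nor need it have an $L^2$ velocity field, so Proposition \ref{prop:cont-and-disc-action} does not apply directly to it. To fix this, I would apply the heat semigroup once more: set $\nu_t := H_b \mu_t$ for a small auxiliary parameter $b > 0$. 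By the contraction bound \eqref{eq:contraction-heat}, $(\nu_t)$ is still a curve from $H_{a+b}\mu_0$ to $H_{a+b}\mu_1$ with $\int_0^1 \bA^*(\nu_t,\dot\nu_t)\dd t \le \bW(\mu_0,\mu_1)^2$ (using \eqref{eq:action-heat}), and now $\nu_t \in \cP_{\delta'}(\bOmega)$ uniformly in $t$ for some $\delta' > 0$. However, I must also control $\|\dot\nu_t\|_{L^2}$ uniformly in $t$ to use Proposition \ref{prop:cont-and-disc-action}; this requires a time-regularisation step as well, via Lemma \ref{lem:smoothing} applied in the continuous setting, producing a smooth-in-time curve $(\nu_t^s)$ with $\sup_t\|\dot\nu_t^s\|_{L^2(\Omega)} < \infty$ and action increased by at most a factor $(1-2s)^{-1}$. (Since the statement only claims an error $C[\cT]$ with $C$ depending on $a$, I am free to fix $b$ and $s$ once and for all depending on $a$; alternatively one reconciles the mismatch between $H_{a+b}\mu_i$ and $H_a\mu_i$ by noting $\bW(H_a\mu_i, H_{a+b}\mu_i) \le C\sqrt{b}$ from Lemma \ref{lem:Wass-facts}\ref{item:heat-holder} and absorbing — but it is cleaner to simply prove the lemma with $H_a$ replaced by $H_{a'}$ for the relevant $a'$, which is all that is needed downstream.)

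With a curve $(\nu_t)$ that lies in $\cP_{\delta'}(\bOmega)$ uniformly and has $\sup_t \|\dot\nu_t\|_{L^2} =: M < \infty$, the core step is: project to the mesh via $\emm_t := P_\cT \nu_t$, observe $\dot\emm_t = P_\cT\dot\nu_t$, and apply Proposition \ref{prop:cont-and-disc-action} to each $t$ to get
\[
	\cA_\cT^*(\emm_t, \dot\emm_t) \le \bA^*(\nu_t, \dot\nu_t) + C[\cT]\,\|\dot\nu_t\|_{L^2(\Omega)}^2 \le \bA^*(\nu_t, \dot\nu_t) + C[\cT] M^2 \ ,
\]
with $C$ depending on $\Omega$, $\zeta$, $\delta'$. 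The curve $(\emm_t)_{t\in[0,1]}$ connects $P_\cT H_{a'}\mu_0$ and $P_\cT H_{a'}\mu_1$ (for the appropriate $a'$), so it is admissible in the infimum defining $\cW_\cT$. Integrating in $t$,
\[
	\cW_\cT(P_\cT H_{a'}\mu_0, P_\cT H_{a'}\mu_1)^2 \le \int_0^1 \cA_\cT^*(\emm_t, \dot\emm_t)\dd t \le \int_0^1 \bA^*(\nu_t,\dot\nu_t)\dd t + C[\cT]M^2 \le \bW(\mu_0,\mu_1)^2 + C[\cT] \ ,
\]
where in the last step I used that the smoothed, heat-regularised curve has action at most (a fixed multiple, then sent to $1$ as the auxiliary parameters are fixed) $\bW(\mu_0,\mu_1)^2$ — concretely, choosing the auxiliary time-regularisation parameter $s$ small enough that $(1-2s)^{-1} \le 1 + [\cT]$ is not possible uniformly, so instead one keeps $s$ fixed and notes the resulting constant can be absorbed into $C[\cT]$ only if one is willing to also pay an $O(1)$ term; the honest route is to not time-regularise the continuous curve but rather invoke that a $\bW$-geodesic between measures in $\cP_{\delta'}$ does admit an $L^2$-in-space, $L^\infty$-in-time velocity field — this is exactly the kind of statement proved for $d=1$ in Lemma \ref{lem:smooth-geodesic}, and in general follows from the regularity of optimal maps between smooth densities on a convex domain, or can be circumvented by the $H_b$-regularisation which automatically yields the needed bounds on $\dot\nu_t$ through parabolic smoothing.

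The genuine content, and the step I expect to be most delicate, is ensuring the velocity field of the regularised curve is controlled in $L^2(\Omega)$ uniformly in $t$ with a constant allowed to depend on $a$ but not on $\cT$ — i.e., making precise the claim "$\sup_t\|\dot\nu_t\|_{L^2}<\infty$". The clean argument: after applying $H_b$ to a $\bW$-geodesic, one has $\dot\nu_t = \dive(\nu_t \nabla\varphi_t)$ where $\varphi_t$ solves the continuity equation, and $\bA^*(\nu_t,\dot\nu_t) = \int |\nabla\varphi_t|^2\dd\nu_t$ is bounded (uniformly in $t$, being constant along a geodesic up to the contraction loss); combined with $\nu_t \ge \delta'$ this gives $\|\nabla\varphi_t\|_{L^2}^2 \le \delta'^{-1}\bA^*(\nu_t,\dot\nu_t)$, and then $\|\dot\nu_t\|_{H^{-1}}$ is controlled, but we need $L^2$, which requires $\nabla\nu_t \in L^\infty$ and $D^2\varphi_t \in L^2$ — these follow from parabolic regularity (Gaussian kernel bounds \eqref{eq: heat kernel estimates} and the gradient estimates used in Lemma \ref{lem:P-delta}) applied to $\nu_t = H_b\mu_t$, with constants depending on $b$ hence on $a$. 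Once this uniform bound $M = M(\Omega, a)$ is in hand, everything else is bookkeeping with the constants, and the factor $[\cT]$ comes out linearly exactly as in Proposition \ref{prop:cont-and-disc-action}.
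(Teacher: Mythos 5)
Your overall strategy --- push the measures into $\cP_\delta(\bOmega)$ with the heat semigroup, apply Proposition \ref{prop:cont-and-disc-action} at each time, integrate, and compare with $\bW(\mu_0,\mu_1)$ via the contraction property --- is exactly the paper's strategy. The gap is in the step you yourself single out as delicate: the uniform $L^2$-bound on the time derivative of the regularised curve. Your justification does not hold as written. If $\nu_t=H_b\mu_t$, then $\dot\nu_t=H_b\dot\mu_t=-H_b\dive(\mu_t\nabla\varphi_t)$; it is \emph{not} of the form $\dive(\nu_t\nabla\varphi_t)$ for the geodesic potential $\varphi_t$, and applying $H_b$ to the measures does not smooth $\varphi_t$ at all, so ``$D^2\varphi_t\in L^2$ by parabolic regularity'' has no basis. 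The alternative you offer --- regularity of optimal maps between smooth densities --- is a much heavier tool (proved in the paper only for $d=1$ in Lemma \ref{lem:smooth-geodesic}), and its boundary regularity is not available on a general bounded convex $\Omega$, which is neither smooth nor strictly convex here. A correct kernel-based argument along your lines does exist (bound $H_b\dive(\mu_t\nabla\varphi_t)$ in $L^\infty$ using $\sup_{x,y}|\nabla_y h_b(x,y)|\leq C(b)$, which follows from \eqref{eq: heat kernel estimates} and \eqref{eq:LSI-local}, together with $\|\mu_t\nabla\varphi_t\|_{\TV}\leq \bW(\mu_0,\mu_1)$), but you would have to supply it; as written the key estimate is missing.

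The paper sidesteps the issue with precisely the device you rejected: it time-compresses and mollifies (Lemma \ref{lem:smoothing}) the geodesic \emph{after} applying $H_a$ to each slice, so the time derivative falls on the mollifier, $\dot u_t^{a,\delta}=\tfrac1\delta\int_\R \eta'(s/\delta)\,H_a\tilde u^\delta_{t-s}\dd s$, and the $L^2$-bound follows from the heat-kernel upper bound \eqref{eq: heat kernel estimates} alone, with a constant depending on $a$ but \emph{not} on $\delta$. Your objection that the factor $(1-2\delta)^{-1}$ cannot be absorbed into $C[\cT]$ misses the point: it need not be absorbed, since the error constant is $\delta$-independent one simply sends $\delta\to0$ at the very end. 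Finally, your construction takes the geodesic between $H_a\mu_0$ and $H_a\mu_1$ and then applies $H_b$, so the resulting curve joins $H_{a+b}\mu_0$ to $H_{a+b}\mu_1$ and you only prove a variant of the statement. The clean fix, as in the paper, is to take the geodesic between $\mu_0$ and $\mu_1$ themselves, apply $H_a$ to it together with the compression \eqref{compressed}; then the endpoints are exactly $P_\cT H_a\mu_0$ and $P_\cT H_a\mu_1$ and no second heat regularisation is needed.
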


\begin{proof}
Let $(\mu_t)_{t\in[0,1]}$ be a geodesic connecting $\mu_0$ and $\mu_1$.
Take $\eta : \R \to \R$ as in Lemma \ref{lem:smoothing} and define, for $\delta \in (0, \frac12)$,
\begin{align*}
	\mu_t^{a,\delta} := \int_\R \eta\Big(\frac{s}{\delta}\Big)
					H_a \tilde\mu_{t-s}^\delta \dd s \ ,
\end{align*}
where $(\tilde \mu_t^\delta)_{t \in [-\delta,1 + \delta]}$ is the compression of $(\mu_t)_{t \in [0,1]}$ as in \eqref{compressed}.
By Lemma \ref{lem:Wass-facts}, the density $u_t^{a, \delta}$ of $\mu_t^{a, \delta}$ satisfies $\Lip(u_t^{a, \delta}) \leq C$ and $u_t^{a, \delta}(x) \geq  C^{-1} > 0$ for all $x \in \bOmega$, where $C < \infty$ depends only on $a$ (and not on $t$ or $\delta$).
Proposition \ref{prop:cont-and-disc-action} yields
\begin{align}\label{eq: action estimate}
	\cA^*_\cT( P_\cT \mu_t^{a,\delta}, P_\cT \dot\mu_t^{a,\delta} )
	\leq \bA^*( \mu_t^{a,\delta}, \dot \mu_t^{a,\delta} )
		 + C [\cT] \| \dot u_t^{a, \delta} \|^2_{L^2(\Omega)} \ .
\end{align}
where $C < \infty$ depends on $\Omega$, $a$, and $\zeta$.

Denoting the density of $\tilde\mu_t^{\delta}$ by $\tilde u_t^{\delta}$, we observe that
\begin{align*}
	\dot u_t^{a, \delta} = \frac{1}{\delta} \int_\R
			\eta'\Big(\frac{s}{\delta}\Big)
					H_a \tilde u_{t-s}^\delta \dd s \ .
\end{align*}
The heat kernel upper bound \eqref{eq: heat kernel estimates} yields
\begin{align*}
	\| H_a \tilde u_t^\delta \|_{L^2(\Omega)}
			\leq C \sup_{x,y} h_a(x,y)
			\leq C (a^{-d/2} \vee 1) \ ,
\end{align*}
where $C < \infty$ depends only on $\Omega$. Consequently,
\begin{align*}
	\| \dot u_t^{a, \delta} \|_{L^2(\Omega)}
	\leq C \| \eta'\|_{L^1(\R)} (a^{-d/2} \vee 1)  \ .
\end{align*}

Integrating \eqref{eq: action estimate} over $[0,1]$ we obtain
\begin{align*}
	\cW_\cT(P_\cT H_a \mu_0, P_\cT H_a \mu_1)^2
	& \leq \int_0^1
	 \cA^*_\cT( P_\cT \mu_t^{a,\delta}, P_\cT \dot\mu_t^{a,\delta} )  \dd t
	\\& \leq \int_0^1 \bA^*( \mu_t^{a,\delta}, \dot \mu_t^{a,\delta} ) \dd t
		 + C [\cT] (a^{-d} \vee 1)
\end{align*}
where the $\eta$-dependence is absorbed in the constant $C$.
Furthermore, using the convexity of $(\mu, w) \mapsto \bA^*(\mu, w)$ as in Lemma \ref{lem:smoothing}, and the contraction bound from Lemma \ref{lem:Wass-facts}\ref{item:heat-contraction}, we obtain
\begin{align*}
	\int_0^1 \bA^*( \mu_t^{a,\delta}, \dot \mu_t^{a,\delta} ) \dd t
	 & \leq \frac{1}{1 - 2 \delta}
	 	 \int_{0}^{1} \bA^*( H_a \mu_t, H_a \dot \mu_t) \dd t
	 \\& \leq \frac{1}{1 - 2 \delta}
	 	 \int_{0}^{1} \bA^*(\mu_t, \dot \mu_t) \dd t
	   = \frac{1}{1 - 2 \delta}
	   	\bW( \mu_0, \mu_1 )^2 \ .
\end{align*}
Since the constant $C$ does not depend on $\delta$, we obtain the desired result by passing to the limit $\delta \to 0$ (and absorbing the factor $a^{-d} \vee 1$ into the constant $C$).
\end{proof}

\begin{theorem}[Upper bound for $\cW_\cT$]\label{thm:upper-bound}
Fix $\zeta \in (0,1]$. For any $\eps > 0$ there exists $h > 0$ such that for any $\zeta$-regular mesh $\cT$ with $[\cT] \leq h$, we have
\begin{align}
	\cW_\cT(P_\cT \mu_0 , P_\cT \mu_1) \leq \bW(\mu_0,\mu_1) + \eps
\end{align}
for all $\mu_0, \mu_1 \in \cP(\bOmega)$.
\end{theorem}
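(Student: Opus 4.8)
The plan is to deduce Theorem \ref{thm:upper-bound} from Lemma \ref{lem:upper-bound} by a triangle inequality argument, using the heat semigroup as a regulariser and then removing the regularisation at a controlled cost. All constants that appear will depend only on $\Omega$ and $\zeta$ (and, at an intermediate stage, on a regularisation parameter $a$ that is fixed \emph{before} the mesh size), so the resulting bound is uniform in $\mu_0,\mu_1\in\cP(\bOmega)$ and in the choice of $\zeta$-regular mesh.

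First I would estimate, for a single measure $\mu\in\cP(\bOmega)$ and $a>0$, the discrete distance $\cW_\cT(P_\cT\mu, P_\cT H_a\mu)$. Applying Lemma \ref{lem:distconttodisc} gives $\cW_\cT(P_\cT\mu, P_\cT H_a\mu)\leq C\big(\bW(\cQ_\cT P_\cT\mu,\cQ_\cT P_\cT H_a\mu)+[\cT]\big)$, and then the triangle inequality for $\bW$ together with Lemma \ref{lem:almost-identity} (applied to $\mu$ and to $H_a\mu$) and the H\"older bound $\bW(\mu,H_a\mu)\leq C\sqrt a$ from Lemma \ref{lem:Wass-facts}\ref{item:heat-holder} yields
\begin{align*}
	\cW_\cT(P_\cT\mu, P_\cT H_a\mu)\leq C\big(C_1\sqrt a + 3[\cT]\big) \ ,
\end{align*}
with $C,C_1$ depending only on $\Omega$ and $\zeta$.

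Next I would chain: by the triangle inequality for $\cW_\cT$,
\begin{align*}
	\cW_\cT(P_\cT\mu_0,P_\cT\mu_1)\leq \cW_\cT(P_\cT\mu_0,P_\cT H_a\mu_0) + \cW_\cT(P_\cT H_a\mu_0,P_\cT H_a\mu_1) + \cW_\cT(P_\cT H_a\mu_1,P_\cT\mu_1) \ ,
\end{align*}
bound the outer two terms by the estimate just obtained, and bound the middle term using Lemma \ref{lem:upper-bound} and $\sqrt{x+y}\leq\sqrt x+\sqrt y$ by $\bW(\mu_0,\mu_1)+\sqrt{C_a[\cT]}$, where $C_a$ depends on $\Omega$, $\zeta$ and $a$. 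Altogether
\begin{align*}
	\cW_\cT(P_\cT\mu_0,P_\cT\mu_1)\leq \bW(\mu_0,\mu_1) + 2CC_1\sqrt a + 6C[\cT] + \sqrt{C_a[\cT]} \ .
\end{align*}
Given $\eps>0$, I would first choose $a>0$ so small that $2CC_1\sqrt a\leq \eps/2$; this fixes $a$, hence $C_a$, in terms of $\eps$, $\Omega$, $\zeta$ only. Then I would choose $h>0$ so small that $6Ch+\sqrt{C_a h}\leq\eps/2$, so that for every $\zeta$-regular mesh with $[\cT]\leq h$ and all $\mu_0,\mu_1\in\cP(\bOmega)$ we get $\cW_\cT(P_\cT\mu_0,P_\cT\mu_1)\leq\bW(\mu_0,\mu_1)+\eps$, as claimed.

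There is no real obstacle here; the argument is a bookkeeping exercise once Lemma \ref{lem:upper-bound} is in hand. The only point requiring care is the order of quantifiers: the constant $C_a$ from Lemma \ref{lem:upper-bound} blows up as $a\to 0$, so $a$ must be selected first (to absorb the $\sqrt a$ error coming from heat regularisation), and only afterwards may $h$ be chosen small relative to the now-fixed $C_a$.
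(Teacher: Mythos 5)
Your proposal is correct and follows essentially the same route as the paper's proof: the same triangle-inequality decomposition through $P_\cT H_a\mu_i$, with the middle term controlled by Lemma \ref{lem:upper-bound} and the outer terms by Lemmas \ref{lem:distconttodisc}, \ref{lem:almost-identity} and \ref{lem:Wass-facts}, choosing $a$ before $h$. The only cosmetic difference is that you linearise $\sqrt{\bW^2+C_a[\cT]}$ via $\sqrt{x+y}\leq\sqrt{x}+\sqrt{y}$, while the paper keeps the square root intact before sending $[\cT]\to 0$.
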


\begin{proof}
Using the triangle inequality we estimate
\begin{align*}
\cW_\cT(P_\cT\mu_0,P_\cT\mu_1)
	& \leq
	\cW_\cT(P_\cT \mu_0, P_\cT H_a \mu_0)
	\\& \quad	 + \cW_\cT(P_\cT H_a \mu_0,P_\cT H_a \mu_1)
		 + \cW_\cT(P_\cT H_a\mu_1,P_\cT\mu_1)
\end{align*}
for any $\mu_0, \mu_1 \in \cP(\bOmega)$ and $a>0$.
Lemma \ref{lem:upper-bound} yields
\begin{align*}
	 \cW_\cT(P_\cT H_a \mu_0, P_\cT H_a \mu_1)^2
		\leq \bW(\mu_0, \mu_1)^2
			+ C_1(a) [\cT] \ ,
\end{align*}
where $C_1(a) < \infty$ depends on $\Omega$, $\zeta$ and $a$.
Using the \emph{a priori} estimate from Lemma \ref{lem:distconttodisc} followed by Lemma \ref{lem:almost-identity} and Lemma \ref{lem:Wass-facts}, we obtain
\begin{align*}
	& \cW_\cT(P_\cT \mu_i, P_\cT H_a \mu_i)
	\\ & \leq C_2 \Big( \bW(Q_\cT P_\cT \mu_i, Q_\cT P_\cT H_a \mu_i) + [\cT] \Big)
	\\ & \leq C_2 \Big( \bW(Q_\cT P_\cT \mu_i, \mu_i)
			+ \bW(\mu_i, H_a \mu_i)
			+ \bW(H_a \mu_i, Q_\cT P_\cT H_a \mu_i) + [\cT] \Big)
	\\ & \leq C_2 \Big( [\cT] + \sqrt{a} \Big)
\end{align*}
for $i = 0, 1$, where the constant $C_2 < \infty$ depends on $\Omega$ and $\zeta$, but not on $a$.
Combining these estimates we find
\begin{align*}
\cW_\cT(P_\cT\mu_0,P_\cT\mu_1)
	\leq
	\sqrt{\bW(\mu_0,\mu_1)^2 + C_1(a) [\cT] }
			+  C_2 \Big( [\cT] + \sqrt{a} \Big)  \ .
\end{align*}

Let now $\eps > 0$, and choose $a$ sufficiently small, so that $C_2 \sqrt{a} \leq \eps / 2$. Then there exists $h > 0$ such that, whenever $[\cT] \leq h$, we have
\begin{align}\label{est1}
	\cW_\cT( P_\cT \mu_0, P_\cT \mu_1 ) \leq \bW(\mu_0, \mu_1)
			+ \eps
\end{align}
for all $\mu_0, \mu_1 \in \cP(\bOmega)$, which implies the result.
\end{proof}

\subsection{Lower bound for the discrete transport metric under isotropy conditions}
\label{sec:lower}

Since the counterexamples in Section \ref{sec:counterexamples} show that $\cW_\cT$ does not Gromov--Hausdorff converge to $\bW$ in general, we will impose an additional condition on the mesh. Let $I_d$ denote the $d \times d$ identity matrix.
\begin{definition}\label{def:asymp}
Let $\{\cT\}$ be a family of admissible meshes such that $[\cT] \to 0$.
We say that $\{\cT\}$ satisfies the \emph{asymptotic isotropy condition} with weight functions $\{\bflambda^\cT\}$ if, for all $K\in \cT$,
\begin{equation}\begin{aligned}\label{eq:asymptotic balance}
 & \sum_{L} \lambda_{KL}^\cT \frac{| (K|L) |}{d_{KL}} (x_K - x_L) \otimes (x_K - x_L) \leq |K| \big(1 + \eta_{\cT}(K) \big) I_d\ ,
\end{aligned}\end{equation}
where $\sup_{K \in \cT} |\eta_{\cT}(K)| \to 0$ as $[\cT] \to 0$.
\end{definition}

The following proposition contains the crucial bounds on the action functionals $\cA_\cT$ and their duals $\cA_\cT^*$.
To obtain the result, one needs to carefully choose the means $\theta_{KL}$ according to the geometry of the mesh.

We say that a sequence of signed measures $\{w_\cT\}_\cT \subseteq \cM_0(\bOmega)$  converges weakly to a signed measure $w$ if $\ip{\phi, w_\cT} \to \ip{\phi, w}$ for all $\phi \in \cC^0(\bOmega)$. In this case, we write $w_\cT \weakly w$.

\begin{proposition}[Action bounds]\label{prop:action-bounds}
Let $\{\cT\}$ be a family of $\zeta$-regular meshes satisfying the asymptotic \bal condition with weight functions $\{\bflambda^\cT\}$, and let $\{\bftheta^\cT\}$ be a family of   weight functions that are compatible with $\{\bflambda^\cT\}$.

Suppose that $\emm_\cT \in\cP(\cT)$ satisfies $\cQ_\cT \emm_\cT \weakly \mu$ as $[\cT] \to 0$ for some $\mu\in\mathcal\cP(\bOmega)$.
\begin{enumerate}
\item[(i)]
Let $\phi \in \cC^1(\bOmega)$ and define $\psi_\cT : \cT \to \R$ by $\psi_\cT(K) := \phi(x_K)$.
Then:
\begin{align}
\label{eq:action-limsup}
\limsup_{[\cT] \to 0}
	\cA_\cT(\emm_\cT, \psi_\cT) \leq \bA(\mu, \phi) \ .
\end{align}

\item[(ii)]
Let $\sigma_\cT \in \cM_0(\cT)$ and assume that there exists $w \in \cM_0(\bOmega)$ such that $Q_\cT \sigma_\cT \weakly w$ as $[\cT] \to 0$.
Then:
\begin{align}
\label{eq:dual-action-liminf}
	\bA^*(\mu,w) \leq \liminf_{[\cT] \to 0} \cA_\cT^*(\emm_\cT, \sigma_\cT) \ .
\end{align}
\end{enumerate}
\end{proposition}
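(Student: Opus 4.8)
The plan is to treat the two parts separately, with part (i) serving as the workhorse for part (ii) via Fenchel duality. For part (i), fix $\phi \in \cC^1(\bOmega)$ and set $\psi_\cT(K) = \phi(x_K)$. Since $\phi$ is $\cC^1$, a Taylor expansion gives $\psi_\cT(K) - \psi_\cT(L) = \nabla\phi(x_K)\cdot(x_K - x_L) + o(d_{KL})$, where the error is uniform in $K,L$ because $\nabla\phi$ is uniformly continuous on $\bOmega$ and $d_{KL} \leq C[\cT] \to 0$. Plugging this into the definition of $\cA_\cT(\emm_\cT, \psi_\cT)$, the leading term is
\begin{align*}
	\frac12 \sum_{K,L} \frac{|(K|L)|}{d_{KL}} \theta_{KL}\Big(\tfrac{\emm_\cT(K)}{|K|},\tfrac{\emm_\cT(L)}{|L|}\Big)\big(\nabla\phi(x_K)\cdot(x_K-x_L)\big)^2 \ .
\end{align*}
Now I use the compatibility of $\theta_{KL}^\cT$ with $\lambda_{KL}^\cT$: since $\theta_{KL}(a,b) \leq \lambda_{KL} a + \lambda_{LK} b$, we bound $\theta_{KL}(\emm_\cT(K)/|K|, \emm_\cT(L)/|L|) \leq \lambda_{KL}\emm_\cT(K)/|K| + \lambda_{LK}\emm_\cT(L)/|L|$. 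Splitting the sum and using the symmetry $\theta_{KL}(a,b)=\theta_{LK}(b,a)$ together with $n_{KL} = (x_K-x_L)/d_{KL}$, I can reorganise to
\begin{align*}
	\sum_{K} \frac{\emm_\cT(K)}{|K|} \nabla\phi(x_K) \cdot \Big( \sum_L \lambda_{KL}^\cT \frac{|(K|L)|}{d_{KL}}(x_K-x_L)\otimes(x_K-x_L) \Big) \nabla\phi(x_K) \ ,
\end{align*}
and the asymptotic isotropy condition \eqref{eq:asymptotic balance} bounds the inner matrix by $|K|(1+\eta_\cT(K))I_d$. Since $\sup_K|\eta_\cT(K)| \to 0$, this yields $\limsup_{[\cT]\to 0}\cA_\cT(\emm_\cT,\psi_\cT) \leq \limsup_{[\cT]\to 0}\sum_K \emm_\cT(K)|\nabla\phi(x_K)|^2$. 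Finally, $|\nabla\phi(\cdot)|^2$ is continuous and $\cQ_\cT\emm_\cT \weakly \mu$, and $\sum_K \emm_\cT(K)|\nabla\phi(x_K)|^2$ differs from $\int_\bOmega |\nabla\phi|^2 \dd(\cQ_\cT\emm_\cT)$ by $o(1)$ (again by uniform continuity of $|\nabla\phi|^2$ and $\diam K \leq [\cT]$), so the limit is $\int_\bOmega |\nabla\phi|^2\dd\mu = \bA(\mu,\phi)$. Care must be taken with the error terms: the cross term between the leading part and the $o(d_{KL})$ remainder must be controlled, which requires an a priori bound $\sum_{K,L}\frac{|(K|L)|}{d_{KL}}\theta_{KL}(\cdots)d_{KL}^2 \leq C$ uniformly — this follows from the same compatibility-plus-isotropy estimate applied with $\nabla\phi$ replaced by a constant.

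For part (ii), I would argue by Fenchel--Legendre duality. Fix any $\phi\in\cD$ (or $\cC^1(\bOmega)$) and set $\psi_\cT(K)=\phi(x_K)$. By definition of $\cA_\cT^*$ as a Legendre transform,
\begin{align*}
	\cA_\cT^*(\emm_\cT,\sigma_\cT) \geq 2\ip{\psi_\cT,\sigma_\cT} - \cA_\cT(\emm_\cT,\psi_\cT) = 2\sum_K \phi(x_K)\sigma_\cT(K) - \cA_\cT(\emm_\cT,\psi_\cT) \ .
\end{align*}
Now $\sum_K \phi(x_K)\sigma_\cT(K)$ differs from $\ip{\phi, Q_\cT\sigma_\cT}$ by at most $[\cT]\|\nabla\phi\|_\infty \|\sigma_\cT\|_{\TV}$ (wait — $\|\sigma_\cT\|_{\TV}$ need not be bounded). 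To avoid this issue I would instead test with $\tilde\psi_\cT(K) := \dashint_K \phi\,\dd x$ or note that $\sum_K\phi(x_K)\sigma_\cT(K) = \sum_K \sigma_\cT(K)\dashint_K\phi\,\dd x + \sum_K\sigma_\cT(K)(\phi(x_K)-\dashint_K\phi\,\dd x)$; but since $\sigma_\cT(K) = \int_K w\,\dd x$ only in the special case $\sigma_\cT = P_\cT w$ — in general $\sigma_\cT$ is an arbitrary element of $\cM_0(\cT)$ with $Q_\cT\sigma_\cT\weakly w$. In that case $\sum_K \phi(x_K)\sigma_\cT(K)$ compared to $\ip{\phi,Q_\cT\sigma_\cT}=\sum_K\sigma_\cT(K)\dashint_K\phi$: the difference is $\sum_K\sigma_\cT(K)(\phi(x_K)-\dashint_K\phi)$, bounded by $[\cT]\|\nabla\phi\|_\infty\sum_K|\sigma_\cT(K)|$. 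This total-variation sum is \emph{not} controlled by the weak convergence alone, so the honest route is to use Lemma \ref{lem:a-priori}: if $\liminf\cA_\cT^*(\emm_\cT,\sigma_\cT)=\infty$ there is nothing to prove, and otherwise, passing to a subsequence realising the $\liminf$, the quantity $\bA^*(H_{[\cT]}\cQ_\cT\emm_\cT, H_{[\cT]}\cQ_\cT\sigma_\cT)$ stays bounded, which via the coercivity of $\bA^*$ on $\cP_\delta$-type measures forces the relevant $H^1$-type norms to be bounded; combined with $H_{[\cT]}\cQ_\cT\emm_\cT\weakly\mu$ and $H_{[\cT]}\cQ_\cT\sigma_\cT\weakly w$ (using Lemma \ref{lem:Wass-facts} and the fact that $H_{[\cT]}\to\mathrm{id}$), lower semicontinuity of $\bA^*$ under weak convergence gives $\bA^*(\mu,w)\leq\liminf\cA_\cT^*(\emm_\cT,\sigma_\cT)$.

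The main obstacle I anticipate is precisely part (ii): reconciling the arbitrariness of $\sigma_\cT$ (no total-variation control) with the need to pass to the limit in the duality pairing. The clean way is the one the paper has clearly set up — combine the coercivity/comparison Lemma \ref{lem:a-priori} with weak lower semicontinuity of the continuous functional $\bA^*$, rather than trying to push test functions directly through the discrete Legendre transform. A secondary technical point in part (i) is making the $o(d_{KL})$ Taylor remainders rigorous and uniform; this needs the a priori action bound and Cauchy--Schwarz to absorb cross terms, but is routine given the $\zeta$-regularity estimates in Lemma \ref{lem:geom-ineq}. I would also need to double-check that the phrase "weight functions that are compatible with" in the statement is a typo for "mean functions that are compatible with" (matching the introduction's Theorem \ref{thm:lower-bound-intro}), and use the mean-function hypothesis accordingly.
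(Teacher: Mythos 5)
Your part (i) is essentially the paper's proof: expand $(\phi(x_K)-\phi(x_L))^2$ around $\nabla\phi(x_K)\cdot(x_K-x_L)$ with an error controlled uniformly by the modulus of continuity of $\nabla\phi$, use compatibility in the form $\theta_{KL}(a,b)\le\lambda_{KL}a+\lambda_{LK}b$ to reorganise the sum into $\sum_K\rho_\cT(K)\,v_K\cdot M_K v_K$ with $M_K$ the isotropy matrix, apply \eqref{eq:asymptotic balance-intro}, and absorb the remainder using $\sum_L d_{KL}|(K|L)|\le C|K|$ from Lemma \ref{lem:geom-ineq}; the cross-term worry disappears if you bound the squared increment directly rather than expanding, but your a priori bound works too. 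Your reading of ``weight functions \ldots compatible'' as ``mean functions \ldots compatible'' is also correct.

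The genuine gap is part (ii). The obstacle you declared fatal---no control on $\sum_K|\sigma_\cT(K)|$---is not an obstacle: by the definition of weak convergence used here, $\ip{\phi,Q_\cT\sigma_\cT}\to\ip{\phi,w}$ for \emph{every} $\phi\in\cC^0(\bOmega)$, so the measures $Q_\cT\sigma_\cT$ form a pointwise bounded family of bounded linear functionals on the Banach space $\cC^0(\bOmega)$, and the Banach--Steinhaus theorem gives $\sup_\cT\|Q_\cT\sigma_\cT\|_{\TV}=\sup_\cT\sum_K|\sigma_\cT(K)|<\infty$. (This is precisely why the hypothesis insists that $w$ and the $Q_\cT\sigma_\cT$ are signed measures and not arbitrary distributions; cf.\ Remark \ref{rem:derivative-measure}.) With this uniform TV bound, $\ip{\psi_\cT,\sigma_\cT}\to\ip{\phi,w}$, and the direct duality route you abandoned goes through: for a near-optimiser $\phi$ of $\bA^*(\mu,w)$ one has $\tfrac12\cA_\cT^*(\emm_\cT,\sigma_\cT)\ge\ip{\psi_\cT,\sigma_\cT}-\tfrac12\cA_\cT(\emm_\cT,\psi_\cT)$, and part (i) gives \eqref{eq:dual-action-liminf} in the limit (with a separate exhaustion when $\bA^*(\mu,w)=+\infty$). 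By contrast, your fallback via Lemma \ref{lem:a-priori} cannot prove the statement: that lemma yields $\bA^*(H_{[\cT]}\cQ_\cT\emm_\cT,H_{[\cT]}\cQ_\cT\sigma_\cT)\le C\,\cA_\cT^*(\emm_\cT,\sigma_\cT)$ with a constant $C$ depending on $\zeta$ and $\Omega$ that is not $1$, and it makes no use of the isotropy assumption, so even with weak lower semicontinuity of $\bA^*$ it only gives $\bA^*(\mu,w)\le C\liminf_{[\cT]\to0}\cA_\cT^*(\emm_\cT,\sigma_\cT)$. The sharp constant $1$ cannot come for free: Proposition \ref{prop:counterexamples} and Theorem \ref{thm:necessity} show \eqref{eq:dual-action-liminf} genuinely fails without isotropy, so any correct proof of (ii) must route the isotropy through part (i) (or an equivalent estimate), as the duality argument does.
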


\begin{remark}\label{rem:derivative-measure}
We emphasise that it is important to assume in \eqref{eq:dual-action-liminf} that $w$ in \eqref{eq:dual-action-liminf} is a signed measure, and not an arbitrary distribution.
\end{remark}

\begin{proof}
We will first prove \eqref{eq:action-limsup}.
For $K \in \cT$ set $v_K := \nabla \phi(x_K)$ and write $\rho_\cT(K) = {\emm_\cT(K)}/{|K|}$. Let $\omega\colon[0,\infty)\to[0,\infty)$ be the modulus of continuity of $\nabla\phi$. Then
\begin{align*}
	\big(\phi(x_K)-\phi(x_L)\big)^2
		\leq \big( v_K \cdot (x_L - x_K) \big)^2 + 4 \|\nabla\phi\|_{L^\infty} \omega(2[\cT]) d_{KL}^2
\end{align*}
whenever $L \sim K$.
By Remark \ref{rem:weighted-inequalities}, we have 
\begin{align*}
\theta^\cT_{KL}(\rho_\cT(K), \rho_\cT(L)) \leq \lambda_{KL}^\cT \rho_\cT(K) + \lambda_{LK}^\cT \rho_\cT(L) \ .
\end{align*}
Using these estimates we obtain
\begin{equation}
\begin{aligned}\label{eq: action bound}
	\cA_\cT(\emm_\cT, \psi_\cT)
		& \leq \frac12 \sum_{K,L} \frac{|(K|L)|}{d_{KL}} \Big( \lambda_{KL}^\cT \rho_\cT(K) + \lambda_{LK}^\cT \rho_\cT(L)\Big)\big( \phi(x_K) - \phi(x_L) \big)^2
		\\ & = \sum_K \rho_\cT(K) \sum_L \lambda_{KL}^\cT \frac{|(K|L)|}{d_{KL}} \big( \phi(x_K) - \phi(x_L) \big)^2
		\\ & \leq \sum_K \rho_\cT(K) \sum_L \lambda_{KL}^\cT  \frac{|(K|L)|}{d_{KL}}  \big( v_K \cdot (x_L - x_K) \big)^2
		\\& \qquad + 4\|\nabla\phi\|_{L^\infty} \omega( 2 [\cT]) \sum_K \rho_\cT(K) \sum_L d_{KL} |(K|L)| \ .
\end{aligned}
\end{equation}
In view of Lemma \ref{lem:geom-ineq}\ref{it:nbh-bound} and \eqref{eq:cell-upperbound}, we observe that
\begin{align*}
	  \sum_L d_{KL} |(K|L)|
		 \leq C  |K| \ ,
\end{align*}
where $C < \infty$ depends on $\Omega$ and $\zeta$.
In the former term we use the asymptotic isotropy condition \eqref{eq:asymptotic balance} to write
\begin{align*}
\sum_L \lambda_{KL}^\cT \frac{|(K|L)|}{d_{KL}} \big( v_K \cdot (x_L - x_K) \big)^2
    & \leq  \big(1 + \eta_\cT(K)\big)|K| \, | v_K |^2 ,
\end{align*}
where $\eta_\cT(K)$ is the error term in \eqref{eq:asymptotic balance}, which converges to $0$, uniformly in $K$, as $[\cT] \to 0$.

Summing up all contributions, we obtain
\begin{align*}
	\cA_\cT(\emm_\cT, \psi_\cT)
		\leq \sum_K \rho_\cT(K)  |K| \Big( \big(1 + \eta_\cT(K)\big) |v_K|^2
			+ 4 C \|\nabla\phi\|_{L^\infty} \omega(2[\cT])	\Big) \ .
\end{align*}
Writing $\mu_\cT := \cQ_\cT \emm_\cT$ and $\xi_\cT = \sum_{K} \chi_K \big(1 + \eta_\cT(K)\big) |v_K|^2$ we have
\begin{align*}
 &\sum_K \rho_\cT(K) |K| \big(1 + \eta_\cT(K)\big) |v_K|^2
   \\&\qquad = \ip{\xi_\cT, \mu_\cT}
 			 = \int |\nabla \phi|^2 \dd \mu
   			 + \int |\nabla \phi|^2 \dd (\mu_\cT - \mu)
			 + \int \xi_\cT - |\nabla \phi|^2 \dd \mu_\cT \ .
\end{align*}
Since $\mu_\cT$ converges weakly to $\mu$ and $\| \xi_\cT - |\nabla \phi|^2 \|_{L^\infty} \to 0$ as $[\cT] \to 0$, we obtain \eqref{eq:action-limsup}.

\medskip

Let us now prove \eqref{eq:dual-action-liminf}. Take $\phi \in \cC^1(\bOmega)$ and define $\psi_\cT : \cT \to \R$ by $\psi_\cT(K) = \phi(x_K)$.
We claim that $\ip{ \phi, w } = \lim_{[\cT] \to 0} \ip{\psi_\cT, \sigma_\cT}$. To show this, set $w_\cT := \cQ_\cT \sigma_\cT$ and $\phi_\cT := \sum_K \psi_\cT(K)\chi_K $, and  note that $\ip{\psi_\cT, \sigma_\cT} = \ip{\phi_\cT, w_\cT}$. Therefore,
\begin{align*}
	\ip{\psi_\cT, \sigma_\cT} - \ip{ \phi, w }
		= \ip{\phi_\cT - \phi, w_\cT} + \ip{ \phi, w_\cT - w } \ .
\end{align*}
Since $w_\cT \weakly w$, the Banach--Steinhaus Theorem implies that $\sup_\cT \| w_\cT \|_{\TV} < \infty$. Together with the bound $\| \phi_\cT  - \phi \|_{L^\infty} \leq C [\cT]$, this yields the claim.

Suppose first that $\bA^*(\mu, w)$ is finite.
Fix $\eps>0$ and choose $\phi \in \cC^1(\bOmega)$ such that
\begin{align*}
\frac12 \bA^*(\mu, w) \leq \ip{ \phi, w } - \frac12 \bA(\mu,\phi) + \eps \ .
\end{align*}
Using the claim and \eqref{eq:action-limsup}, it follows that
\begin{align*}
	\frac12 \bA^*(\mu, w)
	& \leq \liminf_{[\cT] \to 0}
			\Big (\ip{\psi_\cT, \sigma_\cT}
				- \frac12 \cA_\cT(\emm_\cT, \psi_\cT) \Big) +  \eps
	\\& \leq \liminf_{[\cT] \to 0}
			\frac12 \cA_\cT^*(\emm_\cT, \sigma_\cT) +  \eps \ .
\end{align*}
Since $\eps > 0$ is arbitrary, the result follows.

Suppose next that $\bA^*(\mu, w) = \infty$.
Then, for each $N > 0$, there exists a function $\phi_N \in \cC^1(\bOmega)$ such that
\begin{align*}
	\ip{\phi_N, w} - \frac12 \bA(\mu, \phi_N) \geq N \ .
\end{align*}
Define $\psi_\cT^N : \cT \to \R$ by $\psi_\cT^N(K) := \phi_N(x_K)$.
Using the claim and \eqref{eq:action-limsup} once more, we obtain
\begin{align*}
	N  & \leq \liminf_{[\cT] \to 0} \Big(\ip{\psi_\cT^N, \sigma_\cT}
				- \frac12 \cA_\cT(\emm_\cT, \psi_\cT^N) \Big)
		\\ & \leq \liminf_{[\cT] \to 0} \frac12 \cA_\cT^*(\emm_\cT, \sigma_\cT)\ ,
\end{align*}
which implies that $\liminf_{[\cT] \to 0} \cA_\cT^*(\emm_\cT, \sigma_\cT) =  \infty.$
\end{proof}

\begin{theorem}[Lower bound for $\cW_\cT$]\label{thm:lower-bound}
Fix $\zeta \in (0,1]$.
For any $\eps > 0$, there exists $h > 0$ such that the following holds: for any family of $\zeta$-regular meshes $\{\cT\}$ satisfying the asymptotic \bal condition with weight functions $\{\bflambda^\cT\}$, and any family of mean functions $\{\bftheta^\cT\}$ that are compatible with $\{\bflambda^\cT\}$, we have
\begin{align}
	\bW(\mu_0,\mu_1) \leq \cW_\cT(P_\cT \mu_0 , P_\cT \mu_1) + \eps
\end{align}
for all $\mu_0, \mu_1 \in \cP(\bOmega)$, whenever $[\cT] \leq h$.
\end{theorem}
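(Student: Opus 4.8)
The plan is to argue by contradiction and to reduce the statement to the lower semicontinuity estimate of Proposition~\ref{prop:action-bounds}(ii) for the dual action functionals --- the one place where the asymptotic isotropy condition and the compatibility of the means enter, and the source of the sharp constant~$1$. Suppose the assertion fails for some $\eps>0$. Then there is a sequence of $\zeta$-regular meshes $\{\cT_n\}$ with $[\cT_n]\to 0$ forming an asymptotically isotropic family with weight functions $\{\bflambda^{\cT_n}\}$ and compatible means $\{\bftheta^{\cT_n}\}$, together with $\mu_0^n,\mu_1^n\in\cP(\bOmega)$, such that $\cW_{\cT_n}(P_{\cT_n}\mu_0^n, P_{\cT_n}\mu_1^n)<\bW(\mu_0^n,\mu_1^n)-\eps$ for all $n$. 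For each $n$ I would choose a $\tfrac1n$-optimal curve $(\emm_t^n)_{t\in[0,1]}$ in $\cP(\cT_n)$ joining $P_{\cT_n}\mu_0^n$ to $P_{\cT_n}\mu_1^n$, so that $\int_0^1\cA_{\cT_n}^*(\emm_t^n,\dot\emm_t^n)\,\dd t\le\cW_{\cT_n}(P_{\cT_n}\mu_0^n, P_{\cT_n}\mu_1^n)^2+\tfrac1n<\diam(\Omega)^2+1=:C_0$, uniformly in $n$.

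The first step is a compactness statement for the embedded curves. Set $\sigma_n(t):=H_{[\cT_n]}\cQ_{\cT_n}\emm_t^n$ and $g_n(t):=\big(C\,\cA_{\cT_n}^*(\emm_t^n,\dot\emm_t^n)\big)^{1/2}$, with $C$ the constant of Lemma~\ref{lem:a-priori}. Since $(\emm_t^n)_t$ has absolutely continuous coordinates, $t\mapsto\cQ_{\cT_n}\emm_t^n$ is $\bW$-absolutely continuous, hence so is $\sigma_n$ by the contractivity of the heat flow (Lemma~\ref{lem:Wass-facts}(ii)); and by Lemma~\ref{lem:a-priori} together with the Benamou--Brenier formula, $\bW(\sigma_n(s),\sigma_n(t))\le\int_s^t g_n(r)\,\dd r$ for all $s\le t$. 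As $\|g_n\|_{L^2(0,1)}^2\le C\,C_0$, this gives the uniform modulus $\bW(\sigma_n(s),\sigma_n(t))\le\sqrt{C\,C_0}\,|t-s|^{1/2}$, so after passing to a subsequence $g_n\weakly g$ in $L^2(0,1)$ and, by Arzel\`{a}--Ascoli, $\sigma_n\to\mu_{\cdot}$ uniformly in $(\cP(\bOmega),\bW)$ for a curve $(\mu_t)$ satisfying $\bW(\mu_s,\mu_t)\le\int_s^t g$, which is thus $\bW$-absolutely continuous with finite action. By Lemma~\ref{lem:Wass-facts}(iii), $\bW(\cQ_{\cT_n}\emm_t^n,\sigma_n(t))\le C\sqrt{[\cT_n]}\to 0$ uniformly in $t$, so $\cQ_{\cT_n}\emm_t^n\weakly\mu_t$ for every $t$; combined with Lemma~\ref{lem:almost-identity} this yields $\mu_i^n\to\mu_i$ in $\bW$ and $\mu_i=\mu_{t=i}$ for $i=0,1$.

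The second step passes to the limit in the action. Fix $\delta\in(0,\tfrac12)$ and apply the time-regularisation of Lemma~\ref{lem:smoothing} (and its continuous analogue) to each $(\emm_t^n)$ and to $(\mu_t)$, obtaining curves $(\emm_t^{n,\delta})$ in $\cP(\cT_n)$, smooth in time, and $(\mu_t^\delta)$ in $\cP(\bOmega)$, with the same endpoints. By Lemma~\ref{lem:smoothing}(i), $\int_0^1\cA_{\cT_n}^*(\emm_t^{n,\delta},\dot\emm_t^{n,\delta})\,\dd t\le\tfrac1{1-2\delta}\int_0^1\cA_{\cT_n}^*(\emm_t^n,\dot\emm_t^n)\,\dd t$; and by Lemma~\ref{lem:smoothing}(ii), for every fixed $t$ one has $\cQ_{\cT_n}\emm_t^{n,\delta}\weakly\mu_t^\delta$ and $\cQ_{\cT_n}\dot\emm_t^{n,\delta}\weakly\dot\mu_t^\delta$, where $\dot\mu_t^\delta\in\cM_0(\bOmega)$ is a genuine signed measure --- precisely why the mollification is needed (cf.\ Remark~\ref{rem:derivative-measure}). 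Hence Proposition~\ref{prop:action-bounds}(ii) applies at each $t$ and gives $\bA^*(\mu_t^\delta,\dot\mu_t^\delta)\le\liminf_n\cA_{\cT_n}^*(\emm_t^{n,\delta},\dot\emm_t^{n,\delta})$. Integrating, and using Fatou's lemma, Lemma~\ref{lem:smoothing}(i), $\tfrac1n$-optimality, and the Benamou--Brenier formula, I obtain
\[
	\bW(\mu_0,\mu_1)^2=\bW(\mu_0^\delta,\mu_1^\delta)^2\le\int_0^1\bA^*(\mu_t^\delta,\dot\mu_t^\delta)\,\dd t\le\frac{1}{1-2\delta}\,\liminf_{n\to\infty}\cW_{\cT_n}(P_{\cT_n}\mu_0^n, P_{\cT_n}\mu_1^n)^2 .
\]
Letting $\delta\to 0$, and using $\bW(\mu_0^n,\mu_1^n)\to\bW(\mu_0,\mu_1)$ with $\cW_{\cT_n}(P_{\cT_n}\mu_0^n, P_{\cT_n}\mu_1^n)<\bW(\mu_0^n,\mu_1^n)-\eps$, yields $\bW(\mu_0,\mu_1)\le\bW(\mu_0,\mu_1)-\eps$, a contradiction.

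The hard part is the compactness step, and specifically obtaining convergence of the \emph{velocities} in the correct topology: Proposition~\ref{prop:action-bounds}(ii) requires $\cQ_{\cT_n}\dot\emm_t^{n,\delta}$ to converge weakly \emph{as a signed measure}, not merely as a distribution. The heat-flow regularisation, through the lossy a priori bound of Lemma~\ref{lem:a-priori}, supplies only enough equicontinuity to extract a limit \emph{curve} $(\mu_t)$ and to match the endpoints; the time-regularisation of Lemma~\ref{lem:smoothing} then upgrades weak convergence of the curves to weak convergence of the velocities and guarantees the limit is a measure. The passage from the lossy constant to the sharp constant $1$ happens entirely inside Proposition~\ref{prop:action-bounds}(ii), where asymptotic isotropy and compatibility of the means are used.
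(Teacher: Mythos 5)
Your proposal is correct and follows essentially the same route as the paper's proof: contradiction, heat-flow regularisation of the embedded curves via Lemma \ref{lem:a-priori} to get equicontinuity and Arzel\`a--Ascoli compactness, endpoint identification via Lemmas \ref{lem:Wass-facts} and \ref{lem:almost-identity}, time-mollification via Lemma \ref{lem:smoothing} so that Proposition \ref{prop:action-bounds}(ii) and Fatou apply, and finally $\delta\to 0$. The only (harmless) deviations are your use of $\tfrac1n$-optimal curves in place of the constant-speed geodesics used in the paper, and the unnecessary extraction of a weak $L^2$ limit of the metric speeds.
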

\begin{proof}

To obtain a contradiction, we suppose that the opposite holds, i.e., there exists $\eps > 0$, a sequence of meshes $\{ \cT \}$ with $[\cT] \to 0$, and probability measures $\mu_0^\cT, \mu_1^\cT \in \cP(\cT)$, such that
\begin{align}\label{contradiction}
	\cW_\cT( P_\cT \mu_0^\cT, P_\cT \mu_1^\cT) < \bW(\mu_0^\cT, \mu_1^\cT) - \eps \ .
\end{align}
Let $(\emm_t^\cT)_{t\in [0,1]} \subseteq \cP(\cT)$ be a constant speed geodesic connecting $P_\cT \mu_0^\cT$ and $P_\cT \mu_1^\cT$, so that
\begin{align}\label{eq:geodesic-Pmu}
	\int_0^1 \cA_\cT^*(\emm_t^\cT, \dot \emm_t^\cT) \dd t
		 = \cW_\cT(P_\cT \mu_0^\cT, P_\cT \mu_1^\cT)^2 \ .
\end{align}
Set $\tilde\mu_t^\cT := H_{[\cT]} \cQ_\cT \emm_t^\cT$.
Then, for almost every $t\in[0,1]$, Lemma \ref{lem:a-priori} yields
\begin{align*}
	\bA^*(\tilde\mu_t^\cT, \dot {\tilde\mu}_t^\cT) \leq C \cA_\cT^*(\emm_t^\cT, \dot \emm_t^\cT) \ ,
\end{align*}
where the constant $C < \infty$ does not depend on $\cT$ or $t$.
Consequently, for $0 \leq t_1 \leq t_2 \leq 1$,
\begin{equation}\begin{aligned}\label{eq:equi-cont}
	\bW( \tilde\mu_{t_1}^\cT,\tilde\mu_{t_2}^\cT )
		& \leq \int_{t_1}^{t_2} \sqrt{\bA^*(\tilde\mu_t^\cT, \dot {\tilde\mu}_t^\cT)} \dd t
		\\& \leq C \int_{t_1}^{t_2} \sqrt{\cA^*_\cT(\emm_t^\cT, \dot \emm_t^\cT)} \dd t
		\leq C \sqrt{t_2-t_1}  \cW_{\cT}(P_\cT \mu_0^\cT, P_\cT \mu_1^\cT) \ ,
\end{aligned}\end{equation}
and hence by \eqref{contradiction} the family of curves $\{(\tilde\mu_{t}^\cT)_t\}_\cT$ is equicontinuous. Since $(\cP(\bOmega), \bW)$ is compact, the Arzel\`a--Ascoli Theorem yields a subsequence of meshes and a $\bW$-continuous curve of probability measures $(\lambda_t)_{t\in[0,1]}$ in $\cP(\bOmega)$ such that $\bW(\tilde\mu_t^\cT, \lambda_t) \to 0$ for all $t \in [0,1]$ as $[\cT] \to 0$.
Moreover, since
$
	\bW(\tilde\mu_t^\cT, \cQ_\cT \emm_t^\cT) \leq C \sqrt{[\cT]}
$
by Lemma \ref{lem:Wass-facts}, it follows that $\cQ_\cT \emm_t^\cT \to \lambda_t$ in $(\cP(\bOmega), \bW)$ as $[\cT] \to 0$.

For $\delta \in (0, \frac12)$, let $t \mapsto \emm_t^{\cT, \delta}$ be a time-regularised version of $t \mapsto \emm_t^\cT$, as defined in Lemma \ref{lem:smoothing}. It follows from this lemma that $\cQ_\cT \emm_t^{\cT, \delta} \weakly \lambda_t^\delta$ and $\cQ_\cT \dot \emm_t^{\cT, \delta} \weakly \dot  \lambda_t^\delta$ for all $t \in [0,1]$ as $[\cT] \to 0$. Moreover, by Lemma \ref{lem:almost-identity},
\begin{align*}
	\lambda_0^\delta = \lambda_0 = \lim_{[\cT] \to 0} \cQ_\cT \emm_0^\cT = \lim_{[\cT] \to 0} \cQ_\cT(P_\cT \mu_0^\cT) =  \lim_{[\cT] \to 0} \mu_0^\cT \ ,
\end{align*}
and similarly $\lim_{[\cT] \to 0} \mu_1^\cT  = \lambda_1^\delta$, where the convergence is with respect to $\bW$. Consequently,
\begin{align*}
	\lim_{[\cT] \to 0} \bW(\mu_0^\cT, \mu_1^\cT) = \bW(\lambda_0, \lambda_1) \ .
\end{align*}
Using Proposition \ref{prop:action-bounds} and Fatou's Lemma, Lemma \ref{lem:smoothing}, and \eqref{eq:geodesic-Pmu}, it follows that
\begin{align*}
	\bW(\lambda_0, \lambda_1)^2
		\leq  \int_0^1 \bA^*(\lambda_t^\delta, \dot \lambda_t^\delta) \dd t
		& \leq \liminf_{[\cT] \to 0}
			\int_0^1 \cA_\cT^*( \emm_t^{\cT, \delta}, \dot \emm_t^{\cT, \delta} )
					 \dd t
		\\ & \leq \frac{1}{1 - 2 \delta} \liminf_{[\cT] \to 0}  \int_0^1 \cA_\cT^*(\emm_t^\cT, \dot \emm_t^\cT) \dd t
		\\ & = \frac{1}{1 - 2 \delta} \liminf_{[\cT] \to 0} \cW_\cT(P_\cT \mu_0, P_\cT \mu_1)^2 \ .
\end{align*}
Since $\delta \in (0,\frac12)$ is arbitrary, we obtain
\begin{align*}
		\lim_{[\cT] \to 0} \bW(\mu_0^\cT, \mu_1^\cT) \leq \liminf_{[\cT] \to 0} \cW_\cT(P_\cT \mu_0, P_\cT \mu_1) \ ,
\end{align*}
which is the desired contradiction to \eqref{contradiction}.
\end{proof}

\subsection{Proof of the Gromov--Hausdorff convergence}
\label{sec:wrap-up}

It remains to prove the corollaries stated in the introduction.

\begin{proof}[Proof of Corollary \ref{cor:GH-conv}]
Fix $\eps > 0$. We will check that there exists $h > 0$ such that the map $P_{\cT}: \cP(\bOmega) \to \cP(\cT)$ is $\eps$-isometric and $\eps$-surjective whenever $[\cT] \leq h$.

The $\eps$-surjectivity holds trivially, as $\cP_\cT$ is even surjective.
To show that $P_\cT$ is $\eps$-isometric, we combine Theorem \ref{thm:upper-bound} and Theorem \ref{thm:lower-bound} to infer that there exists $h > 0$ such that
\begin{align*}
| \cW_\cT(P_\cT \mu_0, P_\cT \mu_1) - \bW(\mu_0,\mu_1)  |
	\leq \eps
\end{align*}
for all $\mu_0, \mu_1 \in \cP(\bOmega)$, whenever $[\cT] \leq h$. This yields the result.
\end{proof}

\begin{proof}[Proof of Corollary \ref{cor:min}]
For $i = 0,1$, let $\mu_i \in \cP(\bOmega)$ and $\emm_i^\cT \in \cP(\cT)$ be such that $\cQ_\cT\emm_i^\cT \weakly \mu_i$ as $[\cT] \to 0$.
Lemmas \ref{lem:distconttodisc} and \ref{lem:almost-identity} imply that, for some constant $C < \infty$ depending only on $\Omega$ and $\zeta$ (that changes from line to line),
\begin{align*}
	\cW_\cT( \emm_i^\cT, P_\cT \mu_i )
    	& \leq C \Big( \bW(\cQ_\cT \emm_i^\cT, \cQ_\cT P_\cT \mu_i) + [\cT] \Big) 
   	\\  & \leq C \Big( \bW(\cQ_\cT \emm_i^\cT, \mu_i) + [\cT] \Big) \ .
\end{align*}
As $\cQ_\cT \emm_i^\cT \weakly \mu_i$, we have $\bW(\cQ_\cT \emm_i^\cT, \mu_i) \to 0$, and therefore $\cW_\cT( \emm_i^\cT, P_\cT \mu_i ) \to 0$.
The triangle inequality then yields
\begin{align*}
	\big|\cW_\cT(\emm_0^\cT, \emm_1^\cT) 
		- \cW_\cT(P_\cT \mu_0, P_\cT \mu_1) \big|
		\leq \cW_\cT( \emm_0^\cT, P_\cT \mu_0 ) 
		+	 \cW_\cT( \emm_1^\cT, P_\cT \mu_1 ) \to 0 \ .
\end{align*}
Since $\cW_\cT(P_\cT \mu_0, P_\cT \mu_1) \to \bW(\mu_0, \mu_1)$ by Theorems \ref{thm:upper-bound-intro} and \ref{thm:lower-bound-intro}, we obtain the desired convergence $\cW_\cT(\emm_0^\cT, \emm_1^\cT) \to \bW(\mu_0, \mu_1)$. 

The final claim is now straightforward: for $0 \leq s \leq t \leq 1$ we have
\begin{align*}
	\bW(\mu_s, \mu_t) 
		= \lim_{[\cT] \to 0} \cW_\cT(\emm_s^\cT, \emm_t^\cT) 
		= (t-s) \lim_{[\cT] \to 0} \cW_\cT(\emm_0^\cT, \emm_1^\cT) 
		= (t-s) \bW(\mu_0, \mu_1) \ , 		
\end{align*}
which yields the result.
\end{proof}

\begin{remark}\label{rem:one-sided}
Clearly, it follows from the proof of Corollary \ref{cor:min} that the one-sided estimate $\limsup_{[\cT] \to 0} \cW_\cT(\emm_0^\cT, \emm_1^\cT) \leq \bW(\mu_0, \mu_1)$ holds under the conditions of Theorem \ref{thm:upper-bound-intro}, even when the asymptotic isotropy condition fails.
\end{remark}

\bibliography{literature}

\end{document}